\numberwithin{equation}{section}
\titleformat{\subsection}[runin]
       {\normalfont\bfseries}
       {\thesubsection}
       {0.5em}
       {}
       [.]
\newcommand{\Z}{\mathbb{Z}}
\newcommand{\N}{\mathbb{N}}
\newcommand{\Q}{\mathbb{Q}}
\newcommand{\K}{\mathbb{K}}
\newcommand{\F}{\mathbb{F}}
\newcommand{\A}{\mathbb{A}}
\newcommand{\Zpe}{\mathbb{Z}_{/p^e}}
\newcommand{\Zp}{\mathbb{Z}_{/p}}
\newcommand{\Zpt}{\mathbb{Z}_{/p^2}}
\newcommand{\Zpi}{\mathbb{Z}_{/p^i}}
\newcommand{\ZT}{\mathbb{Z}_{/T}}
\newcommand{\GL}{\mathsf{GL}}
\newcommand{\diag}{\operatorname{diag}}
\newcommand{\Ann}{\operatorname{Ann}}
\newcommand{\Aut}{\operatorname{Aut}}
\newcommand{\End}{\operatorname{End}}
\newcommand{\rad}{\operatorname{rad}}
\newcommand{\mG}{\mathcal{G}}
\newcommand{\mF}{\mathcal{F}}
\newcommand{\mM}{\mathcal{M}}
\newcommand{\mN}{\mathcal{N}}
\newcommand{\mV}{\mathcal{V}}
\newcommand{\mT}{\mathcal{T}}
\newcommand{\M}{\mathsf{M}}
\newcommand{\mI}{\mathcal{I}}
\newcommand{\mJ}{\mathcal{J}}
\newcommand{\tS}{\widetilde{S}}
\newcommand{\mS}{\mathcal{S}}
\newcommand{\mH}{\mathcal{H}}
\newcommand{\mA}{\mathcal{A}}
\newcommand{\mZ}{\mathfrak{Z}}
\newcommand{\mB}{\mathcal{B}}
\newcommand{\mR}{\mathcal{R}}
\newcommand{\bv}{\boldsymbol{v}}
\newcommand{\ba}{\boldsymbol{a}}
\newcommand{\bb}{\boldsymbol{b}}
\newcommand{\bc}{\boldsymbol{c}}
\newcommand{\bd}{\boldsymbol{d}}
\newcommand{\be}{\boldsymbol{e}}
\newcommand{\bx}{\boldsymbol{x}}
\newcommand{\br}{\boldsymbol{r}}
\newcommand{\bw}{\boldsymbol{w}}
\newcommand{\bepsilon}{\boldsymbol{\epsilon}}
\newcommand{\tc}{\widetilde{c}}
\newcommand{\tR}{\widetilde{R}}
\newcommand{\tmA}{\widetilde{\mathcal{A}}}
\newcommand{\tmV}{\widetilde{\mathcal{V}}}
\newcommand{\mmU}{\mathscr{U}}
\newcommand{\mmP}{\mathscr{P}}
\newcommand{\tU}{\widetilde{\mathscr{U}}}
\newcommand{\bh}{\boldsymbol{h}}
\newcommand{\bz}{\boldsymbol{z}}
\newcommand{\bzer}{\boldsymbol{0}}
\newcommand{\gen}[1]{\left\langle {#1} \right\rangle}
\newcommand{\bg}{\boldsymbol{g}}
\newcommand{\bsig}{\boldsymbol{\sigma}}
\newcommand{\oX}{\mkern 1.5mu\overline{\mkern-1.5mu X \mkern-1.5mu}\mkern 1.5mu}
\newcommand{\oY}{\mkern 1.5mu\overline{\mkern-1.5mu Y \mkern-1.5mu}\mkern 1.5mu}
\newcommand{\ox}{\overline{x}}
\newcommand{\oA}{\overline{A}}
\newcommand{\oG}{\overline{G}}
\newcommand{\frp}{\mathfrak{p}}
\newcommand{\tfrq}{\widetilde{\mathfrak{q}}}
\newcommand{\frq}{\mathfrak{q}}
\newcommand{\frm}{\mathfrak{m}}
\newcommand{\len}{\mathsf{len}}
\newcommand{\eval}{\mathsf{eval}}
\newtheorem{thrm}{Theorem}[section]
\newtheorem*{thrm*}{Theorem}
\newtheorem{lem}[thrm]{Lemma}
\newtheorem{prop}[thrm]{Proposition}
\newtheorem{cor}[thrm]{Corollary}
\newtheorem{obs}[thrm]{Observation}
\theoremstyle{definition}
\newtheorem{defn}[thrm]{Definition}
\newtheorem*{defn*}{Definition}
\theoremstyle{definition}
\newtheorem{exmpl}[thrm]{Example}
\theoremstyle{definition}
\begin{document}
\title{S-unit equations in modules and linear-exponential Diophantine equations}
\author{Ruiwen Dong\footnote{Magdalen College, University of Oxford, United Kingdom, email: ruiwen.dong@magd.ox.ac.uk} \and Doron Shafrir\footnote{Department of Mathematics, Ben Gurion University of the Negev, Be’er Sheva, Israel}}
\date{}

\maketitle
\thispagestyle{empty}

\begin{abstract}
    Let $T$ be a positive integer, and $\mathcal{M}$ be a finitely presented module over the Laurent polynomial ring $\mathbb{Z}_{/T}[X_1^{\pm}, \ldots, X_N^{\pm}]$.
    We consider S-unit equations over $\mathcal{M}$: these are equations of the form $x_1 m_1 + \cdots + x_K m_K = m_0$, where the variables $x_1, \ldots, x_K$ range over the set of monomials (with coefficient 1) of $\mathbb{Z}_{/T}[X_1^{\pm}, \ldots, X_N^{\pm}]$.
    When $T$ is a power of a prime number $p$, we show that the solution set of an S-unit equation over $\mathcal{M}$ is effectively $p$-normal in the sense of Derksen and Masser (2015), generalizing their result on S-unit equations in fields of prime characteristic.
    When $T$ is an arbitrary positive integer, we show that deciding whether an S-unit equation over $\mathcal{M}$ admits a solution is Turing equivalent to solving a system of linear-exponential Diophantine equations, whose base contains the prime divisors of $T$.
    Combined with a recent result of Karimov, Luca, Nieuwveld, Ouaknine and Worrell (2025), this yields decidability when $T$ has at most two distinct prime divisors.
    This also shows that proving either decidability or undecidability in the case of arbitrary $T$ would entail major breakthroughs in number theory.
    
    We mention some potential applications of our results, such as deciding Submonoid Membership in wreath products of the form $\mathbb{Z}_{/p^a q^b} \wr \mathbb{Z}^d$, as well as progressing towards solving the Skolem problem in rings whose additive group is torsion.
    More connections in these directions will be explored in follow up papers.
\end{abstract}

\vspace{0.5cm}
\noindent
\textbf{Acknowledgements.} The authors would like to thank James Worrell for discussion about S-unit equations. Ruiwen Dong is supported by a Fellowship by Examination at Magdalen College.


\newpage
\setcounter{page}{1}

\section{Introduction}
S-unit equations have a rich history rooted in the study of Diophantine equations and algebraic number theory. They were first introduced in the context of units in number fields, along with the foundational work of Mahler, Siegel and Thue in transcendental number theory. 
Algorithmic solutions to S-unit equations are vital for exploring the algebraic structure of a given field, and have connections to areas such as automata theory, formal verification, cryptography and computational number theory~\cite{10.5555/1481045, adamczewski2012vanishing, 10.1007/978-3-030-80914-0_1, lipton2022skolem, benedikt2023complexity}.
See~\cite{Evertse1988} for an extended survey.

Let $\K$ be a field.
Given a finite subset $S \subseteq \K \setminus \{0\}$, denote by $\gen{S}$ the multiplicative subgroup generated by $S$.
Let $m_0, m_1, \ldots, m_K$ in $\K$, an \emph{S-unit equation} is a linear equation of the form
\begin{equation}\label{eq:Sunitintrofield}
    x_1 m_1 + \cdots + x_K m_K = m_0,
\end{equation}
where we look for solutions $x_1, \ldots, x_K \in \gen{S}$.

When $\K$ is a field of characteristic $0$, Lang~\cite{Lang1960}, generalizing earlier results by Mahler~\cite{Mahler1932}, showed that the S-unit equation has only finitely many solutions when the number of variables $K$ is $2$.
When $K \geq 3$, the \emph{subspace
theorem} can be used to prove that such an equation has only a finite number of nondegenerate solutions; that is, solutions with the property that no proper subsum vanishes~\cite{Evertse1984,PoortenSchlickewei1991}.
However, all general known results concerning more than two variables are ineffective, meaning there is no known algorithm that determines whether a solution exists.

When $\K$ is a field of characteristic $p > 0$ (for example the field $\F_p(X)$ of rational functions), a recent result by Derksen and Masser~\cite{derksen2012linear} showed that the solution set of an S-unit Equation~\eqref{eq:Sunitintrofield} can be effectively written as a \emph{$p$-normal set} (see Definition~\ref{def:pnormal}); thus it is decidable whether the solution set is empty.
A related result was also given by Adamczewski and Bell~\cite{adamczewski2012vanishing}.

Naturally, one can also consider Equation~\eqref{eq:Sunitintrofield} over any commutative ring $\A$.
Given a set $S = \{s_1, \ldots, s_N\}$ of invertible elements of $\A$, one can give $\A$ a $\Z[X_1^{\pm}, \ldots, X_N^{\pm}]$-module structure by letting each $X_i$ act as $s_i$, and consider Equation~\eqref{eq:Sunitintrofield} in the submodule of $\A$ generated by the $m_i$'s.
Therefore, a more general form of S-unit equations can be formulated as follows.
Let $\mM$ be a finitely presented module over the Laurent polynomial ring $\Z[X_1^{\pm}, \ldots, X_N^{\pm}]$, and let $m_0, m_1, \ldots, m_K \in \mM$.
An \emph{S-unit equation} over $\mM$ is the equation
    \begin{equation}\label{eq:Sunitintromod}
        X_1^{z_{11}} X_2^{z_{12}} \cdots X_N^{z_{1N}} \cdot m_1 + \cdots + X_1^{z_{K1}} X_2^{z_{K2}} \cdots X_N^{z_{KN}} \cdot m_K = m_0,
    \end{equation}
where we look for solutions $(z_{11}, \ldots, z_{KN}) \in \Z^{KN}$.
Note that this also allows one to express a \emph{system} of $\ell$ equations of the form~\eqref{eq:Sunitintromod} in a single equation over the module $\mM^{\ell}$.

Equations of the form~\eqref{eq:Sunitintromod} are prevalent in computational algebra, appearing in contexts such as finding sparse polynomials in ideals~\cite{jensen2017finding, draisma2023no}, fragments of arithmetic theories~\cite{Hieronymi2022}, as well as linear recurrence sequences~\cite{chonev2013orbit, ibrahim2024positivity}.
Our original motivation comes from membership problems in metabelian groups, which also reduce to solving such equations~\cite{figelius2020complexity, dong2024submonoid}.
In general, it is undecidable whether a given equation of the form~\eqref{eq:Sunitintromod} admits a solution, even when $N = 1$~\cite{Dong2025LinearEW}.
However, recall that S-unit equations in fields become more tractable when we consider positive characteristics.
Correspondingly, we consider modules $\mM$ with \emph{$T$-torsion}, where $T$ is a positive integer.
This means that $T \mM = 0$, so $\mM$ becomes a $\ZT[X_1^{\pm}, \ldots, X_N^{\pm}]$-module.
It turns out that the difficulty of solving S-unit equations in $T$-torsion modules depends on the number of distinct prime divisors of $T$: this will be the main result of our paper.
We also show connections of S-unit equations to a similar type of Diophantine equations, which we call \emph{linear-exponential Diophantine equations} (see Theorem~\ref{thm:mainequiv}).
These are linear equations over $\Z$, where certain variables are restricted to powers of primes, (e.g.\ $2^x + 2^y - 3^z = 1$).
Despite linear-exponential Diophantine equations being widely studied in number theory, obtaining either decidability or undecidability results for solving the general case remains notoriously difficult~\cite{Hieronymi2022, karimov2025decidability}.

\paragraph*{Main results, comparison with previous work and applications.}

Derksen and Masser's result concerning S-unit equations in fields of positive characteristics is the following.


\begin{thrm}[{Derksen and Masser~\cite{derksen2012linear}}]\label{thm:DM}
    Let $\K$ be a field of characteristic $p$. Let $X_1, \ldots, X_N,$ and $m_0, m_1, \ldots, m_K,$ be elements of $\K$.
    The set of solutions $(z_{11}, \ldots, z_{KN}) \in \Z^{KN}$ to the equation
    \[
        X_1^{z_{11}} X_2^{z_{12}} \cdots X_N^{z_{1N}} \cdot m_1 + \cdots + X_1^{z_{K1}} X_2^{z_{K2}} \cdots X_N^{z_{KN}} \cdot m_K = m_0
    \]
    is an effectively \textbf{$\boldsymbol{p}$-normal set}.
\end{thrm}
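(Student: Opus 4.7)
The plan is to reduce to a finitely generated field and then induct on the transcendence degree over $\F_p$. First, observe that the input data $X_1, \ldots, X_N, X_1^{-1}, \ldots, X_N^{-1}, m_0, m_1, \ldots, m_K$ lie in a finitely generated $\F_p$-subalgebra of $\K$, hence in a finitely generated subfield $\K_0$. Since any candidate $(z_{ij}) \in \Z^{KN}$ gives an equation entirely inside $\K_0$, the solution set is unchanged if we replace $\K$ by $\K_0$. So we may assume $\K$ has finite transcendence degree $d$ over $\F_p$.

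For the base case $d = 0$, the field $\K$ is finite, so the multiplicative map $\psi : \Z^N \to \K^\times$, $(z_1, \ldots, z_N) \mapsto X_1^{z_1} \cdots X_N^{z_N}$, factors through $\Z^N/\Lambda$ for some sublattice $\Lambda$ of finite, computable index. There are only finitely many tuples $(g_1, \ldots, g_K)$ in the image of $\psi$ with $\sum g_i m_i = m_0$, so the full solution set is a finite union of cosets of $\Lambda^K$ inside $\Z^{KN}$. Such a finite union of affine lattice cosets is $p$-normal in a trivial way, with no Frobenius-orbit part needed.

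For the inductive step $d \geq 1$, I would pick a discrete rank-one valuation $v$ on $\K$ trivial on $\F_p$ whose residue field $\kappa$ has transcendence degree $d-1$, and classify solutions by which terms of $\sum_i (X_1^{z_{i1}} \cdots X_N^{z_{iN}}) m_i$ attain the minimum $v$-value. Terms of strictly minimum value must cancel after reducing modulo the maximal ideal, producing a smaller S-unit equation over $\kappa$ to which the induction applies. The Frobenius structure arises precisely here: in characteristic $p$, the substitution $(z_{ij}) \mapsto (p\, z_{ij})$ on any subset of indices preserves the vanishing of the corresponding subsum (since $(\sum a)^p = \sum a^p$), and iterated Frobenius on sub-solutions lifts back through the valuation. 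This is what produces the sets $\{\ba + \sum_j p^{k_j} \bv_j : k_j \in \N\}$ that, together with ordinary affine lattice pieces, assemble the $p$-normal description.

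The main obstacle is effectiveness. To make the recursion terminate with computable output, one needs quantitative control over the valuations encountered, e.g.\ height functions on finitely generated fields, together with an effective function-field ABC / Mason--Stothers bound controlling the number of nondegenerate solutions of bounded height at each inductive step. Combining these with careful bookkeeping of which subsums cancel, and at which valuation, gives a terminating algorithm that outputs the $p$-normal decomposition; this is exactly the delicate part that Derksen--Masser work out, and where most of the technical difficulty lies.
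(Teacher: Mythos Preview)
The paper does not give its own proof of this theorem; it is quoted as the result of Derksen--Masser. However, the paper describes their method in some detail (Section~3.1, especially Example~\ref{exmpl:Derksen}) and proves a strict generalization (Theorem~\ref{thm:primepower}) whose $e=1$ case recovers it. That method is quite different from your outline: rather than inducting on transcendence degree via valuations, one uses the \emph{Frobenius splitting} $\K = \bigoplus_i \K^p \cdot x_i$ to build a finite automaton over the alphabet $\Sigma_p^{KN}$. Reading the base-$p$ digits of $(z_{ij})$, each transition converts the current system of equations over $\K$ into a bounded-size system over $\K^p \cong \K$; a coefficient-degree bound (the analogue of Lemma~\ref{lem:finite}) shows the state set is finite, giving $p$-automaticity. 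Then $p$-normality is extracted by decomposing the automaton into strongly connected components and showing each component contributes a factor $p^{\ell k_i}\ba_i$ and a subgroup $H$ to the $p$-succinct pieces, followed by a symmetrization argument (Sections~\ref{subsec:strongconn}--\ref{subsec:symm}).

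Your valuation-induction sketch is a plausible heuristic, but as written it is not a proof: the entire content of the theorem lives in the step you explicitly defer with ``this is exactly the delicate part that Derksen--Masser work out.'' Concretely, the observation that $(z_{ij}) \mapsto (p\,z_{ij})$ preserves the vanishing of a subsum only shows that certain Frobenius orbits lie \emph{inside} the solution set; it gives no finiteness in the other direction. And in your inductive step, after identifying which terms have minimum valuation and applying the residue-field induction to their reduction, you still have to solve the \emph{original} equation in $\K$, not just its image in $\kappa$: it is not clear how the $p$-normal description over $\kappa$ lifts, nor why the remaining higher-valuation part is controlled, nor why the recursion terminates with a finite output. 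The automaton approach sidesteps all of this by never leaving $\K$: it reads digits of the exponents rather than specializing the field.
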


Here, a $p$-normal set is defined as follows. Throughout this paper we assume $0 \in \N$.

\begin{defn}[{reformulation of~\cite{derksen2015linear}}]\label{def:pnormal}
    Let $r \in \N$ and $\ell \in \N \setminus \{0\}$.
    Let $H$ be a subgroup of $\Z^{KN}$, and $\ba_0, \ba_1, \ldots, \ba_r$ be vectors in $\Q^{KN}$.
    Define
    \begin{equation}\label{eq:psuccinct}
    S(\ell; \ba_0, \ba_1, \ldots, \ba_r; H) \\
        \coloneqq \; \{\ba_0 + p^{\ell k_1} \ba_1 + \cdots + p^{\ell k_r} \ba_r + \bh \mid k_1, k_2, \ldots, k_r \in \N, \bh \in H\}.
    \end{equation}
    Such a set is called \emph{$p$-succinct} if it is a subset of $\Z^{KN}$.
    A subset $S$ of $\Z^{KN}$ is called \emph{$p$-normal}, if it is a finite union of $p$-succinct sets. 
    We say a set is \emph{effectively} $p$-normal if there is an algorithm that computes all the coefficients ($\ba_0, \ldots, \ba_r$ and generators of $H$) of its $p$-succinct sets.
\end{defn}

Note that the vectors $\ba_i \in \Q^{KN}$ might not have integer coefficients. For example, the set $\left\{\frac{1}{2} + 3^n \cdot \frac{1}{2}\;\middle|\; n \in \N\right\}$ is a subset of $\Z$, but $\frac{1}{2}$ is not an integer. 
The condition $S \subseteq \Z^{KN}$ is equivalent to $(p^{\ell} - 1)\ba_0 \in \Z^{KN}, \ldots, (p^{\ell} - 1)\ba_r \in \Z^{KN}$ and $\ba_0 + \cdots + \ba_r \in \Z^{KN}$,~\cite[p.117]{derksen2015linear}.


The first result of our paper extends Derksen and Masser's theorem from fields to arbitrary modules with $p^e$-torsion:

\begin{restatable}{thrm}{thmprimepower}\label{thm:primepower}
    Let $p$ be a prime number and $e$ be a positive integer.
    Let $\mM$ be a finitely presented module over the Laurent polynomial ring $\Zpe[X_1^{\pm}, \ldots, X_N^{\pm}]$, and let $m_0, m_1, \ldots, m_K \in \mM$.
    Then the set of solutions $(z_{11}, \ldots, z_{KN}) \in \Z^{KN}$ to the S-unit equation
    \begin{equation}\label{eq:Sunitthm}
        X_1^{z_{11}} X_2^{z_{12}} \cdots X_N^{z_{1N}} \cdot m_1 + \cdots + X_1^{z_{K1}} X_2^{z_{K2}} \cdots X_N^{z_{KN}} \cdot m_K = m_0
    \end{equation}
    is effectively $p$-normal.
\end{restatable}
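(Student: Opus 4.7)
The plan is to prove Theorem~\ref{thm:primepower} by a double induction: an outer induction on the torsion exponent $e$, and, within the base case $e = 1$, an inner induction on the length of a prime filtration of $\mathcal{M}$. The rough idea is to peel off a single ``simple'' subquotient at each step, invoke Derksen--Masser's Theorem~\ref{thm:DM} at the bottom, and then propagate the resulting $p$-normal structure back up through a lifting analysis.

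For the base case $e = 1$, the ring $R \coloneqq \Zp[X_1^{\pm}, \ldots, X_N^{\pm}]$ is a Noetherian domain, so any finitely presented $R$-module $\mathcal{M}$ admits a finite filtration $0 = M_0 \subsetneq M_1 \subsetneq \cdots \subsetneq M_n = \mathcal{M}$ with each $M_i / M_{i-1} \cong R / \mathfrak{p}_i$ for some prime $\mathfrak{p}_i$; since $p \cdot \mathcal{M} = 0$, each $\mathfrak{p}_i$ contains $p$, so $R / \mathfrak{p}_i$ is a domain of characteristic $p$. The innermost base case $n = 1$ is handled by embedding $R / \mathfrak{p}$ into its fraction field and invoking Theorem~\ref{thm:DM} directly. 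For the inner inductive step, I would project the original equation onto $\mathcal{M} / M_{n-1}$; by the $n = 1$ case, the solution set $\mathcal{S}$ of this projected equation is effectively $p$-normal, and the remaining task is to identify which $\bz \in \mathcal{S}$ make the residue $\sum_j X^{z_j} m_j - m_0 \in M_{n-1}$ actually vanish in $\mathcal{M}$.

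The lifting step is the technical core. Parameterizing a $p$-succinct component of $\mathcal{S}$ by $(k_1, \ldots, k_r, \bh)$ via $z_j = \ba_{0,j} + \sum_i p^{\ell k_i} \ba_{i,j} + \bh_j$ and substituting into the residue yields an expression in $M_{n-1}$ whose terms have the form $X^{\ba_{0,j} + \bh_j} \prod_i (X^{\ba_{i,j}})^{p^{\ell k_i}} m'_j$. The crux is to rephrase the vanishing of this expression as a genuine S-unit equation over $M_{n-1}$ in the new variables $(k_1, \ldots, k_r, \bh)$, so that the inner induction hypothesis applies. The Frobenius identity $(a+b)^p = a^p + b^p$, valid in the characteristic-$p$ subquotients, is the essential tool: it regroups the $p^{\ell k_i}$-th power terms and collapses the doubly-exponential $k_i$-dependence into a single-exponential one suitable for the S-unit format. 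Intersecting the resulting $p$-normal set of valid $(k_i, \bh)$ with the outer parameterization then yields a $p$-normal set of lifted solutions.

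The outer induction on $e$ follows an analogous template, using the exact sequence $0 \to p^{e-1} \mathcal{M} \to \mathcal{M} \to \mathcal{M} / p^{e-1} \mathcal{M} \to 0$: apply the induction hypothesis with exponent $e-1$ to the projection, then lift via the $e = 1$ case applied to the residual equation in the $\Zp$-module $p^{e-1}\mathcal{M}$. The hardest part throughout will be the lifting step: precisely reformulating the substituted residual equation as a genuine S-unit equation requires delicate monomial bookkeeping, a uniform choice of the parameter $\ell$ across the filtration, and careful use of the Frobenius-compatibility afforded by the $p$-torsion structure, all the while maintaining the effectiveness required to extract the coefficients of the final $p$-normal decomposition.
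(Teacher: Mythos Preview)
Your inductive strategy has a genuine gap at the lifting step, and it bites in both the inner and the outer induction.

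Take the inner induction at $e=1$. After substituting the $p$-succinct parameterization $z_j = \ba_{0,j} + \sum_i p^{\ell k_i}\ba_{i,j} + \bh_j$ into the residue, you get an element of $M_{n-1}$ of the shape
\[
\sum_j X^{\ba_{0,j}+\bh_j}\,\prod_i \bigl(X^{\ba_{i,j}}\bigr)^{p^{\ell k_i}} m_j \;-\; m_0.
\]
For this to be an S-unit equation in the new variables $(k_1,\ldots,k_r,\bh)$, the exponents would have to be \emph{linear} in those variables; instead each $k_i$ enters through $p^{\ell k_i}$. The Frobenius identity $(a+b)^{p^m}=a^{p^m}+b^{p^m}$ lets you distribute a fixed $p^m$-th power across a sum, but it does not convert the iteration count $m$ into an ordinary ring exponent: you would need an invertible ring element $Y$ with $Y^k\cdot m = X^{p^{\ell k}\ba}\cdot m$ for all $k$, and no such $Y$ exists. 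So the induction hypothesis on $M_{n-1}$ simply does not apply to the substituted residue. (There is also the separate issue that the individual summands $X^{z_j}m_j$ lie in $\mathcal{M}$, not in $M_{n-1}$; only their sum does.) The same obstruction kills the outer step from $e$ to $e-1$: the residue in $p^{e-1}\mathcal{M}$ is $p$-torsion, but the substituted equation there is again not of S-unit form in $(k_i,\bh)$.

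The paper avoids this entirely and does \emph{not} induct on $e$. It first reduces, via coprimary decomposition, localization, and a simultaneous block-diagonalization (Propositions~\ref{prop:wrapperlocalization} and~\ref{prop:wrapperfreeness}), to the case where $\mV$ is free over a local $\Zpe(\oX)$-algebra $\mA$. The core new idea is the \emph{pseudo-Frobenius splitting} (Proposition~\ref{prop:highFrobsplit}): one Hensel-lifts the honest Frobenius splitting of $\mA/p\mA$ to a conjugacy $R^{-1}\Phi(A_i)^p R=\diag(A_i,\ldots,A_i)$ in $\mA$, which stands in for the missing identity $(X+1)^p=X^p+1$ and permits a Derksen-style automaton construction directly in the $p^e$-torsion setting. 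The $p$-normality is then extracted from the strongly connected component structure of that automaton. The paper does remark that for $e=1$ one can bypass all this by embedding $\mathcal{M}$ into a product of fields via coprimary decomposition and intersecting the resulting $p$-normal sets---but that is a \emph{horizontal} decomposition, not your \emph{vertical} filtration-and-lift, and it is precisely the absence of a lifting step that makes it go through.
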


In particular, this means we can algorithmically decide whether Equation~\eqref{eq:Sunitthm} admits a solution.
Theorem~\ref{thm:primepower} will be proven in Section~\ref{sec:stol}.
We point out that the special case of Theorem~\ref{thm:primepower} for $e = 1$ admits a rather direct proof assuming Theorem~\ref{thm:DM}.
Indeed, using the techniques from~\cite[Section~9]{derksen2007skolem} or~\cite[Section~5]{dong2024submonoid}, we can reduce an S-unit equation in an $\F_p[X_1^{\pm}, \ldots, X_N^{\pm}]$-module to a system of S-unit equations in fields of characteristic $p$.
Then by Theorem~\ref{thm:DM}, the solution set of such a system is an intersection of effectively $p$-normal sets, which can be shown to be again effectively $p$-normal (see Proposition~\ref{prop:internormal}).

Unfortunately, this approach fails for $e > 1$.
Indeed, the work of Derksen and Masser makes extensive use of the \emph{Frobenius endomorphism}, and therefore heavily relies on working with $p$-torsion.
Proving Theorem~\ref{thm:primepower} for $e > 1$ will therefore require new insights.

Next, we proceed to consider modules with arbitrary integer torsion.
We show that algorithmically solving S-unit equations in $\ZT[X_1^{\pm}, \ldots, X_N^{\pm}]$-modules is equivalent to algorithmically solving \emph{linear-exponential Diophantine equations}, another infamous open problem in number theory:

\begin{thrm}\label{thm:mainequiv}
    Let $T = p_1^{e_1} p_2^{e_2} \cdots p_k^{e_k}$, where $p_1, p_2, \ldots, p_k$ are distinct primes and $e_1, \ldots, e_k$ are positive integers. The following two decision problems are Turing reducible to one another:
    \begin{enumerate}[noitemsep, label = (\arabic*)]
        \item \textbf{(S-unit equation in a $T$-torsion module)} Given $K, N \in \N$, a finitely presented module $\mM$ over the Laurent polynomial ring $\ZT[X_1^{\pm}, \ldots, X_N^{\pm}]$, as well as elements $m_0, m_1, \ldots, m_K \in \mM$, decide whether the equation
        \begin{equation}\label{eq:Sunit}
        X_1^{z_{11}} X_2^{z_{12}} \cdots X_N^{z_{1N}} \cdot m_1 + \cdots + X_1^{z_{K1}} X_2^{z_{K2}} \cdots X_N^{z_{KN}} \cdot m_K = m_0
        \end{equation}
        admits solutions $(z_{11}, \ldots, z_{KN}) \in \Z^{KN}$.
        \item \textbf{(Linear-exponential Diophantine equations)} Given $d \leq D$ and $L \in \N$, prime numbers $q_1, q_2, \ldots, q_d \in \{p_1, p_2, \ldots, p_k\}$, as well as integer coefficients $(c_{i,j})_{1 \leq i \leq L, 1 \leq j \leq D}, (b_{i})_{1 \leq i \leq L}$, decide whether the system of equations
        \begin{align}\label{eq:linearpower}
            c_{1,1} \cdot q_1^{n_1} + \cdots + c_{1,d} \cdot q_d^{n_d} + c_{1,d+1} \cdot z_{d+1} + \cdots + c_{1, D} \cdot z_{D} & = b_1, \nonumber \\
            & \vdots \nonumber \\
            c_{L,1} \cdot q_1^{n_1} + \cdots + c_{L,d} \cdot q_d^{n_d} + c_{L,d+1} \cdot z_{d+1} + \cdots + c_{L, D} \cdot z_{D} & = b_L,
        \end{align}
        admits solutions $(n_1, \ldots, n_d) \in \N^d, (z_{d+1}, \ldots, z_D) \in \Z^{D-d}$.
    \end{enumerate}
\end{thrm}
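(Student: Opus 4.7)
The plan is to prove the two directions separately, using the Chinese Remainder decomposition $\ZT \cong \prod_{s=1}^{k} \Z/p_s^{e_s}$ and the induced primary decomposition of $T$-torsion modules as the central tool.

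For direction (1) $\Rightarrow$ (2), I would apply Theorem~\ref{thm:primepower} to each primary component. Any $T$-torsion module $\mM$ decomposes as $\mM = \bigoplus_{s=1}^{k} \mM^{(s)}$ where $\mM^{(s)}$ is the $p_s^{e_s}$-torsion summand, and the S-unit equation over $\mM$ splits into $k$ independent sub-equations, one per $\mM^{(s)}$. By Theorem~\ref{thm:primepower}, the solution set $S_s \subseteq \Z^{KN}$ of the $s$-th sub-equation is effectively $p_s$-normal, presented as a finite union of $p_s$-succinct pieces of the form $\{\ba_0 + p_s^{\ell_s k_1}\ba_1 + \cdots + p_s^{\ell_s k_{r_s}}\ba_{r_s} + \bh : k_i \in \N,\ \bh \in H\}$. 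The overall solution set is $\bigcap_{s=1}^{k} S_s$; non-emptiness is decided by enumerating the finitely many choices of one $p_s$-succinct piece $P_s$ from each $S_s$. For a fixed choice, the condition $\bigcap_s P_s \neq \emptyset$, after parametrizing each lattice summand $H^{(s)}$ by integer coordinates and clearing denominators of the $\ba_i^{(s)}$, is precisely a linear-exponential Diophantine system with exponential variables $k_i^{(s)} \in \N$ based at $p_s \in \{p_1, \ldots, p_k\}$ and integer linear variables; I query the oracle for (2) on each such system.

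For direction (2) $\Rightarrow$ (1), I plan to encode a given linear-exponential Diophantine system as an S-unit equation in a $T$-torsion module $\mM$ assembled to mirror the primary decomposition of $T$. For each exponential variable $n_j$ with base $q_j = p_{\sigma(j)}$, the action of $X_j$ on a $p_s^{e_s}$-primary block with $s \neq \sigma(j)$ is multiplication by $q_j$, which is a unit modulo $p_s^{e_s}$, so that $X_j^{n_j}$ recovers $q_j^{n_j} \bmod p_s^{e_s}$. On the $p_{\sigma(j)}^{e_{\sigma(j)}}$-primary block, where $q_j$ is a zero divisor and cannot act invertibly as a Laurent monomial, I handle matters by a finite case split: either $n_j \geq e_{\sigma(j)}$ (in which case $q_j^{n_j}$ vanishes modulo $p_{\sigma(j)}^{e_{\sigma(j)}}$ and the corresponding component of $m_j$ is set to zero), or $n_j$ takes one of the values $0, 1, \ldots, e_{\sigma(j)}-1$ (each fixing the term's contribution to a constant). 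For linear variables $z_{d+j'}$, I use a unipotent $X_{d+j'}$-action on a rank-$2$ $\ZT$-block (a $2\times 2$ Jordan block), so that $X_{d+j'}^{z}$ encodes $z$ linearly. Assembling these blocks and choosing $m_0$ to encode the right-hand sides $(b_i)_i$ in each component, the resulting S-unit equation is satisfiable in at least one case of the split if and only if the original Diophantine system is; one oracle call to (1) per case completes the reduction.

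The main obstacle is direction (2) $\Rightarrow$ (1). Since a base $q_j$ dividing $T$ is a zero divisor in $\ZT$, it cannot be realized as an invertible Laurent monomial action on the $p_{\sigma(j)}^{e_{\sigma(j)}}$-primary component of any nontrivial $T$-torsion module; the finite case split above is the workaround, and is what forces the reduction to be Turing rather than many-one. A further subtlety is bridging the S-unit equation's mod-$T$ nature with the over-$\Z$ Diophantine equation, which requires careful design of the module actions so that the integer information carried by the $X$-exponents is preserved beyond the naive mod-$T$ reduction. The entire encoding crucially uses that the exponential bases in (2) are restricted to $\{p_1, \ldots, p_k\}$: for a prime base outside this set, no primary component of a $T$-torsion module could accommodate the required invertible action.
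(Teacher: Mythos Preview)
Your direction (1) $\Rightarrow$ (2) is correct and is essentially the paper's Corollary~\ref{cor:intersection}: CRT-decompose, apply Theorem~\ref{thm:primepower} to each primary component, and test non-emptiness of the intersection of $p_s$-succinct pieces via a linear-exponential Diophantine oracle. (A minor point you omit is that the exponents in a $p_s$-succinct set lie in $\ell_s \N$ rather than $\N$; the paper encodes the divisibility constraint $\ell_s \mid n$ by the auxiliary equation $p^n - 1 + (p^{\ell_s}-1)z = 0$.)

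Your direction (2) $\Rightarrow$ (1), however, has a genuine gap. All of the module actions you describe --- multiplication by $q_j$ on a $p_s^{e_s}$-primary block, and a unipotent Jordan block over $\ZT$ for the linear variables --- live in a $\ZT$-module, so the S-unit equation you build can only test the Diophantine system \emph{modulo $T$}, not over $\Z$. For a concrete failure, take $T = 2$ and the single equation $2^n = 3$: over $\Z$ there is no solution, but in your encoding the case $n = 0$ checks $2^0 \equiv 3 \pmod 2$, which holds, so your S-unit instance is solvable. You correctly flag this mod-$T$ versus over-$\Z$ issue as a ``subtlety'', but the mechanism you describe does not resolve it, and it is the crux of the reduction.

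The paper's reduction (Section~\ref{sec:ltos}) takes a completely different route that sidesteps this problem: it never tries to realise $q_j^{n_j}$ as a module element. Each linear constraint $c_1 z_{11} + \cdots + c_D z_{1D} = b$ is encoded as the monomial identity $X_1^{z_{11}} \cdots X_D^{z_{1D}} = Y^b$ in $\ZT[X_1^{\pm}, \ldots, X_D^{\pm}, Y^{\pm}]/\gen{X_1 - Y^{c_1}, \ldots, X_D - Y^{c_D}}$ (Lemma~\ref{lem:linear}); since $Y$ has infinite multiplicative order there, this is a genuine integer equation in the \emph{exponents}. The constraint $z_{1i} \in q_i^{\N}$ is encoded separately, via the identity $X_2^z - X_1^z = 1$ in $\F_{q_i}[X_1^{\pm}, X_2^{\pm}]/\gen{X_2 - X_1 - 1}$ (Lemmas~\ref{lem:pn}--\ref{lem:pnsys}), which holds precisely when $z \in q_i^{\N}$. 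The various constraints are then combined by taking a direct product of the modules. In particular, no case split on small values of $n_j$ is needed, and the reduction is many-one rather than Turing.
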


The reduction from problem (1) to (2) is a relatively straightforward consequence of Theorem~\ref{thm:primepower} (see Corollary~\ref{cor:intersection}).
The reduction from problem (2) to (1) will be shown in Section~\ref{sec:ltos}.

Note that the only shared parameters between the two problems in Theorem~\ref{thm:mainequiv} are the number $k$ and the primes $p_1, \ldots, p_k$.
For the case of $k = 1$, there are classic algorithms that decide existence of solutions to linear-exponential Diophantine equations over a single prime~\cite{benedikt2023complexity}.
These algorithms can be traced back to a long series of work started by B\"{u}chi and Sem\"{e}nov~\cite{buchi1960weak, semenov1980certain}, who were investigating decidable extensions of \emph{Presburger arithmetic}~\cite{presburger1929uber}.

For the case of $k = 2$, Karimov, Luca, Nieuwveld, Ouaknine and Worrell~\cite{karimov2025decidability} recently proved decidable whether a given system of linear-exponential Diophantine equations over two primes admits a solution.
This breakthrough result uses tools from Diophantine approximation and transcendental number theory, notably Baker's theorem.
This immediately yields the following.

\begin{cor}\label{cor:twoprimes}
    Let $p, q$ be primes and $a, b \in \N$.
    It is decidable whether an S-unit Equation~\eqref{eq:Sunit} over a finitely presented $\Z_{/p^a q^b}[X_1^{\pm}, \ldots, X_N^{\pm}]$-module admits a solution.
\end{cor}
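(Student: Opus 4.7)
The plan is to derive Corollary~\ref{cor:twoprimes} as an immediate consequence of Theorem~\ref{thm:mainequiv} together with the Karimov--Luca--Nieuwveld--Ouaknine--Worrell theorem on linear-exponential Diophantine equations over two primes.

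Concretely, I would set $T = p^a q^b$, so that in the notation of Theorem~\ref{thm:mainequiv} we have $k=2$, with distinguished primes $p_1 = p$ and $p_2 = q$ (and exponents $e_1 = a$, $e_2 = b$). The given S-unit Equation~\eqref{eq:Sunit} over the finitely presented $\Z_{/p^a q^b}[X_1^{\pm}, \ldots, X_N^{\pm}]$-module $\mM$ is then exactly an instance of problem~(1) in Theorem~\ref{thm:mainequiv}. Invoking the Turing reduction of problem~(1) to problem~(2) provided by that theorem, the question is reduced to deciding satisfiability of finitely many systems of linear-exponential Diophantine equations of the form~\eqref{eq:linearpower}, each of whose exponential variables range over powers of the two primes $p$ and $q$ (since only $p_1, p_2$ are available as bases).

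Next, I would apply the main result of Karimov, Luca, Nieuwveld, Ouaknine and Worrell~\cite{karimov2025decidability}, which states that satisfiability of a system of linear-exponential Diophantine equations whose exponential bases are a pair of primes is decidable. Each system produced by the reduction is of this shape, so each instance can be decided. Combining the decidability of problem~(2) for $k = 2$ with the Turing reducibility from problem~(1) to problem~(2) yields the decidability of problem~(1), i.e.\ of S-unit equations over finitely presented modules with $p^a q^b$-torsion.

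There is no genuine obstacle to overcome here: all the nontrivial work is performed inside Theorem~\ref{thm:mainequiv} (the reduction (1) $\to$ (2), itself built on Theorem~\ref{thm:primepower} and Corollary~\ref{cor:intersection}) and in the transcendental-number-theoretic decidability result of~\cite{karimov2025decidability}. The only point requiring a brief verification is that the hypothesis of~\cite{karimov2025decidability} genuinely applies to the systems produced by our reduction, namely that the exponential variables range over $\N$ with bases drawn from $\{p, q\}$ while the remaining variables range over $\Z$; this matches problem~(2) of Theorem~\ref{thm:mainequiv} verbatim.
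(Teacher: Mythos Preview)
Your proposal is correct and matches the paper's approach exactly: the paper derives Corollary~\ref{cor:twoprimes} as an immediate consequence of Theorem~\ref{thm:mainequiv} combined with the decidability result of Karimov--Luca--Nieuwveld--Ouaknine--Worrell for two primes, without giving any further argument. The only minor omission is the degenerate case $p=q$, where $T$ has a single prime divisor and one instead invokes the classical one-prime decidability (B\"uchi--Sem\"enov); this is trivial and does not affect the substance of your argument.
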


Deciding whether a system of linear-exponential Diophantine equations over $k \geq 3$ primes is an outstanding open problem in number theory.
We mention here~\cite{brenner1982exponential, alma991009821129706011, cao1999note, bajpai2023effective, berthe2024decidability} among a plethora of partial results.
Theorem~\ref{thm:mainequiv} thus shows that any progress in solving S-unit equations over $p^a q^b r^c$-torsion modules for distinct primes $p, q, r$, would require major breakthroughs in number theory.


Note that if we restrict to dimension one, $p$-normal subsets of $\Z$ in fact have a very simple structure~\cite[p.116]{derksen2015linear}.
The procedure in~\cite[Theorem~3.2]{karimov2025decidability} allows us to decide for arbitrary $k$ whether $\mZ_1 \cap \cdots \cap \mZ_k = \emptyset$, where $\mZ_i$ is a $p_i$-normal subset of $\Z$ for different primes $p_1, \ldots, p_k$.
Therefore, Theorem~\ref{thm:primepower} provides a powerful tool for solving the \emph{Skolem problem} (decide whether a linear recurrence sequence contains a zero~\cite{ouaknine2015linear, lipton2022skolem}) in rings whose additive group is torsion.
We also mention that Corollary~\ref{cor:twoprimes} has direct consequences in computational group theory, namely that \emph{Submonoid Membership} is decidable in \emph{wreath products} of the form $\Z_{/p^a q^b} \wr \Z^d$, (see the reduction in~\cite{dong2024submonoid}).
We will explore more connections in these directions in follow up papers.

\section{Preliminaries}\label{sec:prelim}

\paragraph*{Laurent polynomial rings, modules and algebras.}

Let $T$ be a positive integer, denote by $\ZT$ the quotient ring $\Z/T\Z = \{0, 1, \ldots, T-1\}$.
In particular if $p$ is a prime number, then $\Z_{/p}$ is the finite field $\F_p$.
Denote by $\ZT[X_1^{\pm}, \ldots, X_N^{\pm}]$ the Laurent polynomial ring over $\ZT$ with $n$ variables: this is the set of polynomials over the variables $X_1, X_1^{-1}, \ldots, X_N, X_N^{-1}$, with coefficients in $\ZT$, such that $X_i X_i^{-1} = 1$ for all $i$.
In polynomial rings over a finite field $\F_p$, it is useful to point out that $(f + 1)^p = f^p + 1$, for any $f \in \F_p[X_1^{\pm}, \ldots, X_N^{\pm}]$.
This is not true in the polynomial ring $\ZT[X_1^{\pm}, \ldots, X_N^{\pm}]$ for arbitrary $T$.
For a prime $p$, denote by $\F_p(X_1, \ldots, X_N) \coloneqq \left\{\frac{f}{g} \;\middle|\; f, g \in \F_p[X_1^{\pm}, \ldots, X_N^{\pm}], g \neq 0 \right\}$, the fraction field of $\F_p[X_1^{\pm}, \ldots, X_N^{\pm}]$.

Let $R$ be a commutative ring.
An \emph{$R$-module} is defined as an abelian group $(\mM, +)$ along with an operation $\cdot \;\colon R \times \mM \rightarrow \mM$, satisfying $r \cdot (m+m') = r \cdot m + r \cdot m'$, $(r + s) \cdot m = r \cdot m + s \cdot m$, $rs \cdot m = r\cdot (s \cdot m)$ and $1 \cdot m = m$.
For example, for any $d \in \N$, $R^d$ is an $R$-module by $s \cdot (r_1, \ldots, r_d) = (sr_1, \ldots, sr_d)$.
An \emph{ideal} of $R$ is an $R$-submodule of $R$.
An ideal $I \subset R$ is called \emph{maximal} if $I \neq R$ and there is no ideal $J$ with $I \subsetneq J \subsetneq R$.
A (commutative) ring $R$ is called \emph{local} if it has only one maximal ideal: this is equivalent to having an ideal $I \subsetneq R$ such that every element $x \notin I$ is invertible.

Given $m_1, \ldots, m_k$ in an $R$-module $\mM$, let 
$
\gen{m_1, \ldots, m_k} \coloneqq \left\{\sum_{i=1}^k r_i \cdot m_i \;\middle|\; r_1, \ldots, r_k \in R\right\}
$
denote the $R$-submodule generated by $m_1, \ldots, m_k$.
Given two $R$-modules $\mM, \mM'$ such that $\mM \supseteq \mM'$, we define the quotient $\mM/\mM' \coloneqq \{\overline{m} \mid m \in \mM\}$ where $\overline{m_1} = \overline{m_2}$ if and only if $m_1 - m_2 \in \mM'$.
This quotient is also an $R$-module.
We say an $R$-module is \emph{finitely presented} if it can be written as a quotient $R^d/\gen{v_1, \ldots, v_k}$ for some $d \in \N$ and some $v_1, \ldots, v_k \in R^d$. 
Such a quotient is called a \emph{finite presentation}.
Every finitely generated $\ZT[X_1^{\pm}, \ldots, X_N^{\pm}]$-module admits a finite presentation and is effective~\cite{baumslag1981computable}, meaning there is an algorithm that decides equality of any two elements. 

An \emph{$R$-algebra} is defined as a ring $\mA$ such that $(\mA, +)$ is an $R$-module, and such that $r \cdot (ab) = (r \cdot a) b = a(r \cdot b)$ for all $r \in R, \; a, b \in \mA$.
For any $d \in \N$, denote by $\M_{d \times d}(R)$ the set of $d \times d$ matrices with coefficients in $R$. 
Then $\M_{d \times d}(R)$ is an $R$-algebra.
Denote by $\GL_d(R)$ the set of $d \times d$ \emph{invertible} matrices with coefficients in $R$, it is not an $R$-algebra because $0 \notin \GL_d(R)$.

\paragraph*{p-automatic sets}
We recall the standard notion of \emph{$p$-automatic subsets} of $\Z$ and $\Z^d$.

Let $\Sigma$ be a finite alphabet. An \emph{automaton} over $\Sigma$ is a tuple $\mmU = (Q, \Sigma, \delta, q_I, \mF)$,
where $Q$ is a finite set of states, $\delta \coloneqq Q \times \Sigma \rightarrow Q$ is the transition function, $q_I \in Q$ is the initial state, and $\mF \subseteq Q$ is the set of accepting states.
A \emph{word} over the alphabet $\Sigma$ is a finite sequence of elements in $\Sigma$.
For a state $q$ in $Q$ and for a finite word $w = w_1 w_2 \cdots w_n$ over the alphabet $\Sigma$, we define $\delta(q, w)$ recursively by $\delta(q, w) = \delta(\delta(q, w_1 w_2 \cdots w_{n-1}), w_n)$. 
The word $w$ is \emph{accepted} by $\mmU$ if $\delta(q_I, w) \in \mF$.
We call the \emph{language} accepted by $\mmU$ the set of words accepted by $\mmU$ , and we denote it by $L(\mmU)$.

An automaton is usually represented by a graph whose vertices are the states, drawn as circles. For each state $q$ and each $s \in \Sigma$ we draw an arrow from $q$ to $\delta(q, s)$ with label $s$. The accepting states will be drawn as double circles, and the initial state will be marked an arrow with the label ``start''. See Figure~\ref{fig:pow2} and \ref{fig:a2a} for examples.

Let $p \geq 2$ be an integer, define the alphabet $\Sigma_p \coloneqq \{-(p-1), \ldots, -1, 0, 1, \ldots, p-1\}$.
For any word $w = w_0 w_1 \cdots w_{\ell-1}$ over the alphabet $\Sigma_p$, we define its \emph{evaluation} to be $\eval(w) \coloneqq \sum_{i = 0}^{\ell-1} p^i w_i$.
Note that each integer can be represented as $\eval(w)$ for some word $w$ over $\Sigma_p$, though such representation might not be unique.
For example, when $p = 2$, we have $\eval(001) = \eval\big(00(-1)1\big) = 4$.
A subset $S$ of $\Z$ is called \emph{$p$-automatic} if there exists an automaton $\mmU$ over $\Sigma_p$, such that
$
\left\{\eval(w) \;\middle|\; w \in L(\mmU) \right\} = S
$. (Again, such an automaton might not be unique).
For example, the set $\{2^k \mid k \in \N\}$ is 2-automatic, because it can be represented by the language $\{1, 01, 001, 0001, \cdots \}$, which is accepted by the automaton in Figure~\ref{fig:pow2}.

\begin{figure}[ht!]
    \centering
    \vspace{-0.1cm}
    \begin{minipage}[t]{.47\textwidth}
        \centering
        \tikzset{every loop/.style={min distance=10mm,looseness=10}}
        \raisebox{\height+0.5\baselineskip}{
        \begin{tikzpicture}[>=triangle 45]
           \node[initial, state, minimum size=15pt] (0) at (0,0) {};
           \node[accepting, state, minimum size=14pt] (1) at (2,0) {};
           \path[->] (0) edge [loop above] node [align=center]  {$0$} (0)
           (0) edge [above] node [align=center]  {$1$} (1);
        \end{tikzpicture}}
        \vspace{-0.3cm}
        \caption{An automaton for $\{2^k \mid k \in \N\}$.}
        \label{fig:pow2}
    \end{minipage}
    \hfill
    \begin{minipage}[t]{0.47\textwidth}
        \centering
        \tikzset{every loop/.style={min distance=7mm,looseness=5}}
        \begin{tikzpicture}[>=triangle 45]
           \node[initial, state, minimum size=15pt] (2) at (-2.5,0) {};
           \node[accepting, state, minimum size=14pt] (0) at (0,1) {};
           \node[state, minimum size=15pt] (1) at (0,-1) {};
           \path[->] (2) edge [above, sloped] node [align=center]  {\footnotesize$(0,0)$} (0)
           (2) edge [below, sloped] node [align=center]  {\footnotesize$(-1,0)$} (1)
           (0) edge [loop above] node [align=center]  {\footnotesize$(0,0)$} (0)
           (1) edge [loop below] node [align=center]  {\footnotesize$(-1,-1)$} (1)
           (0) edge [above, in=30,out=-30, sloped] node [align=center]  {\footnotesize$(-1,0)$} (1)
           (1) edge [above, in=-120,out=120, sloped] node [align=center]  {\footnotesize$(0,-1)$} (0);
        \end{tikzpicture}
        \vspace{-0.3cm}
        \caption{An automaton for $\{(a, 2a) \mid a \leq 0\}$.}
        \label{fig:a2a}
    \end{minipage}
\end{figure}

Let $d$ be a positive integer. The definition of $p$-automatic subsets of $\Z$ can be naturally generalized to $p$-automatic subsets of $\Z^d$.
For a word $\bw = \bw_0 \bw_1 \cdots \bw_{\ell-1}$ over the alphabet $\Sigma_p^d$, we define $\eval(\bw) \coloneqq \sum_{i = 0}^{\ell-1} p^i \cdot \bw_i$.
For example, when $p = 2, d = 3$, we have $\eval\big((1, 0, 1)(-1, -1, 1)\big) = (-1, -2, 3)$.
A subset $S$ of $\Z^d$ is called \emph{$p$-automatic} if there exists an automaton $\mmU$ over $\Sigma_p^d$, such that
$
\left\{\eval(\bw) \;\middle|\; \bw \in L(\mmU) \right\} = S
$.
For example, the set $\{(a, 2a) \mid a \leq 0\}$ $\subseteq \Z^2$ is 2-automatic, because the language
$
\{(a_1, 0)(a_2, a_1)(a_3, a_2) \cdots (a_k, a_{k-1}) (0, a_k) \mid k \in \N, a_1, \ldots, a_k \in \{-1, 0\}\}
$
is accepted by the automaton in Figure~\ref{fig:a2a}.
We say a subset $S \subseteq \Z^d$ is \emph{effectively} $p$-automatic if an accepting automaton is given explicitly.
Effective $p$-automatic sets enjoy various closure properties:

\begin{lem}[{\cite{wolper2000construction}}]\label{lem:pauto}
    Let $p \geq 2$ be an integer.
    \begin{enumerate}[nosep, label = (\arabic*)]
        \item If $S$ and $T$ are $p$-automatic, then $S \cap T$, $S \cup T$, and $S \setminus T$ are also effectively $p$-automatic.
        \item The set $\Z^d$ is $p$-automatic. If $S \subseteq \Z^d$ is $p$-automatic, and $\varphi \colon \Z^d \rightarrow \Z^n$ is a linear transformation, then $\varphi(S) \subseteq \Z^n$ is also effectively $p$-automatic.
        \item If $S \subseteq \Z^d$ and $T \subseteq \Z^n$ are $p$-automatic, then their direct product $S \times T \coloneqq \{(s, t) \mid s \in S, t \in T\} \subseteq \Z^{d+n}$ is also effectively $p$-automatic.
    \end{enumerate}
\end{lem}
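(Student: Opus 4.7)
The plan is to reduce all three statements to the classical effective closure properties of regular languages, by passing from a $p$-automatic set $S \subseteq \Z^d$ to its full language of representations $L(S) := \{w \in (\Sigma_p^d)^{\ast} : \eval(w) \in S\}$. The first and technically most delicate step is to show that $L(S)$ is regular whenever $S$ is $p$-automatic, i.e.\ whenever some sub-language of $L(S)$ is recognised by an automaton $\mmU$. Given $\mmU$, I would build a nondeterministic automaton for $L(S)$ which, while reading an arbitrary $w \in (\Sigma_p^d)^{\ast}$, guesses digit-by-digit a second word $w' \in L(\mmU)$ with $\eval(w') = \eval(w)$. Correctness is enforced by tracking a ``carry'' $c_i \in \Z^d$ via the recursion $c_{i+1} = (c_i + w_i - w'_i)/p$, and a routine estimate shows that $c_i$ remains bounded by a constant depending only on $p$; it therefore ranges over a finite set, and the construction is effective.

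Once the equivalence between $p$-automaticity of $S$ and regularity of $L(S)$ is in hand, part (1) follows immediately from $L(S \cap T) = L(S) \cap L(T)$, $L(S \cup T) = L(S) \cup L(T)$, and $L(S \setminus T) = L(S) \setminus L(T)$, together with the effective closure of regular languages under the Boolean operations. For part (3), one first closes each of $\mmU_S$ and $\mmU_T$ under appending trailing zeros (adding a self-loop labelled $\mathbf{0}$ at every accepting state), so that representations of different lengths can be aligned, and then takes the standard product automaton over $\Sigma_p^{d+n}$: on a letter $(a,b) \in \Sigma_p^d \times \Sigma_p^n$ it simulates $\mmU_S$ on $a$ and $\mmU_T$ on $b$ in parallel, with the accepting states being the pairs of accepting states.

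For part (2), the set $\Z^d$ is recognised by the one-state automaton with self-loops on every letter of $\Sigma_p^d$. For the image $\varphi(S)$, I would use the same carry-propagation idea: the new automaton reads $v \in (\Sigma_p^n)^{\ast}$ and nondeterministically guesses a word $w \in L(\mmU_S)$ satisfying $\varphi(\eval(w)) = \eval(v)$; digit-by-digit this amounts to $\varphi(w_i) + c_i = v_i + p c_{i+1}$, and since a single digit of $w$ contributes at most $\|\varphi\|(p-1)$ to the carry while each step divides by $p$, the carry $c_i$ lives in a bounded subset of $\Z^n$, giving a finite state space.

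The main obstacle is the carry-based simulation underlying Step 1 and part (2): one must both choose the carry bound correctly and verify that \emph{every} element of the target set (respectively $S$ or $\varphi(S)$) admits at least one representation discoverable by the nondeterministic simulation. Once these technicalities are settled, the rest of the argument is a direct application of standard regular-language constructions, and all sub-constructions are effective, so the resulting automata for $S \cap T$, $S \cup T$, $S \setminus T$, $\varphi(S)$, and $S \times T$ are produced explicitly.
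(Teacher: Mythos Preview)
The paper does not prove this lemma at all: it is stated with a citation to \cite{wolper2000construction} and used as a black box. Your sketch is a correct reconstruction of the standard argument --- passing to the full representation language via a bounded-carry simulation, then invoking the effective Boolean and product closure of regular languages, and handling linear images by the same carry-tracking trick --- so there is nothing in the paper to compare against beyond noting that your outline matches the folklore proof behind the cited reference.
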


In particular, any subgroup $H$ of $\Z^{KN}$ is $p$-automatic.
It is easy to see that $p$-succinct sets and $p$-normal sets (Definition~\ref{def:pnormal}) are also $p$-automatic.
However, not all $p$-automatic sets are $p$-normal.

\section{S-unit equation to linear-exponential Diophantine equations}\label{sec:stol}

\subsection{Overview and examples}

In this section we prove that the solution set of an S-unit equation over a $p^e$-torsion module is effectively $p$-normal:

\thmprimepower*

From Theorem~\ref{thm:primepower}, we can easily obtain the reduction from solving an S-unit equation to solving linear-exponential Diophantine equations in Theorem~\ref{thm:mainequiv}:

\begin{cor}\label{cor:intersection}
    Let $T = p_1^{e_1} p_2^{e_2} \cdots p_k^{e_k}$.
    Deciding whether an S-unit equation in a $\ZT[X_1^{\pm}, \ldots, X_N^{\pm}]$-module (Equation~\eqref{eq:Sunit}) admits a solution reduces to deciding whether a system of linear-exponential Diophantine equations (Equations~\eqref{eq:linearpower}) admits a solution.
\end{cor}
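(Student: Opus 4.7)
The plan is to split the given S-unit equation into one equation per prime factor of $T$ using the Chinese Remainder Theorem, to apply Theorem~\ref{thm:primepower} to each component to obtain effective $p_i$-normal descriptions of the partial solution sets, and finally to encode the intersection of these $p_i$-normal sets as a disjunction of systems of the form~\eqref{eq:linearpower}.

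Since $T = p_1^{e_1} \cdots p_k^{e_k}$, the Chinese Remainder isomorphism $\Z_{/T} \cong \prod_{i=1}^k \Z_{/p_i^{e_i}}$ gives an effectively computable decomposition $\mM \cong \bigoplus_{i=1}^k \mM_i$, in which each $\mM_i$ is a finitely presented module over $\Z_{/p_i^{e_i}}[X_1^{\pm}, \ldots, X_N^{\pm}]$. Letting $m_j^{(i)}$ denote the image of $m_j$ in $\mM_i$, the original equation is equivalent to the conjunction over $i = 1, \ldots, k$ of the S-unit equations $\sum_{j=1}^K X_1^{z_{j1}} \cdots X_N^{z_{jN}} \cdot m_j^{(i)} = m_0^{(i)}$ in $\mM_i$, all sharing the same integer unknowns $(z_{j\nu})$. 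Theorem~\ref{thm:primepower} provides an effectively $p_i$-normal description of the $i$-th solution set $\mZ_i \subseteq \Z^{KN}$, so the total solution set $\mZ_1 \cap \cdots \cap \mZ_k$ is a computable finite union of intersections of the form $S_1 \cap \cdots \cap S_k$, where each $S_i$ is a single $p_i$-succinct set with explicit parameters $\ba_0^{(i)}, \ldots, \ba_{r_i}^{(i)} \in \Q^{KN}$, integer $\ell_i \geq 1$, and subgroup $H_i \leq \Z^{KN}$.

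It therefore suffices to reduce nonemptiness of one such intersection to an instance of problem~(2). A point lies in $S_1 \cap \cdots \cap S_k$ iff there exist $k_{ij} \in \N$ and $\bh^{(i)} \in H_i$ making the value of $\ba_0^{(i)} + \sum_j p_i^{\ell_i k_{ij}} \ba_j^{(i)} + \bh^{(i)}$ independent of $i$. Equating these $k$ expressions pairwise yields $k-1$ vector equalities, i.e.\ $(k-1)KN$ scalar equalities, in the unknowns $k_{ij}$ together with integer coefficients parametrizing each $\bh^{(i)}$ against a fixed finite generating set of $H_i$. To match the format~\eqref{eq:linearpower} I would introduce a fresh exponent variable $n_{ij} \in \N$ to play the role of $\ell_i k_{ij}$, substitute $p_i^{n_{ij}}$ for $p_i^{\ell_i k_{ij}}$, and clear a common denominator (a divisor of $\prod_i (p_i^{\ell_i} - 1)$) of the rational vectors $\ba_j^{(i)}$ so that every coefficient becomes an integer.

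The only delicate point is enforcing $n_{ij} \in \ell_i \N$ rather than $n_{ij} \in \N$, since~\eqref{eq:linearpower} admits no sublattice constraint on its exponent variables. I would resolve this by the elementary observation that $p_i$ has multiplicative order exactly $\ell_i$ modulo $p_i^{\ell_i} - 1$ (the case $\ell_i = 1$ being vacuous): for $n \in \N$, one has $\ell_i \mid n$ iff $p_i^n \equiv 1 \pmod{p_i^{\ell_i} - 1}$, iff there exists $u \in \Z$ with $p_i^n - (p_i^{\ell_i} - 1) u = 1$. Adjoining one such auxiliary equation per pair $(i,j)$, with a fresh linear variable $u_{ij}$, keeps the entire system within the format of problem~(2) with exponential bases drawn from $\{p_1, \ldots, p_k\}$, and completes the reduction.
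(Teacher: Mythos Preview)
Your proposal is correct and follows essentially the same route as the paper: split via the Chinese Remainder Theorem into one equation per prime power, apply Theorem~\ref{thm:primepower} to obtain effectively $p_i$-normal solution sets, reduce to intersecting single $p_i$-succinct sets, equate their parametrizations to obtain a linear-exponential system with integer coefficients after clearing denominators, and encode the divisibility constraint $\ell_i \mid n_{ij}$ by the auxiliary equation $p_i^{n_{ij}} - (p_i^{\ell_i}-1)u_{ij} = 1$. The paper's proof is identical up to notation (it even notes that any prime can be used in the auxiliary divisibility equation, whereas you use $p_i$ specifically).
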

\begin{proof}
    For each $j = 1, \ldots, k$, consider the quotient $\mM/p_j^{e_j} \mM$.
    It is a finitely presented module over the ring $\Z_{/p_j^{e_j}}[X_1^{\pm}, \ldots, X_N^{\pm}]$. 
    Let $\varphi_j \colon \mM \rightarrow \mM/p_j^{e_j} \mM$ denote the quotient map.
    Since $p_1, \ldots, p_k$ are distinct primes, an element $m \in \mM$ is zero if and only if $\varphi_j(m)$ is zero for all $j$ (by the Chinese remainder theorem for $\mM$ considered as a $\ZT$-module).
    Therefore, a tuple $(z_{11}, \ldots, z_{KN}) \in \Z^{KN}$ is a solution to Equation~\eqref{eq:Sunit} if and only if it is a solution to
    \begin{equation}\label{eq:Sunitmodpe}
        X_1^{z_{11}} X_2^{z_{12}} \cdots X_N^{z_{1N}} \cdot \varphi_j(m_1) + \cdots + X_1^{z_{K1}} X_2^{z_{K2}} \cdots X_N^{z_{KN}} \cdot \varphi_j(m_K) = \varphi_j(m_0)
    \end{equation}
    for all $j = 1, \ldots, k$.
    Let $\mZ_j$ denote the solution set of Equation~\eqref{eq:Sunitmodpe}, it is effectively $p_j$-normal by Theorem~\ref{thm:primepower}.
    Therefore, Equation~\eqref{eq:Sunit} has a solution if and only if the intersection $\mZ_1 \cap \cdots \cap \mZ_k$ is non-empty.
    Each $\mZ_j$ is a finite union of $p_j$-succinct sets, so it suffices to decide whether the intersection of $(p_j)_{j=1, \ldots, k}$-succinct sets is empty.
    
    For each $j$, let $S_j \coloneqq S(\ell_j; \ba_{j0}, \ba_{j1}, \ldots, \ba_{jr_j}; H_j)$ be a $p_j$-succinct set.
    Then deciding whether $S_1 \cap \cdots \cap S_k$ is empty amounts to solving the following system of equation
    \begin{equation}\label{eq:intertoeqflat}
    \ba_{10} + p_1^{n_{11}} \ba_{11} + \cdots + p_1^{n_{1r_1}} \ba_{1r_1} + \bh_1 = \cdots = \ba_{k0} + p_k^{n_{k1}} \ba_{k1} + \cdots + p_k^{n_{kr_k}} \ba_{kr_k} + \bh_k,
    \end{equation}
    over the variables $n_{j1}, \ldots, n_{j1} \in \ell_j \Z, \; \bh_j \in H_j$, for $j = 1, \ldots, k$.
    Let $\bh_{j1}, \ldots, \bh_{js_j}$ be the generators of the group $H_j, j = 1, \ldots, k$.
    Then the system of Equations~\eqref{eq:intertoeqflat} can be written as a system of linear-exponential Diophantine equations of the form~\eqref{eq:linearpower}:
    \begin{align}\label{eq:intertoeq}
    & \; \ba_{10} + p_1^{n_{11}} \cdot \ba_{11} + \cdots + p_1^{n_{1r_1}} \cdot \ba_{1r_1} + z_{11} \cdot \bh_{11} + \cdots + z_{1s_1} \cdot \bh_{1s_1} \nonumber\\
    = & \; \ba_{20} + p_2^{n_{21}} \cdot \ba_{21} + \cdots + p_2^{n_{2r_2}} \cdot \ba_{2r_2} + z_{21} \cdot \bh_{21} + \cdots + z_{2s_2} \cdot \bh_{2s_2} \nonumber\\
    \vdots & \nonumber\\
    = & \; \ba_{k0} + p_k^{n_{k1}} \cdot \ba_{k1} + \cdots + p_k^{n_{kr_k}} \cdot \ba_{kr_k} + z_{k1} \cdot \bh_{k1} + \cdots + z_{ks_k} \cdot \bh_{ks_k},
    \end{align}
    over the variables $n_{11}, \ldots, n_{kr_k} \in \N, z_{11}, \ldots, z_{k s_k} \in \Z$, with the extra constraint that $\ell_j \mid n_{ji}$ for all $j, i$.
    We can multiply each term in Equation~\eqref{eq:intertoeqflat} by their common denominator, and suppose $\ba_{ji} \in \Z^{KN}$ for all $j, i$, so that Equation~\eqref{eq:intertoeq} is indeed of the form~\eqref{eq:linearpower}.
    Furthermore, the extra constraint ``$\ell_j \mid n_{ji}$'' can be expressed as another equation
    ``$
    p^{n_{ji}} - 1 + (p^{\ell_j} - 1) z = 0
    $''
    over the variables $n_{ji} \in \N, z \in \Z$, for any prime $p$. (Indeed, $\ell_j \mid n_{ji} \iff p^{\ell_i} - 1 \mid p^{n_{ji}} - 1$).
    We conclude that solving Equation~\eqref{eq:intertoeq} reduces to solving a system of linear-exponential Diophantine equations of the form~\eqref{eq:linearpower}.
\end{proof}

\paragraph*{Main ideas of proving Theorem~\ref{thm:primepower}.}
We illustrate here with simple examples the key ideas of Derksen and Masser~\cite{derksen2012linear} 
for proving Theorem~\ref{thm:DM}.
Using these examples, we then illustrate how we generalize these ideas to prove Theorem~\ref{thm:primepower}.
Derksen and Masser's method in~\cite{derksen2012linear} builds upon their respective earlier works~\cite{masser2004mixing, derksen2007skolem}.
In~\cite{derksen2007skolem}, Derksen's approach for proving $p$-normality is to first prove \emph{$p$-automaticity}, and then refine it into \emph{$p$-normality} by analysing the accepting automaton.
In~\cite{derksen2012linear}, this approach is simplified and reformulated without the language of automata theory, while retaining many of the same core ideas.

\begin{exmpl}[Derksen's approach~\cite{derksen2007skolem}]\label{exmpl:Derksen}
    Consider the following equation over the variable $n$:
    \begin{equation}\label{eq:exampleDerksen}
    X_2^n - X_1^n = 1,
    \end{equation}
    in the finitely presented $\F_2[X_1^{\pm}, X_2^{\pm}]$-module $\F_2[X_1^{\pm}, X_2^{\pm}]/\gen{X_2 - X_1 - 1}$.
    For simplicity of the illustration consider the variable $n$ over $\N$ instead of $\Z$, and we construct an automaton over the alphabet $\{0, 1\}$ (instead of $\{-1, 0, 1\}$) that accepts the solution set.
    
    Note that Equation~\eqref{eq:exampleDerksen} can be considered as a version of the S-unit equation $X_1^{z_{11}} X_2^{z_{12}} - X_1^{z_{21}} X_2^{z_{22}} = 1$, ``specialized'' at $z_{11} = z_{22} = 0, z_{12} = z_{21} \geq 0$.
    We will illustrate Derksen's approach~\cite{derksen2007skolem} for solving~\eqref{eq:exampleDerksen}, which shares the same ideas for solving the ``full'' S-unit equation.

    Equation~\eqref{eq:exampleDerksen} in $\F_2[X_1^{\pm}, X_2^{\pm}]/\gen{X_2 - X_1 - 1}$ can be rewritten as the equation
    \begin{equation}\label{eq:exampleDerksenvar}
    (X+1)^n - X^n = 1,
    \end{equation}
    in $\F_2[X^{\pm}]$, by the change of variables $X_1 = X,\; X_2 = X_1+1$.
    Consider the parity of $n$:
    \begin{enumerate}[nosep, label = (\roman*)]
        \item If $n$ is even, write $n = 2n'$, and Equation~\eqref{eq:exampleDerksenvar} becomes $(X+1)^{2n'} - X^{2n'} = 1$, which is equivalent to $(X^2+1)^{n'} - {(X^2)}^{n'} = 1$ because $(X+1)^2 = X^2 + 1$.
        Setting $X' \coloneqq X^2$, this can be rewritten as
        \begin{equation}\label{eq:exampleDerkseneven}
        (X'+1)^{n'} - (X')^{n'} = 1.
        \end{equation}
        Note that~\eqref{eq:exampleDerkseneven} has the same form as~\eqref{eq:exampleDerksenvar}.
        \item If $n$ is odd, write $n = 2n' + 1$. Using the equality $(X+1)^2 = X^2 + 1$, Equation~\eqref{eq:exampleDerksenvar} becomes
        $
        (X^2+1)^{n'} \cdot (X+1) - {(X^2)}^{n'} \cdot X = 1.
        $
        Taking all the monomials of \emph{even} degree on both sides yields
        $
        (X^2+1)^{n'} \cdot 1 = 1.
        $
        Taking all the monomials of \emph{odd} degree yields
        $
        (X^2+1)^{n'} \cdot X - {(X^2)}^{n'} \cdot X = 0.
        $
        Thus if we set $X' \coloneqq X^2$, then $
        (X^2+1)^{n'} \cdot (X+1) - {(X^2)}^{n'} \cdot X = 1
        $ becomes the system
        \begin{equation}\label{eq:exampleDerksenodd}
        \begin{cases}
            (X'+1)^{n'} = 1, \\
            (X'+1)^{n'} - {(X')}^{n'} = 0,
        \end{cases}
        \end{equation}
        whose only solution can be easily seen to be $n' = 0$.
    \end{enumerate}
    The above case analysis shows the following. One can construct an automaton $\mmU$ with two states, corresponding respectively to Equation~\eqref{eq:exampleDerksenvar} and (the system of) Equations~\eqref{eq:exampleDerksenodd},
    see Figure~\ref{fig:Derksen}.
    \begin{figure}[h]
        \centering
        \vspace{-0.1cm}
        \begin{tikzpicture}[>=triangle 45]
        \tikzset{elliptic state/.style={draw,ellipse}}
           \node[initial, elliptic state, minimum height=3em] (0) at (0,0) {\small{$(X+1)^n - X^n = 1$}};
           \node[elliptic state,accepting, minimum height=3em] (1) at (6,0) {\footnotesize{$
           \begin{cases}
                (X+1)^{n} = 1, \\
                (X+1)^{n} - {X}^{n} = 0,
            \end{cases}$}};
           \path[->] (0) edge [loop above] node [align=center]  {\footnotesize$0$} (0)
           (0) edge [above] node [align=center]  {\footnotesize$1$} (1);
        \end{tikzpicture}
        \vspace{-0.2cm}
        \caption{The automaton $\mmU$.}
        \label{fig:Derksen}
    \end{figure}
    We start at the state of Equation~\eqref{eq:exampleDerksenvar} and read the base-2 expansion of $n$.
    If the first (least significant) digit of $n$ is $0$, then we stay in the state of Equation~\eqref{eq:exampleDerksenvar}, which now represents the equation for $n'$ with $n = 2n'$.
    If the first digit of $n$ is $1$, then we transition to the state of Equation~\eqref{eq:exampleDerksenodd}, which now represents the equation for $n'$ with $n = 2n' + 1$.
    Since $n' = 0$ is a solution of Equation~\eqref{eq:exampleDerksenodd}, its corresponding state is an accepting state of $\mmU$.
    We have omitted the transitions from the state of~\eqref{eq:exampleDerksenodd}, because we know its solution set is $\{0\}$. 
    Thus, the solution set of Equation~\eqref{eq:exampleDerksenvar} is the language accepted by the automaton $\mmU$, which we can directly see to be $\{2^n \mid n \in \N\}$.

    To solve a general univariate equation of the form~\eqref{eq:exampleDerksenvar}, the key in Derksen's argument for $p$-automaticity is to control the coefficient size and the number of the equations appearing in each state of $\mmU$ (see~\cite[Proposition~5.2]{derksen2007skolem}), so that $\mmU$ has only finitely many states.
    Derksen then proceeds to show $p$-normality of the solution set by analyzing the structure of $\mmU$.
    \hfill $\blacksquare$
\end{exmpl}

Note that instead of considering~\eqref{eq:exampleDerksenvar} as an equation in the polynomial ring $\F_2[X^{\pm}]$, one can equivalently consider it as an equation in the field of fractions $\F_2(X)$.
More generally, Derksen's approach solves Equations of the form~\eqref{eq:exampleDerksenvar} in any field of prime characteristic, using the so-called \emph{Frobenius splitting}.
Namely, if $\K$ is a field of characteristic $p$, then
$
\K^p \coloneqq \{k^p \mid k \in \K\}
$,
is a subfield of $\K$, thus making $\K$ an $\K^p$-vector space.
Hence, as in Example~\ref{exmpl:Derksen}, an equation over $\K$ splits into a system of equations over $\K^p$, which again becomes a system of equations over $\K$ under the variable change $x' \coloneqq x^p$.

In principle, the idea also works for the ``full'' S-unit equation~\eqref{eq:Sunitthm}, \emph{provided} we can embed it in a field of prime characteristic.
Instead of guessing the parity of $n$, we need to guess the residue modulo $p$ of each variable $z_{11}, \ldots, z_{KN}$ as well as their signs.
This approach is taken by the work of Adamczewski and Bell~\cite{adamczewski2012vanishing}, who showed $p$-automaticity of solution sets for S-unit equations in fields of prime characteristics.

The situation becomes much more difficult when we do not work in prime characteristic: we are considering modules over the ring $\Zpe[X_1^{\pm}, \ldots, X_N^{\pm}],\; e > 1$, which is $p^e$-torsion but not $p$-torsion.
This means several key arguments in Derksen's approach no longer work.
The most obvious failure is that we no longer have $(X+1)^p = X^p + 1$. 
However, in the special case of solving Equation~\eqref{eq:exampleDerksenvar} over a \emph{polynomial ring} $\Zpe[X^{\pm}]$, we can conceive an ``improved'' version of Derksen's approach:

\begin{exmpl}[Improved Derksen's approach]\label{exmpl:improved}
    Consider the following equation in $\Z_{/4}[X^{\pm}]$:
    \begin{equation}\label{eq:exampleimproved}
    (X+1)^n - X^n = 1.
    \end{equation}
    
    Suppose $n$ is even and write $n = 2n'$, then Equation~\eqref{eq:exampleimproved} becomes
    \begin{equation}\label{eq:exampleimprovedeven}
    (X^2+2X+1)^{n'} - (X^2)^{n'} = 1,
    \end{equation}
    Note that we have 
    $
    X^2 + 2X + 1 \neq  X^2 + 1
    $
    in $\Z_{/4}[X^{\pm}]$, so we cannot use $X' \coloneqq X^2$ to bring Equation~\eqref{eq:exampleimprovedeven} back to the form~\eqref{eq:exampleimproved}.
    However, the key observation here is that we have
    $
    (X^2+ 2X +1)^2 = X^4+ 2X^2 +1
    $
    in the ring $\Z_{/4}[X^{\pm}]$.
    This means that, if $n'$ is even again and we write $n' = 2n''$, then Equation~\eqref{eq:exampleimprovedeven} becomes $(X^4+2X^2+1)^{n''} - (X^4)^{n''} = 1$.    
    Then we can employ the same approach as in the previous example by taking the variable change $X' \coloneqq X^2$, so that the equation $(X^4+2X^2+1)^{n''} - (X^4)^{n''} = 1$ becomes
    \begin{equation}\label{eq:exampleimprovedfour}
    (X'^2+2X'+1)^{n''} - (X'^2)^{n''} = 1,
    \end{equation}
    which has the same form as~\eqref{eq:exampleimprovedeven}.
    
    This allows us to construct an automaton $\mmU$ using the same approach as in Example~\ref{exmpl:Derksen}.
    See Figure~\ref{fig:improve} for an illustration of the first several states of $\mmU$.
    Here, the dashed transitions $\dashrightarrow$ indicate we do not perform the variable change $X' \coloneqq X^2$ during the transition; whereas regular transitions $\longrightarrow$ indicate the variable change $X' \coloneqq X^2$.
    It is not difficult to show that the degree of the coefficients appearing in the states stays bounded, as each variable change $X' \coloneqq X^2$ decreases the degree by half, up to an additive constant. Therefore, the total number of states stays bounded.
    \hfill $\blacksquare$
\end{exmpl}

    \begin{figure}[h]
        \vspace{-0.5cm}
        \centering
        \begin{tikzpicture}[>=triangle 45]
        \tikzset{elliptic state/.style={draw,ellipse}}
           \node[initial, elliptic state] (0) at (0,0) {\footnotesize$(X+1)^n - X^n = 1$};
           \node[elliptic state] (1) at (3.5,1.2) {\footnotesize{$
               (X^2+2X+1)^n$ $- (X^2)^n = 1
           $}};
            \node[elliptic state] (2) at (3.2,-1.2) {\footnotesize$
           (X^2+2X+1)^n \cdot (X+1) - (X^2)^n \cdot X = 1
            $};
            \node[] (3) at (9, 1.2) {$\cdots$};
            \node[elliptic state, accepting] (4) at (8,0) {\tiny$
            \begin{cases}
                (X^2+2X+1)^n = 1 \\
                (X^2+2X+1)^n - (X^2)^n = 0
            \end{cases}
            $};
            \node[elliptic state] (5) at (8,-2.4) {\tiny$
            \begin{cases}
                (X^2+2X+1)^n \cdot (3X + 1) = 1 \\
                (X^2+2X+1)^n \cdot (X + 3) + (X^2)^n \cdot X = 0
            \end{cases}
            $};
           \path[->] (1) edge [loop above, looseness=6] node [align=center]  {\footnotesize$0$} (1)
           (1) edge [above] node [align=center]  {\footnotesize$1$} (3)
           (2) edge [above, inner sep = 0pt, text depth = 1pt] node [align=center]  {\footnotesize$1$} (5)
           (2) edge [above, inner sep = 0pt, text depth = 1pt] node [align=center]  {\footnotesize$0$} (4);
           \path[dashed, ->]
           (0) edge [above, inner sep = 0pt, text depth = 1pt] node [align=center]  {\footnotesize$0$} (1)
           (0) edge [above, inner sep = 0pt, text depth = 1pt] node [align=center]  {\footnotesize$1$} (2);
        \end{tikzpicture}
        \caption{A fragment of the ``improved'' automaton $\mmU$ for $\Z_{/4}[X^{\pm}]$.}
        \label{fig:improve}
    \end{figure}
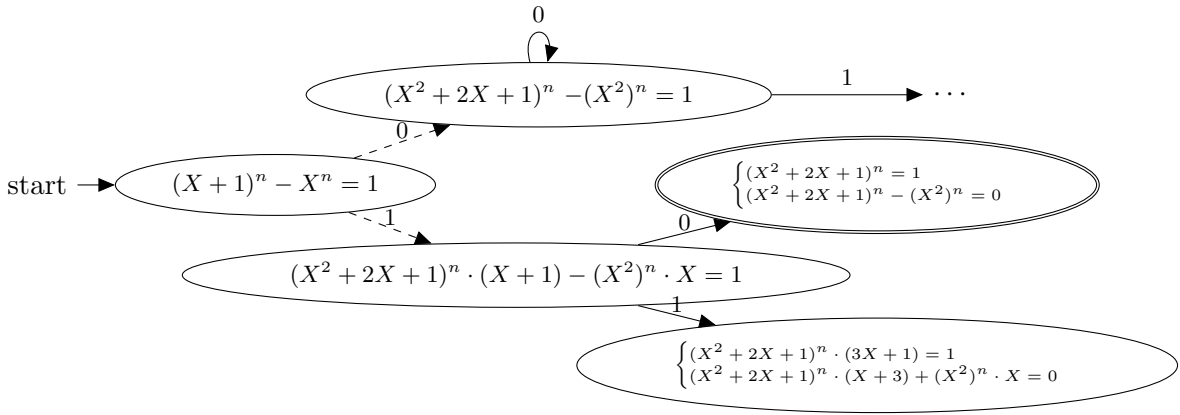
For an equation of the form~\eqref{eq:exampleimproved} in some polynomial ring $\Zpe[X_1^{\pm}, \ldots, X_n^{\pm}]$, we can always apply the idea in Example~\ref{exmpl:improved} to construct an automaton.
Indeed, for any $f \in \Zpe[X_1^{\pm}, \ldots, X_n^{\pm}]$, we can show
$
f^{p^e}(X_1, \ldots, X_n) = f^{p^{e-1}}(X_1^p, \ldots, X_n^p),
$
see Lemma~\ref{lem:powp}.
Therefore, after $e-1$ dashed transitions in the automaton $\mmU$, we can start performing the variable change $(X_1', \ldots, X_n') \coloneqq (X_1^p, \ldots, X_n^p)$ in regular transitions.

Similar to how Example~\ref{exmpl:Derksen} generalizes to arbitrary fields of characteristic $p$, we will generalize Example~\ref{exmpl:improved} from polynomial rings to a family of \emph{Artinian local rings}.
The generalization is highly technical in its exact formulation and proof (see Subsection~\ref{subsec:Frob}), but it is vital for proving Theorem~\ref{thm:primepower} for the following reason.
Our approach in the previous example relies on working over a polynomial ring $\Zpe[X^{\pm}]$ (or $\Zpe[X]$), based on two obvious but important facts:
\begin{enumerate}[nosep]
    \item The polynomial ring $\Zpe[(X^p)^{\pm}]$ is isomorphic to $\Zpe[X^{\pm}]$: this allows us to apply the variable change $X' \coloneqq X^p$ and keep working in equations over the same polynomial ring.
    \item As a $\Zpe[(X^p)^{\pm}]$-module, $\Zpe[X^{\pm}]$ can be written as a direct sum $\bigoplus_{i = 0}^{p-1} \Zpe[(X^p)^{\pm}]$: this allows us to split each equation over $\Zpe[X^{\pm}]$ into a system of at most $p$ equations over $\Zpe[(X^p)^{\pm}]$. This is illustrated in Example~\ref{exmpl:Derksen} case (ii), where an equation is split into a system of two equations~\eqref{eq:exampleDerksenodd}, by grouping monomials of even and odd degrees.
\end{enumerate}
Unfortunately, most S-unit equations in $\Zpe[X_1^{\pm}, \ldots, X_N^{\pm}]$-modules $\mM$ cannot be reduced to equations in polynomial rings.
The two above facts do not make sense if we replace $\Zpe[X^{\pm}]$ with a finitely presented module $\mM$:

\begin{exmpl}[label=exa:cont]\label{exmpl:failure}
    Consider an S-unit equation 
    \begin{equation}\label{eq:failure}
        X^{z_{11}}Y^{z_{12}} - X^{z_{21}}Y^{z_{22}} = 1
    \end{equation}
    in the finitely presented $\Z_{/4}[X^{\pm}, Y^{\pm}]$-module
    $
    \mM \coloneqq \Z_{/4}[X^{\pm}, Y^{\pm}]/\gen{X^3 + 2XY - Y^3}.
    $
    
    It is not clear how $\mM$ can be isomorphic to a polynomial ring, because the quotient $X^3 + 2XY - Y^3$ is not a linear polynomial over any variable. 
    Therefore, we need to work directly over $\mM$.
    
    If all $z_{ij}$ are even, write $z_{ij} = 2 z'_{ij}$ for $i, j \in \{1, 2\}$.
    Then Equation~\eqref{eq:failure} becomes
    \begin{equation}\label{eq:failure2}
    (X^2)^{z'_{11}} (Y^2)^{z'_{12}} - (X^2)^{z'_{21}}(Y^2)^{z'_{22}} = 1.
    \end{equation}
    Now \eqref{eq:failure2} is an equation over $\mM = \Z_{/4}[X^{\pm}, Y^{\pm}]/\gen{X^3 + 2XY - Y^3}$.
    If we consider the indeterminates $X^2, Y^2$, then \eqref{eq:failure2} becomes an equation in the $\Z_{/4}[(X^2)^{\pm}, (Y^2)^{\pm}]$-module
    \[
    \mM' \coloneqq \Z_{/4}[(X^2)^{\pm}, (Y^2)^{\pm}]\Big/\big(\Z_{/4}[(X^2)^{\pm}, (Y^2)^{\pm}] \cap \gen{X^3 + 2XY - Y^3}\big).
    \]
    Unfortunately, the ideal $\Z_{/4}[(X^2)^{\pm}, (Y^2)^{\pm}] \cap \gen{X^3 + 2XY - Y^3}$ is not equal to $\gen{X^6 + 2X^2Y^2 - Y^6}$. (In fact, $X^6 + 2X^2Y^2 - Y^6 \notin \gen{X^3 + 2XY - Y^3}$).
    Therefore $\mM'$ is not isomorphic to $\mM$ under the variable change $X' \coloneqq X^2, Y' \coloneqq Y^2$.
    This means that although Equation~\eqref{eq:failure2} has the same form as~\eqref{eq:failure} after the variable change, it is not the same equation since we are not solving them over the same module.

    What is worse, when considering other possibilities of $z_{ij}$, we can no longer ``split'' the new equation by grouping the monomials in~\eqref{eq:failure2} by their degree parity, as we did in Example~\ref{exmpl:Derksen}~case (ii).
    For instance, in the module $\mM$ we have $2XY = Y^3 - X^3$, but the two sides have different parity in degrees.
    More formally, $\mM$ does not split as a direct product of copies of $\mM'$, which is what would allow us to split an equation into several independent ones.
    \hfill $\blacksquare$
\end{exmpl}

Example~\ref{exmpl:failure} shows that we need additional insights to generalize our idea in Example~\ref{exmpl:improved}.
If we work over a prime characteristic, then in certain cases we can reduce S-unit equations over $\mM$ to S-unit equations over fields of prime characteristic:

\begin{exmpl}[{Embedding in a direct product of fields~\cite{derksen2007skolem, dong2024submonoid}}]\label{exmpl:success}
    Consider the equation
    \begin{equation}\label{eq:success}
        X^{z_{11}}Y^{z_{12}} - X^{z_{21}}Y^{z_{22}} = 1,
    \end{equation}
    but this time in the $\F_2[X^{\pm}, Y^{\pm}]$-module
    $
    \mM \coloneqq \F_2[X^{\pm}, Y^{\pm}]/\gen{X^3 + 2XY - Y^3}.
    $
    
    Note that over $\F_2$, we have the factorization $X^3 + 2XY - Y^3 = (X^2 + XY + Y^2)(X-Y)$, so we have the decomposition of $\mM$ into two modules
    \[
    \mM = \Big(\F_2[X^{\pm}, Y^{\pm}]/\gen{X^2 + XY + Y^2}\Big) \times \Big(\F_2[X^{\pm}, Y^{\pm}]/\gen{X - Y}\Big).
    \]
    One can embed both modules into fields 
    \[
    \varphi_1 \colon \F_2[X^{\pm}, Y^{\pm}]/\gen{X^2 + XY + Y^2} \hookrightarrow \F_2(X)[Y]/\gen{X^2 + XY + Y^2},
    \]
    (the quotient $\F_2(X)[Y]/\gen{X^2 + XY + Y^2}$ is a field because $X^2 + XY + Y^2$ is irreducible), and
    \[
    \varphi_2 \colon \F_2[X^{\pm}, Y^{\pm}]/\gen{X - Y} \hookrightarrow \F_2(X).
    \]
    Thus, the map 
    $
    \varphi_1 \times \varphi_2 \colon \; \mM \hookrightarrow \Big(\F_2(X)[Y]/\gen{X^2 + XY + Y^2} \Big) \times \F_2(X)
    $,
    embeds $\mM$ into a direct product of two fields.
    Let $\pi_1, \pi_2$ denote respectively the projection of $(\varphi_1 \times \varphi_2) (\mM)$ on $\F_2(X)[Y]/\gen{X^2 + XY + Y^2}$ and $\F_2(X)$.
    Then, Equation~\eqref{eq:success} over $\mM$ is equivalent to the system of equations \emph{over fields}
    \begin{equation}\label{eq:successsys}
    \begin{cases}
        \pi_1(X)^{z_{11}} \pi_1(Y)^{z_{12}} - \pi_1(X)^{z_{21}}\pi_1(Y)^{z_{22}} = \pi_1(1), \\
        \pi_2(X)^{z_{11}} \pi_2(Y)^{z_{12}} - \pi_2(X)^{z_{21}}\pi_2(Y)^{z_{22}} = \pi_2(1).
    \end{cases}
    \end{equation}
    We can thus solve both equations effectively by the discussion following Example~\ref{exmpl:Derksen}.
    \hfill $\blacksquare$
\end{exmpl}

In general, if $\mM$ is a module over $\F_p[X_1^{\pm}, \ldots, X_N^{\pm}]$, a similar approach works using the \emph{coprimary decomposition} of $\mM$ instead of factorization.
This is illustrated in~\cite[Section~9]{derksen2007skolem} and more thoroughly in~\cite{dong2024submonoid}.
This approach only works over prime characteristics.
When $\mM$ is a module over $\Zpe[X_1^{\pm}, \ldots, X_N^{\pm}]$, this may fail since $p \mM \neq 0$.
In this case, using an idea similar to Example~\ref{exmpl:success}, we will embed an equation over $\mM$ into equations over \emph{local rings}:

\renewcommand\thmcontinues[1]{continued}
\begin{exmpl}[continues=exa:cont]
    Consider the S-unit equation 
    \begin{equation}\label{eq:failure3}
        X^{z_{11}}Y^{z_{12}} - X^{z_{21}}Y^{z_{22}} = 1
    \end{equation}
    in the $\Z_{/4}[X^{\pm}, Y^{\pm}]$-module
    $
    \mM \coloneqq \Z_{/4}[X^{\pm}, Y^{\pm}]/\gen{X^3 + 2XY - Y^3}.
    $
    We have the factorization 
    \[
    X^3 + 2XY - Y^3 = (X - Y - 2)(X^2 + XY + Y^2 + 2X - 2Y)
    \]
    in the ring $\Z_{/4}[X^{\pm}, Y^{\pm}]$.
    Therefore, the Equation~\eqref{eq:failure3} in $\mM$ can be written as a system of two equations, respectively in
    \[
    \mM_1 \coloneqq \Z_{/4}[X^{\pm}, Y^{\pm}] \big/ \gen{X - Y + 2},
    \]
    and
    \[
    \mM_2 \coloneqq \Z_{/4}[X^{\pm}, Y^{\pm}] \big/ \gen{X^2 + XY + Y^2 + 2X - 2Y}.
    \]
    Let us first consider the ``easier'' module $\mM_1$.
    It is isomorphic to $\Z_{/4}[X^{\pm}, (X+2)^{\pm}]$, which can be embedded into the local ring
    \[
    \Z_{/4}(X) \coloneqq \left\{\frac{f}{g} \;\middle|\; f, g \in \Z_{/4}[X],\; 2 \nmid g \right\}.
    \]
    That is, we allow denominators in $\Z_{/4}(X)$, as long as they are not divisible by $2$.
    The ring $\Z_{/4}(X)$ is local in the sense that it has only one maximal ideal $\gen{2}$.
    It will then be possible to generalize the idea of Example~\ref{exmpl:improved} to solve Equation~\eqref{eq:failure3} over $\Z_{/4}(X)$.

    For the more complicated module $\mM_2$, we can embed it in the ring
    \[
    \mR \coloneqq \Z_{/4}(X)[Y^{\pm}] \big/ \gen{X^2 + XY + Y^2 + 2X - 2Y}.
    \]
    That is, we allow the same denominators in $\mR$ as in $\Z_{/4}(X)$.
    The ring $\mR$ is also \emph{local}, and is an \emph{algebraic extension} of $\Z_{/4}(X)$.
    It turns out that for this reason, $\mR$ shares enough properties with $\Z_{/4}(X)$ and $\Z_{/4}[X]$, so that it will be possible to generalize the ideas of Example~\ref{exmpl:improved} to solve S-unit equations over $\mR$-modules.
    The exact formulation of this generalization is rather technical, and will be gradually introduced in the following subsections.
    \hfill $\blacksquare$
\end{exmpl}

\paragraph{Organization of this section.}
As illustrated in Example~\ref{exmpl:failure}, our approach to proving Theorem~\ref{thm:primepower} will be as follows.
\newcounter{SummaryCounter}
\begin{enumerate}[nosep, label = \arabic*.]
    \item By taking the \emph{coprimary decomposition} of $\mM$ and by \emph{localizing} appropriate variables, we reduce an S-unit equation in $\mM$ to an S-unit equation in modules over a local ring (Proposition~\ref{prop:wrapperlocalization}).
    Though a \emph{stabilization} process (Lemma~\ref{lem:stableffective}) and a \emph{block-diagonalization} process (Proposition~\ref{prop:blockdiagonal}), we further decompose $\mM$ into factors with certain freeness properties (Proposition~\ref{prop:wrapperfreeness}).
    This will be detailed in Subsections~\ref{subsec:localize} and \ref{subsec:free}.
    \item We construct an automaton $\mmU$ in base $p$ that accepts the solution set of an S-unit equation.
    This generalizes Derksen and Masser's~\cite{derksen2007skolem, derksen2012linear} (and independently, Adamczewski and Bell's~\cite{adamczewski2012vanishing}) solution to S-unit equations in fields of characteristic $p$ to S-unit equations in modules over certain $p^e$-torsion local rings.
    This generalization is done in a similar way to how we improved Example~\ref{exmpl:Derksen} to Example~\ref{exmpl:improved}.
    The main idea is to construct a ``pseudo'' Frobenius splitting (Proposition~\ref{prop:highFrobsplit}) via \emph{Hensel lifting} (Lemma~\ref{lem:Hensel}), which will replace the variable change argument in Example~\ref{exmpl:improved}.
    We then bound the number of states appearing in the automaton $\mmU$ by bounding the degree of coefficients appearing in the state equations (Lemma~\ref{lem:finite}).
    This is detailed in Subsections~\ref{subsec:Frob} and \ref{subsec:automaton}.
    \setcounter{SummaryCounter}{\value{enumi}}
\end{enumerate}
The above discussion only serves to prove \emph{$p$-automaticity} instead of \emph{$p$-normality}.
In order to prove Theorem~\ref{thm:primepower}, we will continue to refine our result to $p$-normality.
More precisely:
\begin{enumerate}[nosep, label = \arabic*.]
    \setcounter{enumi}{\value{SummaryCounter}}
    \item We decompose the automaton $\mmU$ into \emph{strongly connected components}.
    We show that each component contributes to a term $p^{\ell k_i} \ba_i$ and a subgroup $H$ in the definition~\eqref{eq:psuccinct} of $p$-succinct sets (Lemma~\ref{lem:stablePi} and Lemma~\ref{lem:singlecomp}).
    If we ``contract'' each strongly connected component into a single point, then $\mmU$ is a graph without cycles, and thus consists of only finitely many paths (see Figure~\ref{fig:SC}).
    Roughly speaking, each path ``corresponds'' to a $p$-succinct set, as it passes through finitely many strongly connected components.
    The union of these paths then ``corresponds'' to a finite union of $p$-succinct sets, thus a $p$-normal set.
    The solution set obtained this way then needs to go through a so-called ``saturation'' process (Lemma~\ref{lem:saturation}) and a so-called ``symmetrization'' process (Proposition~\ref{prop:symmetrize}), in order to truly become a $p$-normal set.
    This is another technical contribution of this section, and will be detailed in Subsections~\ref{subsec:strongconn} and~\ref{subsec:symm}.
\end{enumerate}

\subsection{Decomposition and localization}\label{subsec:localize}
From now on, for a ring $R$ and an $R$-module $M$, we call an \emph{S-unit equation over an $R$-module $M$}, an equation of the form
\[
        x_1^{z_{11}} x_2^{z_{12}} \cdots x_N^{z_{1N}} \cdot m_1 + \cdots + x_1^{z_{K1}} x_2^{z_{K2}} \cdots x_N^{z_{KN}} \cdot m_K = m_0,
\]
where $K, N$ are positive integers, $x_1, \ldots, x_N$ are invertible elements of $R$ and $m_1, \ldots, m_K$ are elements of $M$.

In this subsection we reduce solving S-unit equations over the $\Zpe[X_1^{\pm}, \ldots, X_N^{\pm}]$-module $\mM$, to solving S-unit equations over $\tmA$-modules $\tmV$, where $\tmA$ is a local ring satisfying some additional properties.
More precisely, we will show the following:

\begin{restatable}{prop}{propwrapperlocalization}\label{prop:wrapperlocalization}
    Let $\mZ$ be the solution set of an S-unit equation over a $\Zpe[X_1^{\pm}, \ldots, X_N^{\pm}]$-module $\mM$.
    Then $\mZ$ can be effectively written as a finite positive Boolean combination $\bigcup_i \bigcap_j \mZ_{ij}$, where each $\mZ_{ij}$ is an affine transformation of the solution set of an S-unit equation
    \[
    A_1^{z_{11}} A_2^{z_{12}} \cdots A_N^{z_{1N}} \cdot v_1 + \cdots + A_1^{z_{K1}} A_2^{z_{K2}} \cdots A_N^{z_{KN}} \cdot v_K = v_0
    \]
    over some $\tmA$-module $\tmV$, satisfying:
    \begin{enumerate}[nosep, label = (\roman*)]
        \item $\tmA$ is local, its maximal ideal $\frp$ satisfies $\frp^t = 0$ for some $t \geq 1$.
        \item $\tmA$ is effectively represented as a $\Zpe(\oX)$-algebra for some tuple of variables $\oX = (X_1, \ldots, X_n)$, $n \leq N$. The definition of the ring $\Zpe(\oX)$ will be formalized later. 
        \item As a $\Zpe(\oX)$-module, $\tmA$ is finitely generated.
        \item For any $k \geq 0$, the set of elements $\left\{A_1^{p^k}, A_1^{-p^k} \ldots, A_N^{p^k}, A_N^{-p^k}\right\}$ generates $\tmA$ as a $\Zpe(\oX)$-algebra.
        \item $\tmV$ is finitely presented as an $\tmA$-module.
    \end{enumerate}
\end{restatable}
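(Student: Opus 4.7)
The plan is to decompose both $\mM$ and the underlying ring via a sequence of algebraic reductions -- \emph{coprimary decomposition}, \emph{localization} at a transcendence basis, and \emph{stabilization under $p$-th powers} -- each of which transforms the solution set either into an intersection (accounting for the outer $\bigcap_j$) or into a finite union of affine images (accounting for the outer $\bigcup_i$).

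I would begin with a coprimary decomposition of $\mM$. Since $\Zpe[X_1^{\pm},\ldots,X_N^{\pm}]$ is Noetherian, there is an embedding $\mM \hookrightarrow \bigoplus_j \mM/\mN_j$ into a finite direct sum of coprimary modules, each with a unique associated prime $\frp_j$. Because $\mM$ is $p^e$-torsion, each $\frp_j$ contains $p$. The original S-unit equation holds in $\mM$ if and only if its image holds in every $\mM/\mN_j$, which accounts for the outer intersection in the conclusion.

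Next, for each coprimary component $\mM' = \mM/\mN_j$ with associated prime $\frp$, I would carry out a Noether-normalization-type step: pick a tuple $\oX = (X_1, \ldots, X_n)$ of variables whose images in $\Zpe[X_1^{\pm},\ldots,X_N^{\pm}]/\frp$ are algebraically independent modulo $p$, and then localize at the multiplicative set $\Zpe[\oX] \setminus p\Zpe[\oX]$. Elements of this set lie outside $\frp$ (by algebraic independence of $\oX$), hence act as non-zero-divisors on the coprimary $\mM'$, so the localization preserves the solution set. The localized base ring is the local ring $\Zpe(\oX)$ with maximal ideal $(p)$, and the localization of $R/\Ann(\mM')$ becomes a finite $\Zpe(\oX)$-algebra; after further splitting (if necessary) at the finitely many maximal ideals lying over $(p)$, one obtains an Artinian local $\Zpe(\oX)$-algebra $\tmA$ with nilpotent maximal ideal, giving properties (i), (ii), (iii) and (v). For property (iv), I would apply a stabilization argument. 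The descending chain of $\Zpe(\oX)$-subalgebras
\[
\tmA \supseteq \Zpe(\oX)[A_1^{\pm p}, \ldots, A_N^{\pm p}] \supseteq \Zpe(\oX)[A_1^{\pm p^2}, \ldots, A_N^{\pm p^2}] \supseteq \cdots
\]
stabilizes at some index $K$ (this is the content of the alluded-to Lemma~\ref{lem:stableffective}). Replacing each $A_i$ by $A_i^{p^K}$ and splitting the exponent $z_{ij} = p^K z_{ij}' + r_{ij}$ by residue modulo $p^K$ yields a finite union, indexed by residue tuples $(r_{ij})$, of S-unit equations over the stabilized algebra $\tmA^{(K)} \coloneqq \Zpe(\oX)[A_1^{\pm p^K}, \ldots, A_N^{\pm p^K}]$; in each component, $A_i$ is replaced by $A_i^{p^K}$ and $v_i$ by $A_1^{r_{i1}}\cdots A_N^{r_{iN}} v_i$. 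The correspondence $z_{ij} \mapsto p^K z_{ij}' + r_{ij}$ is the required affine transformation on $\Z^{KN}$, and the stabilized algebra satisfies property (iv) by construction. This produces the outer union $\bigcup_i$.

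The main obstacle will be establishing the stabilization step for property (iv) effectively. Pure Noetherianity of $\tmA$ over $\Zpe(\oX)$ does not immediately give a descending chain condition on subalgebras (as opposed to submodules), so one must exploit the Artinian (finite-length) structure of $\tmA$ together with an explicit analysis of the integral dependencies of each $A_i$ over $\Zpe(\oX)$; this is precisely where the block-diagonalization of Proposition~\ref{prop:blockdiagonal} enters, reducing $\tmV$ to a form in which the stabilization bound $K$ can be controlled algorithmically. A secondary, but nontrivial, point will be ensuring that both the coprimary decomposition and the Noether-normalization step are algorithmic over $\Zpe$, where the presence of the nilpotent $p$ prevents a direct appeal to standard field-coefficient algorithms.
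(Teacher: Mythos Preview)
Your overall structure matches the paper's four-step reduction closely: coprimary decomposition (yielding the intersection $\bigcap_j$), localization at a transcendence basis of the residue field (producing a finite $\Zpe(\oX)$-algebra), stabilization of the $p$-th-power subalgebra chain, and splitting by residues modulo $p^K$ (yielding the union $\bigcup_i$ of affine images). One minor point: the ``splitting at finitely many maximal ideals'' is unnecessary, since coprimarity already guarantees a single associated prime $\frp$, and the localized quotient $\mR_\ell/\tfrq^t$ is automatically local (Lemma~\ref{lem:local}): its reduction modulo $\tfrq$ is a domain finite over the field $\F_p(\oX)$, hence a field, and $\tfrq$ is nilpotent.

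The genuine gap is your proposed mechanism for effective stabilization. You correctly note that the Artinian (finite-length) structure of $\mR$ over $\Zpe(\oX)$ forces the descending chain $\mR_0 \supseteq \mR_1 \supseteq \cdots$ to stabilize (the $\mR_k$ are in particular $\Zpe(\oX)$-submodules); the only issue is detecting \emph{where}. But Proposition~\ref{prop:blockdiagonal} is the wrong tool for this. Block-diagonalization manipulates the matrix representation of the $A_i$ on $\tmV$ and says nothing about the subalgebra filtration $\Zpe(\oX)[A_1^{\pm p^k}, \ldots, A_N^{\pm p^k}]$; in the paper it belongs to the \emph{next} reduction (Proposition~\ref{prop:wrapperfreeness}), applied only \emph{after} property~(iv) is already in hand. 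The paper's actual argument for effective stabilization (Lemma~\ref{lem:stableffective}) is elementary and self-contained: the identity $h^{p^e}(\oY) = h^{p^{e-1}}(\oY^p)$ in $\Zpe[\oY]$ (Lemma~\ref{lem:powp}) yields the implication $\mR_k = \mR_{k+e} \Rightarrow \mR_{k+1} = \mR_{k+e+1}$, so one simply tests $\mR_k = \mR_{k+e}$ for $k = 0, 1, 2, \ldots$ until equality holds---a submodule-membership check, effective by standard methods.
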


Note that affine transformations of $p$-normal sets are also $p$-normal.
The following proposition shows that finite intersections of $p$-normal sets are $p$-normal.
Since finite unions of $p$-normal sets are by definition $p$-normal, this will allow us to reduce proving $p$-normality of the solution set of an S-unit equation over the $\Zpe[X_1^{\pm}, \ldots, X_N^{\pm}]$-module $\mM$ to an $\tmA$-module $\tmV$.

\begin{restatable}{prop}{propinternormal}\label{prop:internormal}
    The intersection of two $p$-normal sets is effectively $p$-normal.
\end{restatable}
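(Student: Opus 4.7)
The strategy is to reduce to the intersection of two $p$-succinct sets by distributivity, unify their bases to a common power of $p$, and then parameterise the intersection by a finite combinatorial case analysis.

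First I would use distributivity of intersection over union. Since every $p$-normal set is a finite union of $p$-succinct sets, and $(\bigcup_i A_i) \cap (\bigcup_j B_j) = \bigcup_{i,j}(A_i \cap B_j)$, it suffices to prove the statement for two $p$-succinct sets $S_\alpha = S(\ell_\alpha; \ba_0^{(\alpha)}, \ldots, \ba_{r_\alpha}^{(\alpha)}; H_\alpha)$ for $\alpha \in \{1,2\}$. Next I would unify the bases. Setting $\ell = \lcm(\ell_1,\ell_2)$, I would decompose each $S_\alpha$ as a finite union of $p$-succinct sets with common base $p^\ell$: for every residue vector $\bc \in \{0,\ldots,\ell/\ell_\alpha-1\}^{r_\alpha}$, the stratum of $S_\alpha$ where each exponent $k_i$ satisfies $k_i \equiv c_i \pmod{\ell/\ell_\alpha}$ becomes $p$-succinct over $p^\ell$ after the substitution $k_i = (\ell/\ell_\alpha)q_i + c_i$ absorbs the factor $p^{\ell_\alpha c_i}$ into $\ba_i^{(\alpha)}$. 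After this reduction we may assume $\ell_1=\ell_2=\ell$.

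The matching condition for an element of $S_1 \cap S_2$ then reads
\[
\sum_{i=1}^{r_1} p^{\ell k_i} \ba_i^{(1)} - \sum_{j=1}^{r_2} p^{\ell n_j} \ba_j^{(2)} \equiv \ba_0^{(2)} - \ba_0^{(1)} \pmod{H_1 + H_2},
\]
with $(k_1,\ldots,k_{r_1}) \in \N^{r_1}$ and $(n_1,\ldots,n_{r_2}) \in \N^{r_2}$. I would stratify the solution set according to the combinatorial \emph{type} of the tuple of exponents, consisting of an equivalence relation on $\{k_1,\ldots,k_{r_1},n_1,\ldots,n_{r_2}\}$ that groups coincident variables, together with a linear ordering of its classes. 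There are only finitely many such types. For a fixed type with ordered classes $C_1<\cdots<C_t$ carrying common values $0\le e_1<e_2<\cdots<e_t$, the matching equation collapses to $\sum_{u=1}^t p^{\ell e_u}\bd_u \equiv \bc \pmod{H_1+H_2}$ for vectors $\bd_u$ and $\bc$ read off from the type, and the contribution to $S_1 \cap S_2$ is the image of the corresponding $(e_1,\ldots,e_t,\bh_1)$-space under $(e_1,\ldots,e_t,\bh_1)\mapsto \ba_0^{(1)} + \sum_u p^{\ell e_u}\bd_u^{(1)} + \bh_1$, where $\bd_u^{(1)} = \sum_{k_i \in C_u} \ba_i^{(1)}$.

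Finally, working inside the finitely generated abelian group $A=\Z^{KN}/(H_1+H_2)\cong \Z^a\oplus\bigoplus_m\Z/m\Z$, the torsion summands restrict each $e_u$ to lie in finitely many residue classes modulo a common period $T$ (since $p^{\ell\,\cdot}$ is eventually periodic in each finite cyclic quotient), while the free $\Z$-summands yield honest integer equations $\sum c_u p^{\ell e_u}=d$ subject to the strict ordering $e_1<\cdots<e_t$; a $p$-adic valuation argument (exploiting the strict ordering to prevent $p$-adic cancellations) shows each free-part equation has only finitely many solutions, which pin the values of all participating exponents. The remaining ``free'' exponents are substituted via $e_u = e_{u-1} + T + T g_u$ for fresh variables $g_u \ge 0$, absorbing the strict ordering into the torsion period, after which each stratum is of $p$-succinct form over $p^{\ell T}$; taking finite unions over types and exceptional solutions yields the $p$-normal representation of $S_1 \cap S_2$. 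The main obstacle I anticipate is the careful bookkeeping required to make the substitutions compatible with both the strict ordering of exponents and the period $T$ coming from the torsion part of $A$, while keeping each $p$-succinct factor effectively computable from the original data.
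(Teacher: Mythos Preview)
Your opening moves---distributivity, passage to a common base $p^\ell$, and the matching congruence modulo $H_1+H_2$---are correct and agree with the paper. The gap is your finiteness claim for the free-part equation. You assert that $\sum_u c_u\, p^{\ell e_u} = d$ under the strict ordering $e_1<\cdots<e_t$ has only finitely many solutions by a $p$-adic valuation argument, but this fails because nothing forces $v_p(c_u)=0$. Take $KN=1$, $p=2$, $\ell=1$, $H_1=H_2=\{0\}$, $S_1=\{2\cdot 2^k:k\in\N\}$, $S_2=\{2^n:n\in\N\}$; in the type $\{k\}<\{n\}$ your equation is $2\cdot 2^{e_1}-2^{e_2}=0$ with $e_1<e_2$, and it has the infinite solution family $(e_1,e_2)=(k,k+1)$. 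More generally, your grouped coefficients $\bd_u$ are sums and differences of the $\ba_i^{(\alpha)}$ and can acquire arbitrary $p$-adic valuation (e.g.\ a class containing two $k_i$'s with $\ba$-values $1$ and $p-1$ yields $\bd_u=p$), so strict ordering of the $e_u$ does not stop the valuations $v_p(c_u)+\ell e_u$ from colliding across different $u$. Consequently the ``pinning'' of participating exponents never happens, and the downstream substitution $e_u=e_{u-1}+T+Tg_u$ no longer produces a $p$-succinct description.

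The paper replaces this step by a structural lemma rather than a finiteness claim: the set of exponent tuples $(k_1,\ldots,k_R)\in\N^R$ with $\ba_0+q^{k_1}\ba_1+\cdots+q^{k_R}\ba_R\in G$ is effectively a finite union of \emph{rectangular cosets} $\bepsilon_0+\N\bepsilon_1+\cdots+\N\bepsilon_s$ where the $\bepsilon_i$ have pairwise disjoint supports. This accommodates infinite families like $\{(k,k+1):k\ge 0\}$ while keeping the free parameters genuinely independent, which is exactly what is needed to reassemble $p$-succinct pieces. The paper also handles your implicit ``which $\bh_1$'' question explicitly: it fixes a $\Z$-basis of $H\cap H'$, extends it to maximal independent sets inside $H$, $H'$, and $\Z^{KN}$, and expresses the intersection through the two complementary projections $\pi_{U+V}$, $\pi_{V'+W}$; this makes the dependence of the admissible $\bh_1$ on the exponent tuple linear and visible, whereas in your sketch it is hidden in the phrase ``the corresponding $(e_1,\ldots,e_t,\bh_1)$-space''.
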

\begin{proof}
    Proposition~\ref{prop:internormal} can be considered as a generalization of~\cite[Lemma~9.5]{derksen2007skolem}, which deals with $p$-normal sets in $\N$.
    The generalization from $\N$ to $\Z^{KN}$ is technical but does not present conceptual difficulty.
    We provide a full self-contained proof in Appendix~\ref{app:internormal}.
\end{proof}

\paragraph{Step 1. Intersection: coprimary decomposition.}

The idea behind Proposition~\ref{prop:wrapperlocalization} is inspired by the approach taken in~\cite{derksen2007skolem} and~\cite{dong2024submonoid}, which reduced S-unit equations in $\F_p[X_1^{\pm}, \ldots, X_N^{\pm}]$-modules to equations in $\F_p(X_1, \ldots, X_n)$-vector spaces.
See also Example~\ref{exmpl:success} for an illustration of the basic ideas.
First we recall some standard definitions from commutative algebra.
We refer the readers to the textbook~\cite{eisenbud2013commutative} for details and proofs.

\begin{defn}\label{def:commalg}
    Let $R$ be a commutative Noetherian ring (for example, $R = \Zpe[X_1^{\pm}, \ldots, X_N^{\pm}]$).
    \begin{enumerate}[nosep, label = (\arabic*)]
        \item An ideal $I \subseteq R$ is called \emph{prime} if $I \neq R$, and for every $a, b \in R$, $ab \in I$ implies $a \in I$ or $b \in I$. Prime ideals are usually denoted by the Gothic letters $\frp$ or $\frq$.
        \item Let $M$ be a finitely generated $R$-module. The \emph{annihilator} of an element $m \in M$, denoted by $\Ann_R(m)$, is the set $\{r \in R \mid r\cdot m = 0\}$.
        A prime ideal $\frp \subset R$ is called \emph{associated} to $M$ if there exists a non-zero $m \in M$ such that $\frp = \Ann_R(m)$. 
        Let $N$ be a finitely generated $R$-module. A submodule $N'$ of $N$ is called \emph{primary} if $N/N'$ has only one associated prime ideal. If we denote this prime ideal by $\frp$, then $N'$ is called a \emph{$\frp$-primary} submodule of $N$.
        \item Let $N'$ be a submodule of a finitely generated $R$-module $N$. The \emph{primary decomposition} of $N'$ is the writing of $N'$ as a finite intersection $\bigcap_{i = 1}^l N_i$, where $N_i$ is a $\frp_i$-primary submodule of $N$ for some prime ideal $\frp_i \subset R$.
        A primary decomposition always exists~\cite[Theorem~3.10]{eisenbud2013commutative}.
        If $R$ is a quotient of a polynomial ring over an effective base ring (such as $\Zpe$), and $N, N'$ are finitely generated submodules of $R^d$ for some $d \in \N$, then a primary decomposition of $N' \subseteq N$ can be effectively computed~\cite{rutman1992grobner}.       
        \item A finitely generated $R$-module $M$ is called \emph{coprimary} if the submodule $\{0\}$ is primary, that is, if $M$ has only one associated prime ideal. If we denote this prime ideal by $\frp$, then $M$ is called \emph{$\frp$-coprimary}.
        If $M$ is $\frp$-coprimary, and $m$ is a non-zero element in $M$, then $\Ann_R(m) \subseteq \frp$.
    \end{enumerate}
\end{defn}

Let $\mM = \Zpe[X_1^{\pm}, \ldots, X_N^{\pm}]^d/\mN$ be the finite presentation of $\mM$.
Let $\mN = \bigcap_{j = 1}^l \mN_j$ be the primary decomposition of the submodule $\mN$ of $\Zpe[X_1^{\pm}, \ldots, X_N^{\pm}]^d$, where $\mN_j$ is $\frp_j$-primary for a prime ideal $\frp_j \subset \Zpe[X_1^{\pm}, \ldots, X_N^{\pm}]$, $j = 1, \ldots, l$.
Then $\mM_j \coloneqq \Zpe[\oX^{\pm}]^d/\mN_j$ is $\frp_j$-coprimary.
Since $\mN \subseteq \mN_j$, there is a canonical map 
\[
\rho_j \colon \mM = \Zpe[\oX^{\pm}]^d/\mN \rightarrow \mM_j = \Zpe[\oX^{\pm}]^d/\mN_j.
\]
Since $\mN = \bigcap_{j = 1}^l \mN_i$, the intersection of kernels $\bigcap_{j=1}^l \ker(\rho_j)$ is $\{0\}$.

Since each $\mM_j$ is $\frp_j$-coprimary, by~\cite[Proposition~3.9]{eisenbud2013commutative} there exists $t_j \in \N$ such that $\frp_j^{t_j} \mM_j = 0$.
Therefore the $\Zpe[X_1^{\pm}, \ldots, X_N^{\pm}]$-module $\mM_j$ is actually a $\Zpe[X_1^{\pm}, \ldots, X_N^{\pm}]/\frp_j^{t_j}$-module.

\begin{lem}\label{lem:inter}
    Let $m_0, m_1, \ldots, m_K \in \mM$.
    For $j = 1, \ldots, l$, let $\mZ_j$ denote the set of solutions to the following equation over $\mM_j$:
    \begin{equation}\label{eq:Suniti}
        X_1^{z_{11}} X_2^{z_{12}} \cdots X_N^{z_{1N}} \cdot \rho_j(m_1) + \cdots + X_1^{z_{K1}} X_2^{z_{K2}} \cdots X_N^{z_{KN}} \cdot \rho_j(m_K) = \rho_j(m_0).
    \end{equation}
    Then the solution set of $\sum_{i = 1}^K X_1^{z_{i1}} X_2^{z_{i2}} \cdots X_N^{z_{iN}} m_i = m_0$ is exactly the intersection $\bigcap_{i = 1}^l \mZ_i$.
\end{lem}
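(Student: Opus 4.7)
The plan is to prove Lemma~\ref{lem:inter} as a direct consequence of two facts already established in the setup: each $\rho_j$ is a module homomorphism, and $\bigcap_{j=1}^l \ker(\rho_j) = \{0\}$ (which is exactly what the primary decomposition $\mN = \bigcap_j \mN_j$ buys us). No further commutative algebra input will be needed.

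For the forward inclusion, I would take any solution $(z_{11},\ldots,z_{KN}) \in \Z^{KN}$ to the equation $\sum_{i=1}^K X_1^{z_{i1}} \cdots X_N^{z_{iN}} m_i = m_0$ in $\mM$ and apply $\rho_j$ to both sides. Since $\rho_j$ is a $\Zpe[X_1^{\pm}, \ldots, X_N^{\pm}]$-module homomorphism, it commutes with the action of monomials, so
\[
\sum_{i=1}^K X_1^{z_{i1}} \cdots X_N^{z_{iN}} \rho_j(m_i) = \rho_j(m_0),
\]
showing $(z_{11}, \ldots, z_{KN}) \in \mZ_j$ for every $j$, hence the tuple lies in $\bigcap_{j=1}^l \mZ_j$.

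For the reverse inclusion, I would take $(z_{11},\ldots,z_{KN}) \in \bigcap_{j=1}^l \mZ_j$ and set
\[
m \coloneqq \sum_{i=1}^K X_1^{z_{i1}} \cdots X_N^{z_{iN}} m_i - m_0 \in \mM.
\]
Using again that each $\rho_j$ is a module homomorphism, the hypothesis that the tuple solves Equation~\eqref{eq:Suniti} in each $\mM_j$ gives $\rho_j(m) = 0$ for all $j$, i.e.\ $m \in \bigcap_{j=1}^l \ker(\rho_j)$. Since this intersection is $\{0\}$, we conclude $m = 0$, so the tuple is a solution of the original equation over $\mM$.

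There is no genuine obstacle here; the only thing to be careful about is invoking exactly the right piece of the setup, namely that the triviality of $\bigcap_j \ker(\rho_j)$ comes from $\mN = \bigcap_j \mN_j$ (by the definition of $\rho_j$ as the quotient map $\Zpe[\oX^{\pm}]^d/\mN \to \Zpe[\oX^{\pm}]^d/\mN_j$, an element of $\mM$ is in $\ker(\rho_j)$ iff its lift lies in $\mN_j$, and lies in $\bigcap_j \ker(\rho_j)$ iff its lift lies in $\bigcap_j \mN_j = \mN$, i.e.\ iff it is zero in $\mM$). Once this is noted, both inclusions are one-line verifications.
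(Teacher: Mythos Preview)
Your proposal is correct and follows essentially the same approach as the paper: both arguments reduce to the observation that an element of $\mM$ vanishes if and only if its image under every $\rho_j$ vanishes, which is exactly the statement $\bigcap_{j=1}^l \ker(\rho_j) = \{0\}$ coming from the primary decomposition $\mN = \bigcap_j \mN_j$. Your write-up is slightly more explicit in separating the two inclusions and in spelling out why the intersection of kernels is trivial, but the content is identical.
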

\begin{proof}
    Since $\bigcap_{j=1}^l \ker(\rho_i) = \{0\}$, we have
    $
    \sum_{i = 1}^K X_1^{z_{i1}} X_2^{z_{i2}} \cdots X_N^{z_{iN}} m_i - m_0 = 0
    $,
    if and only if 
    $
    \sum_{i = 1}^K X_1^{z_{i1}} X_2^{z_{i2}} \cdots X_N^{z_{iN}} \cdot \rho_j( m_i) - \rho_j(m_0) = 0
    $
    for all $j = 1, \ldots, l$.
    Therefore, the solution set of $\sum_{i = 1}^K X_1^{z_{i1}} X_2^{z_{i2}} \cdots X_N^{z_{iN}} m_i = m_0$ is exactly the intersection $\bigcap_{i = 1}^l \mZ_i$.
\end{proof}


By Lemma~\ref{lem:inter}, the solution set of an S-unit equation over a $\Zpe[X_1^{\pm}, \ldots, X_N^{\pm}]$-module is effectively a finite intersection of solution sets of S-unit equations over $\frp$-\emph{coprimary} modules.
Therefore, we can from now on focus on the case where $\mM$ is a $\frp$-coprimary module over $\Zpe[X_1^{\pm}, \ldots, X_N^{\pm}]/\frp^t$.
Here, $\frp$ is a prime ideal of $\Zpe[X_1^{\pm}, \ldots, X_N^{\pm}]$, and $t \in \N$ is such that $\frp^t \mM = 0$.
Since $p^e = 0 \in \frp$ and $\frp$ is prime, we have $p \in \frp$.
Therefore we can also consider $\frp$ as a prime ideal of $\F_p[X_1^{\pm}, \ldots, X_N^{\pm}]$.

\paragraph{Step 2. Localization and definition of $\Zpe(\oX)$.}

Choose a maximal set of variables among $X_1, \ldots, X_N$ that are algebraically independent\footnote{Algebraic independence can be checked effectively by variable elimination in ideals~\cite[Section~15.10.4]{eisenbud2013commutative}.} over $\F_p[X_1^{\pm}, \ldots, X_N^{\pm}]/\frp$. 
Without loss of generality suppose this set of variables is $\{X_1, \ldots, X_n\}$ for some $n \leq N$. 
Denote 
\[
\oX \coloneqq (X_1, \ldots, X_n),
\]
and write $R[\oX^{\pm}] \coloneqq R[X_1^{\pm}, \ldots, X_n^{\pm}]$, $R[\oX] \coloneqq R[X_1, \ldots, X_n]$ for any ring $R$.
Let $\Zpe(\oX)$ denote the \emph{localization} of $\Zpe[\oX^{\pm}]$ at the prime ideal $\gen{p}$:
\[
\Zpe(\oX) \coloneqq \left\{\frac{f}{g} \;\middle|\; f, g \in \Zpe[\oX],\; p \nmid g \right\}.
\]
Then $\Zpe(\oX)$ is a principal ideal ring\footnote{A principal ideal ring is a commutative ring in which every ideal is generated by a single element.} (PIR), whose only ideals are $\gen{1} = \Zpe(\oX), \gen{p}, \gen{p^2}, \ldots$, $\gen{p^{e-1}}$ and $\gen{p^e} = \{0\}$.
Indeed, let $I$ be any ideal of $\Zpe(\oX)$. For each $f \in \Zpe[\oX]$, let $a(f) \in \N$ denote the largest integer $a$ such that $p^{a} \mid f$.
Let $m \coloneqq \min\{a(f) \mid f/g \in I, p \nmid g\}$, then $p^m$ divides every element in $I$, so $I \subseteq \gen{p^m}$. Furthermore, let $\frac{f}{g} \in I, p \nmid g$, be such that $a(f) = m$. Write $f = p^mF$, then $p \nmid F$, so $p^m = \frac{f}{g} \cdot \frac{g}{F} \in I$.
Therefore $I = \gen{p^m}$.

Since the ring $\Zpe(\oX)$ has finitely many ideals, it is Noetherian, so every finitely generated $\Zpe(\oX)$-module admits a finite presentation.
Note that when $e = 1$, the ring $\Zpe(\oX)$ is exactly the fraction field $\F_p(\oX)$.

For any $\Zpe[\oX^{\pm}]$-module $M$, define the localization $M \otimes_{\Zpe[\oX^{\pm}]} \Zpe(\oX)$ to be the $\Zpe(\oX)$-module
\[
\left\{\frac{m}{g} \;\middle|\; m \in M, g \in \Zpe[\oX^{\pm}], \; p \nmid g \right\}.
\]

Consider the localizations
\begin{align*}
    \mN & \coloneqq \mM \otimes_{\Zpe[\oX^{\pm}]} \Zpe(\oX), \\
    \frq & \coloneqq \frp \otimes_{\Zpe[\oX^{\pm}]} \Zpe(\oX), \\
    \mR & \coloneqq \left(\Zpe[X_1^{\pm}, \ldots, X_N^{\pm}]/\frp^t \right) \otimes_{\Zpe[\oX^{\pm}]} \Zpe(\oX) = \Zpe(\oX)[X_{n+1}^{\pm}, \ldots, X_{N}^{\pm}]/\frq^t.
\end{align*}
Since $\mM$ is a $\Zpe[X_1^{\pm}, \ldots, X_N^{\pm}]/\frp^t$-module, its localization $\mN$ is an $\mR$-module.

\begin{lem}\label{lem:AYfgmod}
   The localizations $\mN$ and $\mR$ are finitely generated as $\Zpe(\oX)$-modules.
\end{lem}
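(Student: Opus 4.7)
The plan is to first prove that $\mR$ is finitely generated as a $\Zpe(\oX)$-module, and then deduce the same for $\mN$. The second reduction is immediate: $\mM$ is a quotient of $\bigl(\Zpe[X^\pm]/\frp^t\bigr)^d$, so it is finitely generated as a $\Zpe[X^\pm]/\frp^t$-module, and localization preserves finite generation, whence $\mN$ is finitely generated as an $\mR$-module. Composing with finite generation of $\mR$ over $\Zpe(\oX)$ then yields the claim for $\mN$.

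The first main step is to understand the quotient $\mR/\frq$. Since $p \in \frp \subseteq \frq$, this ring has characteristic $p$ and is naturally isomorphic to $\F_p(\oX)[X_{n+1}^\pm,\ldots,X_N^\pm]/\overline{\frq}$, where $\overline{\frq}$ is the localization of the image $\overline{\frp}$ of $\frp$ in $\F_p[X^\pm]$ at the multiplicative set $\F_p[\oX]\setminus\{0\}$. The algebraic independence of $X_1,\ldots,X_n$ modulo $\overline{\frp}$ translates into $\overline{\frp}\cap \F_p[\oX]=\{0\}$, so this multiplicative set is indeed disjoint from $\overline{\frp}$; consequently $\overline{\frq}$ remains a prime ideal and $\mR/\frq$ is a domain. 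The \emph{maximality} of the algebraically independent set supplies, for each $i > n$, a nontrivial polynomial relation on $X_i$ with coefficients in $\F_p[\oX]$ modulo $\overline{\frp}$; after dividing by the nonzero leading coefficient in $\F_p(\oX)$ one obtains a monic relation, so each $X_i$ is integral over $\F_p(\oX)$ in $\mR/\frq$. Because $\mR/\frq$ is a domain, each $X_i^{-1}$ lies in the field $\F_p(\oX)[X_i]$ and is therefore also integral. Hence $\mR/\frq$ is a finitely generated $\F_p(\oX)$-algebra whose generators are all integral, which forces it to be finitely generated as an $\F_p(\oX)$-module.

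The second step lifts this finiteness to $\mR$ via the $\frq$-adic filtration
\[
\mR \;\supseteq\; \frq \;\supseteq\; \frq^2 \;\supseteq\; \cdots \;\supseteq\; \frq^t \;=\; 0.
\]
The ring $\Zpe(\oX)$ is Noetherian (it has finitely many ideals), so $\Zpe(\oX)[X_{n+1}^\pm,\ldots,X_N^\pm]$ is also Noetherian by the Hilbert basis theorem together with localization; in particular $\frq$ is a finitely generated ideal, and each successive quotient $\frq^s/\frq^{s+1}$ is a finitely generated $\mR/\frq$-module. By the previous step these quotients are then finitely generated as $\Zpe(\oX)$-modules. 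Since finite generation is preserved under extensions in short exact sequences, a short induction on $t$ yields that $\mR$ itself is a finitely generated $\Zpe(\oX)$-module, completing the plan.

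The most delicate point I anticipate is the passage from algebraic independence modulo $\overline{\frp}$ to actual integrality modulo $\overline{\frq}$ over $\F_p(\oX)$: the polynomial relation produced by maximality only has coefficients in $\F_p[\oX]$, and one must use precisely the localization at $\F_p[\oX]\setminus\{0\}$ (equivalently, the passage from $\Zpe[\oX^\pm]$ to $\Zpe(\oX)$) to clear its leading coefficient. Once integrality is established, the remainder is a standard ``filter-and-lift'' argument in commutative algebra.
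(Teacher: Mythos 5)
Your proof is correct and follows essentially the same route as the paper: the quotient $\mR/\frq\mR$ is module-finite over $\F_p(\oX)$ because the variables $X_{n+1},\ldots,X_N$ (and their inverses) become integral once one localizes at $\F_p[\oX]\setminus\{0\}$, and the $\frq$-adic filtration with $\frq^t = 0$ lifts this finiteness to $\mR$. The only (harmless) deviation is that you deduce the statement for $\mN$ by composing its finite generation over $\mR$ with the module-finiteness of $\mR$ over $\Zpe(\oX)$, whereas the paper runs the same filtration argument for $\mN$ directly.
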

\begin{proof}
    Since $\{X_1, \ldots, X_n\}$ is a maximal algebraically independent set over $\F_p[X_1^{\pm}, \ldots, X_N^{\pm}]/\frp$, the quotient $\Zpe[X_1^{\pm}, \ldots, X_N^{\pm}]/\frp = \F_p[X_1^{\pm}, \ldots, X_N^{\pm}]/\frp$ is an algebraic extension of the ring $\F_p[\oX^{\pm}] = \F_p[X_1^{\pm}, \ldots, X_n^{\pm}]$.
    Taking the localization at $\gen{p}$ shows that
    \[
    \mB \coloneqq \left(\Zpe[X_1^{\pm}, \ldots, X_N^{\pm}]/\frp\right) \otimes_{\Zpe[\oX^{\pm}]} \Zpe(\oX) = \Zpe(\oX)[X_{n+1}^{\pm}, \ldots, X_{N}^{\pm}]/\frq
    \]
    is an algebraic extension of the fraction field $\F_p(\oX)$.
    Therefore, $\mB$ is a finite dimensional $\F_p(\oX)$-vector space, and hence a finitely generated $\Zpe(\oX)$-module. 

    Since $\frp^t \mM = 0$ and $\frp^t \left(\Zpe[X_1^{\pm}, \ldots, X_N^{\pm}]/\frp^t \right) = 0$, after localization we have $\frq^t \mN = 0$ and $\frq^t \mR = 0$.
    Let $M$ be either $\mN$ or $\mR$, then we have the chain
    \[
    M \supseteq \frq M \supseteq \frq^2 M \supseteq \cdots \supseteq \frq^t M = 0.
    \]
    Each quotient $\frq^{j} M/\frq^{j+1} M$ is a finitely generated $\Zpe(\oX)[X_{n+1}^{\pm}, \ldots, X_{N}^{\pm}]$-module, thus a finitely generated $\mB = \Zpe(\oX)[X_{n+1}^{\pm}, \ldots, X_{N}^{\pm}]/\frq$-module, because $\frq \cdot (\frq^{j} M/\frq^{j+1} M) = 0$.
    Since $\mB$ is a finitely generated $\Zpe(\oX)$-module, each $\frq^{j} M/\frq^{j+1} M$ is also a finitely generated $\Zpe(\oX)$-module.
    We conclude that $M$ is finitely generated as a $\Zpe(\oX)$-module.
\end{proof}

\paragraph{Step 3. Embedding in S-unit equations over the $\mR$-module $\mN$.}

Since $\mM$ is $\frp$-coprimary, the annihilator of any non-zero element in $\mM$ as a $\Zpe[X_1^{\pm}, \ldots, X_n^{\pm}]$-module is contained in 
\[
\frp \cap \Zpe[X_1^{\pm}, \ldots, X_n^{\pm}] \subseteq \gen{p}.
\]
Therefore the canonical map $\mM \rightarrow \mM \otimes_{\Zpe[\oX^{\pm}]} \Zpe(\oX) = \mN$ is injective since elements of $\gen{p}$ are not localized (do not appear in the denominator).

Let $R_1, \ldots, R_N \in \mR$ be the image of $X_1, \ldots, X_N$ under the composition of maps 
\[
\Zpe[X_1^{\pm}, \ldots, X_N^{\pm}] \rightarrow \Zpe[X_1^{\pm}, \ldots, X_N^{\pm}]/\frp^t \rightarrow \mR.
\]
Let $\nu_0, \nu_1, \ldots, \nu_N \in \mN$ be the images of $m_0, m_1, \ldots, m_K \in \mM$ under the embedding 
\[
\mM \hookrightarrow \mM \otimes_{\Zpe[\oX^{\pm}]} \Zpe(\oX) = \mN.
\]

Since $\mM \hookrightarrow \mN$ is injective, the S-unit equation
    \[
        X_1^{z_{11}} X_2^{z_{12}} \cdots X_N^{z_{1N}} \cdot m_1 + \cdots + X_1^{z_{K1}} X_2^{z_{K2}} \cdots X_N^{z_{KN}} \cdot m_K = m_0
    \]
in the $\Zpe[X_1^{\pm}, \ldots, X_N^{\pm}]/\frp^t$-module $\mM$ is equivalent to the S-unit equation
\[
    R_1^{z_{11}} R_2^{z_{12}} \cdots R_N^{z_{1N}} \cdot \nu_1 + \cdots + R_1^{z_{K1}} R_2^{z_{K2}} \cdots R_N^{z_{KN}} \cdot \nu_K = \nu_0
\]
in the $\mR$-module $\mN$.
Thus, from now on we can work in the $\mR$-module $\mN$.

For $k \in \N$, define $\mR_{k} \subseteq \mR$ to be the $\Zpe(\oX)$-algebra generated by $R_1^{p^{k}}, R_1^{-p^{k}}, \ldots, R_N^{p^{k}}, R_N^{-p^{k}}$.
Then we have a descending chain of $\Zpe(\oX)$-algebras
\begin{equation}\label{eq:descendingmR}
\mR = \mR_0 \supseteq \mR_1 \supseteq \mR_2 \supseteq \cdots.
\end{equation}
The ring $\Zpe(\oX)$ is Artinian (it has finitely many ideals)~\cite[Theorem~2.13]{eisenbud2013commutative}, so $\mR$ is Artinian as a finitely generated module over an Artinian ring. Therefore the chain~\eqref{eq:descendingmR} must stabilize starting from some $\mR_{\ell},\; \ell \in \N$.

\begin{lem}\label{lem:stableffective}
    The number $\ell$ is effectively computable.
\end{lem}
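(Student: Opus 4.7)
}

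The plan is to exploit the finite length of $\mR$ as a $\Zpe(\oX)$-module (which holds by Lemma~\ref{lem:AYfgmod} combined with the fact that $\Zpe(\oX)$ is an Artinian principal ideal ring whose only ideals are $(1) \supsetneq (p) \supsetneq \cdots \supsetneq (p^{e-1}) \supsetneq (0)$) in order to obtain an a priori bound on $\ell$ and then read $\ell$ off by direct computation.

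First I would compute $L := \ell_{\Zpe(\oX)}(\mR)$, a finite non-negative integer which is effectively extractable from the given presentation of $\mR$ by standard linear algebra over the PIR $\Zpe(\oX)$ (e.g.\ via Smith normal form). Next, I would observe that the descending chain $\mR_0 \supseteq \mR_1 \supseteq \mR_2 \supseteq \cdots$ of $\Zpe(\oX)$-submodules of $\mR$ yields a non-increasing sequence of lengths $d_k := \ell_{\Zpe(\oX)}(\mR_k)$ lying in $[0, L]$. Hence $d_k$ can strictly decrease at most $L$ times, so $d_L = d_{L+1} = \cdots$. Because two finitely generated $\Zpe(\oX)$-submodules $\mR_k \supseteq \mR_{k+1}$ of equal finite length must coincide, this forces $\mR_L = \mR_{L+1} = \cdots$, giving the a priori bound $\ell \leq L$.

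To actually find $\ell$, for each $k = 0, 1, \ldots, L$ I would compute a $\Zpe(\oX)$-module generating set of $\mR_k$ explicitly inside $\mR$. Concretely, starting from $M_0^{(k)} := \Zpe(\oX) \cdot 1$, I iterate
\[
M_{j+1}^{(k)} \;:=\; M_j^{(k)} \;+\; \sum_{i=n+1}^{N} R_i^{p^k} \cdot M_j^{(k)} \;+\; \sum_{i=n+1}^{N} R_i^{-p^k} \cdot M_j^{(k)}
\]
in $\mR$ until $M_{j+1}^{(k)} = M_j^{(k)}$; the iteration terminates in at most $L$ steps by the ACC on $\Zpe(\oX)$-submodules of $\mR$, and each step is routine linear algebra over $\Zpe(\oX)$. (The generators $R_i^{p^k}$ for $i \leq n$ already lie in $\Zpe(\oX)$, so they contribute nothing beyond the base.) The limit equals $\mR_k$. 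Finally, $\ell$ is the smallest $k \leq L$ such that $\mR_k = \mR_L$, which again is decided by comparing finitely generated submodules of $\mR$ over the PIR $\Zpe(\oX)$.

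The only conceptually delicate point is the a priori bound $\ell \leq L$: a single equality $\mR_k = \mR_{k+1}$ does not obviously propagate to $\mR_{k+1} = \mR_{k+2}$, since the $p$-th power map is not a ring endomorphism of $\mR$ when $e > 1$ (for instance $(a+b)^p \neq a^p + b^p$ in $\Zpe(\oX)$). Tracking the integer invariant $d_k$ instead of the submodules themselves sidesteps this obstruction and is what makes the algorithm terminate at a known, effectively computable step.
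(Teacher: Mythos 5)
There is a genuine gap at the central step of your argument. From the facts that $d_k := \ell_{\Zpe(\oX)}(\mR_k)$ is non-increasing and bounded by $L$ you conclude ``hence $d_L = d_{L+1} = \cdots$''. This does not follow: a non-increasing integer sequence in $[0,L]$ has at most $L$ strict decreases, but nothing forces those decreases to occur at the first $L$ indices. The chain could plateau ($\mR_k = \mR_{k+1} = \cdots = \mR_{k+1000}$) and only then drop again, in which case your algorithm, which stops at index $L$ and returns the smallest $k \leq L$ with $\mR_k = \mR_L$, would output an $\ell$ at which the chain has merely paused, not stabilized. That would be fatal downstream: Step~4 and property~(iv) of Proposition~\ref{prop:wrapperlocalization} require $\mR_{\ell} = \mR_{\ell + k}$ for \emph{all} $k \geq 0$. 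To turn your length count into the bound $\ell \leq L$ you would need exactly the forward-propagation statement ``$\mR_k = \mR_{k+1} \implies \mR_{k+1} = \mR_{k+2}$'', i.e.\ the statement you yourself flag as unavailable when $e > 1$ because $x \mapsto x^p$ is not a ring endomorphism of $\mR$. Tracking the integer invariant $d_k$ does not sidestep this obstruction; it silently assumes it.

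For comparison, the paper's proof confronts the propagation problem head-on, but with a gap of $e$ rather than $1$: it shows that $\mR_k = \mR_{k+e}$ implies $\mR_{k+1} = \mR_{k+e+1}$, by expressing each $R_i^{p^k}$ as a polynomial in the $R_j^{\pm p^{k+e}}$ and then using $h^{p^{e}}(Y_1,\ldots,Y_s) = h^{p^{e-1}}(Y_1^p,\ldots,Y_s^p)$ (Lemma~\ref{lem:powp}) to push the inner generators one more Frobenius step; iterating gives that $\mR_k = \mR_{k+e}$ certifies stabilization at $k$. The algorithm then simply searches $k = 1, 2, \ldots$ for the first $k$ with $\mR_k = \mR_{k+e}$, which must exist because the chain stabilizes, and termination needs no a priori bound. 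Your auxiliary machinery (computing a $\Zpe(\oX)$-module generating set of each $\mR_k$ by saturating under multiplication by the $R_i^{\pm p^k}$, and deciding equality of finitely generated submodules over the PIR $\Zpe(\oX)$) is fine and matches the paper's effectiveness assumptions, but without a propagation lemma of the above type, or some other certificate that a detected equality really marks stabilization, the proof of Lemma~\ref{lem:stableffective} is incomplete.
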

\begin{proof}
    Let $k \in \N$, and let $e \geq 1$ be as above (as in $\Zpe(\oX)$). We claim that if $\mR_{k} = \mR_{k+e}$, 
    then the chain~\eqref{eq:descendingmR} stabilizes after $\mR_{k}$.
    To prove this, we will show $\mR_k = \mR_{k+e} \implies \mR_{k+1} = \mR_{k+e+1}$.

    Indeed, if $\mR_{k+e} = \mR_k$, then each $R_i^{p^k}, i = 1, \ldots, N,$ as well as its inverse, can be written as $f_i(R_1^{p^{k+e}}, R_1^{-p^{k+e}}, \ldots, R_N^{p^{k+e}}, R_N^{-p^{k+e}})$, where $f_i \in \Zpe(\oX)[Z_1, Z_1', \ldots, Z_N, Z_N']$ is a polynomial.
    Then
    \begin{equation}\label{eq:Reqpoly}
    R_i^{p^k} = f_i\left(R_1^{p^{k+e}}, \ldots, R_N^{-p^{k+e}}\right) = f_i\left(f_1\left(R_1^{p^{k+e}}, \ldots, R_N^{-p^{k+e}}\right)^{p^{e}}, \ldots, f_N\left(R_1^{p^{k+e}}, \ldots, R_N^{-p^{k+e}}\right)^{p^{e}}\right).
    \end{equation}
    Each $f_j\left(R_1^{p^{k+e}}, \ldots, R_N^{-p^{k+e}}\right), j = 1, \ldots, N$, can be written as a fraction
    \[
    \frac{g\left(X_1, \ldots, X_n, R_1^{p^{k+e}}, \ldots, R_N^{-p^{k+e}}\right)}{h(X_1, \ldots, X_n)}
    \]
    for some polynomials $g \in \Zpe[X_1, \ldots, X_n, Z_1, Z_1', \ldots, Z_N, Z_N']$ and $h \in \Zpe[X_1, \ldots, X_n]$.
    Applying Lemma~\ref{lem:powp} below for the tuple of variables $(X_1, \ldots, X_n, Z_1, Z_1', \ldots, Z_N, Z_N')$, we have
    \begin{multline*}
    f_j\left(R_1^{p^{k+e}}, \ldots, R_N^{-p^{k+e}}\right)^{p^{e}} = \frac{g\left(X_1, \ldots, X_n, R_1^{p^{k+e}}, \ldots, R_N^{-p^{k+e}}\right)^{p^{e}}}{h(X_1, \ldots, X_n)^{p^{e}}} \\
    = \frac{g\left(X_1^p, \ldots, X_n^p, R_1^{p^{k+e+1}}, \ldots, R_N^{-p^{k+e+1}}\right)^{p^{e-1}}}{h(X_1, \ldots, X_n)^{p^{e}}}.
    \end{multline*}
    This shows that each $f_j\left(R_1^{p^{k+e}}, \ldots, R_N^{-p^{k+e}}\right)^{p^{e}}$ can be written as a polynomial in $R_1^{p^{k+e+1}}$, $R_1^{-p^{k+e+1}}$, $\ldots$, $R_N^{p^{k+e+1}}$, $R_N^{-p^{k+e+1}}$, with coefficients in $\Zpe(\oX)$.
    Therefore Equation~\eqref{eq:Reqpoly} shows that $R_i^{p^k}$ is equal to a polynomial in $R_1^{p^{k+e+1}}, R_1^{-p^{k+e+1}}, \ldots, R_N^{p^{k+e+1}}, R_N^{-p^{k+e+1}}$, with coefficients in $\Zpe(\oX)$.
    This yields $R_i^{p^k} \in \mR_{k+e+1}$ for all $i = 1, \ldots, N$.
    Consequently, $\mR_{k} = \mR_{k+e+1}$.
    Since $\mR_{k+e} = \mR_k \supseteq \mR_{k+1} \supseteq \mR_{k+e}$ in the descending chain~\eqref{eq:descendingmR}, we have $\mR_{k+1} = \mR_k = \mR_{k+e+1}$.
    We have thus shown 
    \[
    \mR_k = \mR_{k+e} \implies \mR_{k+1} = \mR_{k+e+1}.
    \]
    Use this implication iteratively for $k+1, k+2, \ldots,$ we conclude that $\mR_{k} = \mR_{j}$ for all $j \geq k+e$.
    Therefore $\mR_{k} = \mR_{j}$ for all $j \geq k$. 
    
    Thus, in order to compute $\ell$, it suffices to check whether $\mR_{k+e} = \mR_k$ for $k = 1, 2, \ldots$.
    Since the descending chain $\mR_0 \supseteq \mR_1 \supseteq \mR_2 \supseteq \cdots$ eventually stabilizes, such a $k$ can eventually be found.
    Note that checking whether $\mR_{k+e} = \mR_k$ can be done by checking whether $r \in \mR_{k+e}$ for the generators $r$ of the $\Zpe(\oX)$-module $\mR_{k}$ (an algorithm that checks submodule membership can be found in~\cite{baumslag1981computable}).
    Since the descending chain~\eqref{eq:descendingmR} eventually stabilizes, we can find $k$ such that $\mR_{k+e} = \mR_k$ in finite time, and we conclude by letting $\ell \coloneqq k$.
\end{proof}

\begin{lem}\label{lem:powp}
    Let $\oY = (Y_1, \ldots, Y_s)$ be a tuple of variables.
    For any $h \in \Zpe[\oY]$, we have $h^{p^{e}}(Y_1, \ldots, Y_s) = h^{p^{e-1}}(Y_1^p, \ldots, Y_s^p)$.
\end{lem}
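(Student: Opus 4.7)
The plan is to reduce the identity to a base congruence modulo $p$ and then iteratively lift it $e-1$ times, using the fact that congruence modulo $p^e$ coincides with equality in $\Zpe[\oY]$.

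First, I would establish the base congruence $h(\oY)^p \equiv h(\oY^p) \pmod{p}$ in $\Zpe[\oY]$, where $\oY^p$ abbreviates $(Y_1^p, \ldots, Y_s^p)$. Writing $h = \sum_\alpha c_\alpha \oY^\alpha$ and expanding $h(\oY)^p$ by the multinomial theorem, every multinomial coefficient is divisible by $p$ except those corresponding to pure powers $(c_\alpha \oY^\alpha)^p = c_\alpha^p \oY^{p\alpha}$. By Fermat's little theorem applied to the reduction of $c_\alpha$ modulo $p$, we have $c_\alpha^p \equiv c_\alpha \pmod{p}$ in $\Zpe$. Summing these yields $h(\oY)^p \equiv \sum_\alpha c_\alpha \oY^{p\alpha} = h(\oY^p) \pmod{p}$.

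Next, I would record the standard lifting lemma: for $f, g \in \Zpe[\oY]$ and $k \geq 1$, $f \equiv g \pmod{p^k}$ implies $f^p \equiv g^p \pmod{p^{k+1}}$. Writing $f = g + p^k r$ with $r \in \Zpe[\oY]$, the binomial expansion of $f^p - g^p$ has each summand $\binom{p}{i} g^{p-i} p^{ki} r^i$ divisible by $p^{k+1}$: for $1 \leq i \leq p-1$ we use $p \mid \binom{p}{i}$ combined with the factor $p^{ki}$, so the term is divisible by $p^{ki+1} \geq p^{k+1}$; and for $i = p$ the term carries a factor $p^{kp}$ with $kp \geq k+1$ whenever $k \geq 1$ and $p \geq 2$.

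Finally, an easy induction on $j$ shows $h(\oY)^{p^j} \equiv h(\oY^p)^{p^{j-1}} \pmod{p^j}$ for all $j \geq 1$: the base case $j = 1$ is the congruence from the first step, and the inductive step is obtained by raising both sides to the $p$-th power and invoking the lifting lemma. Taking $j = e$ gives $h(\oY)^{p^e} \equiv h(\oY^p)^{p^{e-1}} \pmod{p^e}$, which in $\Zpe[\oY]$ is precisely the asserted equality. I expect no significant obstacle; the only care required is in handling the multinomial and binomial expansions together with Fermat's little theorem for the coefficients, none of which present conceptual difficulty.
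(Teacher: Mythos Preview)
Your proof is correct and follows essentially the same approach as the paper: establish the base congruence $h^p(\oY) \equiv h(\oY^p) \pmod p$, then iterate the lifting step $f \equiv g \pmod{p^k} \Rightarrow f^p \equiv g^p \pmod{p^{k+1}}$ a total of $e-1$ times. The paper packages the lifting step as a separate lemma (stated for a general ideal $P$ with $p \in P$ and $P^t = 0$), but the argument is the same.
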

\begin{proof}
    Recall that we have $h^p(Y_1, \ldots, Y_s) \equiv h(Y_1^p, \ldots, Y_s^p) \mod p$.
    That is, $h^p(Y_1, \ldots, Y_s) - h(Y_1^p, \ldots, Y_s^p)$ is in the ideal $p \cdot \Zpe[\oY]$.
    Since $p^e = 0$, using Lemma~\ref{lem:lifting} below for $f = h^p(Y_1, \ldots, Y_s)$, $g = h(Y_1^p, \ldots, Y_s^p)$, $t = e$, $r = e-1$, we conclude that 
    $
    h^{p^{e}}(Y_1, \ldots, Y_s) = h^{p^{e-1}}(Y_1^p, \ldots, Y_s^p)
    $.
\end{proof}

\begin{lem}\label{lem:lifting}
    Let $R$ be a ring, and $P$ be an ideal such that $p \in P$, and $P^t = 0$ for some $t \in \N$.
    Let $f, g \in R$ such that $f - g \in P$. Then $f^{p^{r}} = g^{p^{r}}$ for all $r \geq t-1$.
\end{lem}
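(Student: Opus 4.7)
The plan is to prove by induction on $r$ the stronger congruence $f^{p^r} - g^{p^r} \in P^{r+1}$, from which the lemma follows immediately: once $r \geq t-1$, we have $P^{r+1} \subseteq P^t = 0$.

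The base case $r = 0$ is just the hypothesis $f - g \in P$. For the inductive step, write $f^{p^r} = g^{p^r} + \delta$ with $\delta \in P^{r+1}$, and raise to the $p$-th power using the binomial expansion
\[
f^{p^{r+1}} = (g^{p^r} + \delta)^p = g^{p^{r+1}} + \sum_{i=1}^{p-1} \binom{p}{i} (g^{p^r})^{p-i} \delta^i + \delta^p.
\]
Each of the intermediate terms with $1 \leq i \leq p-1$ lies in $P^{r+2}$ because $p \mid \binom{p}{i}$ and $p \in P$, while $\delta^i \in P^{(r+1)i} \subseteq P^{r+1}$. The final term $\delta^p$ lies in $P^{(r+1)p}$, and since $(r+1)p \geq r+2$ holds for all $r \geq 0$ and $p \geq 2$, it also lies in $P^{r+2}$. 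Hence $f^{p^{r+1}} - g^{p^{r+1}} \in P^{r+2}$, completing the induction.

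The argument is entirely routine, so there is no real obstacle; the only subtle point to get right is keeping track of both contributions to the nilpotency level (from the binomial coefficients and from powers of $\delta$) so that the induction hypothesis gains exactly one power of $P$ at each step. This is exactly the standard Frobenius-lifting estimate needed to make sense of the ``pseudo Frobenius splitting'' used earlier in the section.
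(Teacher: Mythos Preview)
Your proof is correct and is essentially the same argument as the paper's: both establish the key step $x - y \in P^{a} \Rightarrow x^p - y^p \in P^{a+1}$ via the binomial expansion (using $p \in P$ for the cross terms and $\delta^i \in P^{ai}$ for the higher powers), then iterate. The only cosmetic difference is that the paper states this step as a separate claim and applies it for $a = 1, \ldots, t-1$, whereas you fold it directly into an induction on $r$.
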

\begin{proof}
    We claim that for $a \geq 1$, we have $x - y \in P^a \implies x^p - y^p \in P^{a+1}$.
    Indeed, if $x - y \in P^a$ then write $x = y + z$ for some $z \in P^a$.
    So $x^p = (y + z)^p = y^p + \sum_{i = 1}^p \binom{p}{i} y^{p-i} z^i$. 
    For $i = 2, \ldots, p$, we have $z^i \in P^{ai} \subseteq P^{a+1}$; while for $i = 1$, we have $\binom{p}{i} z^i = pz \in p \cdot P^a \subseteq P^{a+1}$.
    In both cases we have $\binom{p}{i} y^{p-i} z^i \in P^{a+1}$.
    Therefore $x^p - y^p = \sum_{i = 1}^p \binom{p}{i} y^{p-i} z^i \in P^{a+1}$.

    Using this claim for $a = 1, 2, \ldots, t-1$, we have
    \[
    f - g \in P \implies f^p - g^p \in P^2 \implies f^{p^2} - g^{p^2} \in P^3 \implies \cdots \implies f^{p^{t-1}} - g^{p^{t-1}} \in P^{t} = 0.
    \]
    Taking $p^{r-t}$-th power to both sides of $f^{p^{t-1}} = g^{p^{t-1}}$ yields $f^{p^{r}} = g^{p^{r}}$.
\end{proof}

\paragraph{Step 4. Union of affine transformations: equations over the $\mR_{\ell}/(\frq \cap \mR_{\ell})^t$-module $\mN$.}

Let
\begin{equation}\label{eq:Rnu}
     R_1^{z_{11}} R_2^{z_{12}} \cdots R_N^{z_{1N}} \cdot \nu_1 + \cdots + R_1^{z_{K1}} R_2^{z_{K2}} \cdots R_N^{z_{KN}} \cdot \nu_K = \nu_0,
\end{equation}
be an S-unit equation over the $\mR$-module $\mN$.

Now consider $\mN$ as an $\mR_{\ell}$-module.
Let 
\[
B_i \coloneqq R_i^{p^\ell} \in \mR_{\ell}, \quad i = 1, \ldots, N.
\]
For each tuple $(r_{11}, \ldots, r_{KN}) \in \{0, 1, \ldots, p^{\ell} - 1\}^{KN}$, the solutions of Equation~\eqref{eq:Rnu} satisfying 
\[
(z_{11}, \ldots, z_{KN}) \equiv (r_{11}, \ldots, r_{KN}) \mod p^{\ell}
\]
are exactly solutions to the equation
\begin{equation}\label{eq:Anuprime}
B_1^{z'_{11}} B_2^{z'_{12}} \cdots B_N^{z'_{1N}} \cdot v_1 + \cdots + B_1^{z'_{K1}} B_2^{z'_{K2}} \cdots B_N^{z'_{KN}} \cdot v_K = v_0,
\end{equation}
where $(z'_{11}, \ldots, z'_{KN}) \in \Z^{KN}$ are such that  
\[
(z_{11}, \ldots, z_{KN}) = p^{\ell} \cdot (z'_{11}, \ldots, z'_{KN}) + (r_{11}, \ldots, r_{KN}),
\]
and $v_j \coloneqq R_1^{r_{11}} R_2^{r_{12}} \cdots R_N^{r_{1N}} \cdot \nu_j$ for $j = 1, \ldots, K$.

Therefore, the solution set to the S-unit equation~\eqref{eq:Rnu} over the $\mR$-module $\mN$ is a finite union of affine transformations of solution sets of S-unit equations~\eqref{eq:Anuprime} over the $\mR_{\ell}$-module $\mN$.
From now on we consider $\mN$ as a $\mR_{\ell}$-module.
Note that for any $k \in \N$, the stability property $\mR_{\ell} = \mR_{\ell+k}$ shows that the elements $B_1^{p^k} = R_1^{p^{k + \ell}}, B_1^{-p^k} = R_1^{-p^{k + \ell}}, \ldots, B_N^{p^{k}} = R_N^{p^{k + \ell}}, B_N^{-p^{k}} = R_N^{-p^{k + \ell}}$, generate $\mR_{\ell}$ as a $\Zpe(\oX)$-algebra.

Recall that $\frq$ is a prime ideal of $\mR$ such that $\frq^t \mN = 0$ for some $t \in \N$. 
Thus, $\tfrq \coloneqq \frq \cap \mR_{\ell}$ is a prime ideal of $\mR_{\ell}$, such that $\tfrq^t \mN = 0$. 
Therefore the $\mR_{\ell}$-module $\mN$ can be considered as an $\mR_{\ell}/\tfrq^t$-module.

Denote
\[
\tmA \coloneqq \mR_{\ell}/\tfrq^t, \quad \tmV \coloneqq \mN,
\]
so Equation~\eqref{eq:Anuprime} can be considered as an S-unit equation over the $\tmA$-module $\tmV$:
\begin{equation}\label{eq:Anuvarphi}
A_1^{z_{11}} A_2^{z_{12}} \cdots A_N^{z_{1N}} \cdot v_1 + \cdots + A_1^{z_{K1}} A_2^{z_{K2}} \cdots A_N^{z_{KN}} \cdot v_K = v_0,
\end{equation}
where $A_1, \ldots, A_N$ are the images of $B_1, \ldots, B_N$ under the projection $\mR_{\ell} \rightarrow \mR_{\ell}/\tfrq^t = \tmA$.

\begin{lem}\label{lem:local}
    The ring $\tmA$ is local. The maximal ideal of $\tmA$ is $\tfrq\tmA$ and satisfies $(\tfrq\tmA)^t = 0$.
\end{lem}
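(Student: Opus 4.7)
The plan is to prove the three claims about $\tmA$ in sequence, using standard commutative algebra facts about integral extensions together with the setup already developed in the preceding steps.

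The first claim, $(\tfrq\tmA)^t = 0$, is immediate from the definition $\tmA = \mR_{\ell}/\tfrq^t$, since the image of $\tfrq^t$ in the quotient is zero, and $(\tfrq\tmA)^t$ is exactly this image.

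The second (and main) claim is that $\tfrq$ is a maximal ideal of $\mR_\ell$. First I would check that $\frq$ itself is maximal in $\mR$: by the computation in Lemma~\ref{lem:AYfgmod}, $\mR/\frq = \mB$ is an algebraic extension of the field $\F_p(\oX)$, and since $\frq$ is prime, $\mB$ is a domain; a domain that is algebraic over a field is itself a field, so $\frq$ is maximal. Next, I would observe that $\mR$ is integral over $\mR_\ell$: each generator $R_i^{\pm 1}$ of $\mR$ as a $\Zpe(\oX)$-algebra satisfies $(R_i^{\pm 1})^{p^\ell} = B_i^{\pm 1} \in \mR_\ell$, so $R_i^{\pm 1}$ is integral over $\mR_\ell$, and integrality is preserved under the algebra generated. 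The standard ``lying over / incomparability'' result for integral extensions then yields that the contraction $\tfrq = \frq \cap \mR_\ell$ of a maximal ideal is maximal.

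Once $\tfrq$ is known to be maximal in $\mR_\ell$, the quotient $\tmA/\tfrq\tmA \cong \mR_\ell/\tfrq$ is a field, so $\tfrq\tmA$ is a maximal ideal of $\tmA$. To conclude that $\tmA$ is local with this unique maximal ideal, I would note that since $(\tfrq\tmA)^t = 0$, every element of $\tfrq\tmA$ is nilpotent, so $\tfrq\tmA$ is contained in the nilradical of $\tmA$; conversely every prime ideal of $\tmA$ contains the nilradical, hence contains $\tfrq\tmA$, and by maximality must equal it. Thus $\tfrq\tmA$ is the unique prime (in particular, unique maximal) ideal of $\tmA$, which is precisely the definition of $\tmA$ being local.

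The main obstacle is verifying that contraction along $\mR_\ell \hookrightarrow \mR$ preserves maximality of $\frq$; this is where the integrality of $\mR$ over $\mR_\ell$ provided by the $p^\ell$-th power trick is essential. The other two parts are then formal.
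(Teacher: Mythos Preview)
Your proof is correct, but it takes a somewhat different route from the paper's.

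The paper works directly in $\tmA$: it uses that $\tmA$ is finitely generated as a $\Zpe(\oX)$-module (Lemma~\ref{lem:AYfgmod}) together with $p\in\tfrq\tmA$ to conclude that the integral domain $\tmA/\tfrq\tmA$ is a finite extension of the field $\F_p(\oX)$, hence a field. It then establishes locality by an explicit construction: for $a\notin\tfrq\tmA$ it finds $b$ with $ab\equiv -1\bmod\tfrq\tmA$, expands $(ab+1)^t=0$, and reads off an inverse of $a$.

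You instead go up to $\mR$, show $\frq$ is maximal there (same ``domain algebraic over a field is a field'' step, but applied to $\mB=\mR/\frq$), and then descend to $\mR_\ell$ via integrality and the lying-over/going-up package, using that $(R_i^{\pm1})^{p^\ell}\in\mR_\ell$. For locality you use the clean nilradical argument: $(\tfrq\tmA)^t=0$ forces every prime of $\tmA$ to contain the maximal ideal $\tfrq\tmA$, hence to equal it.

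Both arguments are standard; your integrality detour is a bit more conceptual but introduces machinery the paper avoids, while your nilradical argument for locality is slicker than the paper's explicit inverse construction.
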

\begin{proof}
    First we show that the quotient $\tmA/\tfrq \tmA$ is a field.

    Since $\tfrq$ is a prime ideal of $\mR_{\ell}$, 
    we have $\tfrq \tmA = \tfrq \cdot \mR_{\ell}/\tfrq^{t}$ is a prime ideal of $\tmA = \mR_{\ell}/\tfrq^{t}$.
    Since $p^e = 0 \in \tfrq \tmA$ and $\tfrq \tmA$ is prime, we have $p \in \tfrq \tmA$.
    Note that $\tmA$ is finitely generated as a $\Zpe(\oX)$-module, and $p \cdot (\tmA/\tfrq \tmA) = 0$.
    Therefore $\tmA/\tfrq \tmA$ is finitely generated as a module over $\Zpe(\oX)/p\Zpe(\oX) = \F_p(\oX)$.
    In other words, the ring $\tmA/\tfrq \tmA$ is a finite extension of the field $\F_p(\oX)$.
    Thus, $\tmA/\tfrq \tmA$ is an integral domain\footnote{A ring $R$ is an \emph{integral domain} if $xy = 0 \implies x = 0 \text{ or } y = 0$, for all $x, y \in R$. If $I$ is a prime ideal of a ring $R$, then $R/I$ is an integral domain.} that is also a finite extension of a field, it is therefore also a field~\cite[Corollary~4.7]{eisenbud2013commutative}.

    The fact that $(\tfrq\tmA)^{t} = 0$ follows from the definition $\tmA = \mR_{\ell}/\tfrq^t$.
    Next we show that every $a \in \tmA \setminus \tfrq \tmA$ is invertible. Since the quotient $\tmA/\tfrq \tmA$ is a field, there exists $b \in \tmA$ such that $ab \equiv -1 \mod \tfrq \tmA$. Then $(ab + 1)^{t} = 0$.
    We can expand $(ab + 1)^{t} = \sum_{j = 0}^{t} \binom{t}{j} a^j b^j$ and write it as $1 + af$ for some $f \in \tmA$.
    Therefore $af = -1$, so $-f$ is the inverse of $a$.
    We conclude that $\tmA$ is local with maximal ideal $\tfrq\tmA$.
\end{proof}

This completes all the ingredients for the proof of Proposition~\ref{prop:wrapperlocalization}:

\propwrapperlocalization*
\begin{proof}
    By the four steps above, the solution set of an S-unit equation in a $\Zpe[X_1^{\pm}, \ldots, X_N^{\pm}]$-module $\mM$ can be written as a finite positive Boolean combination of affine transformation of solutions set of S-unit equations~\eqref{eq:Anuvarphi} over $\tmA$-modules $\tmV$.
    Take the ring $\tmA$, the elements $A_1, \ldots, A_N \in \tmA$, and the $\tmA$-module $\tmV$.
    We show that they satisfy the properties~(i)-(v).
    Property~(i) follows from Lemma~\ref{lem:local}.
    Property~(ii) follows from the definition of $\tmA$.
    Property~(iii) follows from Lemma~\ref{lem:AYfgmod}, since finite generation does not change upon taking quotients or submodules.
    For property~(iv), recall that for any $k \in \N$, the elements $B_1^{p^k}, B_1^{-p^k} \ldots, B_N^{p^{k}}, B_N^{-p^{k}}$ generate $\mR_{\ell}$ as a $\Zpe(\oX)$-algebra.
    Consequently for any $k \in \N$, their projections $A_1^{p^k}, A_1^{-p^k} \ldots, A_N^{p^{k}}, A_N^{-p^{k}}$ generate $\tmA = \mR_{\ell}/\tfrq^t$ as a $\Zpe(\oX)$-algebra.
    Property~(v) follows from the fact that $\tmA$ contains $\Zpe(\oX)$ and that $\tmV$ is finitely generated as a $\Zpe(\oX)$-module, which is a consequence of Lemma~\ref{lem:AYfgmod}.
\end{proof}

By Proposition~\ref{prop:wrapperlocalization} and Proposition~\ref{prop:internormal}, we can now focus on proving $p$-normality of the solution set of an S-unit equation in the $\tmA$-module $\tmV$.

\subsection{Reduction to $\mA$ acting on free $\Zpe(\oX)$-modules}\label{subsec:free}

In this subsection we further reduce solving S-unit equations over the $\tmA$-module $\tmV$, to solving S-unit equations over $\mA$-modules $\mV$, where $\mA$ is some $\Zpi(\oX)$-algebra and $\mV$ is free as a $\Zpi(\oX)$-module (but not necessarily free as an $\mA$-module).
More precisely, we will show the following:

\begin{restatable}{prop}{propwrapperfreeness}\label{prop:wrapperfreeness}
    Let $\mZ$ be the solution set of an S-unit equation over an $\tmA$-module $\tmV$, where $\tmA, \tmV$ satisfy the properties in Proposition~\ref{prop:wrapperlocalization}.
    Then $\mZ$ can be effectively written as a finite intersection $\bigcap_j \mZ_{j}$, where each $\mZ_{j}$ is the solution set of an S-unit equation over some $\mA$-module $\mV$, satisfying
    \begin{enumerate}[nosep, label = (\roman*)]
        \item $\mA$ is local, its maximal ideal $\frm$ satisfies $\frm^t = 0$ for some $t \geq 1$.
        \item $\mA$ is effectively represented as a $\Zpi(\oX)$-algebra for some $i \in \N$. 
        \item As a $\Zpi(\oX)$-module, $\mA$ is finitely generated.
        \item As a $\Zpi(\oX)$-module, $\mV$ is isomorphic to $\Zpi(\oX)^d$ for some $d \in \N$.
        In particular, every element in $\mA$ acts as a $\Zpi(\oX)$-linear transformation on $\mV \cong \Zpi(\oX)^d$.
    \end{enumerate}
\end{restatable}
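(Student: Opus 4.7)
The plan is to decompose the $\tmA$-module $\tmV$ into a finite collection of $\tmA$-modules each of which is free over some $\Z_{/p^i}(\oX)$, and then to observe that the S-unit equation over $\tmV$ is equivalent to the conjunction of S-unit equations over these pieces, whose solution sets intersect to give $\mZ$. The fundamental issue is that, as a finitely generated module over the PIR $\Zpe(\oX)$, the structure theorem gives $\tmV \cong \bigoplus_l \Z_{/p^{i_l}}(\oX)$ where the $i_l$ may vary, so $\tmV$ is not itself free over any single $\Z_{/p^i}(\oX)$; the task is to separate the different torsion levels in an $\tmA$-equivariant fashion.

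First I would apply the stabilization process (Lemma~\ref{lem:stableffective}) and the block-diagonalization process (Proposition~\ref{prop:blockdiagonal}) to realize $\tmV$ as an $\tmA$-equivariant submodule of $\bigoplus_j \mV_j$, where each $\mV_j$ is free of finite rank over a single $\Z_{/p^{i_j}}(\oX)$ with $i_j \leq e$. Block-diagonalization aligns bases with the filtration by $p^i$-torsion submodules $\tmV[p^i]$ and by $p^i$-divisible submodules $p^i\tmV$, producing pieces of homogeneous torsion level; stabilization ensures this can be carried out effectively. For each such $\mV_j$, I would define $\mA_j \coloneqq \tmA / p^{i_j}\tmA$; since $p^{i_j}$ annihilates $\mV_j$, the $\tmA$-action factors through $\mA_j$. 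One then verifies: (i) $\mA_j$ is local with maximal ideal $\frp/p^{i_j}\tmA$ of nilpotency index at most $t$, being a quotient of the local ring $\tmA$; (ii) $\mA_j$ is a $\Z_{/p^{i_j}}(\oX) = \Zpe(\oX)/\gen{p^{i_j}}$-algebra; (iii) $\mA_j$ is finitely generated over $\Z_{/p^{i_j}}(\oX)$, inheriting this from $\tmA$ over $\Zpe(\oX)$; and (iv) $\mV_j$ is free over $\Z_{/p^{i_j}}(\oX)$ by construction. The images of $A_1, \ldots, A_N$ in $\mA_j$ remain invertible, so each S-unit equation is well-posed in $(\mA_j, \mV_j)$.

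The translation is then immediate: by $\tmA$-equivariance and injectivity of the embedding $\tmV \hookrightarrow \bigoplus_j \mV_j$, the equation $\sum_l A_1^{z_{l1}} \cdots A_N^{z_{lN}} v_l = v_0$ in $\tmV$ is equivalent to the conjunction of the corresponding equations (with the projected data $v_l^{(j)}, v_0^{(j)}$) in each $\mV_j$, so that $\mZ = \bigcap_j \mZ_j$. The main obstacle is the $\tmA$-equivariant embedding step itself: an $\tmA$-invariant direct sum decomposition of $\tmV$ into torsion-homogeneous summands need not exist, because nilpotent elements of $\tmA$ can intertwine torsion heights (for example, sending a $p$-torsion generator to a $p^2$-divisible element plus itself). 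Block-diagonalization circumvents this by enlarging $\tmV$ to a direct sum of free $\Z_{/p^i}(\oX)$-modules, exploiting the fact that over the PIR $\Zpe(\oX)$ every finitely generated module embeds into a free module of finite rank; the delicate part is to carry this embedding through the $\tmA$-action while keeping track of computational effectivity via the stabilization procedure.
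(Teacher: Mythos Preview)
Your overall strategy matches the paper's: use block-diagonalization (Proposition~\ref{prop:blockdiagonal}) to split $\tmV$ into $\tmA$-stable pieces of homogeneous $p$-torsion, then pass to $\mA_j \coloneqq \tmA/p^{i_j}\tmA$ on each piece and verify (i)--(iv). However, your description of the mechanism contains a misconception that would leave a gap if carried out as written.

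Block-diagonalization does \emph{not} enlarge $\tmV$ or embed it into a larger free module; it yields a genuine direct-sum decomposition $\tmV = R\tmV_1 \oplus \cdots \oplus R\tmV_e$ as an $\tmA$-module, with $R\tmV_i \cong \Z_{/p^i}(\oX)^{d_i}$. Concretely, Proposition~\ref{prop:blockdiagonal} supplies $\ell \in \N$ and $R \in \Aut(\tmV)$ such that every $R^{-1}A_j^{p^{\ell}}R$ is block-diagonal with respect to the structure-theorem decomposition $\tmV = \bigoplus_i \Z_{/p^i}(\oX)^{d_i}$; this is merely a change of basis inside $\tmV$. The step you are missing is the bridge from ``the matrices $A_j^{p^{\ell}}$ preserve each summand'' to ``all of $\tmA$ preserves each summand'': this is precisely property~(iv) of Proposition~\ref{prop:wrapperlocalization}, which says that $A_1^{\pm p^{\ell}}, \ldots, A_N^{\pm p^{\ell}}$ already generate $\tmA$ as a $\Zpe(\oX)$-algebra. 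The stabilization lemma (Lemma~\ref{lem:stableffective}) is not an effectivity device for block-diagonalization; rather, it is what establishes property~(iv) upstream, and that property is consumed here. Your alternative route via an $\tmA$-equivariant embedding into a larger free module would require extending the $\tmA$-action compatibly to that module, which is neither automatic nor what the paper does.
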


\paragraph{Step 1: decomposition of $\tmV$ as $\Zpe(\oX)$-module.}

Recall that $\Zpe(\oX)$ is a principal ideal ring (PIR) whose only ideals are $\gen{1}, \gen{p}, \gen{p^2}$, $\ldots$, $\gen{p^{e-1}}, \gen{p^{e}} = \{0\}$.
This gives us a characterization of the structure of $\tmV$ as a $\Zpe(\oX)$-module:

\begin{lem}[{Structure theorem of finitely generated module over a PIR~\cite[Theorem~15.33]{brown1993matrices}}]\label{lem:PIR}
    As a $\Zpe(\oX)$-module, $\tmV$ can be effectively decomposed as a direct sum
        \begin{equation}\label{eq:VdecomposePIR}
        \tmV = \Zp(\oX)^{d_1} \oplus \Zpt(\oX)^{d_2} \oplus \cdots \oplus \Zpe(\oX)^{d_e}
        \end{equation}
    for some $d_1, \ldots, d_e \in \N$.
    Here, the $\Zpe(\oX)$-module $\Zpi(\oX), i = 1, \ldots, e$, is equal to the quotient $\Zpe(\oX)/p^i\Zpe(\oX)$.
\end{lem}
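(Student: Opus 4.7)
The plan is to reduce the statement to the classical structure theorem for finitely generated modules over a principal ideal ring (PIR), and then to realize the decomposition effectively via a Smith normal form computation over $\Zpe(\oX)$.

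First I would establish that $\tmV$ admits a finite presentation as a $\Zpe(\oX)$-module. By Proposition~\ref{prop:wrapperlocalization}(iii), $\tmA$ is finitely generated over $\Zpe(\oX)$, and by~(v), $\tmV$ is finitely presented over $\tmA$; composing these gives a finite set of $\Zpe(\oX)$-generators of $\tmV$. Because $\Zpe(\oX)$ is Noetherian---its only ideals are $\langle 1\rangle \supsetneq \langle p\rangle \supsetneq \cdots \supsetneq \langle p^e\rangle = 0$---the module of relations among these generators is itself finitely generated, yielding an explicit presentation matrix $M \in \Zpe(\oX)^{m \times k}$.

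Next I would invoke the structure theorem from~\cite{brown1993matrices}, which decomposes any finitely generated module over a PIR as a direct sum of cyclic modules $R/I$. Since every proper ideal of $\Zpe(\oX)$ equals $\langle p^i\rangle$ for some $i \in \{1, \ldots, e\}$, every nontrivial cyclic summand has the form $\Zpe(\oX)/\langle p^i\rangle = \Zpi(\oX)$, with the case $i = e$ recovering the free summand $\Zpe(\oX)$ itself. Grouping identical summands then yields the claimed direct sum $\bigoplus_{i=1}^{e} \Zpi(\oX)^{d_i}$.

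For effectiveness I would run the Smith normal form reduction on $M$. Every nonzero element of $\Zpe(\oX)$ can be written uniquely as $u \cdot p^v$ with $u$ a unit and $v \in \{0, 1, \ldots, e-1\}$, where $v$ is directly computable from a reduced representation $f/g$ with $p \nmid g$. The usual sequence of row and column operations---swap to bring an entry of minimum $p$-adic valuation to the pivot, then use unit multiples to clear its row and column---terminates with a diagonal matrix $\mathrm{diag}(p^{i_1}, \ldots, p^{i_r})$ padded by zero rows and columns; each entry $p^{i_s}$ with $i_s \geq 1$ contributes a summand $\Zpe(\oX)/\langle p^{i_s}\rangle$, while the excess zero rows and columns contribute free summands, from which the multiplicities $d_1, \ldots, d_e$ and an explicit change of basis realizing the decomposition can be read off. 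The main (and essentially only) obstacle is bookkeeping: ensuring every arithmetic step is carried out effectively inside the localization $\Zpe(\oX)$, which reduces to arithmetic in $\Zpe[\oX]$ (routine via Gr\"obner bases over $\Zpe$) together with computing $p$-adic valuations of polynomials, both of which are standard.
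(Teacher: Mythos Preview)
Your proposal is correct, but note that the paper does not supply its own proof of this lemma: it is stated as a direct citation of~\cite[Theorem~15.33]{brown1993matrices} and left unproved. Your sketch---finite presentation over the Noetherian ring $\Zpe(\oX)$, followed by the cyclic decomposition from the cited structure theorem, with effectiveness via Smith normal form exploiting the computable $p$-adic valuation on $\Zpe(\oX)$---is exactly the standard argument one would expect the cited reference to contain, so there is no genuine divergence to discuss.
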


If the action of $\tmA$ stabilizes each component $\Zpi(\oX)^{d_i}$ in the decomposition of the $\Zpe(\oX)$-module $\tmV$, then Equation~\eqref{eq:VdecomposePIR} is also a decomposition of $\tmV$ as an $\tmA$-module.
In this case, Proposition~\ref{prop:wrapperfreeness} easily follows by taking $\mA \coloneqq \tmA/p^i\tmA$ and $\mV \coloneqq \Zpi(\oX)^{d_i}$.
However, in general $\tmA$ does not stabilize each $\Zpi(\oX)^{d_i}$, that is, $\tmA \cdot \Zpi(\oX)^{d_i} \not\subseteq \Zpi(\oX)^{d_i}$.
Therefore Equation~\eqref{eq:VdecomposePIR} is not a decomposition of $\tmV$ as an $\tmA$-module.
The main idea of this subsection is to find a different decomposition of the $\Zpe(\oX)$-module $\tmV$ by changing the ``basis'', so that the components of the new decomposition $\tmV = \Zp(\oX)^{d_1} \oplus \Zpt(\oX)^{d_2} \oplus \cdots \oplus \Zpe(\oX)^{d_e}$ are stabilized by $\tmA$.
To achieve this, we will exploit property~(iv) of $\tmA$ from Proposition~\ref{prop:wrapperlocalization}, so that we can freely take $p^{\N}$-th power of the generators $A_i$.

For any $\Zpe(\oX)$-module $\mN$, let $\End(\mN)$ denote the set of all $\Zpe(\oX)$-linear maps from $\mN$ to $\mN$, then $\End(\mN)$ is a $\Zpe(\oX)$-algebra.
Furthermore, let $\Aut(\mN)$ denote the set of all \emph{invertible} $\Zpe(\oX)$-linear maps from $\mN$ to $\mN$.
In particular, if $\mN = \Zpe(\oX)^d$, then $\End(\mN)$ and $\Aut(\mN)$ are respectively the matrix sets $\M_{d \times d}(\Zpe(\oX))$ and $\GL_{d}(\Zpe(\oX))$.
In general, for $\tmV = \Zp(\oX)^{d_1} \oplus \cdots \oplus \Zpe(\oX)^{d_e}$, the structures of $\End(\tmV)$ and $\Aut(\tmV)$ are more complicated.

Since $A_1, \ldots, A_N \in \tmA$ are invertible, and $\tmV$ is an $\tmA$-module, each $A_i, i = 1, \ldots, N$, can be considered as an element of $\Aut(\tmV)$ by the map
\begin{align*}
    A_i \colon \tmV & \rightarrow \tmV, \\
    v & \mapsto A_i \cdot v.
\end{align*}
Furthermore, $A_1, \ldots, A_N$ commute pairwise. 

As in Lemma~\ref{lem:PIR}, for $i = 1, \ldots, e$, let $\{\bepsilon_{i1}, \ldots, \bepsilon_{i d_i}\}$ be a $\Zpi(\oX)$-basis of the component $\Zpi(\oX)^{d_i}$ of $\tmV$ in the decomposition~\eqref{eq:VdecomposePIR}.
Then, the set $\{\bepsilon_{11}, \ldots, \bepsilon_{1 d_1}, \ldots, \bepsilon_{e1}, \ldots, \bepsilon_{e d_e}\}$ generates $\tmV$ as a $\Zpe(\oX)$-module.
For any $f \in \End(\tmV)$ and each $k = 1, \ldots, e; l = 1, \ldots, d_k$, the element $f \cdot \bepsilon_{k l}$ can be written uniquely as a sum
\begin{equation}\label{eq:matform}
f \cdot \bepsilon_{k l} = \sum_{i = 1}^e \sum_{j = 1}^{d_i} z_{ij, kl} \bepsilon_{ij},
\end{equation}
where 
\begin{equation}\label{eq:memcond}
    z_{ij, kl} \in \Z_{/p^i}(\oX) \quad \text{ for } i = 1, \ldots, e,\; j = 1, \ldots, d_i.
\end{equation}
Furthermore, since 
\[
p^{k} \sum_{i = 1}^e \sum_{j = 1}^{d_i} z_{ij, kl} \bepsilon_{ij} = p^{k} (f \cdot \bepsilon_{kl}) = f \cdot (p^k \bepsilon_{kl}) = 0,
\]
we must have 
\begin{equation}\label{eq:divcond}
    p^{i - k} \mid z_{ij, kl} \quad \text{ for all } i > k,\; j = 1, \ldots, d_i.
\end{equation}

It is easy to see that any tuple $\left(z_{ij, kl}\right)_{i = 1, \ldots, e; j = 1, \ldots, d_i; k = 1, \ldots, e; l = 1, \ldots, d_k}$ satisfying~\eqref{eq:memcond} and \eqref{eq:divcond} defines an element $f \in \End(\tmV)$, by extending $\Zpe(\oX)$-linearly the Definition~\eqref{eq:matform} from the basis $\{\bepsilon_{11}, \ldots, \bepsilon_{1 d_1}, \ldots, \bepsilon_{e1}, \ldots, \bepsilon_{e d_e}\}$ to the whole module $\tmV$.

Note that $\left(z_{ij, kl}\right)_{i = 1, \ldots, e; j = 1, \ldots, d_i; k = 1, \ldots, e; l = 1, \ldots, d_k}$, are the coefficients of $f$ in its matrix form under the basis $\{\bepsilon_{11}, \ldots, \bepsilon_{1 d_1}, \ldots, \bepsilon_{e1}, \ldots, \bepsilon_{e d_e}\}$.
From now on, for any $f \in \End(\tmV)$ and $i,k \in \{1, \ldots, e\}$, we will let 
\[
f_{ik} \coloneqq \left(z_{ij, kl}\right)_{j = 1, \ldots, d_i; l = 1, \ldots, d_k} \in \M_{d_i \times d_k}(\Zpi(\oX))
\]
denote the $(i,k)$-th block of $f$.
Taking into account the divisibility constraints~\eqref{eq:divcond}, the map $f$ can be written as a block matrix
\begin{equation}\label{eq:deffinM}
    \begin{pmatrix}
        M_{11} & M_{12} & M_{13} & \cdots & M_{1e} \\
        p M_{21} & M_{22} & M_{23} & \cdots & M_{2e} \\
        p^2 M_{31} & p M_{32} & M_{33} & \cdots & M_{3e} \\
        \vdots & \vdots & \vdots & \ddots & \vdots \\
        p^{e-1} M_{e1} & p^{e-2} M_{e2} & p^{e-3} M_{e3} & \cdots & M_{ee} \\
    \end{pmatrix},
\end{equation}
where $M_{ik} \in \M_{d_i \times d_k}(\Zpi(\oX))$ for all $i,k \in \{1, \ldots, e\}$.
Then, $f_{ik} = p^{i-k} M_{ik}$ for $i > k$, and $f_{ik} = M_{ik}$ for $i \leq k$.
Note that for any $f \in \End(\tmV)$, the blocks $f_{ik}$ are uniquely defined, but the choices of matrices $M_{ik}, i > k$ are not unique.

For two endomorphisms $f, g \in \End(\tmV)$, their composition $g f$ can be computed by multiplying their corresponding matrices.
That is, 
\[
(g f)_{ik} = \sum_{j = 1}^e \left(g_{ij} f_{jk} \mod p^i \right).
\]
Note that although $f_{jk}$ has coefficients in $\Z_{/p^j}(\oX)$, the expression $\left(g_{ij} f_{jk} \mod p^i \right)$ is well defined even when $i > j$.
Indeed, when $i > j$, we have $p^{i-j} \mid g_{ij}$, so 
\[
f'_{jk} \equiv f''_{jk} \mod p^j \implies g_{ij} f'_{jk} \equiv g_{ij} f''_{jk} \mod p^i.
\]

The next lemma shows that if $f$ is invertible, then so are its diagonal blocks.

\begin{lem}
    If $f \in \Aut(\tmV)$ then $M_{11} \in \GL_{d_1}(\Zp(\oX)),\; M_{22} \in \GL_{d_2}(\Zpt(\oX)),\; \ldots,\; M_{ee} \in \GL_{d_e}(\Zpe(\oX))$.
\end{lem}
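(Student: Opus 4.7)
The plan is to reduce modulo $p$ and exploit the block-upper-triangular structure that appears. Since $f \in \Aut(\tmV)$ is $\Zpe(\oX)$-linear, it sends $p\tmV$ into itself, and therefore descends to an $\F_p(\oX)$-linear automorphism $\bar f$ of the quotient $\tmV / p\tmV$. Using the decomposition~\eqref{eq:VdecomposePIR}, I would first observe that $p\tmV = 0 \oplus p\Zpt(\oX)^{d_2} \oplus \cdots \oplus p\Zpe(\oX)^{d_e}$, so $\tmV / p\tmV \cong \F_p(\oX)^{d_1 + d_2 + \cdots + d_e}$, with basis given by the images $\overline{\bepsilon}_{ij}$ of the generators $\bepsilon_{ij}$.

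Next, I would compute the matrix of $\bar f$ in this basis simply by reducing the block matrix~\eqref{eq:deffinM} of $f$ modulo $p$. Every block strictly below the main diagonal carries an explicit factor of $p$ (namely $p^{i-k} M_{ik}$ for $i > k$), and hence vanishes modulo $p$. The matrix of $\bar f$ is therefore block upper triangular with diagonal blocks $M_{11} \bmod p,\; M_{22} \bmod p,\; \ldots,\; M_{ee} \bmod p$. Since $\bar f$ is an automorphism of $\tmV/p\tmV$, its matrix is invertible over $\F_p(\oX)$; a block-upper-triangular matrix is invertible exactly when all its diagonal blocks are, so each $M_{ii} \bmod p$ lies in $\GL_{d_i}(\F_p(\oX))$.

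Finally, I would lift invertibility from the residue field back up to $\Zpi(\oX)$. The ring $\Zpe(\oX)$ is local with maximal ideal $(p)$, so each quotient $\Zpi(\oX) = \Zpe(\oX)/p^i\Zpe(\oX)$ is also local, with maximal ideal $(p)$ and residue field $\F_p(\oX)$. Over any commutative local ring a square matrix is invertible if and only if its reduction modulo the maximal ideal is invertible (equivalently, its determinant is a unit). Applying this to each $M_{ii}$ gives $M_{ii} \in \GL_{d_i}(\Zpi(\oX))$, as required. Note that for $i = 1$ the lifting step is vacuous since $\Zp(\oX) = \F_p(\oX)$.

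The argument is essentially bookkeeping and presents no real obstacle. The only points that deserve care are the correct identification of $\tmV/p\tmV$ as a single $\F_p(\oX)$-vector space, so that the block structure of~\eqref{eq:deffinM} survives passing to the quotient intact, and the invocation of the standard local-ring criterion for matrix invertibility in the final step.
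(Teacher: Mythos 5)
Your proof is correct, and its first half coincides with the paper's: both reduce modulo $p$ and exploit the fact that every strictly lower block of~\eqref{eq:deffinM} carries an explicit factor of $p$, so that the diagonal blocks become invertible over $\F_p(\oX)$ (the paper gets this by computing $f\cdot f^{-1}$ blockwise modulo $p$ and reading off $M_{ii}M'_{ii}\equiv I$, you by noting that $\bar f$ is an automorphism of $\tmV/p\tmV$ with block-upper-triangular matrix; these are the same observation in two guises). Where you genuinely diverge is the lifting step: the paper proves by hand that invertibility of $M_{ii}$ modulo $p^a$ implies invertibility modulo $p^{a+1}$ (writing $MM' = I + p^aA$ and correcting $M'$ by $-p^aM'A$), and iterates over $a$; you instead invoke the standard fact that over the local ring $\Zpi(\oX)$ a square matrix is invertible if and only if its reduction modulo the maximal ideal $(p)$ is invertible, equivalently its determinant is a unit. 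Your route is shorter and cleaner, at the cost of appealing to the determinant/adjugate criterion rather than the self-contained Newton-type computation the paper prefers; both are perfectly valid, and your use of the PIR/local structure of $\Zpe(\oX)$ established in Subsection~\ref{subsec:localize} is legitimate. The only point worth making explicit in your write-up is why $\bar f$ is an automorphism rather than merely an endomorphism of $\tmV/p\tmV$: either note that $f^{-1}$ also preserves $p\tmV$ and descends to an inverse, or that a surjective endomorphism of a finite-dimensional $\F_p(\oX)$-vector space is bijective; this is immediate but currently asserted without comment.
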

\begin{proof}
    Suppose $f \in \Aut(\tmV)$.
    Write $f^{-1}$ as the matrix.
    \[
    \begin{pmatrix}
        M'_{11} & M'_{12} & M'_{13} & \cdots & M'_{1e} \\
        p M'_{21} & M'_{22} & M'_{23} & \cdots & M'_{2e} \\
        p^2 M'_{31} & p M'_{32} & M'_{33} & \cdots & M'_{3e} \\
        \vdots & \vdots & \vdots & \ddots & \vdots \\
        p^{e-1} M'_{e1} & p^{e-2} M'_{e2} & p^{e-3} M'_{e3} & \cdots & M'_{ee} \\
    \end{pmatrix}.
    \]
    Then computing the product $f \cdot f^{-1}$ modulo $p$, we obtain $M_{11} M'_{11} \equiv I \mod p, \ldots, M_{ee} M'_{ee} \equiv I \mod p$.
    This shows that $M_{11}, \ldots, M_{ee}$ are invertible modulo $p$.
    We claim that for any $a \geq 1$, invertibility of a matrix $M \in \GL_d(\Zpi(\oX))$ modulo $p^a$ implies invertibility modulo $p^{a+1}$.
    Indeed, suppose $M M' \equiv I \mod p^a$, then writing $MM' = I + p^a A$, we have
    \[
    M(M' - p^a M' A) = MM' - p^a MM' A = (I + p^a A) - p^a(I + p^a A) A \equiv I \mod p^{2a}.
    \]
    Therefore $M$ is invertible modulo $p^{a+1} \mid p^{2a}$.
    Applying this iteratively for $a = 1, 2, \ldots$, we conclude that $M_{11} \in \GL_{d_1}(\Zp(\oX)), \ldots, M_{ee} \in \GL_{d_e}(\Zpe(\oX))$.
\end{proof}

The main idea of proving Proposition~\ref{prop:wrapperfreeness} is to simultaneously block-diagonalize the matrix forms of $A_1, \ldots, A_N$, up to taking $p$-th powers.
There are two main difficulties.
The first is that the matrix blocks do not commute, nor do they have the same dimension.
The second is that the diagonalization can only use conjugators in $\Aut(\tmV)$, which have certain divisibility constraints for its blocks in the lower-left part of the matrix.
For these reasons, we will perform the diagonalization step by step on its blocks.
First, we simultaneously lower-triangularize the matrices $A_1, \ldots, A_N$.
Some of the arguments used in the following step will also appear in the next subsection.

\paragraph{Step 2: simultaneous block lower-triangularization.}

In this step, we simultaneously lower-triangularize the matrices $A_1, \ldots, A_N$, up to taking their $p^{\N}$-th powers.
More precisely, we will show the following.

\begin{restatable}[simultaneous block lower-triangularization]{prop}{proptriangularize}\label{prop:triangularize}
    Let $A_1, \ldots, A_N$ be pairwise commuting elements of $\Aut(\tmV)$.
    Then there exist effectively computable $\ell \in \N$ and $R \in \Aut(\tmV)$, such that $R^{-1}A_1^{p^{\ell}}R, \ldots, R^{-1}A_N^{p^{\ell}}R$ are block lower-triangular (i.e.\ their $(j,k)$-th blocks are zero for all $1 \leq j < k \leq e$).
\end{restatable}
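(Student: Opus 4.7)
The plan is to prove Proposition~\ref{prop:triangularize} by induction on the number of levels $e$. The base case $e=1$ is trivial, since there is only a single block. For the inductive step, it suffices to produce $\ell \in \N$ and a $\Zpe(\oX)$-submodule $F' \subseteq \tmV$ such that (i) $F'$ is a direct summand of $\tmV$ isomorphic to $\bigoplus_{i \geq 2} \Zpi(\oX)^{d_i}$, with complement isomorphic to $\Zp(\oX)^{d_1}$, and (ii) $A_s^{p^\ell}(F') \subseteq F'$ for each $s$. Given such $F'$, pick $R_0 \in \Aut(\tmV)$ sending the standard summand $F_2 = \bigoplus_{i \geq 2} \Zpi(\oX)^{d_i}$ onto $F'$; then $R_0^{-1} A_s^{p^\ell} R_0$ has vanishing $(1,k)$-blocks for every $k \geq 2$, and its restriction to $F_2$ is a commuting family of $\Zpe(\oX)$-linear automorphisms of an $(e-1)$-level module, to which the induction hypothesis applies. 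Extending the resulting $R_1$ by the identity on a chosen complement gives the desired $R \in \Aut(\tmV)$ and exponent $p^{\ell + \ell'}$.

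To construct $F'$, reduce modulo $p$. The quotient $\tmV/p\tmV$ is an $\F_p(\oX)$-vector space on which the $A_s$ act as commuting, block upper-triangular invertible matrices with respect to the splitting $\bar{\tmV}_1 \oplus \bar F_2$, where $\bar F_2 = \bigoplus_{i \geq 2} \bar{\tmV}_i$ has dimension $d_2 + \cdots + d_e$. Any complement to $\bar{\tmV}_1$ has the form $W_\phi = \{(\phi(y), y) : y \in \bar F_2\}$ for an $\F_p(\oX)$-linear map $\phi \colon \bar F_2 \to \bar{\tmV}_1$, and $A_s$-invariance of $W_\phi$ is equivalent to a Sylvester-type equation $\bar M_{11}^{(s)} \phi - \phi \bar D^{(s)} = -\bar C^{(s)}$, where $\bar D^{(s)}$ and $\bar C^{(s)}$ are the obvious block entries of $A_s \bmod p$. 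Replacing $A_s$ by $A_s^{p^\ell}$ allows us to exploit the Frobenius endomorphism of $\F_p(\oX)$: the unipotent parts of the diagonal blocks $\bar M_{ii}^{(s)}$ disappear once $p^\ell$ exceeds their nilpotency index, leaving only semisimple diagonal blocks that generate a commutative semisimple subalgebra. Moreover, the ``upper-triangle'' entries of $A_s^{p^\ell} \bmod p$ admit a Frobenius-like closed form, since paths through the matrix involving two or more ``lower'' transitions contribute weights with at least two factors of $p$, which vanish by $p^e = 0$; this reduces each Sylvester obstruction to an explicit polynomial expression that can be killed either by already being zero (when the relevant semisimple parts coincide) or by a direct conjugation in $\Aut(\tmV/p\tmV)$. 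The pairwise commutativity of the $A_s$'s guarantees that a single uniform $\ell$ and $\phi$ works simultaneously for the whole family.

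Finally, the invariant complement $W \subseteq \tmV/p\tmV$ is lifted to a $\Zpe(\oX)$-submodule $F' \subseteq \tmV$ by Hensel-type lifting of the associated idempotent projection, using that $(p) \subset \Zpe(\oX)$ is nilpotent. Effectivity of $\ell$ and $R$ follows from the effectivity of the underlying linear algebra over $\F_p(\oX)$ and $\Zpe(\oX)$, together with the effective computability of nilpotency indices. The hard step will be the uniform control of the Sylvester obstructions across all $A_s$ simultaneously: solving the problem for a single $A_s$ is a textbook Frobenius argument in the spirit of Example~\ref{exmpl:Derksen}, but handling the whole commuting family at once requires interlocking the Frobenius reduction with the commutation relations so that a single exponent $\ell$ suffices. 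This is precisely the role of the ``pseudo Frobenius splitting via Hensel lifting'' foreshadowed in the overview and developed in Subsection~\ref{subsec:Frob}.
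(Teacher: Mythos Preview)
Your inductive scheme on $e$ differs from the paper's, which iterates Lemma~\ref{lem:singletriangularization} over all block positions $(b,c)$ with $b<c$ and all target valuations $a=0,\ldots,e-1$: at each step it conjugates by an elementary $R$ with a single off-diagonal block and solves one Sylvester congruence modulo $p$ via Lemmas~\ref{lem:iterphi} and~\ref{lem:invertphi}. Both routes ultimately rest on the same key fact (Lemma~\ref{lem:invertphi}): the simultaneous system $\varphi_i^{p^{\ell}}(Q)=\varphi_i^{p^{\ell}-1}(M_i)$ is solvable for commuting $\varphi_i$ satisfying $\varphi_i(M_j)=\varphi_j(M_i)$. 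You correctly flag this as the hard step but defer it; note that the tool you need is Lemma~\ref{lem:invertphi}, proved right alongside Lemma~\ref{lem:singletriangularization}, not the material of Subsection~\ref{subsec:Frob}, which is a later and logically separate application of the same lemmas.

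The genuine gap is your lifting step. Hensel lifting of an idempotent $\bar e\in\End(\tmV/p\tmV)$ produces an idempotent in $\End(\tmV)$, but it will not in general commute with the $A_s^{p^{\ell}}$: the lift formula does not preserve commutation unless it is carried out inside a subalgebra already commuting with the $A_s^{p^{\ell}}$, and there is no reason for your projection onto $W_\phi$ to lie in the image of such a subalgebra. So an invariant complement modulo $p$ does not lift for free to one over $\Zpe(\oX)$. What is actually required is to re-solve a Sylvester congruence at each level $p^a$ for $a=1,\ldots,e-1$, possibly enlarging $\ell$ each time, and this is exactly the paper's iteration over $a$ in the proof of Proposition~\ref{prop:triangularize}. (Your remark that paths with two or more ``lower'' transitions ``vanish by $p^e=0$'' is also off: two lower transitions contribute only a factor $p^2$, which is nonzero for $e>2$; presumably you meant to reason modulo $p$, not modulo $p^e$.)
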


To prove Proposition~\ref{prop:triangularize}, we will increase the $p$-adic valuation of the upper-right blocks step by step.
Let $a \in \N$ and $1 \leq b < c \leq e$. 
We say that an element $f \in \Aut(\tmV)$ is \emph{$(a,b,c)$-triangular}, if it satisfies
\begin{align}\label{eq:abctriangular}
    p^{a} \mid f_{jk} & \text{ for all } 1 \leq j < k \leq e, \nonumber\\
    p^{a+1} \mid f_{jk} & \text{ for all } 1 \leq j < k \leq e, \; k \geq c +1, \nonumber\\
    p^{a+1} \mid f_{jk} & \text{ for all } b<j< c, \; k = c.
\end{align}
This is in addition to the divisibility constraints~\eqref{eq:divcond} that all elements of $\End(\tmV)$ are subject to.
In this case, $f$ can be written in the matrix form
\[
    \begin{pmatrix}
        M_{11} & p^a M_{12} & \cdots & & & p^a M_{1c} & p^{a+1} M_{1(c+1)} & \cdots \\    
         &  \ddots & & & & \vdots & \vdots \\
         &  & M_{bb} & & & p^a M_{bc} & p^{a+1} M_{b(c+1)} & \cdots \\
         & & & \ddots & & p^{a+1} M_{(b+1)c} & p^{a+1} M_{(b+1)(c+1)} & \cdots \\
        & & & & \ddots & \vdots & \vdots \\
        &*&&&& M_{cc} & p^{a+1} M_{c(c+1)} & \cdots \\
        &&&&& & \ddots & \cdots \\
    \end{pmatrix},
\]
where $*$ denotes entries satisfying the divisibility constraints~\eqref{eq:divcond}.
In particular, every entry represented by $*$ is divisible by $p$.
Note that if $b+1 = c$ then the term $p^{a+1} M_{(b+1)c}$ is overwritten by $M_{cc}$.

\begin{obs}
    The composition of two $(a,b,c)$-triangular automorphisms is $(a,b,c)$-triangular.
\end{obs}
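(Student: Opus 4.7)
The plan is to expand the block-matrix product $(fg)_{ik} = \sum_{m=1}^{e} f_{im}\, g_{mk} \pmod{p^i}$ and verify each of the three divisibility clauses in~\eqref{eq:abctriangular} term by term. Throughout I will combine the structural divisibility $p^{j-k}\mid f_{jk},\; p^{j-k}\mid g_{jk}$ for $j>k$ from~\eqref{eq:divcond} with the $(a,b,c)$-triangularity hypotheses on $f$ and $g$. Invertibility of the composition is automatic since $\Aut(\tmV)$ is a group, so I only need to track the $p$-adic valuations of the blocks.

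For the baseline condition, $p^{a}\mid (fg)_{ik}$ for $i<k$, every summand $f_{im}g_{mk}$ has at least one factor of strict upper-triangular type: if $m\le i$ then $m<k$ and condition~1 for $g$ gives $p^{a}\mid g_{mk}$; if $m\ge k$ then $i<m$ and condition~1 for $f$ gives $p^{a}\mid f_{im}$; if $i<m<k$ both apply simultaneously.

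For the second condition, $p^{a+1}\mid(fg)_{ik}$ when $i<k$ and $k\ge c+1$, the key observation is that condition~2 for $g$ promotes $g_{mk}$ to $p^{a+1}$-divisibility for \emph{every} $m<k$ (since $k\ge c+1$), which handles all summands with $m<k$. For $m=k$, condition~2 for $f$ gives $p^{a+1}\mid f_{ik}$; for $m>k$, condition~2 for $f$ gives $p^{a+1}\mid f_{im}$ (since $m>k\ge c+1$) while $g_{mk}$ already carries the factor $p^{m-k}$.

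The third condition, $p^{a+1}\mid(fg)_{ic}$ for $b<i<c$, is the most delicate case and I expect this to be the main obstacle; I will split the sum on $m$ into five subranges. For $m=i$ and $i<m<c$, the hypothesis $i>b$ forces $m>b$, so condition~3 for $g$ applies to $g_{mc}$ and yields $p^{a+1}$. For $m=c$, condition~3 for $f$ applies directly to $f_{ic}$. For $m>c$ the factor $p^{m-c}\mid g_{mc}$ combined with $p^{a}\mid f_{im}$ suffices. The truly subtle subrange is $m<i$: here $m$ may be $\le b$, so condition~3 for $g$ is \emph{not} available; instead I use the structural divisibility $p^{i-m}\mid f_{im}$, which contributes at least one factor of $p$ to $f_{im}$, combined with $p^{a}\mid g_{mc}$ from condition~1 for $g$. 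Assembling all five subranges yields $p^{a+1}\mid(fg)_{ic}$. Finally, reduction modulo $p^i$ does not interfere, since any asserted divisibility by $p^{a+1}$ is trivially satisfied in $\Zpi(\oX)$ once $a+1\ge i$.
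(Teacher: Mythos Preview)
Your proof is correct. The paper states this observation without proof, so you have supplied an explicit block-by-block verification where the paper offers none; your case split on the index $m$ in each of the three clauses of~\eqref{eq:abctriangular}, together with the use of the structural constraint~\eqref{eq:divcond} in the subrange $m<i$ of clause~3, is exactly the right way to handle it.
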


\begin{lem}[single step of triangularization]\label{lem:singletriangularization}
    Let $A_1, \ldots, A_N$ be pairwise commuting elements of $\Aut(\tmV)$, let $a \in \N$ and $1 \leq b < c \leq e$. 
    Suppose that $A_1, \ldots, A_N$ are $(a,b,c)$-triangular.
    Then there exist effectively computable $\ell \in \N$ and $R \in \Aut(\tmV)$, such that $R^{-1}A_1^{p^{\ell}}R, \ldots, R^{-1}A_N^{p^{\ell}}R$ are $(a,b,c)$-triangular and their $(b,c)$-th blocks are divisible by $p^{a+1}$.
\end{lem}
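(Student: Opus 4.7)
The plan is to take the conjugator $R \in \Aut(\tmV)$ of the form $R = I + p^a X$, where $X \in \End(\tmV)$ has its unique nonzero block $X_{bc}$ in position $(b,c)$. Since $b<c$, the matrix product of blocks gives $X^2 = 0$, so $R^{-1} = I - p^a X$ and $R$ automatically lies in $\Aut(\tmV)$ (its diagonal blocks are the identity). I would then expand
\[
R^{-1} A_j^{p^\ell} R = A_j^{p^\ell} + p^a\,[A_j^{p^\ell},\,X] - p^{2a}\,X\,A_j^{p^\ell}\,X
\]
block by block, using the $(a,b,c)$-triangularity of $A_j$ (which transfers to $A_j^{p^\ell}$), and check that all divisibility conditions defining $(a,b,c)$-triangularity are preserved and that the $(b,c)$-block of the conjugate modulo $p^{a+1}$ becomes
\[
(A_j^{p^\ell})_{bc} + p^a\bigl((A_j^{p^\ell})_{bb}\,X_{bc} - X_{bc}\,(A_j^{p^\ell})_{cc}\bigr).
\]
Writing $(A_j^{p^\ell})_{bc} = p^a\,\mu_j^{(\ell)}$, the lemma reduces to finding $\ell$ and $X_{bc}$ such that, for every $j$, the Sylvester congruence
\[
\bar B_j^{(\ell)}\,X_{bc} - X_{bc}\,\bar C_j^{(\ell)} \equiv -\bar\mu_j^{(\ell)} \pmod{p}
\]
holds in the matrix space $E := \M_{d_b \times d_c}(\F_p(\oX))$, where the bars denote reduction modulo $p$ and $\bar B_j^{(\ell)}, \bar C_j^{(\ell)}$ are the reductions of $(A_j^{p^\ell})_{bb}, (A_j^{p^\ell})_{cc}$.

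Next I would use the pairwise commutativity $A_j A_{j'} = A_{j'} A_j$. Matching $(b,c)$-blocks modulo $p^{a+1}$, the $(a,b,c)$-triangular hypothesis kills every contribution from intermediate indices strictly between $b$ and $c$ or past $c$, leaving only the paths through $k=b$ and $k=c$ and producing the cocycle identity
\[
\bar B_j\,\bar\mu_{j'} - \bar\mu_{j'}\,\bar C_j \;=\; \bar B_{j'}\,\bar\mu_j - \bar\mu_j\,\bar C_{j'},
\]
together with its evident analogue after the $p^\ell$-substitution. View left multiplication by $\bar B_j$ and right multiplication by $\bar C_j$ as commuting operators $L_j, R_j \in \End(E)$ and set $\delta_j := L_j - R_j$. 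The commutative $\F_p(\oX)$-subalgebra $\mC \subseteq \End(E)$ generated by the $L_j$'s and $R_j$'s is finite-dimensional, hence Artinian, and decomposes as a finite product $\prod_\alpha \mC_\alpha$ of local factors with an induced splitting $E = \bigoplus_\alpha E_\alpha$. On each component, either some $\delta_{j_\alpha}$ is invertible, or all $\delta_j$ lie in $\rad(\mC_\alpha)$ and are nilpotent with index uniformly bounded by $\dim_{\F_p(\oX)}\mC_\alpha$.

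The crux, and the main obstacle, is to simultaneously solve the $N$ Sylvester equations on every component. For this I would exploit the characteristic-$p$ Frobenius identity, which in our commutative setting gives
\[
\delta_j^{(\ell)} \;=\; L_j^{p^\ell} - R_j^{p^\ell} \;=\; (L_j - R_j)^{p^\ell} \;=\; \delta_j^{p^\ell},
\]
and, from the telescoping factorization together with characteristic $p$, $\bar\mu_j^{(\ell)} = \delta_j^{p^\ell - 1}(\bar\mu_j)$. Choosing $\ell$ so that $p^\ell$ exceeds the common nilpotence bound (which is computable from the dimension of $\mC$), we obtain $\delta_j^{(\ell)}|_{E_\alpha} = 0$ and $\bar\mu_j^{(\ell)}|_{E_\alpha} = 0$ simultaneously on every nilpotent component, so the Sylvester system is trivially satisfied there by setting $X_{bc}|_{E_\alpha} = 0$. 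On each remaining component, some $\delta_{j_\alpha}^{(\ell)}$ acts invertibly, and the unique solution $X_{bc}|_{E_\alpha} := -(\delta_{j_\alpha}^{(\ell)})^{-1}(\bar\mu_{j_\alpha}^{(\ell)})$ to that single equation automatically satisfies the remaining $N-1$ equations, because for any other $j'$,
\[
\delta_{j'}^{(\ell)}(X_{bc}|_{E_\alpha}) = -\delta_{j'}^{(\ell)}(\delta_{j_\alpha}^{(\ell)})^{-1}(\bar\mu_{j_\alpha}^{(\ell)}) = -(\delta_{j_\alpha}^{(\ell)})^{-1}\delta_{j'}^{(\ell)}(\bar\mu_{j_\alpha}^{(\ell)}) = -(\delta_{j_\alpha}^{(\ell)})^{-1}\delta_{j_\alpha}^{(\ell)}(\bar\mu_{j'}^{(\ell)}) = -\bar\mu_{j'}^{(\ell)},
\]
by the cocycle identity and the commutation of $\delta_{j_\alpha}^{(\ell)}$ with $\delta_{j'}^{(\ell)}$. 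Assembling the local solutions yields a single $X_{bc}$, hence an $R = I + p^a X$ with the desired effect. Effectivity of $\ell$ and $R$ follows because the decomposition of $\mC$, detection of invertibility versus nilpotence on each component, and the explicit inversion of $\delta_{j_\alpha}^{(\ell)}$ are all computable over the effective ring $\Zpe(\oX)$ via the submodule membership routines already in use in the preceding subsection.
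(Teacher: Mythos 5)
Your proposal is correct and follows essentially the same route as the paper: the same conjugator $R = I + p^aX$ with a single block in position $(b,c)$, the same reduction (via the block computation of $A_j^{p^\ell}$ modulo $p^{a+1}$) to a simultaneous Sylvester congruence modulo $p$, the same compatibility identity extracted from pairwise commutativity, and the same resolution by decomposing the finite-dimensional commutative operator algebra into local Artinian factors, taking $p^{\ell}$ beyond the nilpotence index, and inverting on the unit components. The paper merely packages your inline steps as separate lemmas (the iterate formulas $\varphi^{p^{\ell}-1}$, $\varphi^{p^{\ell}}$ and the simultaneous solvability lemma), plus a harmless reduction to the case $a<b$.
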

\begin{proof}
    Without loss of generality suppose $a < b$. Otherwise $a \geq b$, so for all $f \in \{A_1, \ldots, A_N\}$, we have $f_{bc} \in p^a \cdot \M_{d_b \times d_c}(\Z_{/p^b}(\oX)) \subseteq p^b \cdot \M_{d_b \times d_c}(\Z_{/p^b}(\oX)) = \{0\}$.
    This means that the $(b,c)$-th blocks of $A_1, \ldots, A_N$ are zero, and hence divisible by $p^{a+1}$.
    So we can take $R$ to be the identity and $\ell = 0$, and there is nothing to prove.

    Let $A_i \in \{A_1, \ldots, A_N\}$.
    Since $A_i$ is $(a,b,c)$-triangular, it can be written in the matrix form
    \[
    \begin{pmatrix}
        M_{i11} & p^a M_{i12} & \cdots & & & p^a M_{i1c} & p^{a+1} M_{i1(c+1)} & \cdots \\    
         &  \ddots & & & & \vdots & \vdots \\
         &  & M_{ibb} & & & p^a M_{ibc} & p^{a+1} M_{ib(c+1)} & \cdots \\
         & & & \ddots & & p^{a+1} M_{i(b+1)c} & p^{a+1} M_{i(b+1)(c+1)} & \cdots \\
        & & & & \ddots & \vdots & \vdots \\
        &*&&&& M_{icc} & p^{a+1} M_{ic(c+1)} & \cdots \\
        &&&&& & \ddots & \cdots \\
    \end{pmatrix}.
    \]
    Let $m \in \N$.
    We claim that $A_i^m$ can be written in the form
    \begin{equation}\label{eq:Aim}
    \begin{pmatrix}
        M'_{i11} & p^a M'_{i12} & \cdots & & & p^a M'_{i1c} & p^{a+1} M'_{i1(c+1)} & \cdots \\    
         &  \ddots & & & & \vdots & \vdots \\
         &  & M'_{ibb} & & & p^a M'_{ibc} & p^{a+1} M'_{ib(c+1)} & \cdots \\
         & & & \ddots & & p^{a+1} M'_{i(b+1)c} & p^{a+1} M'_{i(b+1)(c+1)} & \cdots \\
        & & & & \ddots & \vdots & \vdots \\
        &*&&&& M'_{icc} & p^{a+1} M'_{ic(c+1)} & \cdots \\
        &&&&& & \ddots & \cdots \\
    \end{pmatrix}.
    \end{equation}
    where
    \begin{equation}\label{eq:Mijjcong}
    \left(A_i^{m}\right)_{jj} = M'_{ijj} \equiv M_{ijj}^{m} \mod p, \quad j = 1, \ldots, e,
    \end{equation}
    and $\left(A_i^{m}\right)_{bc} = p^a M'_{ibc}$ with
    \begin{equation}\label{eq:Mibccong}
    M'_{ibc} \equiv \sum_{k=0}^{m-1} M_{ibb}^k M_{ibc} M_{icc}^{m-1-k} \mod p.
    \end{equation}
    Indeed, the congruence~\eqref{eq:Mijjcong} is obvious from the fact that $A_i$ is block upper-triangular modulo $p$.
    (Recall that every entry represented by $*$ is divisible by $p$).
    The congruence~\eqref{eq:Mibccong} follows by induction on $m$ the following way.
    For $m = 1$, the congruence~\eqref{eq:Mibccong} is obvious.
    Suppose~\eqref{eq:Mibccong} holds for $m$, then $A_i^{m+1} \equiv A_i \cdot A_i^{m} \mod p^{a+1}$ yields the recurrence 
    \begin{align*}
    \left(A_i^{m+1}\right)_{bc} & \equiv p \cdot p^a ( \cdots ) + M_{ibb} \left(A_i^{m}\right)_{bc} + p^a \cdot p^{a+1} ( \cdots ) + p^aM_{ibc} \left(A_i^{m}\right)_{cc} + p^{a+1} \cdot p \cdot ( \cdots ) \\
    & \equiv M_{ibb} \left(A_i^{m}\right)_{bc} + p^aM_{ibc} M_{icc}^{m} \mod p^{a+1}.
    \end{align*}
    Therefore for $m + 1$, we have
    \[
    p^a M'_{ibc} \equiv  M_{ibb} \left(p^a\sum_{k=0}^{m-1} M_{ibb}^k M_{ibc} M_{icc}^{m-1-k}\right) + p^a M_{ibc} M_{icc}^{m} \equiv p^a \sum_{k=0}^{m} M_{ibb}^k M_{ibc} M_{icc}^{m-k} \mod p^{a+1},
    \]
    which yields the congruence~\eqref{eq:Mibccong} for $m+1$.
    
    We now take $m \coloneqq p^{\ell}$ and consider the matrices $A_i^{p^{\ell}}, i = 1, \ldots, N$.
    Let $R \in \Aut(\tmV)$ be the automorphism defined by
    \[
    R_{jk} = 
    \begin{cases}
        I \quad & 1\leq j = k \leq e, \\
        p^a Q \quad & j = b, k = c, \\
        0 \quad & \text{otherwise},
    \end{cases}
    \]
    for some $Q \in \M_{d_b \times d_c}(\Z_{/p^b}(\oX))$ to be determined later.
    That is,
    \[
    R = 
    \begin{pmatrix}
        I & 0 & \cdots & & & 0 & 0 & \cdots \\    
         &  \ddots & & && \vdots & \vdots \\
         &  & I & & & p^a Q & 0 & \cdots \\
         & & & \ddots & & 0 & 0 & \cdots \\
        & & &  & \ddots & \vdots \\
        &0&&&& I & 0 & \cdots \\
        &&&&&& \ddots & \cdots \\
    \end{pmatrix},
    \quad
    R^{-1} = 
    \begin{pmatrix}
        I & 0 & \cdots & & & 0 & 0 & \cdots \\    
         &  \ddots & & && \vdots & \vdots \\
         &  & I & & & -p^a Q & 0 & \cdots \\
         & & & \ddots & & 0 & 0 & \cdots \\
        & & &  & \ddots & \vdots \\
        &0&&&& I & 0 & \cdots \\
        &&&&&& \ddots & \cdots \\
    \end{pmatrix}.
    \]
    Both $R, R^{-1}$ are $(a,b,c)$-triangular, and consequently all $R^{-1} A_i^{p^{\ell}} R, i = 1, \ldots, N,$ are $(a,b,c)$-triangular.
    We want to find $Q$ such that $p^{a+1} \mid \left(R^{-1} A_i^{p^{\ell}} R\right)_{bc}$ for all $i = 1, \ldots, N$.

    From the matrix form~\eqref{eq:Aim}, we can directly compute
    \begin{multline*}
    \left(R^{-1} A_i^{p^{\ell}} R\right)_{bc} \equiv p^a\left(M'_{ibc} - Q M'_{icc} + M'_{ibb}Q\right) \equiv p^a \left(\sum_{k=0}^{p^{\ell}-1} M_{ibb}^k M_{ibc} M_{icc}^{p^{\ell}-1-k} - Q M_{icc}^{p^{\ell}} + M_{ibb}^{p^{\ell}} Q \right) \\
    \mod p^{a+1}.
    \end{multline*}
    Therefore in order for $p^{a+1} \mid \left(R^{-1} A_i^{p^{\ell}} R\right)_{bc}$ to be satisfied, it suffices to find 
    \[
    (Q \mod p) \in \M_{d_b \times d_c}(\F_p(\oX))
    \]
    such that 
    \begin{equation}\label{eq:solveQmat}
    \sum_{k=0}^{p^{\ell}-1} M_{ibb}^k M_{ibc} M_{icc}^{p^{\ell}-1-k} \equiv Q M_{icc}^{p^{\ell}} - M_{ibb}^{p^{\ell}} Q \mod p.
    \end{equation}
    We now without loss of generality write $Q$ instead of $(Q \mod p)$.
    
    For each $i = 1, \ldots, N$, let $\varphi_i$ denote the $\F_p(\oX)$-linear transformation
    \begin{align*}
        \varphi_i \colon \M_{d_b \times d_c}(\F_p(\oX)) & \rightarrow \M_{d_b \times d_c}(\F_p(\oX)) \\
        S & \mapsto S M_{icc} - M_{ibb} S.
    \end{align*}
    Here, $\M_{d_b \times d_c}(\F_p(\oX))$ is considered as a $d_bd_c$-dimensional vector space over $\F_p(\oX)$.
    By Lemma~\ref{lem:iterphi} below, Equation~\eqref{eq:solveQmat} can be rewritten as
    \begin{equation}\label{eq:solveQvarphi}
    \varphi_{i}^{p^{\ell}-1} (M_{ibc}) = \varphi_{i}^{p^{\ell}}(Q).
    \end{equation}
    For all $i,j \in \{1, \ldots, N\}$, since $A_i$ and $A_j$ commute, we have 
    \begin{align*}
    M_{ibb}M_{jbb} \equiv (A_iA_j)_{bb} & \equiv (A_jA_i)_{bb} \equiv M_{jbb}M_{ibb} \mod p, \\
    M_{icc}M_{jcc} \equiv (A_iA_j)_{cc} & \equiv (A_jA_i)_{cc} \equiv M_{jcc}M_{icc} \mod p, \\
    p^a(M_{ibb} M_{jbc} + M_{ibc} M_{jcc}) \equiv (A_iA_j)_{bc} & \equiv (A_jA_i)_{bc} \equiv p^a(M_{jbb} M_{ibc} + M_{jbc} M_{icc}) \mod p^{a+1}.
    \end{align*}
    Therefore for all $S \in \M_{d_b \times d_c}(\F_p(\oX))$, we have
    \begin{multline*}
    \varphi_i\varphi_j(S) = S M_{jcc} M_{icc} - M_{jbb} S M_{icc} - M_{ibb} S M_{jcc} + M_{ibb} M_{jbb} S \\
    = S M_{icc} M_{jcc} - M_{jbb} S M_{icc} - M_{ibb} S M_{jcc} + M_{jbb} M_{ibb} S = \varphi_j\varphi_i(S).
    \end{multline*}
    Hence, $\varphi_i$ and $\varphi_j$ commute for all $i,j \in \{1, \ldots, N\}$. 
    Furthermore,
    \[
    \varphi_i(M_{jbc}) = M_{jbc} M_{icc} - M_{ibb} M_{jbc} \equiv M_{ibc} M_{jcc} - M_{jbb} M_{ibc} = \varphi_j(M_{ibc}) \mod p.
    \]
    Hence $\varphi_i(M_{jbc}) = \varphi_j(M_{ibc})$ for all $i,j \in \{1, \ldots, N\}$.
    By Lemma~\ref{lem:invertphi} below, we can compute $Q \in \M_{d_b \times d_c}(\F_p(\oX))$ such that
    \[
    \varphi_1^{p^{\ell}}(Q) = \varphi_1^{p^{\ell}-1}(M_{1bc}), \ldots, \varphi_N^{p^{\ell}}(Q) = \varphi_N^{p^{\ell}-1}(M_{Nbc}).
    \]
    We have thus found $Q$ that satisfies Equation~\eqref{eq:solveQvarphi} (and hence Equation~\eqref{eq:solveQmat}) for all $i = 1, \ldots, N$.
\end{proof}

\begin{lem}\label{lem:iterphi}
    Let $d, d'$ be positive integers.
    Let $C \in \GL_{d}(\F_p(\oX)), B \in \GL_{d'}(\F_p(\oX))$. 
    Define the $\F_p(\oX)$-linear transformation
    \begin{align*}
        \varphi \colon \M_{d \times d'}(\F_p(\oX)) & \rightarrow \M_{d \times d'}(\F_p(\oX)) \\
        M & \mapsto M C - B M.
    \end{align*}
    Then for any $\ell \geq 1$, we have
    \begin{equation}\label{eq:iterpm1}
    \varphi^{p^\ell-1}(M) = \sum_{k=0}^{p^\ell-1} B^k M C^{p^\ell-1-k},
    \end{equation}
    and
    \begin{equation}\label{eq:iterpp}
    \varphi^{p^{\ell}}(M) = M C^{p^{\ell}} - B^{p^{\ell}} M.
    \end{equation}
\end{lem}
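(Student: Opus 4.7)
\medskip

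\noindent\textbf{Proof plan.} The natural approach is to realize $\varphi$ as the difference of two commuting operators on $\M_{d\times d'}(\F_p(\oX))$ and then exploit the Frobenius endomorphism in characteristic $p$. Introduce the $\F_p(\oX)$-linear maps $L_B, R_C \colon \M_{d\times d'}(\F_p(\oX)) \to \M_{d\times d'}(\F_p(\oX))$ defined by $L_B(M) = BM$ and $R_C(M) = MC$. Associativity of matrix multiplication gives $L_B R_C = R_C L_B$, and by definition $\varphi = R_C - L_B$. Since we are working inside an $\F_p(\oX)$-algebra (a ring of characteristic $p$), the commutativity of $L_B$ and $R_C$ lets us apply the Frobenius identity $(Y-X)^{p^\ell} = Y^{p^\ell} - X^{p^\ell}$.

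The second formula~\eqref{eq:iterpp} then falls out immediately: $\varphi^{p^\ell} = (R_C - L_B)^{p^\ell} = R_C^{p^\ell} - L_B^{p^\ell} = R_{C^{p^\ell}} - L_{B^{p^\ell}}$, which evaluated at $M$ gives $MC^{p^\ell} - B^{p^\ell} M$.

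For the first formula~\eqref{eq:iterpm1}, I would argue via a universal polynomial identity. In $\F_p[X,Y]$, combining the telescoping factorization
\[
Y^{p^\ell} - X^{p^\ell} \;=\; (Y - X)\sum_{k=0}^{p^\ell-1} X^k Y^{p^\ell-1-k}
\]
with the Frobenius identity $Y^{p^\ell} - X^{p^\ell} = (Y-X)^{p^\ell}$, and using that $Y - X$ is a non-zero-divisor in the integral domain $\F_p[X,Y]$, yields
\[
(Y - X)^{p^\ell - 1} \;=\; \sum_{k=0}^{p^\ell - 1} X^k Y^{p^\ell - 1 - k}.
\]
This is an identity in the commutative polynomial ring $\F_p[X,Y]$, hence holds in any commutative $\F_p$-algebra under any substitution of commuting elements. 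Substituting $X = L_B$ and $Y = R_C$, which commute and lie in a subalgebra of $\End_{\F_p(\oX)}\bigl(\M_{d\times d'}(\F_p(\oX))\bigr)$, and evaluating at $M$, gives
\[
\varphi^{p^\ell - 1}(M) \;=\; \sum_{k=0}^{p^\ell-1} L_B^k R_C^{p^\ell-1-k}(M) \;=\; \sum_{k=0}^{p^\ell-1} B^k M C^{p^\ell-1-k},
\]
which is~\eqref{eq:iterpm1}.

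There is no serious obstacle here: once one views $\varphi$ as $R_C - L_B$ and notes that $L_B$ and $R_C$ commute, the entire lemma is a one-line application of the Frobenius in characteristic $p$. The only subtlety worth stating carefully is the reduction of~\eqref{eq:iterpm1} to a polynomial identity in $\F_p[X,Y]$, justified by the fact that $Y - X$ is not a zero divisor so one may cancel it on both sides of $(Y-X)^{p^\ell} = (Y-X)\sum X^k Y^{p^\ell-1-k}$.
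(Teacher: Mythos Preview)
Your proof is correct. Both your approach and the paper's rest on the same observation: writing $\varphi = R_C - L_B$ for the commuting operators $L_B(M)=BM$ and $R_C(M)=MC$, the problem reduces to evaluating $(R_C-L_B)^m$ for $m=p^\ell-1$ and $m=p^\ell$. The paper proceeds by a direct induction on $m$ to establish the binomial expansion $\varphi^m(M)=\sum_{k=0}^m(-1)^k\binom{m}{k}B^kMC^{m-k}$, then specializes using the elementary congruence $(-1)^k\binom{p^\ell-1}{k}\equiv 1\pmod p$ for~\eqref{eq:iterpm1} and Lucas' theorem for~\eqref{eq:iterpp}. You instead establish the two required identities once and for all as polynomial identities in $\F_p[X,Y]$---via Frobenius and cancellation of the non-zero-divisor $Y-X$---and then substitute $X=L_B$, $Y=R_C$. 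Your route is a bit more conceptual and sidesteps the explicit binomial-coefficient computations; the paper's is more hands-on. Both are complete and of comparable length.
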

\begin{proof}
    We prove by induction on $m$ that
    \begin{equation}\label{eq:iterm}
    \varphi^{m}(M) = \sum_{k=0}^{m} (-1)^{k} \binom{m}{k} B^k M C^{m-k}.
    \end{equation}
    For $m = 1$, Equation~\eqref{eq:iterm} holds by the definition of $\varphi$.
    Suppose Equation~\eqref{eq:iterm} holds for $m$, then
    \begin{align*}
        \varphi^{m+1}(M) & = \varphi\left(\sum_{k=0}^{m} (-1)^{k} \binom{m}{k} B^k M C^{m-k}\right) \\
        & = \left(\sum_{k=0}^{m} (-1)^{k} \binom{m}{k} B^k M C^{m-k}\right) C - B \left(\sum_{k=0}^{m} (-1)^{k} \binom{m}{k} B^k M C^{m-k}\right) \\
        & = \sum_{k=0}^{m} (-1)^{k} \binom{m}{k} B^k M C^{m+1-k} - \sum_{k=0}^{m} (-1)^{k} \binom{m}{k} B^{k+1} M C^{m-k} \\
        & = \sum_{k=0}^{m+1} \left((-1)^{k} \binom{m}{k} - (-1)^{k-1} \binom{m}{k-1}\right) B^k M C^{m+1-k} \\
        & = \sum_{k=0}^{m+1} (-1)^{k} \left(\binom{m}{k} + \binom{m}{k-1}\right) B^k M C^{m+1-k} \\
        & = \sum_{k=0}^{m+1} (-1)^{k} \binom{m+1}{k} B^k M C^{m+1-k}.
    \end{align*}
    This proves Equation~\eqref{eq:iterm} for $m+1$.

    Specializing Equation~\eqref{eq:iterm} for $m = p^{\ell}-1$ and noticing 
    \[
    (-1)^{k} \binom{p^{\ell}-1}{k} \equiv (-1)^{k} \cdot \frac{p^{\ell}-1}{1} \cdot \frac{p^{\ell}-2}{2} \cdots \frac{p^{\ell}-k}{k} \equiv 1 \mod p
    \]
    for prime $p$, we obtain~\eqref{eq:iterpm1}.

    Specializing Equation~\eqref{eq:iterm} for $m = p^{\ell}$.
    By Lucas' theorem~\cite{granville1997arithmetic}, we have
    \[
    \binom{p^{\ell}}{k} \equiv 
    \begin{cases}
        1 \mod p, \quad k = 0 \text{ or } p^{\ell}, \\
        0 \mod p, \quad 1 \leq k \leq p^{\ell}-1.
    \end{cases}
    \]
    Thus we obtain~\eqref{eq:iterpp}.
\end{proof}

\begin{lem}\label{lem:invertphi}
    Let $D$ be a positive integer and $V$ be a $D$-dimensional $\F_p(\oX)$-vector space.
    Let $M_1, \ldots, M_N \in V$, and let $\varphi_1, \ldots, \varphi_N$ be pairwise commuting elements of $\End(V) = \M_{d \times d}(\F_p(\oX))$, such that
    \begin{align*}
        \varphi_i(M_j) = \varphi_j(M_i)
    \end{align*}
    for all $i, j \in \{1, \ldots, N\}$.
    Then there exist effectively computable $Q \in V$ and $\ell \in \N$, such that
    \[
    \varphi_1^{p^{\ell}}(Q) = \varphi_1^{p^{\ell}-1}(M_1), \ldots, \varphi_N^{p^{\ell}}(Q) = \varphi_N^{p^{\ell}-1}(M_N).
    \]
\end{lem}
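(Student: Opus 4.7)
The plan is to apply a simultaneous Fitting decomposition of $V$ with respect to the commuting family $\varphi_1, \ldots, \varphi_N$, so that the equations $\varphi_i^{p^\ell}(Q) = \varphi_i^{p^\ell - 1}(M_i)$ decouple into trivial pieces on each summand, after which the compatibility hypothesis $\varphi_i(M_j) = \varphi_j(M_i)$ makes the blockwise construction of $Q$ consistent.

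First I would set $D = \dim_{\F_p(\oX)} V$ and compute, for each $i$, the Fitting decomposition $V = V_i^0 \oplus V_i^1$ where $V_i^0 = \ker(\varphi_i^D)$ (on which $\varphi_i$ is nilpotent of index at most $D$) and $V_i^1 = \mathrm{im}(\varphi_i^D)$ (on which $\varphi_i$ is invertible). This is standard linear algebra over $\F_p(\oX)$ given matrix representations of the $\varphi_i$. Since the $\varphi_j$ pairwise commute, each $V_i^\epsilon$ is stable under every $\varphi_j$, so intersecting over $i$ yields a refined decomposition $V = \bigoplus_{S \subseteq \{1, \ldots, N\}} V_S$, where $V_S = \bigcap_{i \in S} V_i^0 \cap \bigcap_{i \notin S} V_i^1$. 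On $V_S$, the operator $\varphi_i$ is nilpotent when $i \in S$ and invertible when $i \notin S$, and the projection $\pi_S \colon V \to V_S$ commutes with every $\varphi_i$.

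Next I would pick $\ell$ such that $p^\ell > D$ and define $Q = \sum_S Q_S$ block by block. On $S = \{1, \ldots, N\}$ set $Q_S = 0$; on any other $S$, pick some $j \notin S$ and set $Q_S = \varphi_j^{-1}(\pi_S(M_j))$, the inverse being taken on $V_S$. Well-definedness is the key consistency check: if $j, j' \notin S$, applying $\pi_S$ to $\varphi_j(M_{j'}) = \varphi_{j'}(M_j)$ (using that $\pi_S$ commutes with each $\varphi_i$) and inverting $\varphi_j, \varphi_{j'}$ on $V_S$ gives $\varphi_j^{-1}\pi_S(M_j) = \varphi_{j'}^{-1}\pi_S(M_{j'})$, so $Q_S$ is independent of the choice of $j$.

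Finally, verification of $\varphi_i^{p^\ell}(Q) = \varphi_i^{p^\ell - 1}(M_i)$ can be done on each $V_S$ separately. If $i \in S$ both sides vanish since $\varphi_i^{p^\ell-1}$ already kills $V_S$ by the choice of $\ell$; if $i \notin S$, using commutativity of $\varphi_i, \varphi_j$ with $\pi_S$ and the hypothesis, a short chain
\[
\varphi_i^{p^\ell}(Q_S) = \varphi_j^{-1}\varphi_i^{p^\ell-1}\bigl(\varphi_i \pi_S(M_j)\bigr) = \varphi_j^{-1}\varphi_i^{p^\ell-1}\bigl(\varphi_j \pi_S(M_i)\bigr) = \varphi_i^{p^\ell-1}\pi_S(M_i)
\]
closes the argument. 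The hardest part I foresee is purely the bookkeeping of the joint decomposition and the well-definedness check across all $S$ simultaneously; note that intrinsically this lemma only needs an exponent exceeding $D$, so the $p^\ell$ shape is imposed by the caller (Lemma~\ref{lem:singletriangularization}) rather than by the argument itself.
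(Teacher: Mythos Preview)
Your proof is correct. Both you and the paper follow the same high-level strategy---decompose $V$ into pieces on which every $\varphi_i$ is either invertible or nilpotent, then define $Q$ piecewise as $\varphi_j^{-1}(\text{projection of }M_j)$ for some invertible $\varphi_j$, with the compatibility hypothesis $\varphi_i(M_j)=\varphi_j(M_i)$ guaranteeing consistency---but you reach the decomposition by a different and more elementary route. The paper builds the commutative $\F_p(\oX)$-subalgebra $\mF\subseteq\End(V)$ generated by the $\varphi_i$, invokes the Artinian structure theorem to split $\mF$ into a product of local rings $\mR_1\times\cdots\times\mR_q$, and then decomposes $V=\bigoplus_j \mF_j V$ accordingly; on each factor $\varphi_i$ is a unit or lies in the nilpotent radical. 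You instead take the joint Fitting decomposition $V=\bigoplus_{S\subseteq\{1,\ldots,N\}} V_S$, which is pure linear algebra over a field and needs no ring-theoretic input. The paper's decomposition is in general a refinement of yours (several local factors may share the same ``invertibility pattern'' $S$), but that extra fineness is never used, so your coarser decomposition suffices. Your closing remark is also apt: the argument only needs an exponent at least $D+1$, and the shape $p^{\ell}$ is imposed solely by the caller.
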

\begin{proof}
    Let $\mF$ be the $\F_p(\oX)$-subalgebra of $\End(V)$ generated by $\varphi_1, \ldots, \varphi_N$ and the identity element.
    Since the endomorphisms $\varphi_1, \ldots, \varphi_N$ commute pairwise, $\mF$ is commutative.
    Furthermore, $\mF$ has finite dimension over $\F_p(\oX)$, since $\End(V)$ has finite dimension over $\F_p(\oX)$.
    Therefore, $\mF$ is Artinian.
    
    By~\cite[Corollary~2.16]{eisenbud2013commutative}, the Artinian ring $\mF$ can be decomposed into a direct product of local rings
    \[
    \mF = \mR_1 \times \mR_2 \times \cdots \times \mR_q.
    \]
    Let 
    \[
    \mF_i \coloneqq \{0\} \times \cdots \times \{0\}  \times \mR_i \times \{0\} \times \cdots \times \{0\}
    \]
    for $i = 1, \ldots, q$.
    Since $\F_p(\oX) \cdot \mF_i \subseteq \mF \cdot \mF_i \subseteq \mF_i$, each $\mF_i$ is a $\F_p(\oX)$-subalgebra of $\mF$.
    This means that $\mF$ is a direct sum of the local $\F_p(\oX)$-subalgebras $\mF_1, \mF_2, \ldots, \mF_q$:
    \begin{equation}\label{eq:decomposemF}
    \mF = \mF_1 \oplus \mF_2 \oplus \cdots \oplus \mF_q.
    \end{equation}
    See~\cite[Section~3.2,~p.501]{babai1996multiplicative} for an effective algorithm for finding the decomposition~\eqref{eq:decomposemF}.
    For each $i$, the maximal ideal of the local ring $\mF_i$ is
    \[
    \rad(\mF_i) \coloneqq \{f \in \mF_i \mid \exists m \geq 1, f^m = 0\}.
    \]
    Indeed, $\rad(\mF_i)$ is the intersection of all prime ideals of $\mF_i$~\cite[Proposition~1.8]{atiyah1969introduction}, which is exactly the maximal ideal of the Artinian local ring $\mF_i$~\cite[Proposition~8.1]{atiyah1969introduction}.
    Furthermore, there exists an effectively computable $t_i \in \N$, such that $\rad(\mF_i)^{t_i} = 0$~\cite[Proposition~8.6]{atiyah1969introduction}.
    
    For any $i \in \{1, \ldots, N\}$, since $\varphi_i \in \mF$, we can write 
    \[
    \varphi_i = (f_{i1}, f_{i2}, \ldots, f_{iq}) \in \mF_1 \oplus \mF_2 \oplus \cdots \oplus \mF_q
    \]
    according to this decomposition.
    Let $\ell \in \N$ be such that $p^{\ell} \geq 1+\max\{t_1, \ldots, t_q\}$.

    Since $\mF$ acts $\F_p(\oX)$-linearly on $V$, we can decompose 
    \[
    V = V_1 \oplus \cdots \oplus V_q
    \]
    according to the decomposition of $\mF$, that is, $V_1 \coloneqq \mF_1 \cdot V$, $\ldots$, $V_q \coloneqq \mF_q \cdot V$.
    We write each $M_i \in V, i = 1, \ldots, N,$ as 
    \[
    M_i = (v_{i1}, \ldots, v_{iq}) \in V_1 \oplus \cdots \oplus V_q
    \]
    according to this decomposition.
    Then for all $i, j \in \{1, \ldots, N\}$, the equality $\varphi_i(M_j) = \varphi_j(M_i)$ implies 
    $f_{i1}v_{j1} = f_{j1}v_{i1}, \ldots, f_{iq}v_{jq} = f_{jq}v_{iq}$.

    Let $\mI_1 \coloneqq \{i \mid 1 \leq i \leq N, f_{i1} \notin \rad(\mF_1)\}$.
    Since $f_{i1}v_{j1} = f_{j1}v_{i1}$ for all $i, j \in \mI_1$, we have $f_{i1}^{-1}v_{i1} = f_{j1}^{-1}v_{j1}$ for all $i, j \in \mI_1$.
    Therefore, there exists $v_1^* \in V_1$ such that $v_{1}^* = f_{i1}^{-1}v_{i1}$ for all $i \in \mI_1$.
    This yields $f_{i1}^{p^{\ell}}v_1^* = f_{i1}^{p^{\ell}-1}v_{i1}$ for all $i \in \mI_1$.
    For all $i \notin \mI_1$, we have $f_{i1}^{t_1} = 0$, so $f_{i1}^{p^{\ell}}v_1^* = 0 = f_{i1}^{p^{\ell}-1}v_{i1}$ since $p^{\ell} \geq 1+\max\{t_1, \ldots, t_q\}$.
    In both cases, we have
    \[
    f_{i1}^{p^{\ell}}v_1^* = f_{i1}^{p^{\ell}-1}v_{i1}.
    \]

    Similarly, we can find $v_2^* \in V_1, \ldots, v_q^* \in V_q$, such that
    \[
    f_{i2}^{p^{\ell}}v_2^* = f_{i2}^{p^{\ell}-1}v_{i2} \;, \;  \ldots \;, \; f_{iq}^{p^{\ell}}v_q^* = f_{iq}^{p^{\ell}-1}v_{iq},
    \]
    for all $i = 1, \ldots, N$.
    Let 
    \[
    Q \coloneqq (v_1^*, \ldots, v_q^*) \in V_1 \oplus \cdots \oplus V_q,
    \]
    then for all $i = 1, \ldots, N$, we have
    \[
    \varphi_i^{p^{\ell}}(Q) = (f_{i1}^{p^{\ell}} v_1^*, \ldots, f_{iq}^{p^{\ell}} v_q^*) = (f_{i1}^{p^{\ell}-1} v_{i1}, \ldots, f_{iq}^{p^{\ell}-1} v_{iq}) = \varphi_i^{p^{\ell}-1}(M_i).
    \]
\end{proof}

To prove Proposition~\ref{prop:triangularize}, we will apply Lemma~\ref{lem:singletriangularization} repeatedly.
Observe that if $A \in \End(\tmV)$ is $(a, b, c)$-triangular and its $(b,c)$-th block is divisible by $p^{a+1}$, then $A$ is
\[
\begin{cases}
    \text{$(a, b-1, c)$-triangular,} \quad & \text{if $b > 1$,} \\
    \text{$(a, c-2, c-1)$-triangular,} \quad & \text{if $b = 1, c > 2$,} \\
    \text{$(a+1, e-1, e)$-triangular,} \quad & \text{if $b = 1, c = 2$.} \\
\end{cases}
\]

\proptriangularize*
\begin{proof}
    Apply Lemma~\ref{lem:singletriangularization} repeatedly for
    \begin{align*}
    (a, b, c) = \; & (0, e-1, e), (0, e-2, e), \ldots, (0, 1, e), \\
    & (0, e-2, e-1), (0, e-3, e-1), \ldots, (0, 1, e-1), \\
    & \cdots \\
    & (0, 2, 3), (0, 1, 3), \\
    & (0, 1, 2),
    \end{align*}
    the following way.
    We start with the matrices $A_1, \ldots, A_N$, which are $(0, e-1, e)$-triangular (every element in $\Aut(\tmV)$ is $(0, e-1, e)$-triangular).
    After each application of Lemma~\ref{lem:singletriangularization}, we obtain the matrices $R^{-1}A_1^{p^{\ell}}R, \ldots, R^{-1}A_N^{p^{\ell}}R$, and we apply the next repetition of Lemma~\ref{lem:singletriangularization} on $A_1 \coloneqq R^{-1}A_1^{p^{\ell}}R, \ldots, A_N \coloneqq R^{-1}A_N^{p^{\ell}}R$.
    Since 
    \[
    {R'}^{-1}\left(R^{-1}A^{p^{\ell}}R\right)^{p^{\ell'}} R' = {R'}^{-1}R^{-1} A^{p^{\ell} + p^{\ell'}} RR' = (RR')^{-1} A^{p^{\ell + \ell'}} (RR'),
    \]
    the repeated application of Lemma~\ref{lem:singletriangularization} yields 
    \[
    \widehat{\ell} \coloneqq \ell + \ell' + \cdots \in \N, \quad \widehat{R} \coloneqq RR' \cdots \in \Aut(\tmV),
    \]
    such that the $(j,k)$-th blocks of $\widehat{R}^{-1}A_1^{p^{\widehat{\ell}}} \widehat{R}, \ldots, \widehat{R}^{-1}A_N^{p^{\widehat{\ell}}}\widehat{R}$ are divisible by $p$ for all $1 \leq j < k \leq e$.
    
    Then starting from the matrices $A_1 \coloneqq \widehat{R}^{-1}A_1^{p^{\widehat{\ell}}} \widehat{R}, \ldots, A_N \coloneqq \widehat{R}^{-1}A_N^{p^{\widehat{\ell}}}\widehat{R}$, and apply Lemma~\ref{lem:singlecomp} repeated for
    \begin{align*}
    (a, b, c) = \; & (1, e-1, e), (1, e-2, e), \ldots, (1, 1, e), \\
    & (1, e-2, e-1), (1, e-3, e-1), \ldots, (1, 1, e-1), \\
    & \cdots \\
    & (1, 2, 3), (1, 1, 3), \\
    & (1, 1, 2),
    \end{align*}
    we can find $\widehat{\ell}, \widehat{R}$ such that the $(j,k)$-th blocks of $\widehat{R}^{-1}A_1^{p^{\widehat{\ell}}} \widehat{R}, \ldots, \widehat{R}^{-1}A_N^{p^{\widehat{\ell}}}\widehat{R}$ are divisible by $p^2$ for all $1 \leq j < k \leq e$.

    Repeat the above process for $a = 2, 3, \ldots, e-1$. Then we find $\ell, R$, such that the $(j,k)$-th blocks of $R^{-1}A_1^{p^{\ell}}R$, $\ldots$, $R^{-1}A_N^{p^{\ell}}R$ are divisible by $p^e = 0$ for all $1 \leq j < k \leq e$.
    Thus $R^{-1}A_1^{p^{\ell}}R, \ldots, R^{-1}A_N^{p^{\ell}}R$ are block lower-triangular.
\end{proof}

\paragraph{Step 3: simultaneous block diagonalization.} 

In this step, we simultaneously diagonalize the matrices $A_1, \ldots, A_N$, up to taking their $p^{\N}$-th powers.
More precisely, we will show the following.

\begin{restatable}[simultaneous block diagonalization]{prop}{propblockdiagonal}\label{prop:blockdiagonal}
    Let $A_1, \ldots, A_N$ be pairwise commuting elements of $\Aut(\tmV)$.
    Then there exist effectively computable $\ell \in \N$ and $R \in \Aut(\tmV)$, such that $R^{-1}A_1^{p^{\ell}}R, \ldots, R^{-1}A_N^{p^{\ell}}R$ are block-diagonal (i.e.\ their $(j,k)$-th blocks are zero for all $j \neq k$).
\end{restatable}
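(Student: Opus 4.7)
The plan is to first reduce to the block lower-triangular case via Proposition~\ref{prop:triangularize}, and then run a dual procedure that kills the strictly lower-triangular blocks while preserving upper-triangularity. The key observation making this work is that block lower-triangularity is preserved by products, by $p^\ell$-th powers, and by conjugation by any block lower-triangular automorphism, so once triangularized we never need to re-triangularize.

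Concretely, I would apply Proposition~\ref{prop:triangularize} to obtain $\ell_0 \in \N$ and $R_0 \in \Aut(\tmV)$ such that $B_i \coloneqq R_0^{-1} A_i^{p^{\ell_0}} R_0$ are pairwise commuting and block lower-triangular, and then prove a dual analog of Lemma~\ref{lem:singletriangularization}. For $a \in \N$ and $1 \leq c < b \leq e$, call $f \in \Aut(\tmV)$ \emph{$(a,b,c)$-co-triangular} if $f$ is block lower-triangular and the normalized blocks $M_{jk}$ (defined via $f_{jk} = p^{j-k} M_{jk}$ for $j > k$, as in~\eqref{eq:deffinM}) satisfy $p^a \mid M_{jk}$ in general, $p^{a+1} \mid M_{jk}$ for $k \leq c-1$, and $p^{a+1} \mid M_{jc}$ for $c < j < b$. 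The dual lemma would state that, given pairwise commuting $(a,b,c)$-co-triangular matrices, one can effectively find $\ell$ and a block lower-triangular $R \in \Aut(\tmV)$ whose only nonzero off-diagonal block is $p^{b-c} Q$ in position $(b,c)$, such that each $R^{-1} B_i^{p^\ell} R$ is $(a,b,c)$-co-triangular with its $(b,c)$-normalized block further divisible by $p^{a+1}$. A $p$-adic path count shows that paths with $s$ strict drops from row $b$ down to row $c$ in $B_i^{p^\ell}$ contribute terms of valuation at least $p^{b-c+as}$, so modulo $p^{b-c+a+1}$ only single-drop paths survive, yielding the analog expression $p^{b-c}\sum_{t=0}^{p^\ell-1} M_{ibb}^t M_{ibc} M_{icc}^{p^\ell-1-t}$; the commutation relations $B_i B_j = B_j B_i$ then yield the Sylvester-compatibility identity $\varphi_i(M_{jbc}) = \varphi_j(M_{ibc})$ needed to invoke Lemma~\ref{lem:invertphi} and solve for $Q \in \M_{d_b \times d_c}(\F_p(\oX))$.

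Iterating this dual lemma by sweeping $(a, b, c)$ for $a = 0, 1, \ldots, e-1$ and an appropriate ordering of pairs $b > c$ (mirroring the sweep in the proof of Proposition~\ref{prop:triangularize}), every strictly lower-triangular block eventually becomes divisible by $p^e = 0$ and hence vanishes, so the resulting matrices are block-diagonal; composing all conjugators and summing all exponents yields the required $R$ and $\ell$. The main obstacle is the careful $p$-adic book-keeping verifying that multi-drop path contributions remain negligible throughout the sweep and that $(a,b,c)$-co-triangularity is preserved by each elementary conjugation, particularly in the $a = 0$ base step where the inductive hypothesis is weakest and may need a slightly modified form to handle the bootstrap, analogously to how the initial reduction ``$a < b$'' is handled at the start of the proof of Lemma~\ref{lem:singletriangularization}.
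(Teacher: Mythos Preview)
Your approach is correct and essentially identical to the paper's: reduce to block lower-triangular via Proposition~\ref{prop:triangularize}, then run the dual sweep; the paper's Lemma~\ref{lem:singlediagonalization} is exactly your dual lemma, and its proof is literally declared to be ``the same as Lemma~\ref{lem:singletriangularization}, but with all the matrices transposed.'' The only bookkeeping difference is that the paper tracks $p^a\mid f_{jk}$ rather than $p^a\mid M_{jk}$, so its conjugator carries the factor $p^a$ in the off-diagonal block (and first disposes of the case $c-b>a$ as trivial so that the divisibility constraint~\eqref{eq:divcond} is met), whereas your conjugator carries $p^{b-c}$ and, for $a\geq 1$, needs the refinement $Q=p^aQ'$ to reduce the resulting equation to a mod-$p$ Sylvester equation solvable by Lemma~\ref{lem:invertphi}.

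Your flagged $a=0$ concern is not a genuine obstacle: the bound $b-c+as$ you quote uses only the blanket hypothesis $p^a\mid M_{jk}$, but your own $(a,b,c)$-co-triangular definition also records that $p^{a+1}\mid M_{jc}$ for $c<j<b$. Every multi-drop path from row $b$ down to row $c$ in a block lower-triangular matrix has its \emph{final} strict drop entering column $c$ from some intermediate row $j$ with $c<j<b$, and that factor $M_{jc}$ already carries the extra $p$. Hence a path with $s\geq 2$ strict drops contributes with normalized valuation at least $(s-1)a+(a+1)=sa+1\geq a+1$, so it vanishes modulo $p^{a+1}$ for every $a\geq 0$, and only single-drop paths survive exactly as you claim.
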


Following the previous step, we can suppose $A_1, \ldots, A_N$ to be already block lower-triangularized.
That is, we can replace $A_1, \ldots, A_N$ with the elements $R^{-1}A_1^{p^{\ell}}R, \ldots, R^{-1}A_N^{p^{\ell}}R$ obtained from Proposition~\ref{prop:triangularize}.

The proof of Proposition~\ref{prop:blockdiagonal} is similar to that of Proposition~\ref{prop:triangularize}.
Namely, we will increase the $p$-adic valuation of the lower-right blocks step by step.
Let $a \in \N$ and $1 \leq b < c \leq e$. 
We say that an element $f \in \Aut(\tmV)$ is \emph{$(a,b,c)$-diagonal}, if it satisfies
\begin{align}\label{eq:abcdiagonal}
    f_{jk} = 0 & \text{ for all } 1 \leq j < k \leq e, \nonumber\\
    p^{a} \mid f_{jk} & \text{ for all } 1 \leq k < j \leq e, \nonumber\\
    p^{a+1} \mid f_{jk} & \text{ for all } 1 \leq k < j \leq e, \; j \geq c +1, \nonumber\\
    p^{a+1} \mid f_{jk} & \text{ for all } b < k < c, \; j = c.
\end{align}
in addition to the divisibility constraints~\eqref{eq:divcond} which all elements of $\End(\tmV)$ are subject to.
In this case, $f$ can be written in the matrix form
\[
    \begin{pmatrix}
        M_{11} & & & & & \\    
        p^a M_{21} & \ddots & & &&0 \\
        \vdots & & M_{bb} \\
         & & & \ddots & \\
        & & & & \ddots & \\
        p^a M_{c1} & \cdots & p^a M_{cb} & p^{a+1} M_{c(b+1)} & \cdots & M_{cc} \\
        p^{a+1} M_{(c+1)1} & \cdots & p^{a+1} M_{(c+1)b} & p^{a+1} M_{(c+1)(b+1)} & \cdots & p^{a+1} M_{(c+1)c} & \ddots \\
        \vdots & & \vdots & \vdots & & \vdots \\
    \end{pmatrix}.
\]
If $b+1 = c$ then the term $p^{a+1} M_{c(b+1)}$ is overwritten by $M_{cc}$.

\begin{obs}
    The composition of two $(a,b,c)$-diagonal automorphisms is $(a,b,c)$-diagonal.
\end{obs}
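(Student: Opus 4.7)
The plan is to verify directly the four defining conditions of~\eqref{eq:abcdiagonal} for the composition $fg$, using the block-multiplication formula
\[
(fg)_{ik} = \sum_{j=1}^{e} \bigl(f_{ij}\, g_{jk} \bmod p^i\bigr).
\]
The entire proof amounts to a bookkeeping case analysis on which index $j$ in the sum is responsible for supplying the required $p$-adic divisibility.

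First, block lower-triangularity ($(fg)_{ik} = 0$ for $i < k$) is immediate: every term in the sum satisfies either $j \leq i < k$, forcing $g_{jk} = 0$ by lower-triangularity of $g$, or $j > i$, forcing $f_{ij} = 0$ by lower-triangularity of $f$. Second, the basic divisibility $p^a \mid (fg)_{ik}$ for $k < i$ follows because the lower-triangular shape restricts the sum to $k \leq j \leq i$; if $j = i$ then $p^a \mid g_{ik}$, if $j = k$ then $p^a \mid f_{ik}$, and for $k < j < i$ both factors carry $p^a$ from the strictly-lower-block condition on $f$ and $g$. Third, the strengthened condition $p^{a+1} \mid (fg)_{ik}$ whenever $i \geq c+1$ and $k < i$ follows the same way: for $j = i$, condition (iii) on $g$ supplies $p^{a+1} \mid g_{ik}$; for every $j < i$, condition (iii) on $f$ (using $i \geq c+1$ and $j < i$) supplies $p^{a+1} \mid f_{ij}$.

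The only slightly delicate condition is $p^{a+1} \mid (fg)_{ck}$ for $b < k < c$. The boundary terms $j = c$ and $j = k$ are handled by applying condition (iv) to $g$ and to $f$ respectively. For an intermediate index $k < j < c$, the key observation is that the hypothesis $k > b$ forces $j > k > b$, hence $b < j < c$, so condition (iv) on $f$ gives $p^{a+1} \mid f_{cj}$. This propagation of the strict inequality $b < k$ into $b < j$ is the single subtle point of the argument; every other step is a direct consequence of the lower-triangular shape of $f, g$ combined with the definitions. Once this propagation is noticed, no further work is needed.
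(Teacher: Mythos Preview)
Your proof is correct. The paper states this observation without proof, so your direct block-by-block verification of the four conditions in~\eqref{eq:abcdiagonal} is exactly the argument the reader is expected to supply; the one nontrivial point you correctly isolate is that in the $(c,k)$ block with $b<k<c$, the intermediate terms $f_{cj}g_{jk}$ with $k<j<c$ inherit $p^{a+1}$ from $f_{cj}$ via condition (iv) because $j>k>b$ forces $b<j<c$.
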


\begin{lem}[single step of diagonalization]\label{lem:singlediagonalization}
    Let $A_1, \ldots, A_N$ be pairwise commuting elements of $\Aut(\tmV)$, let $a \in \N$ and $1 \leq b < c \leq e$. 
    Suppose that $A_1, \ldots, A_N$ are $(a,b,c)$-diagonal.
    Then there exist effectively computable $\ell \in \N$ and $R \in \Aut(\tmV)$, such that $R^{-1}A_1^{p^{\ell}}R, \ldots, R^{-1}A_N^{p^{\ell}}R$ are $(a,b,c)$-diagonal and their $(c,b)$-th blocks are divisible by $p^{a+1}$.
\end{lem}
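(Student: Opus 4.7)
The plan is to mirror the proof of Lemma~\ref{lem:singletriangularization} with the roles of rows and columns swapped, so that the work happens on the $(c,b)$-block instead of the $(b,c)$-block. First I would dispose of the trivial cases: if $c-b > a$, then by~\eqref{eq:divcond} every element of $\End(\tmV)$ already has its $(c,b)$-block divisible by $p^{c-b} \geq p^{a+1}$, so $R = I$, $\ell = 0$ work; if $a \geq c$, every $(c,b)$-block lives in $\M_{d_c \times d_b}(\Z_{/p^c}(\oX))$ and is already zero. So I may assume $c-b \leq a < c$.

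Next I would carry out the same power-and-conjugation bookkeeping as in the triangularization proof. Writing each $A_i$ in $(a,b,c)$-diagonal form with diagonal blocks $M_{ijj}$ and $(c,b)$-block $p^a M_{icb}$, an induction on $m$ (using that the intermediate blocks $(c,k)$ for $b<k<c$ and $(j,b)$ for $b<j<c$ are divisible by $p^{a+1}$, so cross-terms through them contribute only at order $p^{a+1}$) would give
\[
(A_i^{p^\ell})_{cb} \equiv p^a \sum_{k=0}^{p^\ell - 1} M_{icc}^{\,k}\, M_{icb}\, M_{ibb}^{\,p^\ell - 1 - k} \pmod{p^{a+1}},
\]
with diagonal blocks congruent to $M_{ijj}^{p^\ell}$ modulo $p$. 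I would then take $R \in \Aut(\tmV)$ with identity diagonal blocks and a single off-diagonal block $R_{cb} = p^a Q$ for some $Q \in \M_{d_c \times d_b}(\Z_{/p^c}(\oX))$ to be determined; since $a \geq c-b$, the divisibility constraint~\eqref{eq:divcond} holds. Because $A_i^{p^\ell}$ has no $(b,c)$-block, the perturbation squares to zero, so $R^{-1}$ has $(c,b)$-block $-p^a Q$, and a direct block computation gives
\[
(R^{-1} A_i^{p^\ell} R)_{cb} \equiv p^a \left( \sum_{k=0}^{p^\ell - 1} M_{icc}^{\,k}\, M_{icb}\, M_{ibb}^{\,p^\ell - 1 - k} + M_{icc}^{p^\ell} Q - Q M_{ibb}^{p^\ell} \right) \pmod{p^{a+1}}.
\]

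Finally I would solve for $Q$ using the same algebraic engine as before. Define $\varphi_i \colon \M_{d_c \times d_b}(\F_p(\oX)) \to \M_{d_c \times d_b}(\F_p(\oX))$ by $\varphi_i(S) \coloneqq S M_{ibb} - M_{icc} S$. Applying Lemma~\ref{lem:iterphi} with $B = M_{icc}$ and $C = M_{ibb}$, the condition that the above block vanishes modulo $p^{a+1}$ rewrites as $\varphi_i^{p^\ell}(Q) = \varphi_i^{p^\ell - 1}(M_{icb})$. Comparing the $(c,b)$-blocks of $A_i A_j$ and $A_j A_i$ modulo $p^{a+1}$ yields $\varphi_i(M_{jcb}) = \varphi_j(M_{icb})$, and the commutativity of the diagonal blocks modulo $p$ yields $\varphi_i \varphi_j = \varphi_j \varphi_i$. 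Lemma~\ref{lem:invertphi} then provides a common $Q$ and $\ell$ solving the equation for every $i$, which makes each $(R^{-1} A_i^{p^\ell} R)_{cb}$ divisible by $p^{a+1}$ while preserving $(a,b,c)$-diagonality. The main bookkeeping hurdle, just as in Lemma~\ref{lem:singletriangularization}, is to verify that all cross-terms — both in the expansion of $A_i^{p^\ell}$ and in the conjugation — carry at least one extra factor of $p$ beyond $p^a$ and thus vanish modulo $p^{a+1}$; once that is checked, the argument reduces cleanly to Lemmas~\ref{lem:iterphi} and~\ref{lem:invertphi} and presents no new conceptual difficulty.
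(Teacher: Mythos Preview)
Your proposal is correct and follows essentially the same approach as the paper. The paper's own proof simply disposes of the trivial case $c-b>a$, writes down the conjugator $R$ with a single off-diagonal block at position $(c,b)$, checks the divisibility constraint~\eqref{eq:divcond} holds because $a\ge c-b$, and then says ``the rest of the proof is the same as Lemma~\ref{lem:singletriangularization}, but with all the matrices transposed''; you have carried out exactly those transposed computations in detail, arriving at the same reduction to Lemmas~\ref{lem:iterphi} and~\ref{lem:invertphi}.
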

\begin{proof}
    Without loss of generality suppose $c-b \leq a$.
    Otherwise we have $c-b \geq a+1$, so $p^{a+1} \mid p^{c-b}$.
    Since $p^{c-b} \mid f_{cb}$ for all $f \in \End(\tmV)$ by the divisibility constraint~\eqref{eq:divcond}, we have $p^{a+1} \mid A_i, i = 1, \ldots, N$. 
    We can take $\ell = 0$ and $R$ to be the identity map, and we have nothing to prove.

    Let $R$ be the block matrix defined by 
    \[
    R_{jk} = 
    \begin{cases}
        I \quad & 1 \leq j = k \leq e, \\
        -p^a Q \quad & j = c, k = b, \\
        0 \quad & \text{otherwise},
    \end{cases}
    \]
    for some $Q \in \M_{d_c \times d_b}(\Z_{/p^c}(\oX))$ to be determined later.
    That is,
    \[
    R = 
    \begin{pmatrix}
        I & & & & \\    
        0 & \ddots & &&&0 \\
        \vdots & & I \\
         & & & \ddots & \\
        & & & & \ddots & \\
        0 & \cdots & -p^a Q & 0 & \cdots & I \\
        0 & \cdots & 0 & 0 & \cdots & 0 & \ddots \\
        \vdots & & \vdots & \vdots & & \vdots \\
    \end{pmatrix}.
    \]
    We now verify that $R$ satisfies the divisibility constraints~\eqref{eq:divcond}, so that $R$ is indeed in $\Aut(\tmV)$. 
    Indeed, since $p^{a} \mid R_{cb}, c-b \leq a$, we have $p^{c-b} \mid R_{cb}$.
    Therefore $R \in \Aut(\tmV)$.
    
    The rest of the proof is the same as Lemma~\ref{lem:singletriangularization}, but with all the matrices transposed.
\end{proof}

To prove Proposition~\ref{prop:blockdiagonal}, we will apply Lemma~\ref{lem:singlediagonalization} repeatedly.
Observe that if $A \in \End(\tmV)$ is $(a, b, c)$-diagonal and its $(c,b)$-th block is divisible by $p^{a+1}$, then $A$ is
\[
\begin{cases}
    \text{$(a, b-1, c)$-triangular,} \quad & \text{if $b > 1$,} \\
    \text{$(a, c-2, c-1)$-triangular,} \quad & \text{if $b = 1, c > 2$,} \\
    \text{$(a+1, e-1, e)$-triangular,} \quad & \text{if $b = 1, c = 2$.} \\
\end{cases}
\]

\propblockdiagonal*
\begin{proof}
    By Proposition~\ref{prop:triangularize}, we can without loss of generality suppose $A_1, \ldots, A_N$ to be block lower-triangular.
    The rest of the proof is the same as Proposition~\ref{prop:triangularize}.
    Apply Lemma~\ref{lem:singlediagonalization} repeatedly for
    \begin{align*}
    (a, b, c) = \; & (a, e-1, e), (a, e-2, e), \ldots, (a, 1, e), \\
    & (a, e-2, e-1), (a, e-3, e-1), \ldots, (a, 1, e-1), \\
    & \cdots \\
    & (a, 2, 3), (a, 1, 3), \\
    & (a, 1, 2),
    \end{align*}
    for $a = 0, 1, \ldots, e-1$. We can find $\ell, R$ such that the $(j,k)$-th blocks of $R^{-1}A_1^{p^{\ell}}R$, $\ldots$, $R^{-1}A_N^{p^{\ell}}R$ are divisible by $p^e = 0$ for all $1 \leq k < j \leq e$ and all $1 \leq j < k \leq e$.
    This means that $R^{-1}A_1^{p^{\ell}}R, \ldots, R^{-1}A_N^{p^{\ell}}R$ are block diagonal.
\end{proof}

\paragraph{Step 4: reduction to S-unit equations over the $\mA$-module $\Zpe(\oX)^d$.}

Let
\begin{equation}\label{eq:decomposeVZpemod}
\tmV = \Zp(\oX)^{d_1} \oplus \Zpt(\oX)^{d_2} \oplus \cdots \oplus \Zpe(\oX)^{d_e}
\end{equation}
be the decomposition of $\tmV$ as a $\Zpe(\oX)$-module specified in Lemma~\ref{lem:PIR}.
Let $\tmV_1, \tmV_2, \ldots, \tmV_e$ denote respectively the components $\Zp(\oX)^{d_1}, \Zpt(\oX)^{d_2}, \ldots, \Zpe(\oX)^{d_e}$ in~\eqref{eq:decomposeVZpemod}.

By Proposition~\ref{prop:blockdiagonal}, we can compute $\ell \in \N$ and $R \in \Aut(\tmV)$, such that $R^{-1}A_1^{p^{\ell}}R, \ldots, R^{-1}A_N^{p^{\ell}}R$ are block diagonal.
This means that
\[
R^{-1}A_i^{p^{\ell}}R \cdot \tmV_1 \subseteq \tmV_1, \; R^{-1}A_i^{p^{\ell}}R \cdot \tmV_2 \subseteq \tmV_2, \; \ldots, \; R^{-1}A_i^{p^{\ell}}R \cdot \tmV_e \subseteq \tmV_e,
\]
for all $i = 1, \ldots, N$.
Since $R^{-1}A_1^{p^{\ell}}R, \ldots, R^{-1}A_N^{p^{\ell}}R$ are invertible, we have
\[
R^{-1}A_i^{p^{\ell}}R \cdot \tmV_1 = \tmV_1, \; R^{-1}A_i^{p^{\ell}}R \cdot \tmV_2 = \tmV_2, \; \ldots, \; R^{-1}A_i^{p^{\ell}}R \cdot \tmV_e = \tmV_e.
\]
Therefore
\[
A_i^{p^{\ell}} \cdot R \tmV_1 = R \tmV_1, \; A_i^{p^{\ell}} \cdot R \tmV_2 = R \tmV_2, \; \ldots,\; A_i^{p^{\ell}} \cdot R \tmV_e = R \tmV_e
\]
for all $i = 1, \ldots, N$.
Since $A_i^{p^{\ell}}, A_i^{-p^{\ell}}, i = 1, \ldots, N$, generate $\tmA$ as a $\Zpe(\oX)$-algebra (see property~(iv) of Proposition~\ref{prop:wrapperlocalization}), we have
\[
\tmA \cdot R \tmV_1 = R \tmV_1,\; \tmA \cdot R \tmV_2 = R \tmV_2,\; \ldots,\; \tmA \cdot R \tmV_e = R \tmV_e.
\]
This means that we have the decomposition
\begin{equation}\label{eq:decomposeVAmod}
\tmV = R \tmV = R\tmV_1 \oplus R\tmV_2 \oplus \cdots \oplus R\tmV_e
\end{equation}
as an $\tmA$-module.

Let $\pi_i \colon \tmV \rightarrow R \tmV_i, i = 1, \ldots, e$, be the projections according to the decomposition~\eqref{eq:decomposeVAmod}.
Then the solution set of the an equation
\begin{equation}\label{eq:beforeAv}
     A_1^{z_{11}} A_2^{z_{12}} \cdots A_N^{z_{1N}} \cdot v_1 + \cdots + A_1^{z_{K1}} A_2^{z_{K2}} \cdots A_N^{z_{KN}} \cdot v_K = v_0
\end{equation}
over the $\tmA$-module $\tmV$ is equal to the intersection of the solution set of equations
\begin{equation}\label{eq:afterAv}
A_1^{z_{11}} A_2^{z_{12}} \cdots A_N^{z_{1N}} \cdot \pi_i(v_1) + \cdots + A_1^{z_{K1}} A_2^{z_{K2}} \cdots A_N^{z_{KN}} \cdot \pi_i(v_K) = \pi_i(v_0)
\end{equation}
over the $\tmA$-modules $R\tmV_i,\; i = 1, \ldots, e$.

Since $R \in \Aut(\tmV)$ is injective, the map $R\tmV_i \rightarrow \tmV_i, v \mapsto R^{-1}v$ defines an isomorphism between $R\tmV_i$ and $\tmV_i$.
Therefore
\[
R\tmV_i \cong \tmV_i = \Zpi(\oX)^{d_i}
\]
for $i = 1, \ldots, e$.
We consider the S-unit Equations~\eqref{eq:afterAv} over the $\tmA$-module $\mV \coloneqq R\tmV_i \cong \Zpi(\oX)^{d_i}$.
Since $p^i \cdot \mV = 0$, the $\tmA$-module $\mV$ is actually an $\mA \coloneqq \tmA/p^i \tmA$-module.
Hence, Equation~\eqref{eq:afterAv} can be considered as an S-unit equation over the $\mA$-module $\mV \cong \Zpi(\oX)^{d_i}$, by replacing $A_1, \ldots, A_N$ with their image under the projection $\tmA \rightarrow \tmA/p^i \tmA = \mA$.

This completes all the ingredients for the proof of Proposition~\ref{prop:wrapperfreeness}:

\propwrapperfreeness*
\begin{proof}
    It suffices to show that the ring $\mA$ and the $\mA$-module $\mV$ satisfy the properties~(i)-(iv).
    Since $\tmA$ is local with some maximal ideal $\frp$, the quotient $\mA = \tmA/p^i \tmA$ is also local with maximal ideal $\frm \coloneqq \frp/p^i \tmA$.
    Furthermore, since $\frp^t = 0$ for some $t$, we have $\frm^t = 0$ for the same $t$.
    This proves the property~(i) of $\mA$ in Proposition~\ref{prop:wrapperfreeness}.
    Since $\tmA$ is a effectively represented as a $\Zpe(\oX)$-algebra, the quotient $\mA = \tmA/p^i \tmA$ is a effectively represented as a $\Zpe(\oX)/p^i\Zpe(\oX) = \Zpi(\oX)$-algebra
    This shows property~(ii).
    Property~(iii) of $\mA$ is inherited from the property~(iii) of $\tmA$ from Proposition~\ref{prop:wrapperlocalization}.
    Taking $d = d_i$, we can write $\mV$ as $\Zpi(\oX)^{d}$, this yields property~(iv).
\end{proof}

From now on, we focus on S-unit equations
\begin{equation*}
     A_1^{z_{11}} A_2^{z_{12}} \cdots A_N^{z_{1N}} \cdot v_1 + \cdots + A_1^{z_{K1}} A_2^{z_{K2}} \cdots A_N^{z_{KN}} \cdot v_K = v_0
\end{equation*}
in $\mA$-modules $\mV$.
To re-uniformize our notation, we replace the exponent $i$ with $e$, so that $\mA$ is again a local $\Zpe(\oX)$-algebra, and $\mV$ is isomorphic to $\Zpe(\oX)^d$ as a $\Zpe(\oX)$-module for some $d \geq 1$.
In particular, each invertible element $A \in \mA$ acts on $\mV = \Zpe(\oX)^d$ as a matrix in $\GL_d(\Zpe(\oX))$.
From now on, we denote by $\frm$ the maximal ideal of $\mA$.

\subsection{Pseudo Frobenius splitting}\label{subsec:Frob}
As illustrated in Example~\ref{exmpl:Derksen}, the key part in Derksen and Masser's proof of Theorem~\ref{thm:DM} is the so-called \emph{Frobenius splitting}.
Recall that this means for a field $\K$ of characteristic $p$,
\[
\K^p \coloneqq \{k^p \mid k \in \K\}
\]
is a subfield of $\K$, making $\K$ an $\K^p$-vector space.
For the special case of the field $\F_p(\oX)$, we have $f(X_1, \ldots, X_n)^p = f(X_1^p, \ldots, X_n^p)$.
Therefore
\[
\F_p(\oX)^p = \F_p(\oX^p) \coloneqq \{f(X_1^p, \ldots, X_n^p) \mid f \in \F_p(\oX)\}
\]
is a subfield of $\F_p(\oX)$, and $\F_p(\oX)$ splits as an direct sum of $p^n$ different $\F_p(\oX^p)$-vector spaces:
\begin{equation}\label{eq:splitfield}
\F_p(\oX) = \bigoplus_{r_1, \ldots, r_n \in \{0, 1, \ldots, p-1\}} \F_p(\oX^p) \cdot X_1^{r_1} X_2^{r_2} \cdots X_n^{r_n}.
\end{equation}

Ideally, we would like to have a similar result for the algebra $\mA$.
However, the situation here is more delicate, namely $\mA^p \coloneqq \{a^p \mid a \in \mA\}$ is not necessarily a subalgebra of $\mA$ (we no longer have $a^p + b^p = (a+b)^p$).
Therefore, we need to generalize the Frobenius splitting from a field $\K$ to the algebra $\mA$, the same way Example~\ref{exmpl:improved} generalizes Example~\ref{exmpl:Derksen}.
In this subsection, we will construct such a generalization, which we will call \emph{pseudo Frobenius splitting} (Proposition~\ref{prop:highFrobsplit}).


%
%

First, we show that there is a splitting for $\Zpe(\oX)$ similar to the splitting for $\F_p(\oX)$ in Equation~\eqref{eq:splitfield}:

\begin{lem}\label{lem:splitfree}
Define $\Zpe(\oX^p) \coloneqq \{f(X_1^p, \ldots, X_n^p) \mid f \in \Zpe(\oX)\}$.
We have
\[
\Zpe(\oX) = \bigoplus_{r_1, \ldots, r_n \in \{0, 1, \ldots, p-1\}} \Zpe(\oX^p) \cdot X_1^{r_1} X_2^{r_2} \cdots X_n^{r_n}
\]
as a $\Zpe(\oX^p)$-module.
\end{lem}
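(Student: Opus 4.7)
The plan is to reduce Lemma~\ref{lem:splitfree} to the analogous statement for the polynomial ring $\Zpe[\oX]$, and then bootstrap to the localization using Lemma~\ref{lem:powp}. For the polynomial version, I would observe that $\Zpe[\oX]$ is free as a $\Zpe[\oX^p]$-module with basis $\{X_1^{r_1}\cdots X_n^{r_n} : 0 \leq r_i < p\}$: every monomial $X_1^{a_1}\cdots X_n^{a_n}$ splits uniquely as $(X_1^p)^{q_1}\cdots(X_n^p)^{q_n}\cdot X_1^{r_1}\cdots X_n^{r_n}$ via Euclidean division $a_i = p q_i + r_i$, $0 \leq r_i < p$, and this extends by $\Zpe$-linearity to a unique expansion of any polynomial.

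For existence in the localization, take $f/g \in \Zpe(\oX)$ with $f, g \in \Zpe[\oX]$ and $p \nmid g$. The crucial observation is that by Lemma~\ref{lem:powp} we have $g^{p^e}(X_1,\ldots,X_n) = g^{p^{e-1}}(X_1^p,\ldots,X_n^p)$, so $g^{p^e} \in \Zpe[\oX^p]$; and $p \nmid g^{p^e}$ because $\Zpe[\oX]/p\Zpe[\oX] = \F_p[\oX]$ is an integral domain. Hence $g^{p^e}$ becomes a unit in $\Zpe(\oX^p)$, and we can rewrite $f/g = (f\,g^{p^e - 1})/g^{p^e}$. Expanding $f g^{p^e - 1} \in \Zpe[\oX]$ in the polynomial basis from the first step and then dividing each coefficient by $g^{p^e}$ yields the desired expression of $f/g$ as a $\Zpe(\oX^p)$-linear combination of the monomials $X_1^{r_1}\cdots X_n^{r_n}$.

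For uniqueness (directness of the sum), suppose $\sum_r h_r(X_1^p,\ldots,X_n^p)\, X_1^{r_1}\cdots X_n^{r_n} = 0$ with $h_r \in \Zpe(\oX^p)$. Choose a common denominator $G \in \Zpe[\oX^p]$ with $p \nmid G$, set $H_r \coloneqq G h_r \in \Zpe[\oX^p]$, and multiply the relation by $G(X_1^p, \ldots, X_n^p)$ to obtain $\sum_r H_r(X_1^p,\ldots,X_n^p)\, X_1^{r_1}\cdots X_n^{r_n} = 0$ inside $\Zpe[\oX]$. The polynomial case forces each $H_r = 0$, and since $G$ is a unit in $\Zpe(\oX^p)$, each $h_r = 0$. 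The only subtle ingredient of the argument is Lemma~\ref{lem:powp}: without it there is no evident way to replace a denominator $g \in \Zpe[\oX]$ (with $p \nmid g$) by one lying in $\Zpe[\oX^p]$ while preserving the non-divisibility by $p$, and this is the single technical hinge on which the whole reduction to the polynomial case turns.
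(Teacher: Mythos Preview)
Your proposal is correct and follows essentially the same approach as the paper's proof: both arguments reduce to the polynomial case by clearing denominators for injectivity, and both use Lemma~\ref{lem:powp} to rewrite $f/g = (f g^{p^e-1})/g^{p^e}$ with $g^{p^e} \in \Zpe[\oX^p]$ for surjectivity. Your version is in fact slightly more explicit in isolating the polynomial freeness as a preliminary step and in justifying why $p \nmid g^{p^e}$.
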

\begin{proof}
    Consider the $\Zpe(\oX^p)$-linear map
    \begin{align*}
        \varphi \colon \quad \Zpe(\oX^p)^{p^n} & \rightarrow \Zpe(\oX), \\
        (f_{0,0,\ldots,0}, f_{0,0,\ldots,1}, \ldots, f_{p-1, p-1, \ldots, p-1}) & \mapsto \sum_{r_1, \ldots, r_n \in \{0, 1, \ldots, p-1\}} f_{r_1,r_2,\ldots,r_n} \cdot X_1^{r_1} X_2^{r_2} \cdots X_n^{r_n}.
    \end{align*}
    First we show that $\varphi$ is injective.
    Suppose $\sum_{r_1, \ldots, r_n \in \{0, 1, \ldots, p-1\}} f_{r_1,r_2,\ldots,r_n} \cdot X_1^{r_1} X_2^{r_2} \cdots X_n^{r_n} = 0$.
    Write each $f_{r_1,r_2,\ldots,r_n} = \frac{g_{r_1,r_2,\ldots,r_n}}{h_{r_1,r_2,\ldots,r_n}}$ where $g_{r_1,r_2,\ldots,r_n}, h_{r_1,r_2,\ldots,r_n} \in \Zpe[\oX^p]$ and $p \nmid h_{r_1,r_2,\ldots,r_n}$.
    Then
    \begin{multline*}
    0 = \sum_{r_1, \ldots, r_n \in \{0, 1, \ldots, p-1\}} f_{r_1,r_2,\ldots,r_n} \cdot X_1^{r_1} X_2^{r_2} \cdots X_n^{r_n} = 
    \frac{\sum_{r_1, \ldots, r_n \in \{0, 1, \ldots, p-1\}} G_{r_1, \ldots, r_n} X_1^{r_1} X_2^{r_2} \cdots X_n^{r_n}}{\prod_{r_1, \ldots, r_n \in \{0, 1, \ldots, p-1\}} h_{r_1,r_2,\ldots,r_n}},
    \end{multline*}
    where $G_{r_1, \ldots, r_n} \coloneqq g_{r_1, \ldots, r_n} \prod_{(r'_1, \ldots, r'_n) \neq (r_1, \ldots, r_n)} h_{r'_1, \ldots, r'_n} \in \Zpe[\oX^p]$.
    Therefore
    \[
    0 = \sum_{r_1, \ldots, r_n \in \{0, 1, \ldots, p-1\}} G_{r_1, \ldots, r_n} X_1^{r_1} X_2^{r_2} \cdots X_n^{r_n},
    \]
    so we must have $G_{r_1, \ldots, r_n} = 0$ for all $r_1, \ldots, r_n \in \{0, 1, \ldots, p-1\}$.
    Consequently $g_{r_1, \ldots, r_n} = 0$, because $h_{r'_1, \ldots, r'_n} \neq 0$. We conclude that $f_{r_1, \ldots, r_n} = 0$ for all $r_1, \ldots, r_n \in \{0, 1, \ldots, p-1\}$.

    Next we show that $\varphi$ is surjective.
    Let $\frac{g}{h} \in \Zpe(\oX)$ where $g, h \in \Zpe[\oX]$ with $p \nmid h$.
    By Lemma~\ref{lem:powp}, we have $h^{p^{e}} \in \Zpe[\oX^p]$.
    Therefore, $\frac{g}{h} = \frac{g h^{p^{e} - 1}}{h^{p^{e}}}$ where $p \nmid h^{p^{e}}$ and $h^{p^{e}} \in \Zpe[\oX^p]$.
    We can write $g h^{p^{e} - 1}$ as $\sum_{r_1, \ldots, r_n \in \{0, 1, \ldots, p-1\}} H_{r_1, \ldots, r_n} X_1^{r_1} X_2^{r_2} \cdots X_n^{r_n}$ with each $H_{r_1, \ldots, r_n} \in \Zpe[\oX^p]$.
    Then
    \[
    \frac{g}{h} = \frac{g h^{p^{e} - 1}}{h^{p^{e}}} = \sum_{r_1, \ldots, r_n \in \{0, 1, \ldots, p-1\}} \frac{H_{r_1, \ldots, r_n}}{h^{p^{e}}} \cdot X_1^{r_1} X_2^{r_2} \cdots X_n^{r_n} = \varphi\left(\frac{H_{0, \ldots, 0}}{h^{p^{e}}}, \ldots, \frac{H_{p-1, \ldots, p-1}}{h^{p^{e}}}\right).
    \]
    Therefore $\varphi$ is surjective.

    We conclude that $\varphi$ is a bijection, so $\Zpe(\oX) = \bigoplus_{r_1, \ldots, r_n \in \{0, 1, \ldots, p-1\}} \Zpe(\oX^p) \cdot X_1^{r_1} \cdots X_n^{r_n}$.
\end{proof}

Instead of the usual $\Zpe(\oX)$-module structure on $\Zpe(\oX)$, we can define a different $\Zpe(\oX)$-module structure on $\Zpe(\oX)$ by the action
\[
* \; \colon \Zpe(\oX) \times \Zpe(\oX) \rightarrow \Zpe(\oX), \quad f * m \coloneqq f(X_1^p, \ldots, X_k^p) \cdot m.
\]
We denote by $\Phi(\Zpe(\oX))$ this new $\Zpe(\oX)$-module. 
Intuitively, applying $\Phi$ to $\Zpe(\oX)$ can be considered as performing the ``variable change'' $X'_1 \coloneqq X_1^p, \ldots, X'_n \coloneqq X_n^p$, as in Example~\ref{exmpl:Derksen} or~\ref{exmpl:improved}.
By Lemma~\ref{lem:splitfree}, we have $\Phi\left(\Zpe(\oX)\right) = \left(\Zpe(\oX)\right)^{p^n}$.
For any element $f \in \Zpe(\oX)$, it can be considered as an element in $\Phi\left(\Zpe(\oX)\right)$ which we denote by $\Phi(f)$.
In particular, $\Phi(f) = (f_{0, 0, \ldots, 0}, f_{0, 0, \ldots, 1}, \ldots, f_{p-1, p-1, \ldots, p-1})$, where $f_{r_1, r_2 \ldots, r_n} \in \Zpe(\oX)$ are such that
\[
f = \sum_{r_1, \ldots, r_n \in \{0, 1, \ldots, p-1\}} f_{r_1, r_2 \ldots, r_n}(X_1^p, \ldots, X_n^p) \cdot X_1^{r_1} X_2^{r_2} \cdots X_n^{r_n}.
\]

We can extend the domain of definition of $\Phi$ to $\mV = \Zpe(\oX)^{d}$, so that $\Phi(\mV) = \left(\Phi\left(\Zpe(\oX)\right)\right)^d = \Zpe(\oX)^{p^n d}$.
In particular, if $\bv = (f_1, \ldots, f_d) \in \mV$, then $\Phi(\bv) \coloneqq \left(\Phi(f_1), \ldots, \Phi(f_d)\right) \in \Zpe(\oX)^{p^n \cdot d} = \Phi(\mV)$.
Let $A \in \GL_d(\Zpe(\oX))$ be any invertible $\Zpe(\oX)$-linear transformation of $\mV$. 
Then $A$ induces an invertible $\Zpe(\oX)$-linear transformation $\Phi(A) \colon \Phi(\mV) \rightarrow \Phi(\mV)$ defined by $\Phi(A) \cdot \Phi(\bv) \coloneqq \Phi(A\bv)$.
In particular, $\Phi(A) \in \GL_{p^n d}(\Zpe(\oX))$.

Note that the map $\Phi$ commutes with taking modulo $p$. More precisely, we have $\Phi(f + p \cdot \Zpe(\oX)) = \Phi(f) + p \cdot\Phi(\Zpe(\oX))$, therefore $\Phi$ induces a map from $\Zpe(\oX)/p\Zpe(\oX) = \F_p(\oX)$ to $\Zpe(\oX)^{p^n}/p\Zpe(\oX)^{p^n} = \F_p(\oX)^{p^n}$.
Similarly, we have $\Phi(\mV/p\mV) = \Phi(\F_p(\oX)^d) = \F_p(\oX)^{p^n d}$, and $\Phi(\GL_d(\F_p(\oX))) = \GL_{p^n d}(\F_p(\oX))$.

One can also iterate the operation $\Phi$, and we denote $\Phi^k(\cdot) \coloneqq \underbrace{\Phi(\Phi( \cdots \Phi}_{k \text{ times }} ( \cdot ) \cdots ))$.
Thus, we have the chains
\begin{alignat*}{3}
     \Zpe(\oX) \; & \xrightarrow{\;\;\Phi\;\;} \;\left(\Zpe(\oX)\right)^{p^n} && \;\xrightarrow{\;\;\Phi\;\;}\; \left(\Zpe(\oX)\right)^{p^n \cdot p^n} && \;\xrightarrow{\;\;\Phi\;\;}\; \cdots \\
     \mV = \Zpe(\oX)^d & \;\xrightarrow{\;\;\Phi\;\;}\; \left(\Zpe(\oX)\right)^{p^n d} && \;\xrightarrow{\;\;\Phi\;\;}\; \left(\Zpe(\oX)\right)^{p^n \cdot p^nd} && \;\xrightarrow{\;\;\Phi\;\;}\; \cdots \\
     \Aut(\mV) = \GL_d(\Zpe(\oX)) & \;\xrightarrow{\;\;\Phi\;\;}\; \GL_{p^n d}(\Zpe(\oX)) && \;\xrightarrow{\;\;\Phi\;\;}\; \GL_{p^{2n} d}(\Zpe(\oX)) && \;\xrightarrow{\;\;\Phi\;\;}\; \cdots \\
\end{alignat*}
In particular, each $\Phi^k, k \geq 1$, is a bijection, and can be considered as performing the variable change $X'_1 \coloneqq X_1^{p^k}, \ldots, X'_n \coloneqq X_n^{p^k}$.
These also commute with taking modulo $p$.

We are now ready to construct the ``pseudo Frobenius splitting'' for the elements $A_1, \ldots, A_N \in \mA$, considered as invertible matrices in $\GL_d(\Zpe(\oX))$.
The exact formulation is the following proposition.

\begin{restatable}[Pseudo Frobenius splitting]{prop}{prophighFrob}\label{prop:highFrobsplit}
    There exist an effectively computable integer $s \geq 0$, and an effectively computable matrix $R \in \GL_{p^{(s+1)n} d}(\Zpe(\oX))$, such that for all $i = 1, \ldots, N$, we have
    \begin{equation}\label{eq:highFrob}
    R^{-1} \cdot \Phi^{s+1}(A_i)^{p^{s+1}} \cdot R = \diag\Big(\underbrace{\Phi^{s}(A_i)^{p^{s}}, \ldots, \Phi^{s}(A_i)^{p^{s}}}_{p^n \text{ blocks}}\Big).
    \end{equation}
\end{restatable}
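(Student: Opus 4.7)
The plan is to establish Proposition~\ref{prop:highFrobsplit} in two stages: first a characteristic-$p$ analogue modulo $p$, then a Hensel lift to $\Zpe(\oX)$. For the characteristic-$p$ stage I would work in the residue field $\F_p(\oX) = \Zpe(\oX)/p\Zpe(\oX)$. The Frobenius $\sigma \colon f \mapsto f^p$ sends $\F_p(\oX)$ into the subfield $\F_p(\oX^p)$, and by Lemma~\ref{lem:splitfree}, any matrix whose entries lie in $\F_p(\oX^{p^s})$ becomes, after applying $\Phi^s$ and the variable-renaming identification, block-diagonal with $p^{sn}$ identical blocks. The mod-$p$ goal is thus to produce an effectively bounded $s$ and a common conjugator $R_0 \in \GL_{p^{(s+1)n}d}(\F_p(\oX))$ that simultaneously brings each $\Phi^s(A_i)^{p^s}$ into a matrix with entries in $\F_p(\oX^{p^s})$. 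I would obtain this by exploiting the commutative, finite-dimensional $\F_p(\oX)$-subalgebra $\mB \subseteq \M_{d \times d}(\F_p(\oX))$ generated by $A_1, \ldots, A_N$ and their inverses, whose dimension is finite by property~(iii) of Proposition~\ref{prop:wrapperfreeness}. Artinianness of $\mB$ ensures that the descending chain $\mB \supseteq \sigma(\mB) \supseteq \sigma^2(\mB) \supseteq \cdots$ stabilizes at an effectively computable index $s$; a simultaneous block-triangularization of $A_1, \ldots, A_N$ at this index confines the diagonal parts of $A_i^{p^s}$ to $\F_p(\oX^{p^s})$, supplying the desired $R_0$.

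For the Hensel lifting stage, I would promote the mod-$p$ solution to an exact solution over $\Zpe(\oX)$ using the Hensel-type Lemma~\ref{lem:Hensel}. The identity $R^{-1} \Phi^{s+1}(A_i)^{p^{s+1}} R = \diag(\Phi^s(A_i)^{p^s}, \ldots, \Phi^s(A_i)^{p^s})$ is a polynomial matrix equation in the entries of $R$, satisfied modulo $p$ by $R_0$. By Lemma~\ref{lem:lifting}, a matrix identity valid modulo $p^a$ becomes valid modulo $p^{a+1}$ once both sides are raised to the $p$-th power. Since applying one extra $\Phi$ together with one extra $p$-th power on the left side is exactly incrementing $s$ by $1$, each Hensel step trades one increment of $s$ for one additional $p$-adic digit of accuracy in $R$. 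After at most $e - 1$ such refinements we reach exact equality (recall $p^e = 0$ in $\Zpe(\oX)$), yielding the required $R$ simultaneously for every $A_i$.

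The main obstacle is Stage~1: a naive attempt with $s = 0$ fails because a matrix $p$-th power $A^p$ in general does not have entries in $\F_p(\oX^p)$, even though scalar $p$-th powers do. Overcoming this requires the structural analysis of $\mB$ described above, together with an effective bound on the stabilization index of the Frobenius chain $\sigma^k(\mB)$, which follows from a computable bound on $\dim_{\F_p(\oX)} \mB$ provided by Proposition~\ref{prop:wrapperfreeness}. Uniformity of the construction, i.e., that a single $R$ works for all $i$ simultaneously, is automatic at every stage because the algebra $\mB$ encodes all of $A_1, \ldots, A_N$ jointly, and both the triangularization in Stage~1 and the Hensel iterations in Stage~2 depend only on this common structural data.
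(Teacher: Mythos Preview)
Your two-stage plan (prove the identity modulo $p$, then Hensel-lift to $\Zpe(\oX)$) matches the paper's architecture. However, Stage~1 as you describe it has a genuine gap.

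\medskip

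\textbf{Stage 1.} You claim that stabilization of the chain $\mB \supseteq \mB^p \supseteq \mB^{p^2} \supseteq \cdots$ together with ``simultaneous block-triangularization'' will ``confine the diagonal parts of $A_i^{p^s}$ to $\F_p(\oX^{p^s})$''. This is neither justified nor correct: block-triangularizing commuting matrices over $\F_p(\oX)$ does not force their entries into the subfield $\F_p(\oX^{p^s})$, and stabilization of the Frobenius chain only tells you $\mB^{p^s}$ is perfect, not that its realization in $\M_{d\times d}(\F_p(\oX))$ uses entries from $\F_p(\oX^{p^s})$. More fundamentally, you have misidentified the goal: the required identity is not ``some conjugate of $\Phi^s(A_i)^{p^s}$ has entries in a subfield'', but the precise relation $R^{-1}\Phi^{s+1}(B_i)^{p^{s+1}}R = \diag(\Phi^s(B_i)^{p^s},\ldots,\Phi^s(B_i)^{p^s})$, where the diagonal blocks are \emph{prescribed}. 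The paper's Lemma~\ref{lem:Frobp} obtains this by a different mechanism: using the locality of $\mA$ (property~(i) of Proposition~\ref{prop:wrapperfreeness}), one shows that for $p^s \geq t$ the ring $\mB^{p^s}$ is an integral domain finite over a field, hence itself a \emph{field}. This lets one treat $\mV/p\mV$ as a $\mB^{p^s}$-vector space, pick an explicit basis, and compute two change-of-basis matrices $C$ and $Q$ (at levels $s$ and $s+1$) whose comparison yields the identity directly. Your outline does not contain this key field-theoretic step.

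\medskip

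\textbf{Stage 2.} Your appeal to Lemma~\ref{lem:lifting} is misplaced: that lemma is about elements in a commutative ring, and the implication ``$R^{-1}BR \equiv C \bmod p^a$ implies $R^{-1}B^pR \equiv C^p \bmod p^{a+1}$'' is false for non-commuting matrices (the cross-terms $\sum_k C^k M C^{p-1-k}$ do not collapse). The actual Hensel step (Lemma~\ref{lem:Hensel}) finds a \emph{new} conjugator $\tilde R = R + p^a Q$ by solving $\varphi_i^{p^\ell}(Q) = \varphi_i^{p^\ell-1}(M_i)$ simultaneously for all $i$; this invokes Lemma~\ref{lem:invertphi} and may require $\ell > 1$, so ``one increment of $s$'' per step is not what happens. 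You do cite Lemma~\ref{lem:Hensel}, so the skeleton of Stage~2 is recoverable, but the mechanism you describe is not the one that works.
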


Here, $\diag(A, \ldots, A)$ denotes the block-diagonal matrix with the blocks $A, \ldots, A$ on the diagonal.
The rest of this subsection will be dedicated to the proof of Proposition~\ref{prop:highFrobsplit}.
The proof applies similar techniques to the block-diagonalization procedure from Subsection~\ref{subsec:free}.
Notably, Lemma~\ref{lem:iterphi} and Lemma~\ref{lem:invertphi} will be crucial.

Let $\mB \coloneqq \mA/p\mA$. Then $\mB$ is a finite dimensional commutative $\F_p(\oX)$-algebra, which acts on $\mV/p\mV = \F_p(\oX)^d$.
Since $\mA$ is local with maximal ideal $\frm$ satisfying $\frm^t = 0$, the algebra $\mB$ is also local with maximal ideal $\frm/p\mA$, and $(\frm/p\mA)^t = 0$.
Let $B_1, \ldots, B_N$ be the image of $A_1, \ldots, A_N$ in $\mB = \mA/p\mA$.
Since the map $\Phi$ commutes with taking modulo $p$, it can be applied on the quotients $\mV/p\mV$ and $\mB = \mA/p\mA$.

First, we show a special case of Proposition~\ref{prop:highFrobsplit} for $e = 1$: this is a rather classic extension of the Frobenius splitting.

\begin{lem}[Frobenius splitting of $\F_p(\oX)$-algebras]\label{lem:Frobp}
    There exists an effectively computable integer $s \geq 0$, and an effectively computable matrix $R \in \GL_{p^{(s+1)n} d}(\F_p(\oX))$, such that for all $i = 1, \ldots, N$, we have
    \begin{equation}
    R^{-1} \cdot \Phi^{s+1}(B_i)^{p^{s+1}} \cdot R = \diag\Big(\underbrace{\Phi^{s}(B_i)^{p^s}, \ldots, \Phi^{s}(B_i)^{p^s}}_{p^n \text{ blocks}}\Big) 
    \end{equation}
\end{lem}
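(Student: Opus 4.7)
The plan is to reduce, via the local Artinian structure of $\mB$, to a splitting problem inside a subfield of $\mB$, then apply the Frobenius decomposition of Lemma~\ref{lem:splitfree}.

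First, since $\mB$ is commutative local Artinian with maximal ideal $\frm/p\mA$ satisfying $(\frm/p\mA)^t = 0$, I choose $s$ with $p^s \geq t$. Then $b \in \frm/p\mA$ implies $b^{p^s} = 0$, and by Lemma~\ref{lem:lifting} applied in the characteristic-$p$ ring $\mB$, the iterated Frobenius $F^s \colon \mB \to \mB$, $b \mapsto b^{p^s}$, factors through the residue field $\mK := \mB / (\frm/p\mA)$. Consequently the image $\mK_s := F^s(\mB) \subseteq \mB$ is a subring abstractly isomorphic to $\mK$; in particular it is itself a field, and it contains $B_i^{p^s}$ and $B_i^{p^{s+1}}$ for every $i$.

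Next, I build a basis of $\mV / p\mV = \F_p(\oX)^d$ adapted to the field $\mK_s$. Since $\mK_s$ acts on $\mV/p\mV$ through the $\mB$-action, $\mV/p\mV$ is a finite-dimensional $\mK_s$-vector space; choose an $\mK_s$-basis $\{v_1, \ldots, v_\delta\}$. Combining this with a basis $\{k_\ell\}$ of the finite extension $\mK_s / \F_p(\oX^{p^s})$ and then refining via the Frobenius splitting $\F_p(\oX^{p^s}) = \bigoplus_{\br} \F_p(\oX^{p^{s+1}}) (X^{p^s})^{\br}$ furnished by Lemma~\ref{lem:splitfree}, I obtain an $\F_p(\oX^{p^{s+1}})$-basis of $\mV/p\mV$ indexed by tuples $(\br, \ell, j)$. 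In this adapted basis the scalar $B_i^{p^s} \in \mK_s$ acts block-diagonally, with $\delta$ copies of its multiplication matrix $M_i$ on $\mK_s$, while $B_i^{p^{s+1}} = (B_i^{p^s})^p$ acts block-diagonally with $p^n\delta$ copies of the multiplication matrix for $B_i^{p^{s+1}}$.

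Finally, I use the Frobenius identity $f^p(\oX) = f(\oX^p)$ in characteristic $p$ (as in Lemma~\ref{lem:powp}), together with the identifications $\F_p(\oX^{p^s}) \cong \F_p(\oX) \cong \F_p(\oX^{p^{s+1}})$ built into $\Phi^s$ and $\Phi^{s+1}$, to show that after identification the $p^n\delta$ blocks for $B_i^{p^{s+1}}$ coincide with $p^n$ copies of the $\delta$ blocks for $B_i^{p^s}$. Composing the change-of-basis matrices between the adapted bases and the standard $\Phi^s$- and $\Phi^{s+1}$-bases $\{X^{\br}e_j\}$, together with the permutation that regroups the $p^n\delta$ copies into $p^n$ blocks of $\delta$, yields the required $R \in \GL_{p^{(s+1)n}d}(\F_p(\oX))$. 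Crucially, the adapted basis depends only on the common subfield $\mK_s$ and not on any individual $B_i$, so a single $R$ works simultaneously for all $i$. The main subtlety I anticipate is that when $\mK/\F_p(\oX)$ is inseparable, the required linear disjointness of $\mK_{s+1} := F(\mK_s)$ from $\F_p(\oX^{p^s})$ over $\F_p(\oX^{p^{s+1}})$ inside $\mK_s$ can fail, so the basis refinement above needs a more careful tensor-product argument to verify linear independence; this is the technical point I expect to be most delicate.
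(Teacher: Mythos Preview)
Your overall strategy matches the paper's: choose $s$ with $p^s \ge t$ so that $\mK_s := \mB^{p^s}$ is a field, decompose $\mV/p\mV$ as a $\mK_s$-vector space, and compare the $\Phi^s$- and $\Phi^{s+1}$-descriptions via the Frobenius on $\mK_s$. The key ingredients (the field $\mK_s$, the $\mK_s$-basis $\{v_j\}$, the $\F_p(\oX^{p^s})$-basis $\{k_\ell\}$ of $\mK_s$) are the same as the paper's $\{v_j\}$ and $\{E_k\}$.

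The gap is in your refinement step. You pass from an $\F_p(\oX^{p^s})$-basis to an $\F_p(\oX^{p^{s+1}})$-basis via the splitting of $\F_p(\oX^{p^s})$ over $\F_p(\oX^{p^{s+1}})$, obtaining the basis $\{(X^{p^s})^{\br} k_\ell v_j\}$. In this basis, any element of $\mK_s$ acts as $\delta$ copies of its $(p^n d_{\mB})\times(p^n d_{\mB})$ multiplication matrix on $\mK_s$; this is true for $B_i^{p^{s+1}}$ as well. Your claim that $B_i^{p^{s+1}}$ further decomposes into $p^n\delta$ identical $d_{\mB}\times d_{\mB}$ blocks requires that $B_i^{p^{s+1}} k_\ell$ lie in the $\F_p(\oX^{p^{s+1}})$-span of $\{k_m\}$, i.e.\ that the structure constants of $B_i^{p^{s+1}}$ in the $\F_p(\oX^{p^s})$-basis $\{k_\ell\}$ already lie in $\F_p(\oX^{p^{s+1}})$. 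There is no reason for this; it fails precisely when $\mK_s/\F_p(\oX^{p^s})$ is inseparable. Your closing remark about linear disjointness points toward the right neighbourhood, but the obstruction is not about linear \emph{independence} of your basis (which is fine), it is about the block structure of the action in that basis.

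The paper's fix is to refine along the tower $\mK_{s+1} \subset \mK_s$ instead of $\F_p(\oX^{p^{s+1}}) \subset \F_p(\oX^{p^s})$: take the $\F_p(\oX^{p^{s+1}})$-basis $\{E_k^p\}$ of $\mK_{s+1}$ (obtained by applying Frobenius to $\{E_k\}$) together with a basis $\{F_i\}_{i=1}^{p^n}$ of $\mK_s$ over $\mK_{s+1}$, giving the $\F_p(\oX^{p^{s+1}})$-basis $\{E_k^p F_i v_j\}$ of $\mV/p\mV$. Since $B_i^{p^{s+1}} \in \mK_{s+1}$ commutes with multiplication by $F_i$ and $BE_k = \sum_m b_{mk}^{p^s} E_m$ implies $B^p E_k^p = \sum_m b_{mk}^{p^{s+1}} E_m^p$, the action is automatically block-diagonal with $p^n$ identical blocks whose entries are exactly the $b_{mk}$ — matching the entries of $C^{-1}\Phi^s(B)C$. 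No separability or disjointness assumption is needed, because the degree $[\mK_s : \mK_{s+1}] = p^n$ holds unconditionally.
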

\begin{proof}
    Let $s$ be such that $p^s \geq t$.
    Since $\mB$ is a finite dimensional $\F_p(\oX)$-algebra, the set 
    \[
    \mB^{p^s} \coloneqq \{b^{p^s} \mid b \in \mB\}
    \]
    is a finite dimensional $\F_p(\oX^{p^s})$-algebra.
    We claim that $\mB^{p^s}$ is an integral domain, and therefore a field~\cite[Corollary~4.7]{eisenbud2013commutative}.
    Indeed, suppose $xy = 0$ in $\mB^{p^s} = (\mA/p\mA)^{p^s}$, then $x = (v + p \mA)^{p^s},\; y = (w + p \mA)^{p^s}$ for some $v, w \in \mA$.
    Then $xy = 0$ yields $v^{p^s} w^{p^s} \in p \mA \subseteq \frm$.
    Since $\frm$ is a prime ideal of $\mA$, we have $v \in \frm$ or $w \in \frm$.
    This yields $v^t = 0$ or $w^t = 0$.
    Since $p^s \geq t$, we have $v^{p^s} = 0$ or $w^{p^s} = 0$.
    We conclude that either $x = 0$ or $y = 0$, and hence $\mB^{p^s}$ is an integral domain and therefore a field.

    The field $\mB^{p^s}$ acts on $\mV/p\mV$, making it a $\mB^{p^s}$-linear space, whose basis we denote by $v_1, \ldots, v_{d_{\mV}} \in \mV/p\mV$.
    Let $E_1, \ldots, E_{d_{\mB}}$ be a basis of $\mB^{p^{s}}$ as a $\F_p(\oX^{p^{s}})$-linear space.
    Then
    \[
    \mV/p\mV = \bigoplus_{j = 1}^{d_{\mV}} \mB^{p^s} v_j = \bigoplus_{j = 1}^{d_{\mV}} \bigoplus_{k = 1}^{d_{\mB}} \F_p(\oX^{p^{s}}) \cdot E_k v_j
    \]
    as a $\F_p(\oX^{p^{s}})$-vector space.
    Thus, 
    \[
    \Phi^s(\mV/p\mV) = \bigoplus_{j = 1}^{d_{\mV}} \bigoplus_{k = 1}^{d_{\mB}} \F_p(\oX) \cdot \Phi^s(E_k v_j).
    \]
    We define the basis matrix $C \coloneqq \left(\Phi^s(E_k v_j)\right)_{j = 1, \ldots, d_{\mV}; k = 1, \ldots, d_{\mB}}$ of $\Phi^s(\mV/p\mV) = \F_p(\oX)^{p^{sn}d}$, treating each $\Phi^s(E_k v_j)$ as a column vector:
    \[
    C \coloneqq \left(\Phi^s(E_1 v_1), \ldots, \Phi^s(E_{d_{\mB}} v_1), \ldots, \Phi^s(E_1 v_{d_{\mV}}), \ldots, \Phi^s(E_{d_{\mB}} v_{d_{\mV}}) \right) \in \GL_{p^{sn}d}(\F_p(\oX)).
    \]

    Since $\left\{E_1, \ldots, E_{d_{\mB}}\right\}$ forms a basis of $\mB^{p^{s}}$ as a $\F_p(\oX^{p^{s}})$-linear space, taking their $p$-th power gives $\left\{E_1^p, \ldots, E_{d_{\mB}}^p\right\}$ as a basis of $\mB^{p^{s+1}}$ as a $\F_p(\oX^{p^{s+1}})$-linear space.
    Since $\mB^{p^s}$ is a field, $\mB^{p^{s+1}} = \left(\mB^{p^s}\right)^p$ is a subfield of $\mB^{p^s}$.
    Let $F_1, \ldots, F_{p^n}$ be a basis of $\mB^{p^s}$ as a $\mB^{p^{s+1}}$-linear space.
    Then,
    \[
    \mV/p\mV = \bigoplus_{j = 1}^{d_{\mV}} \mB^{p^s} v_j = \bigoplus_{j = 1}^{d_{\mV}} \bigoplus_{i = 1}^{p^n} \mB^{p^{s+1}} \cdot F_i v_j = \bigoplus_{i = 1}^{p^n} \bigoplus_{j = 1}^{d_{\mV}} \bigoplus_{k = 1}^{d_{\mB}} \F_p(\oX^{p^{s+1}}) \cdot E_k^p F_i v_j
    \]
    as a $\F_p(\oX^{p^{s+1}})$-vector space.
    So
    \[
    \Phi^{s+1}(\mV/p\mV) = \bigoplus_{i = 1}^{p^n} \bigoplus_{j = 1}^{d_{\mV}} \bigoplus_{k = 1}^{d_{\mB}} \F_p(\oX) \cdot \Phi^{s+1}(E_k^p F_i v_j),
    \]
    and we define the basis matrix of $\Phi^{s+1}(\mV/p\mV)$:
    \begin{multline*}
    Q \coloneqq \left(\Phi^{s+1}(E_1^p F_1 v_1), \ldots, \Phi^{s+1}(E_{d_{\mB}}^p F_1 v_{d_{\mV}}), \ldots, \Phi^{s+1}(E_1^p F_{p^n} v_1), \ldots, \Phi^{s+1}(E_{d_{\mB}}^p F_{p^n} v_{d_{\mV}}) \right) \\
    \in \GL_{p^{(s+1)n}d}(\F_p(\oX)).
    \end{multline*}
    Let $B$ be any element of $\mB^{p^s}$, we will show
    \[
    Q^{-1} \cdot \Phi^{s+1}(B^p) \cdot Q = \diag\Big(\underbrace{C^{-1} \cdot \Phi^{s}(B) \cdot C, \ldots, C^{-1} \cdot \Phi^{s}(B) \cdot C}_{p^n \text{ blocks}}\Big).
    \]

    Recall that $E_1, \ldots, E_{d_{\mB}}$ is a basis of $\mB^{p^{s}}$ as a $\F_p(\oX^{p^{s}})$-linear space.
    For any $k = 1, \ldots, d_{\mB}$, we can write 
    \begin{equation}\label{eq:BE}
        B E_k = b_{1k}^{p^s} E_1 + \cdots + b_{d_{\mB} k}^{p^s} E_{d_{\mB}},
    \end{equation}
    for some $b_{1k}^{p^s}, \ldots, b_{d_{\mB} k}^{p^s} \in \F_p(\oX^{p^s})$.
    Then
    \[
    \Phi^{s}(B) \Phi^{s}(E_k) = b_{1k} \Phi^{s}(E_1) + \cdots + b_{d_{\mB} k} \Phi^{s}(E_{d_{\mB}}),
    \]
    which yields
    \[
    \Phi^{s}(B) \Phi^{s}(E_k v_j) = b_{1k} \Phi^{s}(E_1 v_j) + \cdots + b_{d_{\mB} k} \Phi^{s}(E_{d_{\mB}} v_j)
    \]
    for all $j = 1, \ldots, d_{\mV}$.
    This means that the matrix $C^{-1} \cdot \Phi^{s}(B) \cdot C$ (that is, the matrix form of linear map $\Phi^{s}(B)$ under the new basis $C$) is block diagonal of the form
    \begin{align}\label{eq:CBC}
    C^{-1} \cdot \Phi^{s}(B) \cdot C & =
    \begin{pmatrix}
        b_{11} & \cdots & b_{1d_{\mB}} &   &   &   &   &   &   &   \\
        \vdots & \ddots & \vdots&&&&&&& \\
        b_{d_{\mB}1} & \cdots & b_{d_{\mB}d_{\mB}} & &&&&&& \\
        &&& b_{11} & \cdots & b_{1d_{\mB}} \\
        &&&\vdots & \ddots & \vdots \\
        &&& b_{d_{\mB}1} & \cdots & b_{d_{\mB}d_{\mB}} \\
        &&&&&&\ddots \\
        &&&&&&& b_{11} & \cdots & b_{1d_{\mB}} \\
        &&&&&&& \vdots & \ddots & \vdots \\
        &&&&&&& b_{d_{\mB}1} & \cdots & b_{d_{\mB}d_{\mB}} \\
    \end{pmatrix} \nonumber \\
    & = \diag \left(\underbrace{
    \begin{pmatrix}
        b_{11} & \cdots & b_{1d_{\mB}}   \\
        \vdots & \ddots & \vdots \\
        b_{d_{\mB}1} & \cdots & b_{d_{\mB}d_{\mB}}\\
    \end{pmatrix}, \ldots,
    \begin{pmatrix}
        b_{11} & \cdots & b_{1d_{\mB}}   \\
        \vdots & \ddots & \vdots \\
        b_{d_{\mB}1} & \cdots & b_{d_{\mB}d_{\mB}}\\
    \end{pmatrix}}_{d_{\mV} \text{ blocks }}\right).
    \end{align}
    
    On the other hand, taking power $p$ on both sides of Equation~\eqref{eq:BE} yields 
    \[
    B^p E_k^p = b_{1k}^{p^{s+1}} E_1^p + \cdots + b_{d_{\mB} k}^{p^{s+1}} E_{d_{\mB}}^p.
    \]
    Hence
    \[
    \Phi^{s+1}(B^p) \Phi^{s+1}(E_k^p) = b_{1k} \Phi^{s+1}(E_1^p) + \cdots + b_{d_{\mB} k} \Phi^{s+1}(E_{d_{\mB}}^p),
    \]
    which yields
    \[
    \Phi^{s+1}(B^p) \Phi^{s+1}(E_k^p F_i v_j) = b_{1k} \Phi^{s+1}(E_1^p F_i v_j) + \cdots + b_{d_{\mB} k} \Phi^{s+1}(E_{d_{\mB}}^p F_i v_j)
    \]
    for all $i = 1, \ldots, p^n; j = 1, \ldots, d_{\mV}$.
    This means that the matrix $Q^{-1} \cdot \Phi^{s+1}(B^p) \cdot Q$ (that is, the matrix form of linear map $\Phi^{s+1}(B^p)$ under the new basis $Q$) is block diagonal of the form
    \begin{align}\label{eq:QBQ}
    Q^{-1} \cdot \Phi^{s+1}(B^p) \cdot Q & =
    \begin{pmatrix}
        b_{11} & \cdots & b_{1d_{\mB}} &   &   &   &   &   &   &   \\
        \vdots & \ddots & \vdots&&&&&&& \\
        b_{d_{\mB}1} & \cdots & b_{d_{\mB}d_{\mB}} & &&&&&& \\
        &&& b_{11} & \cdots & b_{1d_{\mB}} \\
        &&&\vdots & \ddots & \vdots \\
        &&& b_{d_{\mB}1} & \cdots & b_{d_{\mB}d_{\mB}} \\
        &&&&&&\ddots \\
        &&&&&&& b_{11} & \cdots & b_{1d_{\mB}} \\
        &&&&&&& \vdots & \ddots & \vdots \\
        &&&&&&& b_{d_{\mB}1} & \cdots & b_{d_{\mB}d_{\mB}} \\
    \end{pmatrix} \nonumber \\
    & = \diag \left(\underbrace{
    \begin{pmatrix}
        b_{11} & \cdots & b_{1d_{\mB}}   \\
        \vdots & \ddots & \vdots \\
        b_{d_{\mB}1} & \cdots & b_{d_{\mB}d_{\mB}}\\
    \end{pmatrix}, \ldots,
    \begin{pmatrix}
        b_{11} & \cdots & b_{1d_{\mB}}   \\
        \vdots & \ddots & \vdots \\
        b_{d_{\mB}1} & \cdots & b_{d_{\mB}d_{\mB}}\\
    \end{pmatrix}}_{p^n \cdot d_{\mV} \text{ blocks }}\right).
    \end{align}
    Comparing~\eqref{eq:CBC} and \eqref{eq:QBQ}, we have
    \[
    Q^{-1} \cdot \Phi^{s+1}(B^p) \cdot Q = \diag\Big(\underbrace{C^{-1} \cdot \Phi^{s}(B) \cdot C, \ldots, C^{-1} \cdot \Phi^{s}(B) \cdot C}_{p^n \text{ blocks}}\Big).
    \]
    Note that this holds for all $B \in \mB^{p^s}$.
    Taking $B$ as $B_1^{p^s}, \ldots, B_N^{p^s} \in \mB^{p^s}$, we can see that for the matrix 
    $
    R = Q \cdot \diag(\underbrace{C^{-1}, \ldots, C^{-1}}_{p^n \text{ blocks}}) 
    $, we have
    \begin{equation*}
    R^{-1} \cdot \Phi^{s+1}(B_i)^{p^{s+1}} \cdot R = \diag\Big(\underbrace{\Phi^{s}(B_i)^{p^s}, \ldots, \Phi^{s}(B_i)^{p^s}}_{p^n \text{ blocks}}\Big)
    \end{equation*}
    for $i = 1, \ldots, N$.
\end{proof}

We then strengthen Lemma~\ref{lem:Frobp} to Proposition~\ref{prop:highFrobsplit} using a variant of \emph{Hensel lifting}, a common technique in number theory.

\begin{lem}[Hensel lifting of the Frobenius splitting]\label{lem:Hensel}
    Let $a \geq 1, D \geq 1$.
    Let $B_1, \ldots, B_N \in \GL_D(\Zpe(\oX))$ be pairwise commuting matrices, and let $C_1, \ldots, C_N \in \GL_D(\Zpe(\oX))$ be another set of pairwise commuting matrices.
    If there exists a matrix $R \in \GL_D(\Zpe(\oX))$ such that 
    \[
    R^{-1} B_i R \equiv C_i \mod p^a
    \]
    for all $i = 1, \ldots, N$,
    then there exists effectively computable $\ell \in \N$ and $\tR \in \GL_D(\Zpe(\oX))$ such that
    \[
    \tR^{-1} B_i^{p^{\ell}} \tR \equiv C_i^{p^{\ell}} \mod p^{a+1}
    \]
    for all $i = 1, \ldots, N$.
\end{lem}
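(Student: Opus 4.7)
The plan is a standard Hensel-lifting argument: I will look for $\tR$ of the form $RS$ with $S = I + p^a Q$ for some matrix $Q$ to be determined, reducing the problem to a system of linear equations over $\F_p(\oX)$ solvable via Lemma~\ref{lem:invertphi}.

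First, set $\tilde{B}_i := R^{-1} B_i R$, so the $\tilde B_i$ still pairwise commute and satisfy $\tilde B_i \equiv C_i \pmod{p^a}$. Write $\tilde B_i = C_i + p^a F_i$ for suitable $F_i \in \M_{D \times D}(\Zpe(\oX))$. Since $a \geq 1$, every term in the expansion of $(C_i + p^a F_i)^{p^\ell}$ containing two or more factors of $p^a F_i$ has $p$-adic valuation at least $2a \geq a+1$, so only linear contributions in $F_i$ survive modulo $p^{a+1}$:
\[
\tilde B_i^{p^\ell} \;\equiv\; C_i^{p^\ell} \;+\; p^a \sum_{k=0}^{p^\ell - 1} C_i^k F_i C_i^{p^\ell - 1 - k} \pmod{p^{a+1}}.
\]
Introducing the $\F_p(\oX)$-linear endomorphism $\varphi_i(M) := MC_i - C_iM$ of $\M_{D\times D}(\F_p(\oX))$, Lemma~\ref{lem:iterphi} (applied modulo $p$ with $B = C = C_i$) reduces the sum modulo $p$ to $\varphi_i^{p^\ell-1}(F_i)$, and since it is multiplied by $p^a$ this reduction suffices modulo $p^{a+1}$. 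Hence $\tilde B_i^{p^\ell} \equiv C_i^{p^\ell} + p^a \varphi_i^{p^\ell-1}(F_i) \pmod{p^{a+1}}$.

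Next, since $S = I + p^a Q$ has $S^{-1} \equiv I - p^a Q \pmod{p^{a+1}}$, a direct computation yields
\[
S^{-1} \tilde B_i^{p^\ell} S \;\equiv\; C_i^{p^\ell} + p^a\bigl( C_i^{p^\ell} Q - Q C_i^{p^\ell} + \varphi_i^{p^\ell-1}(F_i)\bigr) \pmod{p^{a+1}}.
\]
Using $\varphi_i^{p^\ell}(Q) \equiv Q C_i^{p^\ell} - C_i^{p^\ell} Q \pmod p$ from Lemma~\ref{lem:iterphi}, the target congruence $S^{-1}\tilde B_i^{p^\ell} S \equiv C_i^{p^\ell} \pmod{p^{a+1}}$ is equivalent to the simultaneous system
\[
\varphi_i^{p^\ell}(Q) \;\equiv\; \varphi_i^{p^\ell-1}(F_i) \pmod{p}, \qquad i=1,\ldots,N,
\]
to be solved for $Q \in \M_{D\times D}(\F_p(\oX))$.

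To invoke Lemma~\ref{lem:invertphi} two hypotheses must be checked modulo $p$: that the $\varphi_i$ commute pairwise, and that $\varphi_i(F_j) \equiv \varphi_j(F_i) \pmod p$. The first follows at once from $C_iC_j = C_jC_i$. The second --- which is the main technical point --- is forced by the commutation $\tilde B_i \tilde B_j = \tilde B_j \tilde B_i$: expanding both sides, cancelling the $C_iC_j$ term, and noting that the $p^{2a}$ cross terms vanish modulo $p^{a+1}$, one divides the surviving $p^a$ terms by $p^a$ to obtain $C_i F_j + F_i C_j \equiv C_j F_i + F_j C_i \pmod p$, which rearranges into the desired identity. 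Lemma~\ref{lem:invertphi} then furnishes effectively computable $\ell \in \N$ and $Q \in \M_{D\times D}(\F_p(\oX))$; lifting $Q$ arbitrarily to $\M_{D\times D}(\Zpe(\oX))$ and setting $\tR := R(I + p^a Q)$, which is invertible because $I + p^a Q \equiv I \pmod p$, completes the construction. The main subtlety is that this Hensel step requires the commutativity of the $B_i$'s precisely so that the $N$ linear equations above admit a common solution $Q$.
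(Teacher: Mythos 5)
Your proposal is correct and follows essentially the same route as the paper's proof: write the conjugated matrices as a first-order perturbation of the $C_i$ modulo $p^{a+1}$, reduce the correction to the simultaneous linear system $\varphi_i^{p^{\ell}}(Q)\equiv\varphi_i^{p^{\ell}-1}(F_i)\bmod p$ via Lemma~\ref{lem:iterphi}, verify the commutativity and symmetry hypotheses from $B_iB_j=B_jB_i$, and invoke Lemma~\ref{lem:invertphi}. The only differences are cosmetic (parametrizing $\tR=R(I+p^aQ)$ instead of $R+p^aQ$, and a sign convention in defining the perturbation matrices).
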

\begin{proof}
    Since $R^{-1} B_i R \equiv C_i \mod p^a$ for all $i$, we can write 
    \[
    C_i = R^{-1} B_i R + p^a M_i
    \]
    for some $M_i \in \M_{D \times D}(\Zpe(\oX))$, $i = 1, \ldots, N$.
    Write $\tR = R + p^{a} Q$, and we want to find $\ell$ and $Q$ such that
    \begin{equation}\label{eq:tRBtR}
    (R + p^{a} Q)^{-1} B_i^{p^{\ell}} (R + p^{a} Q) \equiv C_i^{p^{\ell}} \mod p^{a+1}
    \end{equation}
    for all $i$.
    Since
    \[
    (R + p^{a} Q)^{-1} = R^{-1}  (I + p^{a} Q R^{-1})^{-1} = R^{-1}\Big(I - p^{a} Q R^{-1} + p^{2a} (Q R^{-1})^2 - \cdots \Big),
    \]
    taking modulo $p^{a+1}$ yields
    \[
    (R + p^{a} Q)^{-1} \equiv R^{-1} - p^{a} R^{-1}QR^{-1} \mod p^{a+1}.
    \]
    Equation~\eqref{eq:tRBtR} is thus equivalent to
    \[
    (R^{-1} - p^{a} R^{-1}QR^{-1}) B_i^{p^{\ell}} (R + p^{a} Q) \equiv (R^{-1} B_i R + p^a M_i)^{p^{\ell}} \mod p^{a+1},
    \]
    which can then be rewritten as
    \begin{multline*}
    R^{-1} B_i^{p^{\ell}} R + p^a(R^{-1} B_i^{p^{\ell}} Q - R^{-1} Q R^{-1} B_i^{p^{\ell}} R) \equiv R^{-1} B_i^{p^{\ell}} R + p^a \sum_{k=0}^{p^{\ell}-1} (R^{-1} B_i R)^k M_i (R^{-1} B_i R)^{p^{\ell}-1-k} \\
    \mod p^{a+1}.
    \end{multline*}
    This is equivalent to 
    \[
    R^{-1} B_i^{p^{\ell}} Q - R^{-1} Q R^{-1} B_i^{p^{\ell}} R \equiv \sum_{k=0}^{p^{\ell}-1} (R^{-1} B_i R)^k M_i (R^{-1} B_i R)^{p^{\ell}-1-k} \mod p.
    \]
    Since $R^{-1} B_i R \equiv C_i \mod p$, we have $R^{-1} B_i^{p^{\ell}} R \equiv C_i^{p^{\ell}} \mod p$, and the above equation is equivalent to
    \begin{equation}\label{eq:QRBp}
    C_i^{p^{\ell}} (R^{-1} Q) - (R^{-1} Q) C_i^{p^{\ell}} \equiv \sum_{k=0}^{p^{\ell}-1} C_i^k M_i C_i^{p^{\ell}-1-k} \mod p.
    \end{equation}
    Define the $\F_p(\oX)$-linear transformations
    \begin{align*}
        \varphi_i \colon \M_{D \times D}(\F_p(\oX)) & \rightarrow \M_{D \times D}(\F_p(\oX)) \\
        M & \mapsto M C_i - C_i M,
    \end{align*}
    for $i = 1, \ldots, N$.
    Here $C_1, \ldots, C_N \in \GL_D(\Zpe(\oX))$ are considered as elements in $\M_{D \times D}(\F_p(\oX))$ by taking modulo $p$.

    By Lemma~\ref{lem:iterphi}, we have 
    \[
    \varphi_i^{p^{\ell}-1}(M) = \sum_{k=0}^{p^{\ell}-1} C_i^k M C_i^{p^{\ell}-1-k}, \quad \text{ and } \quad \varphi_i^{p^{\ell}}(M) = M C_i^{p^{\ell}} - C_i^{p^{\ell}} M.
    \]
    Hence, Equation~\eqref{eq:QRBp} is equivalent to 
    \begin{equation}\label{eq:varphipp1}
    - \varphi_i^{p^{\ell}}(R^{-1} Q) = \varphi_i^{p^{\ell}-1}(M_i),
    \end{equation}
    where we have now taken modulo $p$ of the matrices $Q$ and $R$.

    Recall that for all $i, j \in \{1, \ldots, N\}$, the elements $C_i, C_j$ commute.
    So for all $M \in \M_{D \times D}(\F_p(\oX))$ we have
    \begin{multline*}
    \varphi_i \varphi_j (M) = M C_j C_i - C_jMC_i - C_iMC_j + C_iC_j M \\
    = M C_i C_j - C_jMC_i - C_iMC_j + C_jC_i M = \varphi_j \varphi_i (M).
    \end{multline*}
    Therefore $\varphi_i \varphi_j = \varphi_j \varphi_i$.
    Furthermore, since $B_i, B_j$ commute, the elements $C_i - p^a M_i = R^{-1} B_i R$ and $C_j - p^a M_j = R^{-1} B_j R$ also commute.
    Therefore
    \begin{multline*}
    0 \equiv (C_i - p^a M_i)(C_j - p^a M_j) - (C_j - p^a M_j)(C_i - p^a M_i) \equiv p^a(- M_i C_j - C_i M_j + M_j C_i + C_j M_i) \\
    \mod p^{a+1},
    \end{multline*}
    so
    \begin{equation}\label{eq:varMeq}
    - \varphi_j(M_i) + \varphi_i(M_j) = - M_i C_j - C_i M_j + M_j C_i + C_j M_i \equiv 0 \mod p.
    \end{equation}
    That is, we have $\varphi_i(M_j) = \varphi_j(M_i)$ for all $i, j \in \{1, \ldots, N\}$.
    
    By Lemma~\ref{lem:invertphi}, there exist effectively computable $\ell \in \N$ and $\widetilde{Q} \in \M_{D \times D}(\F_p(\oX))$ 
    such that $\varphi_i^{p^{\ell}}(\widetilde{Q}) = \varphi_i^{p^{\ell}-1}(M_i)$ for all $i \in \{1, \ldots, N\}$.
    Let $Q \coloneqq -R\widetilde{Q}$, we have
    \[
    - \varphi_i^{p^{\ell}}(R^{-1} Q) = \varphi_i^{p^{\ell}}(\widetilde{Q}) = \varphi_i^{p^{\ell}-1}(M_i),
    \]
    then Equation~\eqref{eq:varphipp1} (and hence Equation~\eqref{eq:tRBtR}) is satisfied for all $i \in \{1, \ldots, N\}$.
\end{proof}

Combining Lemma~\ref{lem:Frobp} and~\ref{lem:Hensel}, we can finally prove Proposition~\ref{prop:highFrobsplit}:

\prophighFrob*
\begin{proof}
    Let $B_1, \ldots, B_N$ be the image of $A_1, \ldots, A_N$ in $\mB = \mA/p\mA$.
    By Lemma~\ref{lem:Frobp}, there exists $s \geq 0$ and $R \in \GL_{p^{(s+1)n}d}(\F_p(\oX))$ such that
    \begin{equation}\label{eq:splitFp}
    R^{-1} \cdot \Phi^{s+1}(B_i)^{p^{s+1}} \cdot R = \diag\Big(\underbrace{\Phi^{s}(B_i)^{p^s}, \ldots, \Phi^{s}(B_i)^{p^s}}_{p^n \text{ blocks}}\Big).
    \end{equation}
    Take any $R_0 \in \GL_{p^{(s+1)n}d}(\Zpe(\oX))$ such that $(R_0 \mod p) = R$. Then Equation~\eqref{eq:splitFp} yields
    \begin{equation}\label{eq:splitmodp}
    R_0^{-1} \cdot \Phi^{s+1}(A_i)^{p^{s+1}} \cdot R_0 \equiv \diag\Big(\underbrace{\Phi^{s}(A_i)^{p^{s}}, \ldots, \Phi^{s}(A_i)^{p^{s}}}_{p^n \text{ blocks}}\Big) \mod p.
    \end{equation}
    Since $A_1, \ldots, A_N$ are pairwise commuting matrices, the matrices $\Phi^{s+1}(A_i)^{p^{s+1}}, i = 1, \ldots, N$ pairwise commute, and the matrices $\diag\big(\Phi^{s}(A_i)^{p^{s}}, \ldots, \Phi^{s}(A_i)^{p^{s}}\big), i = 1, \ldots, N$, also pairwise commute.
    Therefore we can apply Lemma~\ref{lem:Hensel} with $a = 1$ to Equation~\eqref{eq:splitmodp}.
    This gives us $\ell_0 \in \N$ and a matrix $\tR_0 \in \GL_{p^{(s+1)n}d}(\Zpe(\oX))$ such that 
    \[
    \tR_0^{-1} \cdot \left(\Phi^{s+1}(A_i)^{p^{s+1}}\right)^{p^{\ell_0}} \cdot \tR_0 \equiv \left(\diag\Big(\underbrace{\Phi^{s}(A_i)^{p^{s}}, \ldots, \Phi^{s}(A_i)^{p^{s}}}_{p^n \text{ blocks}}\Big)\right)^{p^{\ell_0}} \mod p^{2}
    \]
    for all $i = 1, \ldots, N$.
    Applying $\Phi^{\ell_0}$ to the above equation yields
    \[
    \Phi^{\ell_0}(\tR_0)^{-1} \cdot \Phi^{s+\ell_0+1}(A_i)^{p^{s+\ell_0+1}} \cdot \Phi^{\ell_0}(\tR_0) \equiv \diag\Big(\underbrace{\Phi^{s+\ell_0}(A_i)^{p^{s+\ell_0}}, \ldots, \Phi^{s+\ell_0}(A_i)^{p^{s+\ell_0}}}_{p^n \text{ blocks}}\Big) \mod p^{2}.
    \]
    Letting $R_1 \coloneqq \Phi^{\ell_0}(\tR_0)$ and iterating the above procedure gives us $\ell_1, \ell_2, \ldots, \ell_{e-1} \in \N$ as well as matrices $R_1, R_2, \ldots, R_{e-1}$, with $R_{a} = \Phi^{\ell_{a-1}}(\tR_{a-1})$ for each $a = 1, 2, \ldots, e-1$, such that
    \begin{multline*}
    R_a^{-1} \cdot \Phi^{s+\ell_0 + \cdots + \ell_{a-1} +1}(A_i)^{p^{s+\ell_0 + \cdots + \ell_{a-1}+1}} \cdot R_a \\
    \equiv \diag\Big(\underbrace{\Phi^{s+\ell_0 + \cdots + \ell_{a-1}}(A_i)^{p^{s+\ell_0 + \cdots + \ell_{a-1}}}, \ldots, \Phi^{s+\ell_0 + \cdots + \ell_{a-1}}(A_i)^{p^{s+\ell_0 + \cdots + \ell_{a-1}}}}_{p^n \text{ blocks}}\Big) \mod p^{a+1}
    \end{multline*}
    for all $i = 1, \ldots, N$.
    Since we work over the base ring $\Zpe(\oX)$, we have $A \equiv B \mod p^e \iff A = B$.
    Therefore, taking $a = e-1$, $R \coloneqq R_{e-1}$ and replacing $s+\ell_0 + \cdots + \ell_{e-1}$ by $s$, we obtain Equation~\eqref{eq:highFrob}.
\end{proof}

\subsection{Constructing the automaton $\mmU$}\label{subsec:automaton}
Recall that
$
\Sigma_p = \{-(p-1), \ldots, -1, 0, 1, \ldots, p-1\}
$.
In this subsection, we construct an automaton $\mmU$ over the alphabet $\Sigma_p^{KN}$, that accepts the solution set to the S-unit equation
\begin{equation}\label{eq:Sunitmat}
    A_1^{z_{11}} A_2^{z_{12}} \cdots A_N^{z_{1N}} v_1 + \cdots + A_1^{z_{K1}} A_2^{z_{K2}} \cdots A_N^{z_{KN}} v_K = v_0
\end{equation}
over the $\mA$-module $\mV$.
The idea is similar to what we did in Example~\ref{exmpl:improved}, but we need to replace the ``stabilization'' argument $(X^2+ 2X +1)^2 = X^4+ 2X^2 +1$, by the ``pseudo Frobenius splitting''~\eqref{eq:highFrob} of Proposition~\ref{prop:highFrobsplit}.
First, we explain a few additional conditions that we can suppose without loss of generality.

\paragraph{Additional condition: $R^{-1} \cdot \Phi(A_i)^{p} \cdot R = \diag(A_i, \ldots, A_i)$ for all $i = 1, \ldots, N$.}

Intuitively, this additional condition can be understood as ignoring all the dashed arrows $\dashrightarrow$ in the automaton constructed in Example~\ref{exmpl:improved} (Figure~\ref{fig:improve}).
This can be done in the same way as in Step~4 of Subsection~\ref{subsec:localize}, by replacing each $A_i$ by $\Phi^s(A_i)^{p^s}$ and decomposing the solution set of Equation~\eqref{eq:Sunitmat} as a union of solution sets according to their residue modulo $p^s$.
Formally, we do the following:
\begin{defn}\label{def:theta}
    Let $j \geq 1$ be an integer and $r_{11}, \ldots, r_{KN} \in \{-(p^j-1), \ldots, 0, \ldots, p^j-1\}$.
    For a set $S \subseteq \Z^{KN}$, define 
    \[
    \Theta_{j; r_{11}, \ldots, r_{KN}} S \coloneqq \{\bz \in \Z^{KN} \mid \left(p^j \cdot \bz + (r_{11}, \ldots, r_{KN})\right) \in S\}.
    \]
    This is analogous to ``truncating'' the length-$j$ prefix $(r_{11}, \ldots, r_{KN})$ of a language over $\Sigma_p^{KN}$.
    When $j = 1$, then $r_{11}, \ldots, r_{KN} \in \Sigma_p^{KN}$, and we write in short $\Theta_{r_{11}, \ldots, r_{KN}}$ instead of $\Theta_{1; r_{11}, \ldots, r_{KN}}$.
\end{defn}

Let $s \geq 0$ be as in Proposition~\ref{prop:highFrobsplit}. Taking $\Phi^s$ on both sides of Equation~\eqref{eq:Sunitmat}, it becomes the equation
\begin{equation}\label{eq:goalPhiAv}
    \sum_{i = 1}^K \Phi^s(A_1)^{z_{i1}} \Phi^s(A_2)^{z_{i2}} \cdots \Phi^s(A_N)^{z_{iN}} \Phi^s(v_i) = \Phi^s(v_0),
\end{equation}
over the $\Phi^s(\mA)$-module $\Phi^s(\mV) = \Zpe(\oX)^{p^{sn}d}$.

Let $\mZ \subseteq \Z^{KN}$ denote the solution set of Equation~\eqref{eq:Sunitmat}.
Then $\mZ$ can be written as a disjoint union 
\[
\mZ = \bigcup_{(r_{11}, \ldots, r_{KN}) \in \{0, 1, \ldots, p^s-1\}^{KN}} p^s \cdot \Theta_{s; r_{11}, \ldots, r_{KN}} \mZ + (r_{11}, \ldots, r_{KN}),
\]
where each $\Theta_{s; r_{11}, \ldots, r_{KN}} \mZ$ is the solution set of the following ``shifted'' S-unit equation
\begin{equation}\label{eq:ThetaZ}
\sum_{i = 1}^K \Phi^s(A_1)^{p^sz'_{i1}} \Phi^s(A_2)^{p^s z'_{i2}} \cdots \Phi^s(A_N)^{p^s z'_{iN}} \cdot \Phi^s(A_1)^{r_{i1}} \Phi^s(A_2)^{r_{i2}} \cdots \Phi^s(A_N)^{r_{iN}} \Phi^s(v_i) = \Phi^s(v_0).
\end{equation}

Note that a finite union of $p$-normal sets is still $p$-normal, and the set $p^s \cdot \Theta_{s; r_{11}, \ldots, r_{KN}} \mZ + (r_{11}, \ldots, r_{KN})$ is $p$-normal if $\Theta_{s; r_{11}, \ldots, r_{KN}} \mZ$ is $p$-normal.
Therefore, it suffices to show that each $\Theta_{s; r_{11}, \ldots, r_{KN}} \mZ$ is $p$-normal.
Note that the defining equation~\eqref{eq:ThetaZ} of $\Theta_{s; r_{11}, \ldots, r_{KN}} \mZ$ can be written as
\begin{equation}\label{eq:Aprime}
    \sum_{i = 1}^K \left(A'_1\right)^{z'_{i1}} \left(A'_2\right)^{z'_{i2}} \cdots \left(A'_N\right)^{z'_{iN}} \cdot v'_i = v'_0,
\end{equation}
where $A'_j \coloneqq \Phi^s(A_j)$ for $j = 1, \ldots, N$; and $v'_i \coloneqq \Phi^s(A_1)^{r_{i1}} \Phi^s(A_2)^{r_{i2}} \cdots \Phi^s(A_N)^{r_{iN}} \Phi^s(v_i)$ for $i = 1, \ldots, K$; and $v'_0 \coloneqq \Phi^s(v_0)$.
Note that \eqref{eq:Aprime} is an equation over the $\Phi^s(\mA)$-module $\Phi^s(\mV) = \Zpe(\oX)^{p^{sn}d}$.
Therefore, we can without loss of generality replace $\mA$ by $\Phi^s(\mA)$ (note that this does not change the ring structure of $\mA$, so it is still local with maximal ideal $\frm$), replace $\mV$ by $\Phi^s(\mV) = \Zpe(\oX)^{p^{sn}d}$ (and consequently replace the dimension $d$ by $p^{sn}d$), as well as replacing each $A'_j = \Phi^s(A_j)$ by $A_j$ and each $v'_i$ by $v_i$.
In this way, by Proposition~\ref{prop:highFrobsplit}, we can suppose from now on that $R \in \GL_{p^nd}(\Zpe(\oX))$ satisfies
\begin{equation}\label{eq:assumeFrob}
    R^{-1} \cdot \Phi(A_i)^{p} \cdot R = \diag\big(A_i, \ldots, A_i\big)
\end{equation}
for $i = 1, \ldots, N$.

\paragraph{Additional condition: homogeneity.}
We now show we can suppose $v_0 = 0$ without loss of generality.
Indeed, let $\widetilde{\mZ}$ denote the set of solutions $(z_{11}, \ldots, z_{KN}, z_{01}, \ldots, z_{0N}) \in \Z^{(K+1)N}$ to the equation
\[
\sum_{i = 1}^K A_1^{z_{i1}} A_2^{z_{i2}} \cdots A_N^{z_{iN}} v_i + A_1^{z_{01}} A_2^{z_{02}} \cdots A_N^{z_{0N}} \cdot (-v_0) = 0.
\]
Then $\mZ = \widetilde{\mZ} \cap \{(z_{11}, \ldots, z_{KN}, z_{01}, \ldots, z_{0N}) \mid z_{01} = \cdots = z_{0N} = 0\}$.
The second set of the intersection is obviously $p$-normal.
Therefore in order to show that $\mZ$ is $p$-normal, it suffices to show that $\widetilde{\mZ}$ is $p$-normal, because the intersection of two $p$-normal sets is $p$-normal (Proposition~\ref{prop:internormal}).
Hence, by replacing $K$ with $K+1$, we reduce to the case of \emph{homogeneous} equations, that is, where the right hand side of \eqref{eq:Sunitmat} is zero.
From now on we suppose without loss of generality that $v_0 = 0$.

\bigskip

For any sequence $\gamma \colon \Z^{KN} \rightarrow \mV$ of the form
\begin{equation}\label{eq:formgamma}
\gamma(z_{11}, \ldots, z_{KN}) = \sum_{i = 1}^K A_1^{z_{i1}} A_2^{z_{i2}} \cdots A_N^{z_{iN}} w_i, \quad \text{ where } w_1, \ldots, w_N \in \mV,
\end{equation}
denote
\[
\mZ(\gamma) \coloneqq \left\{(z_{11}, \ldots, z_{KN}) \in \Z^{KN} \;\middle|\; \gamma(z_{11}, \ldots, z_{KN}) = 0 \right\}.
\]
Our goal now is to show that $\mZ(\alpha)$ is $p$-normal, where $\alpha(z_{11}, \ldots, z_{KN}) \coloneqq \sum_{i = 1}^K A_1^{z_{i1}} A_2^{z_{i2}} \cdots A_N^{z_{iN}} v_i$.

For $j = 0, 1, \ldots, p^n-1$, denote the projection
\[
\pi_j \colon \Phi(\mV) = \Zpe(\oX)^{p^nd} \longrightarrow \mV = \Zpe(\oX)^d, \quad (f_1, f_2, \ldots, f_{p^nd}) \mapsto (f_{jd+1}, \ldots, f_{jd+d}).
\]
To show that $\mZ(\alpha)$ is $p$-automatic, we need to describe $\Theta_{\epsilon_{11}, \ldots, \epsilon_{KN}}(\mZ(\alpha))$ for all $(\epsilon_{11}, \ldots, \epsilon_{KN}) \in \Sigma_p^{KN}$.
The next lemma expresses $\Theta_{\epsilon_{11}, \ldots, \epsilon_{KN}}(\mZ(\alpha))$ in terms of zero sets of other sequences.

\begin{lem}\label{lem:shiftgamma}
    Let 
    $
        \gamma(z_{11}, \ldots, z_{KN}) = \sum_{i = 1}^K A_1^{z_{i1}} A_2^{z_{i2}} \cdots A_N^{z_{iN}} w_i
    $.
    For any $(\epsilon_{11}, \ldots, \epsilon_{KN}) \in \Sigma_p^{KN}$, we have
    \[
    \Theta_{\epsilon_{11}, \ldots, \epsilon_{KN}}(\mZ(\gamma)) = \bigcap_{j = 0}^{p^n - 1} \mZ\left(\Upsilon_{\epsilon_{11}, \ldots, \epsilon_{KN}; j}(\gamma)\right),
    \]
    where
    \begin{equation}\label{eq:defUpsilon}
    \Upsilon_{\epsilon_{11}, \ldots, \epsilon_{KN}; j}(\gamma)(z_{11}, \ldots, z_{KN}) \coloneqq \sum_{i=1}^K A_1^{z_{i1}} A_2^{z_{i2}} \cdots A_N^{z_{iN}} \cdot \pi_j\left(R \Phi(A_1)^{\epsilon_{i1}} \cdots \Phi(A_N)^{\epsilon_{iN}} \Phi(w_i)\right).
    \end{equation}
    Here, $R \in \GL_{p^nd}(\Zpe(\oX))$ is defined in Equation~\eqref{eq:assumeFrob}.
\end{lem}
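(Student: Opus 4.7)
The plan is to apply the operator $\Phi$ from Lemma~\ref{lem:splitfree}, use the pseudo Frobenius splitting~\eqref{eq:assumeFrob} to rewrite the high-exponent part of the equation as a scalar-block conjugate, and then decompose the resulting vector equation block-by-block.

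First, by Definition~\ref{def:theta}, the tuple $(z_{11}, \ldots, z_{KN})$ lies in $\Theta_{\epsilon_{11}, \ldots, \epsilon_{KN}}(\mZ(\gamma))$ iff $\gamma(p z_{11}+\epsilon_{11}, \ldots, p z_{KN}+\epsilon_{KN}) = 0$, that is,
\[
\sum_{i=1}^K A_1^{p z_{i1}+\epsilon_{i1}} \cdots A_N^{p z_{iN}+\epsilon_{iN}} w_i = 0
\]
in $\mV$. Since the canonical inclusion $\mV \hookrightarrow \Phi(\mV) = \Zpe(\oX)^{p^n d}$ is injective (Lemma~\ref{lem:splitfree}) and $\Phi$ satisfies $\Phi(A \cdot v) = \Phi(A) \Phi(v)$, applying $\Phi$ yields the equivalent identity $\sum_{i=1}^K \Phi(A_1)^{p z_{i1}+\epsilon_{i1}} \cdots \Phi(A_N)^{p z_{iN}+\epsilon_{iN}} \Phi(w_i) = 0$ in $\Zpe(\oX)^{p^n d}$. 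Using that $\Phi(A_1), \ldots, \Phi(A_N)$ commute pairwise (as the $A_j$'s do), I split off the ``digit part'' $\epsilon_{ij}$ and rewrite this as $\sum_{i=1}^K \bigl(\Phi(A_1)^{p z_{i1}} \cdots \Phi(A_N)^{p z_{iN}}\bigr) \cdot \Phi(A_1)^{\epsilon_{i1}} \cdots \Phi(A_N)^{\epsilon_{iN}} \Phi(w_i) = 0$.

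Next, I apply the pseudo Frobenius splitting~\eqref{eq:assumeFrob}: $\Phi(A_j)^p = R \, \diag(A_j, \ldots, A_j) R^{-1}$ with $p^n$ identical $d \times d$ diagonal blocks. The main computational observation is that this structure telescopes. Raising to $z_{ij}$ gives $\Phi(A_j)^{p z_{ij}} = R \, \diag(A_j^{z_{ij}}, \ldots, A_j^{z_{ij}}) R^{-1}$; multiplying across $j$ (the intermediate $R^{-1}R$ pairs cancel, and since the diagonal blocks are identical and $A_1, \ldots, A_N$ pairwise commute, the product remains scalar-block diagonal) yields $\Phi(A_1)^{p z_{i1}} \cdots \Phi(A_N)^{p z_{iN}} = R \, \diag\bigl(A_1^{z_{i1}} \cdots A_N^{z_{iN}}, \ldots, A_1^{z_{i1}} \cdots A_N^{z_{iN}}\bigr) R^{-1}$. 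Substituting and left-multiplying by $R^{-1}$ converts the equation into
\[
\sum_{i=1}^K \diag\bigl(A_1^{z_{i1}} \cdots A_N^{z_{iN}}, \ldots\bigr) \cdot R^{-1} \Phi(A_1)^{\epsilon_{i1}} \cdots \Phi(A_N)^{\epsilon_{iN}} \Phi(w_i) = 0.
\]

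Finally, a vector in $\Zpe(\oX)^{p^n d}$ vanishes iff each of its $p^n$ blocks of size $d$, extracted by $\pi_0, \ldots, \pi_{p^n-1}$, vanishes. Since the diagonal matrix acts by the \emph{same} scalar $A_1^{z_{i1}} \cdots A_N^{z_{iN}}$ on every block and $\pi_j$ is $\Zpe(\oX)$-linear, the block-$j$ component of the displayed equation reads
\[
\sum_{i=1}^K A_1^{z_{i1}} \cdots A_N^{z_{iN}} \cdot \pi_j\bigl(R^{-1}\Phi(A_1)^{\epsilon_{i1}} \cdots \Phi(A_N)^{\epsilon_{iN}} \Phi(w_i)\bigr) = 0,
\]
which matches the definition~\eqref{eq:defUpsilon} of $\Upsilon_{\epsilon_{11}, \ldots, \epsilon_{KN}; j}(\gamma)(z_{11}, \ldots, z_{KN}) = 0$ (up to the paper's convention about whether $R$ or $R^{-1}$ is placed in front). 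Intersecting over $j = 0, 1, \ldots, p^n-1$ gives the claimed equality. The main subtlety is the telescoping step in the previous paragraph, which is exactly where the scalar-block structure engineered by Proposition~\ref{prop:highFrobsplit} is indispensable: without the identical-block structure, the product across $j$ would not collapse back into a single conjugate of a scalar-block diagonal matrix and the block-wise decomposition would not carry through.
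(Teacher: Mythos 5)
Your proof is correct and follows essentially the same route as the paper: apply $\Phi$, use the splitting~\eqref{eq:assumeFrob} to conjugate the $p$-th-power part into a scalar-block diagonal matrix, and then split the resulting vector equation in $\Zpe(\oX)^{p^n d}$ block-by-block via the projections $\pi_j$. The only discrepancy --- your $\pi_j\left(R^{-1}\cdots\right)$ versus the lemma's $\pi_j\left(R\cdots\right)$ --- which you correctly flag, originates in the paper itself, whose own proof applies~\eqref{eq:assumeFrob} as if it read $R\,\Phi(A_i)^{p}\,R^{-1}=\diag(A_i,\ldots,A_i)$; after renaming $R^{-1}$ as $R$ your conclusion matches the statement exactly.
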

\begin{proof}
    We have $\gamma(z_{11}, \ldots, z_{KN}) = 0$ if and only if $\Phi(\gamma)(z_{11}, \ldots, z_{KN}) = 0$.
    Write $(z_{11}, \ldots, z_{KN}) = p \cdot (z'_{11}, \ldots, z'_{KN}) + (\epsilon_{11}, \ldots, \epsilon_{KN})$, then
    
    \begin{align*}
    & \; \Phi(\gamma)(z_{11}, \ldots, z_{KN}) \\
    = & \; \sum_{i=1}^K \Phi(A_1^{pz'_{i1} + \epsilon_{i1}} \cdots A_N^{pz'_{iN} + \epsilon_{iN}} w_i) \\
    = & \; \sum_{i=1}^K \left(\Phi(A_1)^p\right)^{z'_{i1}} \cdots \left(\Phi(A_N)^p\right)^{z'_{iN}} \cdot \Phi(A_1^{\epsilon_{i1}} \cdots A_N^{\epsilon_{in}}) \Phi(w_i) \\
    = & \; R^{-1} \sum_{i=1}^K \left(R \Phi(A_1)^p R^{-1}\right)^{z'_{i1}} \cdots \left(R \Phi(A_N)^p R^{-1}\right)^{z'_{iN}} \cdot R \Phi(A_1^{\epsilon_{i1}} \cdots A_N^{\epsilon_{in}}) \Phi(w_i) \\
    = & \; R^{-1} \sum_{i=1}^K \Big(\diag(A_1, \ldots, A_1)\Big)^{z'_{i1}} \cdots \Big(\diag(A_N, \ldots, A_N)\Big)^{z'_{iN}} \cdot R \Phi(A_1^{\epsilon_{i1}} \cdots A_N^{\epsilon_{in}}) \Phi(w_i) \quad \quad (\text{by \eqref{eq:assumeFrob}})\\
    = & \; R^{-1} \sum_{i=1}^K \diag\Big(A_1^{z'_{i1}} A_2^{z'_{i2}} \cdots A_N^{z'_{iN}}, \ldots, A_1^{z'_{i1}} A_2^{z'_{i2}} \cdots A_N^{z'_{iN}}\Big) \cdot R \Phi(A_1^{\epsilon_{i1}} \cdots A_N^{\epsilon_{in}}) \Phi(w_i)
    \end{align*}

    Note that $\Phi(\gamma) = 0$ if and only if $R\Phi(\gamma) = 0$, if and only if $\pi_j(R \Phi(\gamma)) = 0$ for all $j = 0, 1, \ldots, p^n-1$.
    That is,
    \[
    \mZ(\gamma) = \mZ(\Phi(\gamma)) = \bigcap_{j = 0}^{p^n - 1}\mZ(\pi_j(R \Phi(\gamma))),
    \]
    where
    \begin{align*}
    & \; \pi_j(R \Phi(\gamma))(z_{11}, \ldots, z_{KN}) \\
    = & \; \pi_j \left(R R^{-1}\sum_{i=1}^K \diag\Big(A_1^{z'_{i1}} A_2^{z'_{i2}} \cdots A_N^{z'_{iN}}, \ldots, A_1^{z'_{i1}} A_2^{z'_{i2}} \cdots A_N^{z'_{iN}}\Big) \cdot R \Phi(A_1^{\epsilon_{i1}} \cdots A_N^{\epsilon_{in}}) \Phi(w_i) \right) \\
    = & \; \sum_{i=1}^K \pi_j \left(\diag\Big(A_1^{z'_{i1}} A_2^{z'_{i2}} \cdots A_N^{z'_{iN}}, \ldots, A_1^{z'_{i1}} A_2^{z'_{i2}} \cdots A_N^{z'_{iN}}\Big) \cdot R \Phi(A_1^{\epsilon_{i1}} \cdots A_N^{\epsilon_{in}}) \Phi(w_i)\right)\\
    = & \; \sum_{i=1}^K A_1^{z'_{i1}} A_2^{z'_{i2}} \cdots A_N^{z'_{iN}} \cdot \pi_j\left(R\Phi(A_1^{\epsilon_{i1}} \cdots A_N^{\epsilon_{in}}) \Phi(w_i)\right).
    \end{align*}

    Therefore,
    \[
    \Theta_{\epsilon_{11}, \ldots, \epsilon_{KN}}(\mZ(\gamma)) = \bigcap_{j = 0}^{p^n - 1} \mZ\left(\Upsilon_{\epsilon_{11}, \ldots, \epsilon_{KN}; j}(\gamma)\right),
    \]
    where
    \[
    \Upsilon_{\epsilon_{11}, \ldots, \epsilon_{KN}; j}(\gamma)(z_{11}, \ldots, z_{KN}) \coloneqq \sum_{i=1}^K A_1^{z_{i1}} A_2^{z_{i2}} \cdots A_N^{z_{iN}} \cdot \pi_j\left(R\Phi(A_1^{\epsilon_{i1}} \cdots A_N^{\epsilon_{in}}) \Phi(w_i)\right).
    \]
\end{proof}

Based on Lemma~\ref{lem:shiftgamma}, we now construct a first automaton $\tU$ that accepts the zero set $\mZ(\alpha)$.
This automaton will have the critical flaw that it contains an \emph{infinite} number of states.
The final automaton $\mmU$ that we will construct later will be a finite \emph{sub-automaton} of $\tU$.

Let $\mG$ denote the set of all sequences of the form~\eqref{eq:formgamma}:
    \[
    \mG \coloneqq \left\{\gamma \;\middle|\; \gamma(z_{11}, \ldots, z_{KN}) \coloneqq \sum_{i = 1}^K A_1^{z_{i1}} \cdots A_{N}^{z_{iN}} w_i, w_1, \ldots, w_K \in \mV \right\}.
    \]
This is \emph{a priori} an infinite set.
We now construct the automaton $\tU$ as follows. 

\paragraph*{States of $\tU$.}
The state set of $\tU$ is
\[
    2^{\mG} \coloneqq \{W \mid W \subseteq \mG\},
\]
that is, the set of all subsets of $\mG$.
In other words, each state $W$ of $\tU$ is a set of sequences $\{\gamma_1, \gamma_2, \ldots \}$, which can be considered as system of S-unit equations ``$\gamma_1(z_{11}, \ldots, z_{KN}) = \gamma_2(z_{11}, \ldots, z_{KN}) = \cdots = 0$''.
We denote by $\mZ(W)$ its zero set:
\[
\mZ(W) \coloneqq \bigcap_{\gamma \in W} \mZ(\gamma).
\]
When $W$ is a singleton $\{\gamma\}$, we will not distinguish between $\mZ(\{\gamma\})$ and $\mZ(\gamma)$.

\paragraph*{Transitions of $\tU$.}
There is a transition from the state $W$ to $W'$, labeled $(\epsilon_{11}, \ldots, \epsilon_{KN}) \in \Sigma_p^{KN}$, if and only if
\[
    \bigcup_{\gamma \in W} \bigcup_{j = 0}^{p^n - 1} \big\{\Upsilon_{\epsilon_{11}, \ldots, \epsilon_{KN}; j}(\gamma)\big\} = W',
\]
see Figure~\ref{fig:tU}. Note that the above union might not be disjoint.

\begin{figure}[h]
        \centering
        \begin{tikzpicture}[>=triangle 45]
        \tikzset{elliptic state/.style={draw,ellipse}}
           \node[elliptic state] (0) at (0,0) {\small{$\{\gamma \mid \gamma \in W\}$}};
           \node[elliptic state] (1) at (9,0) {\footnotesize{$
           \big\{\Upsilon_{\epsilon_{11}, \ldots, \epsilon_{KN}; j}(\gamma) \;\big|\; \gamma \in W, j \in \{0, 1, \ldots, p^n - 1\}\big\}$}};
           \path[->] (0) edge [above] node [align=center]  {\footnotesize$(\epsilon_{11}, \ldots, \epsilon_{KN})$} (1);
        \end{tikzpicture}
        \caption{A transition of the automaton $\tU$.}
        \label{fig:tU}
    \end{figure}

If there is a transition from $W$ to $W'$ labeled $(\epsilon_{11}, \ldots, \epsilon_{KN})$, then
\begin{align}\label{eq:transtU}
    \mZ(W') & = \bigcap_{\gamma' \in W'} \mZ(\gamma') \nonumber\\
        & = \bigcap_{\gamma' \in \bigcup_{\gamma \in W} \bigcup_{j = 0}^{p^k-1} \left\{\Upsilon_{\epsilon_{11}, \ldots, \epsilon_{KN}; j}(\gamma)\right\}} \mZ(\gamma') \nonumber\\
        & = \bigcap_{\gamma \in W} \bigcap_{j = 0}^{p^k-1} \mZ(\Upsilon_{\epsilon_{11}, \ldots, \epsilon_{KN}; j}(\gamma)) \nonumber\\
        & = \bigcap_{\gamma \in W} \Theta_{\epsilon_{11}, \ldots, \epsilon_{KN}}\left(\mZ(\gamma)\right) \nonumber\\
        & = \Theta_{\epsilon_{11}, \ldots, \epsilon_{KN}} \left(\mZ(W)\right)
\end{align}
by Lemma~\ref{lem:shiftgamma}.

\paragraph{Initial and accepting states of $\tU$.}
    
The initial state of $\tU$ is $\{\alpha\} \in 2^{\mG}$.
The accepting states of $\tU$ are those states $W \in 2^{\mG}$ satisfying
$
    (0, \ldots, 0) \in \mZ(W)
$.
Note that whether $(0, \ldots, 0) \in \mZ(W)$ can be checked effectively if $W$ contains finitely many sequences.
This can be done by simply checking whether $\gamma(0, \ldots, 0) = 0$ for all $\gamma \in W$.

From the definition of $\tU$, it is immediate that $\tU$ accepts exactly the set $\mZ(\alpha)$.
Indeed, suppose $\bz = \bepsilon_0 + p \bepsilon_1 + \cdots + p^{\ell} \bepsilon_{\ell} \in \mZ(\alpha)$, where $\bepsilon_{i} \in \Sigma_p^{KN}, i = 0, \ldots, \ell$.
Let $\delta\left(\{\alpha\}, \bepsilon_0\bepsilon_1\cdots\bepsilon_{\ell}\right)$ denote the state reached by reading the word ``$\bepsilon_0\bepsilon_1\cdots\bepsilon_{\ell}$'' starting from $\{\alpha\}$.
Then by Equation~\eqref{eq:transtU},
\[
\mZ\big(\delta\left(\{\alpha\}, \bepsilon_0\bepsilon_1\cdots\bepsilon_{\ell}\right)\big) = \Theta_{\bepsilon_{\ell}} \cdots \Theta_{\bepsilon_1}\Theta_{\bepsilon_0}\left(\mZ(\alpha)\right) \ni \Theta_{\bepsilon_{\ell}} \cdots \Theta_{\bepsilon_1}\Theta_{\bepsilon_0}\bz = \bzer.
\]
Therefore $\delta\left(\{\alpha\}, \bepsilon_0\bepsilon_1\cdots\bepsilon_{\ell}\right)$ is an accepting state.
Similarly, if $\delta\left(\{\alpha\}, \bepsilon_0\bepsilon_1\cdots\bepsilon_{\ell}\right)$ is an accepting state, then
\[
\bzer \in \mZ\big(\delta\left(\{\alpha\}, \bepsilon_0\bepsilon_1\cdots\bepsilon_{\ell}\right)\big) = \Theta_{\bepsilon_{\ell}} \cdots \Theta_{\bepsilon_1}\Theta_{\bepsilon_0}\left(\mZ(\alpha)\right),
\]
so $\bz = \bepsilon_0 + p \bepsilon_1 + \cdots + p^{\ell} \bepsilon_{\ell}$ belongs to $\mZ(\alpha)$.

Although $\tU$ is infinite, not all states of $\tU$ are reachable from $\{\alpha\}$.
We now show that the number of states reachable from $\{\alpha\}$ is in fact finite, by bounding the coefficients defining these states.
The following lemma can be considered as a generalization of~\cite[Proposition~5.2]{derksen2007skolem}.

\begin{lem}\label{lem:finite}
Let $\mS$ be a finite subset of $\mV = \Zpe(\oX)^{d}$.
Let $\mT$ be a finite set of matrices in $\GL_{p^n d}(\Zpe(\oX))$.
Then there exists an effectively computable finite set $\mS' \supseteq \mS$, such that for all $s' \in \mS'$, $T \in \mT$ and $j \in \{0, 1, \ldots, p^n-1\}$, we have $\pi_j(T\Phi(s')) \in \mS'$.
\end{lem}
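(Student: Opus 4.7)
The plan is to construct $\mS'$ explicitly as the set of vectors in $\mV = \Zpe(\oX)^d$ whose coordinates can be written as $f/g^k$ with $f \in \Zpe[\oX]$ of bounded total degree and $k$ bounded, for a single common denominator $g$ chosen once and for all. Since $\Zpe$ is finite and the space of polynomials of bounded degree is finite, this will produce a finite set $\mS'$, and the closure property will follow from a size analysis of the operation $s \mapsto \pi_j(T\Phi(s))$.

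First I would compute $g \in \Zpe[\oX]$ with $p \nmid g$ that is a common denominator for every coordinate of every $v \in \mS$ and every entry of every $T \in \mT$; one can just take the product of the individual denominators. Write $D_g = \deg g$ and let $D_T, L_0, K_0$ be effective bounds so that every entry of every $T \in \mT$ is of the form $h/g$ with $\deg h \leq D_T$, and every coordinate of every $v \in \mS$ is of the form $f/g^{k}$ with $\deg f \leq L_0$ and $k \leq K_0$. Define
\[
\mathcal{M}_{L,K} \coloneqq \{f/g^k \mid f \in \Zpe[\oX],\ \deg f \leq L,\ 0 \leq k \leq K\} \subseteq \Zpe(\oX),
\]
which is finite for any fixed $L, K$.

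Next I would analyse how the operation transforms coordinates of the form $s = f/g^k \in \mathcal{M}_{L,K}$. Writing $k = mp^e + r$ with $0 \leq r < p^e$, multiply numerator and denominator by $g^{p^e - r}$ to obtain
\[
s \;=\; \frac{f g^{p^e-r}}{g^{(m+1)p^e}}.
\]
By Lemma~\ref{lem:powp}, $g^{(m+1)p^e}(\oX) = g^{(m+1)p^{e-1}}(\oX^p)$, so the denominator is genuinely a polynomial in the new variables $\oY = \oX^p$ of degree $(m+1)p^{e-1}D_g$ in $\oY$. Expanding the numerator $fg^{p^e - r}$ (of degree $\leq L + p^e D_g$ in $\oX$) by residues modulo $p$ gives $p^n$ polynomials $H_r(\oY)$ of total degree at most $\lceil (L + p^e D_g)/p \rceil$ in $\oY$. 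Hence, after renaming $\oY \to \oX$, each component of $\Phi(s)$ lies in $\mathcal{M}_{L', K'}$ with $L' = \lceil (L + p^e D_g)/p\rceil$ and $K' = (m+1)p^{e-1}$. Multiplying by $T \in \mT$ (entries of the form $h/g$, $\deg h \leq D_T$) with the common denominator $g$ and projecting through $\pi_j$ then produces coordinates in $\mathcal{M}_{L'', K''}$ with
\[
L'' = D_T + \Big\lceil \tfrac{L + p^e D_g}{p}\Big\rceil, \qquad K'' = (m+1)p^{e-1} + 1.
\]
Crucially, the summation over the $p^n d$ terms of $(T \Phi(s))_i$ uses a common denominator so the numerator degree does not blow up.

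The core estimate is that the recurrences $L_{n+1} = D_T + \lceil (L_n + p^e D_g)/p\rceil$ and $K_{n+1} = (\lfloor K_n/p^e\rfloor + 1)p^{e-1} + 1$ are eventually contracting: the first satisfies $L_{n+1} \leq L_n/p + D_T + p^{e-1}D_g + 1$, and the second satisfies $K_{n+1} \leq K_n/p + p^{e-1} + 1$. Both therefore admit effectively computable universal bounds $L^*, K^*$ depending only on $p, e, D_g, D_T$. Setting $L = \max(L_0, L^*)$ and $K = \max(K_0, K^*)$ and defining $\mS' \coloneqq \mathcal{M}_{L,K}^d$, we obtain a finite set containing $\mS$ and stable under $\pi_j(T\Phi(\cdot))$.

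The main obstacle is precisely the size analysis in the case $e \geq 2$, where the naive estimate for $\Phi$ applied to $f/g^k$ gives a denominator $g^{kp^{e-1}}$ and a numerator of degree blown up by $p^{e-1}$. The Frobenius-type identity $g^{p^e}(\oX) = g^{p^{e-1}}(\oX^p)$ from Lemma~\ref{lem:powp} is what lets us absorb this into a change of variables, so that the denominator exponent only grows from $k$ to at most $(\lfloor k/p^e\rfloor + 1)p^{e-1}$ rather than $kp^{e-1}$, and this is the source of the contracting recurrence. For $e = 1$ the identity is the classical Frobenius $g^p(\oX) = g(\oX^p)$, and the argument specializes to the degree bound used by Derksen in~\cite[Proposition~5.2]{derksen2007skolem}.
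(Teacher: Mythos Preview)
Your approach is correct and follows the same strategy as the paper: control numerator degrees over a fixed common denominator $g$, using the identity $g^{p^e}(\oX) = g^{p^{e-1}}(\oX^p)$ from Lemma~\ref{lem:powp} to prevent the denominator from blowing up under $\Phi$.

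The paper streamlines your two-parameter recursion into a single one by choosing the denominator exponent to be an exact fixed point rather than merely bounded. Concretely, the paper takes $\mS'$ to consist of vectors with all coordinates of the form $g_i/h^{2p^{e-1}}$, $\deg g_i \leq c$, and rewrites the entries of each $T$ with denominator $h^{p^{e-1}}$ (rather than $h$). Applying $\Phi$ then turns the denominator $h^{2p^{e-1}}$ into $h^{p^{e-1}}$ in the new variables, and multiplying by $T$ restores it to $h^{2p^{e-1}}$ exactly. This eliminates your $K$-recurrence and leaves only the numerator degree $c$ to track. Your contracting recurrence for $K$ is a valid alternative, just less tidy.

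One small gap: your set $\mS' = \mathcal{M}_{L,K}^d$ permits different coordinates of $s'$ to carry different denominator exponents $k_i$, but your $L$-recurrence implicitly assumes a common one (otherwise the sum $\sum_j T_{\ell j}\,\Phi(s')_j$ requires padding to a common denominator, which adds a term bounded by $\lfloor K/p^e\rfloor\, p^{e-1} D_g$ to $L''$). Either restrict $\mS'$ to vectors sharing a single denominator $g^k$, or absorb this extra term into the recurrence; since $K$ is already bounded, convergence is unaffected.
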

\begin{proof}
    Let $h \in \Zpe[\oX]$ be a common denominator of all the entries appearing in elements of $\mT$ and $\mS$.
    We construct $\mS'$ as the set
    \begin{equation}\label{eq:defsprime}
    \mS' \coloneqq \left\{\left(\frac{g_1}{h^{2p^{e-1}}}, \ldots, \frac{g_d}{h^{2p^{e-1}}}\right) \in \Zpe(\oX)^d \;\middle|\; g_1, \ldots, g_d \in \Zpe[\oX], \; \deg(g_1) \leq c, \ldots, \deg(g_d) \leq c \right\}
    \end{equation}
    for some $c \in \N$ that we will specify later.
    Note that for any given $c$, the set $\mS'$ is finite since there are only finitely many polynomials in $\Zpe[\oX]$ with bounded degree.

    Let $s' = \left(\frac{g_1}{h^{2p^{e-1}}}, \ldots, \frac{g_d}{h^{2p^{e-1}}}\right) \in \mS'$.
    For any $j = 1, \ldots, d$, the product $g_j h^{p^e - 2p^{e-1}}$ can be written as
    \[
    g_j h^{p^e - 2p^{e-1}} = \sum_{\epsilon_1, \ldots, \epsilon_n \in \{0, 1, \ldots, p-1\}} F_{\epsilon_1, \ldots, \epsilon_n}(X_1^{p}, \ldots, X_n^p) \cdot X_1^{\epsilon_1} \cdots X_n^{\epsilon_n}
    \]
    with $F_{\epsilon_1, \ldots, \epsilon_n} \in \Zpe[\oX]$ for all $\epsilon_1, \ldots, \epsilon_n \in \{0, 1, \ldots, p-1\}$.
    Recall from Lemma~\ref{lem:powp} that we can write $h^{p^e}$ as $h^{p^{e-1}}(X_1^{p}, \ldots, X_n^p)$.
    So
    \[
    \frac{g_j}{h^{2p^{e-1}}} = \frac{g_j h^{p^e - 2p^{e-1}}}{h^{p^e}} = \sum_{\epsilon_1, \ldots, \epsilon_n \in \{0, 1, \ldots, p-1\}} \frac{F_{\epsilon_1, \ldots, \epsilon_n}(X_1^{p}, \ldots, X_n^p)}{h^{p^{e-1}}(X_1^{p}, \ldots, X_n^p)} \cdot X_1^{\epsilon_1} \cdots X_n^{\epsilon_n}.
    \]
    Therefore
    \[
    \Phi\left(\frac{g_j}{h^{2p^{e-1}}}\right) = \left(\frac{F_{0,\ldots,0}}{h^{p^{e-1}}}, \ldots, \frac{F_{p-1, \ldots, p-1}}{h^{p^{e-1}}}\right),
    \]
    where
    \begin{align*}
        \deg(F_{\epsilon_1, \ldots, \epsilon_n}) & \leq \frac{\deg(g_j h^{p^e - 2p^{e-1}}) - (\epsilon_1 + \cdots + \epsilon_n)}{p} \\
        & \leq \frac{\deg(g_j) + (p^e - 2p^{e-1}) \deg(h)}{p} \\
        & \leq \frac{c}{p} + \frac{(p^e - 2p^{e-1})\deg(h)}{p}.
    \end{align*}
    To sum up the above discussion, $\Phi(s') = \left(\Phi\left(\frac{g_1}{h^{2p^{e-1}}}\right), \ldots, \Phi\left(\frac{g_d}{h^{2p^{e-1}}}\right)\right)$ can be written as a tuple $\left(\frac{f_1}{h^{p^{e-1}}}, \ldots, \frac{f_{p^n d}}{h^{p^{e-1}}} \right)$, where each $f_k \in \Zpe[\oX], k = 1, \ldots, p^nd$, satisfies 
    \begin{equation}\label{eq:bounddegf}
    \deg(f_k) \leq \frac{c}{p} + \frac{(p^e - 2p^{e-1})\deg(h)}{p}.
    \end{equation}

    Write $\mT = \{T_1, \ldots, T_m\}$.
    By multiplying both the numerator and the denominator by a suitable polynomial, we can write out the coefficients $(t_{i\ell k})_{\ell, k \in \{1, \ldots, p^n d\}}$ of any $T_i \in \mT$ as
    \[
    t_{i\ell k} = \frac{a_{i\ell k}}{h^{p^{e-1}}}, \quad a_{i\ell k} \in \Zpe[\oX].
    \]
    Then $T_i \Phi(s') = (s_1, \ldots, s_{p^n d})$, where
    \[
    s_\ell = \frac{a_{i\ell1}}{h^{p^{e-1}}} \cdot \frac{f_1}{h^{p^{e-1}}} + \frac{a_{i\ell2}}{h^{p^{e-1}}} \cdot \frac{f_2}{h^{p^{e-1}}} + \cdots + \frac{a_{i\ell(p^nd)}}{h^{p^{e-1}}} \cdot \frac{f_{p^nd}}{h^{p^{e-1}}} = \frac{\sum_{k = 1}^{p^nd} a_{i\ell k} f_k}{h^{2p^{e-1}}}
    \]
    for $\ell = 1, \ldots, p^n d$.
    Furthermore,
    \begin{align*}
        \deg\left(\sum_{k = 1}^{p^nd} a_{i\ell k} f_k\right) & \leq \max_{k \in \{1, \ldots, p^n d\}} \left(\deg(a_{i\ell k}) + \deg(f_k)\right) \\
        & \leq \max_{k \in \{1, \ldots, p^n d\}} \deg(a_{i\ell k}) + \frac{c}{p} + \frac{(p^e - 2p^{e-1})\deg(h)}{p}
    \end{align*}
    by Inequality~\eqref{eq:bounddegf}.  
    Therefore, for any
    \begin{equation}\label{eq:defc}
        c \geq \frac{p \cdot\max_{i \in \{1, \ldots, m\}, \ell, k \in \{1, \ldots, p^n d\}}\deg(a_{ijk}) + (p^e - 2p^{e-1})\deg(h)}{p - 1},
    \end{equation}
    we will have $\deg\left(\sum_{k = 1}^{p^nd} a_{i\ell k} f_k\right) \leq c$ for $i = 1, \ldots, m;\; \ell = 1, \ldots, p^n d$.
    In this case, we have $\pi_j(T_i\Phi(s')) \in \mS'$ for $j = 0, 1, \ldots, p^n -1$.

    Recall that every coefficient appearing in elements of $\mS$ can be written as $\frac{g}{h} = \frac{g h^{2p^{e-1} - 1}}{h^{2p^{e-1}}}$.
    Therefore we can take $c$ large enough so that $\mS'$ contains every element of $\mS$.
    By enlarging $c$ so that it satisfies Condition~\eqref{eq:defc}, we obtain $\mS' \supseteq \mS$ such that $\pi_j(T\Phi(s')) \subseteq \mS'$ for all $s' \in \mS'$, $T \in \mT$ and $j \in \{0, 1, \ldots, p^n-1\}$.
\end{proof}

\paragraph{The finite automaton $\mmU$.}
We now construct the finite sub-automaton $\mmU$ of $\tU$ by bounding the states reachable from $\{\alpha\}$, where
$
\alpha(z_{11}, \ldots, z_{KN}) = \sum_{i = 1}^K A_1^{z_{i1}} A_2^{z_{i2}} \cdots A_N^{z_{iN}} v_i
$.

Recall that taking a transition $(\epsilon_{11}, \ldots, \epsilon_{KN})$ from a state $W$, we reach the new state
\[
    \bigcup_{\gamma \in W} \bigcup_{j = 0}^{p^n - 1} \big\{\Upsilon_{\epsilon_{11}, \ldots, \epsilon_{KN}; j}(\gamma)\big\}
\]
where $\Upsilon_{\epsilon_{11}, \ldots, \epsilon_{KN}; j}(\gamma)(z_{11}, \ldots, z_{KN})$ is defined as
\begin{equation*}
    \sum_{i=1}^K A_1^{z_{i1}} A_2^{z_{i2}} \cdots A_N^{z_{iN}} \cdot \pi_j\left(R\Phi(A_1^{\epsilon_{i1}} \cdots A_N^{\epsilon_{in}}) \Phi(w_i)\right)
\end{equation*}
for $\gamma = \sum_{i = 1}^K A_1^{z_{i1}} A_2^{z_{i2}} \cdots A_N^{z_{iN}} w_i$.

Apply Lemma~\ref{lem:finite} with $\mS \coloneqq \{v_1, \ldots, v_K\}$, and
\[
\mT \coloneqq \left\{R \Phi(A_1^{\epsilon_{i1}} \cdots A_N^{\epsilon_{in}}) \,\middle|\, \epsilon_{11}, \ldots, \epsilon_{KN} \in \Sigma_p \right\},
\]
we obtain a finite set $\mS' \supseteq \mS$ satisfying
\begin{equation}\label{eq:pisins}
\pi_j\left(R\Phi(A_1^{\epsilon_{i1}} \cdots A_N^{\epsilon_{in}}) \Phi(w)\right) \in \mS'
\end{equation}
for all $w \in \mS'$, $\epsilon_{11}, \ldots, \epsilon_{KN} \in \Sigma_p$ and $j \in \{0, 1, \ldots, p^n - 1\}$.
Let $\mH \subset \mG$ be the set of sequences whose coefficients are in $\mS'$:
\[
    \mH \coloneqq \left\{\gamma \;\middle|\; \gamma(z_{11}, \ldots, z_{KN}) \coloneqq \sum_{i = 1}^K A_1^{z_{i1}} \cdots A_{N}^{z_{iN}} w_i,\; w_1, \ldots, w_K \in \mS' \right\}.
\]
Then $\mH$ is finite and contains the sequence $\alpha$, and $\gamma \in \mH \implies \Upsilon_{\epsilon_{11}, \ldots, \epsilon_{KN}; j}(\gamma) \in \mH$ for all $\epsilon_{11}, \ldots, \epsilon_{KN} \in \Sigma_p,\; j = 0, 1, \ldots, p^n -1$. 
Therefore, a state in $2^{\mH} \coloneqq \{W \mid W \subseteq \mH\}$ can only reach other states in $2^{\mH}$.
We now take $\mmU$ to be the sub-automaton of $\tU$ containing all the states in $2^{\mH}$.
Since it contains the initial state $\{\alpha\}$, the finite automaton $\mmU$ accepts the zero set $\mZ(\alpha)$.

\subsection{Decomposition of $\mmU$ into strongly connected components}\label{subsec:strongconn}
In the previous subsection we have shown that the zero set $\mZ(\alpha)$ is $p$-automatic by constructing the automaton $\mmU$.
Our next step is to refine this result from $p$-automaticity to $p$-normality.
This refinement will be done by a combined analysis of the structure of $\mmU$ and the structure of $\alpha$.
In this subsection we analyze the \emph{strongly connected components} of $\mmU$.
We show that, roughly speaking, these strongly connected components will contribute to the subgroup $H$ in the definition~\eqref{eq:psuccinct} of $p$-succinct sets.

\paragraph{Multiplicative independence.} 
We can suppose $A_1, \ldots, A_N \in \mA$ to be multiplicatively independent, that is, 
\[
A_1^{z_1} A_2^{z_2} \cdots A_N^{z_N} = 1 \implies z_1 = z_2 = \cdots = z_N = 0.
\]
We can do so without loss of generality.
In fact, suppose $A_1, \ldots, A_N$ are not multiplicatively independent\footnote{Multiplicative dependence can be effectively computed using Noskov's Lemma \cite{noskov1982conjugacy}~\cite[Proposition~2.4]{baumslag1994algorithmic}}.
Then take a maximal subset of $\{A_1, \ldots, A_N\}$ that is multiplicatively independent, and without loss of generality denote them by $A_1, \ldots, A_s$.
For each $j = s+1, \ldots, N$, there exists $t_j \geq 1$ such that $A_j^{t_j}$ is in the multiplicative subgroup generated by $A_1, \ldots, A_s$.
We can write the zero set $\mZ(\alpha)$ as a finite union
\begin{multline*}
\bigcup_{r_{1j}, \ldots, r_{Kj} \in \{0, 1, \ldots, t_{j}-1\}, j = s+1, \ldots, N} \Bigg\{ (z_{11}, \ldots, z_{1s}, r_{1(s+1)}, \ldots, r_{1N}, \ldots, z_{K1}, \ldots, z_{Ks}, r_{K(s+1)}, \ldots r_{KN}) \\
\;\Bigg|\; \sum_{i = 1}^K A_1^{z_{i1}} \cdots A_s^{z_{s1}} \cdot \left(A_{s+1}^{r_{i(s+1)}} \cdots A_N^{r_{iN}} v_i \right) = 0 \Bigg\}.
\end{multline*}
Thus, we have reduced the problem to showing that the solution set $(z_{11}, \ldots, z_{1s}, \ldots, z_{K1}, \ldots, z_{Ks})$ for each equation 
\[
\sum_{i = 1}^K A_1^{z_{i1}} \cdots A_s^{z_{s1}} \cdot \left(A_{s+1}^{r_{i(s+1)}} \cdots A_n^{r_{iN}} v_i \right) = 0
\]
is $p$-normal, where $A_1, \ldots, A_s$ are multiplicatively independent.
Replacing $s$ by $N$, we therefore reduce to the case where the elements $A_1, A_2, \ldots, A_N$, are multiplicatively independent.

\bigskip

From now on, let $\oA$ denote the tuple $(A_1, \ldots, A_N)$.
For a vector $\bz = (z_1, \ldots, z_N) \in \Z^N$, we write
$
\oA^{\bz} \coloneqq A_1^{z_1} A_2^{z_2} \cdots A_N^{z_N}
$.
Recall that $\mA$ is a local ring with maximal ideal $\frm \ni p$, such that $\frm^t = 0$ for some $t \geq 1$.

\begin{lem}\label{lem:independentmodq}
    The maps $A_1, \ldots, A_N$ are multiplicatively independent in $\mA$ if and only if they are multiplicatively independent in the quotient $\mA/\frm$.
\end{lem}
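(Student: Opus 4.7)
The plan is to handle the two directions separately; only one of them requires any work. The direction from $\mA/\frm$ to $\mA$ is immediate: any relation $A_1^{z_1} \cdots A_N^{z_N} = 1$ holding in $\mA$ descends along the projection $\mA \twoheadrightarrow \mA/\frm$ to the same relation in the quotient, and the assumed independence in $\mA/\frm$ then forces $z_1 = \cdots = z_N = 0$.

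For the converse, suppose the $A_i$ are multiplicatively independent in $\mA$ and that $A_1^{z_1} \cdots A_N^{z_N} \equiv 1 \pmod{\frm}$. Set $u \coloneqq A_1^{z_1} \cdots A_N^{z_N}$, so that $u - 1 \in \frm$. The idea is to promote this congruence to an actual equality in $\mA$ by passing to a sufficiently large $p$-th power. Since $\mA$ is a $\Zpe(\oX)$-algebra, the element $p$ is nilpotent in $\mA$, and because $\mA$ is local with maximal ideal $\frm$, this forces $p \in \frm$; together with $\frm^t = 0$, we are exactly in the situation of Lemma~\ref{lem:lifting} with $P = \frm$, $f = u$, and $g = 1$. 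That lemma then yields $u^{p^r} = 1$ in $\mA$ for every $r \geq t-1$, i.e.\ $A_1^{p^r z_1} \cdots A_N^{p^r z_N} = 1$ as a genuine equality in $\mA$. Applying the multiplicative independence of $A_1, \ldots, A_N$ in $\mA$ now gives $p^r z_i = 0$ in $\Z$ for each $i$, hence $z_i = 0$.

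I expect no real obstacle: the only substantive point is the lift from a congruence modulo $\frm$ to an equality in $\mA$, and this is already packaged into Lemma~\ref{lem:lifting}. The only thing to verify separately is the mild observation that $p \in \frm$, which follows from nilpotency of $p$ in the local ring $\mA$.
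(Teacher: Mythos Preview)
Your proposal is correct and follows essentially the same route as the paper's proof: both directions match, and for the nontrivial one you apply Lemma~\ref{lem:lifting} with $P=\frm$, $f=\oA^{\bz}$, $g=1$ to get $\oA^{p^r\bz}=1$ for $r\geq t-1$, then invoke multiplicative independence in $\mA$ to conclude $p^r\bz=\bzer$ and hence $\bz=\bzer$. Your extra remark justifying $p\in\frm$ is fine but unnecessary here, since the ambient setup already records $\frm\ni p$.
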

\begin{proof}
    If $A_1, \ldots, A_N$ are multiplicatively independent in the quotient $\mA/\frm$, then they are obviously multiplicatively independent in $\mA$.
    
    Suppose $A_1, \ldots, A_N$ multiplicatively independent in $\mA$, and let $\bz \in \Z^N$ be such that $\oA^{\bz} \equiv 1 \mod \frm$.
    By Lemma~\ref{lem:lifting}, we have $\oA^{p^{t-1}\bz} = 1$, therefore $p^{t-1}\bz = \bzer$ by the multiplicative independence of $A_1, \ldots, A_N$ in $\mA$.
    Therefore $\bz = \bzer$, and $A_1, \ldots, A_N$ are multiplicatively independent in $\mA/\frm$.
\end{proof}

\paragraph{Prototype of the subgroup $H$.}

Let $\bz_1 = (z_{11}, \ldots, z_{1N}), \ldots, \bz_K = (z_{K1}, \ldots, z_{KN}) \in \Z^N$, then $\alpha(z_{11}, \ldots, z_{KN}) = \sum_{i = 1}^K A_1^{z_{i1}} \cdots A_{N}^{z_{iN}} v_i$ can be written as $\alpha(\bz_1, \ldots, \bz_K) = \sum_{i = 1}^K \oA^{\bz_i} v_i$.

We start by giving some intuition of the subgroup $H$ in the definition~\eqref{eq:psuccinct} of $p$-succinct sets.
For any $b \in \{1, \ldots, N\}$, we can replace $z_{1b}, z_{2b}, \ldots, z_{Kb}$, by $z_{1b}+1, z_{2b}+1, \ldots, z_{Kb}+1$, respectively, this will give us the sequence $A_b \cdot \alpha$.
Therefore, if $(z_{11}, \ldots, z_{KN}) \in \mZ(\alpha)$, then $(z_{11}, \ldots, z_{KN}) + \be_{\{1, \ldots, K\}, b}$ is also in $\mZ(\alpha)$, where
\[
\be_{\{1, \ldots, K\}, b} \coloneqq (\be_1, \ldots, \be_K), \quad \text{ with } \be_1 = \cdots = \be_K = (0, \ldots, 0, \underset{\underset{\text{$b$-th index}}{\uparrow}}{1}, 0, \ldots, 0) \in \Z^N.
\]
Therefore, $\mZ(\alpha)$ is stable under translation by the group generated by $\be_{\{1, \ldots, K\}, 1}, \ldots, \be_{\{1, \ldots, K\}, N}$:
\[
\mZ(\alpha) = \mZ(\alpha) + \sum_{b = 1}^N \Z \be_{\{1, \ldots, K\}, b}.
\]
In this case, the subgroup $\sum_{b = 1}^N \Z \be_{\{1, \ldots, K\}, b}$ a prototype of the subgroup $H$ in the definition~\eqref{eq:psuccinct} of $p$-succinct sets.

Let us consider a more complicated example.
Let $W = \{\beta, \gamma\}$ be a state of $\mmU$, where $\beta = \sum_{i = 1}^{k} \oA^{\bz_i} w_i,\; \gamma = \sum_{i = k+1}^{K} \oA^{\bz_i} w'_i$, for some $1 \leq k \leq K$.
Then by the homogeneity of $\beta$, for any $b \in \{1, \ldots, N\}$, we can replace $z_{1b}, z_{2b}, \ldots, z_{kb}$, by $z_{1b}+1, z_{2b}+1, \ldots, z_{kb}+1$, without changing the solution set to $\beta = 0$.
This also does not change the solution set to $\gamma = 0$, because the variables $z_{1b}, z_{2b}, \ldots, z_{kb}$ do not appear in $\gamma$ at all.
Similarly, we can replace $z_{(k+1)b}, z_{(k+2)b}, \ldots, z_{Kb}$, by $z_{(k+1)b} + 1, z_{(k+2)b} + 1, \ldots, z_{Kb} + 1$, without changing the solution set.
This shows that $\mZ(\{\beta, \gamma\})$ is stable under translation by the group generated by $\be_{\{1, \ldots, k\}, 1}, \ldots, \be_{\{1, \ldots, k\}, N}$, $\be_{\{k+1, \ldots, K\}, 1}, \ldots, \be_{\{k+1, \ldots, K\}, N}$:
\[
\mZ(\{\beta, \gamma\}) = \mZ(\{\beta, \gamma\}) + \sum_{b = 1}^N \Z \be_{\{1, \ldots, k\}, b} + \sum_{b = 1}^N \Z \be_{\{k+1, \ldots, K\}, b},
\]
where for any set $S \subseteq \{1, \ldots, K\}$,
\begin{equation*}
\be_{S, b} \coloneqq (\be_1, \ldots, \be_K), \quad \text{ with } 
\be_i =
\begin{cases}
    (0, \ldots, 0, \underset{\underset{\text{$b$-th index}}{\uparrow}}{1}, 0, \ldots, 0), \quad \text{ if } i \in S \\
    (0, \ldots, 0, \underset{\underset{\text{$b$-th index}}{\uparrow}}{0}, 0, \ldots, 0) , \quad  \text{ if } i \notin S.
\end{cases}
\end{equation*}
If additionally there is a transition labeled $(\epsilon_{11}, \ldots, \epsilon_{KN})$ from $\{\alpha\}$ to $\{\beta, \gamma\}$, then $\Theta_{\epsilon_{11}, \ldots, \epsilon_{KN}}\mZ(\alpha) = \mZ(\{\beta, \gamma\})$ is stable under translation by $\sum_{b = 1}^N \Z \be_{\{1, \ldots, k\}, b} + \sum_{b = 1}^N \Z \be_{\{k+1, \ldots, K\}, b}$.
So $\mZ(\alpha)$ contains a subset that is stable under translation by $p \cdot \left(\sum_{b = 1}^N \Z \cdot \be_{\{1, \ldots, k\}, b} + \sum_{b = 1}^N \Z \cdot \be_{\{k+1, \ldots, K\}, b} \right)$: this is another prototype of the subgroup $H$ in $p$-succinct sets.

More generally, we can replace $\{1, \ldots, k\}, \{k+1, \ldots, K\}$ in the above example by any partition of $\{1, \ldots, K\}$.
This motivates the following definition.

\begin{defn}
    A \emph{partition} of the set $\{1, \ldots, K\}$ is defined as a family $\Pi = \{S_1, S_2, \ldots, S_r\}$, where $S_1, S_2, \ldots, S_r$ are non-empty disjoint subsets of $\{1, \ldots, K\}$, such that $S_1 \cup \cdots \cup S_r = \{1, \ldots, K\}$.
    The sets $S_1, S_2, \ldots, S_r$ are called \emph{blocks} of $\Pi$.
    For any $b \in \{1, \ldots, N\}$ and any subset $S \subseteq \{1, \ldots, K\}$, define
    \begin{equation}\label{eq:defeSb}
        \be_{S, b} \coloneqq (e_{11}, \ldots, e_{KN}), \quad e_{ij} = 1 \text{ for } i \in S, j = b; \text{ and } e_{ij} = 0 \text{ otherwise}.
    \end{equation}
    For any partition $\Pi$ of the set $\{1, \ldots, K\}$, define $(\Z^N)^{\Pi}$ to be the subgroup of $\Z^{KN}$ generated by the elements $\be_{S, b},\; S \in \Pi, b \in \{1, \ldots, N\}$:
    \[
    (\Z^N)^{\Pi} \coloneqq \sum_{S \in \Pi} \sum_{b = 1}^N \Z \be_{S, b}.
    \]
\end{defn}

For a path $\pi$ in the automaton $\mmU$, let $\len(\pi)$ denote the length of $\pi$.
Recall that $\eval(\pi) \in \Z^{KN}$ denotes the evaluation of $\pi$.
For two states $W, V$ of the automaton $\mmU$, denote by $L(W, V)$ the set of paths from $W$ to $V$.

The following lemma shows that, whenever a state $W$ of $\mmU$ appears in two distinct cycles of the same length, the zero set $\mZ(W)$ is stable under translation by a subgroup of the form $(\Z^N)^{\Pi}$.

\begin{lem}\label{lem:stablePi}
    Let $W \in 2^{\mH}$ be a state of the automaton $\mmU$.
    Suppose there are two cycles $C_1, C_2 \in L(W, W)$ such that $\len(C_1) = \len(C_2) = \ell$.
    Write $\eval(C_1) = (\br_1, \ldots, \br_K)$, $\eval(C_2) = (\br_1 + \bsig_{1}, \ldots, \br_K + \bsig_{K})$, with $\br_i, \br_i + \bsig_i \in \{-(p^{\ell} - 1), \ldots, 0, \ldots, p^{\ell} - 1\}^{N}$ for all $i$.
    Let $\Pi$ be the partition of $\{1, \ldots, K\}$ such that $i, j \in \{1, \ldots, K\}$ fall in the same block of $\Pi$ if and only if $\bsig_i = \bsig_j$.\footnote{For example, if $\bsig_1 = (5,6), \bsig_2 = (0, 0), \bsig_3 = (5, 6)$, then $\Pi$ is the family $\big\{\{1, 3\}, \{2\}\big\}$.}
    Then
    \begin{equation}\label{eq:stabletrans}
    \mZ(W) = \mZ(W) + (\Z^N)^{\Pi}.
    \end{equation}
\end{lem}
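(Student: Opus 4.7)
The plan is to reduce the desired translation-invariance of $\mZ(W)$ to the block-wise vanishing of each $\gamma$. For $\gamma = \sum_{i=1}^{K} \oA^{\bz_i} w_i \in W$ and $S \in \Pi$, define the block-restriction
\[
\gamma_S(\bz_1,\ldots,\bz_K) := \sum_{i \in S} \oA^{\bz_i} w_i,
\]
which depends only on the coordinates $\bz_i$ with $i \in S$. The direct computation
\[
\gamma(\bz + \be_{S,b}) = A_b\,\gamma_S(\bz) + (\gamma - \gamma_S)(\bz) = (A_b - 1)\gamma_S(\bz) + \gamma(\bz)
\]
combined with Lemma~\ref{lem:independentmodq} (which, via multiplicative independence of $A_1,\ldots,A_N$, shows that each $A_b - 1$ has nonzero residue in $\mA/\frm$ and is therefore a unit of $\mA$) identifies the desired containment $\mZ(W) + (\Z^N)^{\Pi} \subseteq \mZ(W)$ with the statement that $\gamma_S(\bz) = 0$ for every $\bz \in \mZ(W)$, every $\gamma \in W$, and every $S \in \Pi$. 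The reverse inclusion is automatic.

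To establish this block-vanishing, I exploit the two cycles by iterating them arbitrarily. Any word obtained by concatenating $k$ copies chosen from $\{C_1, C_2\}$ is again a cycle at $W$ of length $k\ell$, whose evaluation is
\[
M\br + \textstyle\sum_{j \in J} p^{j\ell}\bsig, \qquad M := 1 + p^\ell + p^{2\ell} + \cdots + p^{(k-1)\ell},
\]
where $J \subseteq \{0,1,\ldots,k-1\}$ records the positions at which $C_2$ is used. Iterating the identity $\mZ(W) = \Theta_{(\cdot)}\mZ(W)$ from \eqref{eq:transtU} along these composite cycles gives, for every $\bz \in \mZ(W)$ and every $J$,
\[
\bz^{(k)} + \textstyle\sum_{j \in J} p^{j\ell}\bsig \;\in\; \mZ(W), \qquad \bz^{(k)} := p^{k\ell}\bz + M\br.
\]
Evaluating an arbitrary $\gamma \in W$ at these points and collapsing $\bsig_i = \bsig_S$ for $i \in S$ then produces the family of identities
\[
\sum_{S \in \Pi} \mu_S^{M_J} \cdot \gamma_S\big(\bz^{(k)}\big) = 0, \qquad \mu_S := \oA^{\bsig_S}, \quad M_J := \textstyle\sum_{j \in J} p^{j\ell},
\]
valid for every $J \subseteq \{0, \ldots, k-1\}$.

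The crux is to extract from this system that $\gamma_S(\bz^{(k)}) = 0$ for each $S$. By Lemma~\ref{lem:independentmodq}, since the $\bsig_S$ are pairwise distinct across blocks by the definition of $\Pi$, the residues $\bar\mu_S$ in the residue field $\mA/\frm$ are pairwise distinct, and hence each $\mu_S - \mu_{S'}$ is a unit of $\mA$. Specialising $J$ to $\emptyset$ and to singletons $\{j\}$ for a suitable range of $j$ yields an $r \times r$ Moore-type coefficient matrix with entries $\mu_S^{p^{j\ell}}$ ($r := |\Pi|$). The reduction modulo $\frm$ of its determinant is the classical Moore determinant of the $\bar\mu_S$'s, which is a unit provided the $\bar\mu_S$ are $\F_{p^\ell}$-linearly independent in the residue field; if this condition is not immediate, one may first replace $(C_1, C_2)$ by $(C_1^m, C_2^m)$ for a suitable $m$ so as to enlarge the relevant Frobenius orbit, and then lift invertibility from $\mA/\frm$ to $\mA$ using nilpotence of $\frm$. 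Once the Moore matrix is invertible, each $\gamma_S(\bz^{(k)})$ must vanish.

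Finally, I descend from $\gamma_S(\bz^{(k)}) = 0$ to $\gamma_S(\bz) = 0$ by observing that the transition operators $\Upsilon_{\bepsilon;j}$ commute with the block-restriction, i.e.\ $(\Upsilon_{\bepsilon;j}(\gamma))_S = \Upsilon_{\bepsilon;j}(\gamma_S)$, since each $\pi_j\!\big(R\Phi(\oA^{\bepsilon_i})\Phi(w_i)\big)$ depends only on index $i$. Thus the Moore argument can be applied simultaneously at every state reached along the iterated cycle, and the block-wise vanishing propagates back through the cycle to the original $\bz$. The main obstacle of the proof will be this Moore-determinant step: certifying $\F_{p^\ell}$-linear independence of the $\bar\mu_S$'s, together with the descent from $\bz^{(k)}$ to $\bz$, is delicate and requires a careful choice of the cycle powers used to iterate.
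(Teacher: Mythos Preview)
Your reduction to block-vanishing is exactly right and matches the paper: showing $\mZ(W)+\be_{S,b}\subseteq\mZ(W)$ is equivalent to showing $\gamma_S(\bz)=0$ for every $\bz\in\mZ(W)$ and every $\gamma\in W$, because $A_b-1$ is a unit. Your use of the two cycles to produce the family
\[
\sum_{S\in\Pi}\mu_S^{M_J}\,\gamma_S(\bz^{(k)})=0,\qquad J\subseteq\{0,\dots,k-1\},
\]
is also correct, and the descent step is valid: since $(\Upsilon_{\bepsilon;j}(\gamma))_S=\Upsilon_{\bepsilon;j}(\gamma_S)$, applying Lemma~\ref{lem:shiftgamma} to each $\gamma_S$ shows that $\gamma_S(\bz^{(k)})=0$ for all $\gamma\in W$ forces $(\gamma')_S(\bz)=0$ for every $\gamma'$ in the state reached after $k\ell$ transitions along $C_1^k$, which is again $W$.

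The genuine gap is the Moore determinant. The residues $\bar\mu_S=\ox^{\bsig_S}$ in the field $\mA/\frm$ are pairwise distinct (by Lemma~\ref{lem:independentmodq}), but there is no reason they should be $\F_{p^\ell}$-linearly independent: $\mA/\frm$ is a finite extension of $\F_p(\oX)$, so the images of the $A_i$ can satisfy additive polynomial relations, and hence distinct monomials in them can be $\F_{p^\ell}$-dependent. Your proposed repair, replacing $(C_1,C_2)$ by $(C_1^m,C_2^m)$, sends $\mu_S$ to $\mu_S^{1+p^\ell+\cdots+p^{(m-1)\ell}}$ and replaces $\F_{p^\ell}$ by $\F_{p^{m\ell}}$; this does not manufacture linear independence in general.

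The fix is not to enlarge the Frobenius orbit but to abandon the full Moore matrix and eliminate one block at a time. Fix $S_1\in\Pi$; subtracting $\mu_{S_1}^{p^{(k-1)\ell}}$ times the equation for $J$ from the equation for $J\cup\{k-1\}$ (for $J\subseteq\{0,\dots,k-2\}$) yields
\[
\sum_{S\neq S_1}\mu_S^{M_J}\bigl(\mu_S^{p^{(k-1)\ell}}-\mu_{S_1}^{p^{(k-1)\ell}}\bigr)\gamma_S(\bz^{(k)})=0,
\]
and each bracket is a unit since its reduction mod $\frm$ is $(\bar\mu_S-\bar\mu_{S_1})^{p^{(k-1)\ell}}\neq0$. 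Iterating $r-1$ times (take $k\geq r$) gives $\gamma_S(\bz^{(k)})=0$ for all $S$, using only pairwise distinctness.

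This pairwise $2\times2$ elimination is precisely the paper's device: the paper forms, for a pair of blocks $S,S'$, the invertible combination with matrix $\begin{psmallmatrix}\oA^{\ba}&-1\\\oA^{\ba'}&-1\end{psmallmatrix}$ (determinant $\oA^{\ba}(\oA^{\ba'-\ba}-1)$, a unit), and iterates across successive blocks. The paper carries this out directly at the level of zero sets $\mZ(\gamma)$, so no separate descent is needed; your route reaches the same conclusion via pointwise equations plus the commutation argument. Either way the crucial input is only that $\oA^{\bsig_S}-\oA^{\bsig_{S'}}$ is a unit for $S\neq S'$, not any Moore-type linear independence.
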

\begin{proof}
    Let $S \subseteq \{1, \ldots, K\}$ be any block of $\Pi$ and let $b \in \{1, \ldots, N\}$, we will prove
    \begin{equation}
        \mZ(W) = \mZ(W) + \be_{S, b}
    \end{equation}
    for the generator $\be_{S, b}$ of $(\Z^N)^{\Pi}$.

    If $S = \{1, \ldots, K\}$ then Equation~\eqref{eq:stabletrans} is obviously satisfied thanks to the homogeneity of each $\gamma \in W$.
    Therefore suppose $S \neq \{1, \ldots, K\}$, pick another block $S' \neq S$ in the partition $\Pi$.

    For each $\gamma \in W$, $\gamma(\bz_1, \ldots, \bz_K) = \sum_{i = 1}^K \oA^{\bz_i} w_i$, we claim that
    \begin{equation}\label{eq:interdegen}
    \Theta_{\ell; \br_1, \ldots, \br_K}(\mZ(\gamma)) \cap \Theta_{\ell; \br_{1} + \bsig_{1}, \ldots, \br_{K} + \bsig_{K}}(\mZ(\gamma)) = \mZ(\gamma_S) \cap \mZ(\gamma_{S'})
    \end{equation}
    where
    \begin{equation}\label{eq:defbeta}
    \gamma_S(\bz_{1}, \ldots, \bz_{K}) = \sum_{i \in \{1, \ldots, K\} \setminus S} \oA^{p^{\ell} \bz_i} x_i,
    \end{equation}
    with some $x_i \in \mV, i \in \{1, \ldots, K\} \setminus S$, and
    \begin{equation}\label{eq:defbetap}
    \gamma_{S'}(\bz_{1}, \ldots, \bz_{K}) = \sum_{i \in \{1, \ldots, K\} \setminus S'} \oA^{p^{\ell} \bz_i} x'_i,
    \end{equation}
    with some $x'_i \in \mV, i \in \{1, \ldots, K\} \setminus S'$.\footnote{Here, the subscript $S$ in $\gamma_S$ is meant to suggest that the coefficients $x_i$ in $\gamma_S$ vanish for $i \in S$. Same for $\gamma_{S'}$.}
    Here, $\gamma_S$ and $\gamma_{S'}$ can be seen as sequences over the tuple $\oA^{p^{\ell}} \coloneqq \left(A_1^{p^{\ell}}, \ldots, A_N^{p^{\ell}}\right)$.
    
    Indeed, we have $(\bz_1, \ldots, \bz_K) \in \Theta_{\ell; \br_1, \ldots, \br_K}(\mZ(\gamma))$ if and only if
    \begin{equation}\label{eq:thetar}
        \sum_{i = 1}^K \oA^{p^{\ell} \bz_{i}} \cdot \oA^{\br_i} w_i = 0.
    \end{equation}
    Similarly, we have $(\bz_1, \ldots, \bz_K) \in \Theta_{\ell; \br_{1} + \bsig_{1}, \ldots, \br_{K} + \bsig_{K}}(\mZ(\gamma))$ if and only if
    \begin{equation}\label{eq:thetars}
        \sum_{i = 1}^K \oA^{p^{\ell} \bz_{i}} \cdot \oA^{\br_i + \bsig_i} w_i = 0.
    \end{equation}
    Since $S$ is a block in $\Pi$, there exists $\ba \in \Z^N$, such that $\bsig_i = \ba$ for all $i \in S$, and $\bsig_i \neq \ba$ for all $i \notin S$.
    Similarly, there exists $\ba' \in \Z^N$, such that $\bsig_i = \ba'$ for all $i \in S'$, and $\bsig_i \neq \ba'$ for all $i \notin S'$.
    Of course, $\ba \neq \ba'$.
    Let
    \[
        \gamma_S \coloneqq \oA^{\ba} \cdot \left(\sum_{i = 1}^K \oA^{p^{\ell} \bz_{i}} \cdot \oA^{\br_i} w_i\right)
        - \left(\sum_{i = 1}^K \oA^{p^{\ell} \bz_{i}} \cdot \oA^{\br_i + \bsig_i} w_i\right),
    \]
    which can be written in the form~\eqref{eq:defbeta} with $x_i = \oA^{\ba + \br_i} w_i - \oA^{\br_i + \bsig_i} w_i$.
    This is because for $i \in S$, the coefficient $x_i$ vanishes by $\ba = \bsig_i$.
    
    Similarly, let
    \[
        \gamma_{S'} \coloneqq \oA^{\ba'} \cdot \left(\sum_{i = 1}^K \oA^{p^{\ell} \bz_{i}} \cdot \oA^{\br_i} w_i\right)
        - \left(\sum_{i = 1}^K \oA^{p^{\ell} \bz_{i}} \cdot \oA^{\br_i + \bsig_i} w_i\right),
    \]
    which can be written in the form~\eqref{eq:defbetap} with $x'_i = \oA^{\ba' + \br_i} w_i - \oA^{\br_i + \bsig_i} w_i$.
    This is because for $i \in S'$, the coefficient $x'_i$ vanishes.

    The system of equations $\gamma_S = \gamma_{S'} = 0$ is a linear transformation of the system of Equations~\eqref{eq:thetar} and~\eqref{eq:thetars}.
    We claim that the transformation matrix
    $
    \begin{pmatrix}
        \oA^{\ba} & -1 \\
        \oA^{\ba'} & -1 \\
    \end{pmatrix}
    $
    is invertible, so the two systems are equivalent.
    Indeed, the determinant of the transformation matrix is $\oA^{\ba'} - \oA^{\ba} = \oA^{\ba}\left(\oA^{\ba' - \ba} - 1\right)$. We have $\oA^{\ba' - \ba} \not\equiv 1 \mod \frm$, by the multiplicative independence of $A_1, \ldots, A_N$ and Lemma~\ref{lem:independentmodq}. Hence $\oA^{\ba' - \ba} - 1 \notin \frm$, and is therefore invertible.
    Consequently the transformation matrix has an invertible determinant and is therefore invertible.
    We thus conclude that the system $\gamma_S = \gamma_{S'} = 0$ is equivalent to the system of Equations~\eqref{eq:thetar} and \eqref{eq:thetars}.
    In other words,
    \[
    \Theta_{\ell; \br_1, \ldots, \br_K}(\mZ(\gamma)) \cap \Theta_{\ell; \br_{1} + \bsig_{1}, \ldots, \br_{K} + \bsig_{K}}(\mZ(\gamma)) = \mZ(\gamma_S) \cap \mZ(\gamma_{S'}).
    \]
    
    Since there are length-$\ell$ paths from $W$ to $W$ evaluated at $(\br_1, \ldots, \br_K)$ and $(\br_1 + \bsig_1, \ldots, \br_K + \bsig_K)$, we have
    \[
    \mZ(W) = \Theta_{\ell; \br_1, \ldots, \br_K}(\mZ(W)) = \Theta_{\ell; \br_1 + \bsig_1, \ldots, \br_K + \bsig_K}(\mZ(W)).
    \]
    Therefore
    \begin{align*}
        \mZ(W)
        = & \; \Theta_{\ell; \br_1, \ldots, \br_K}(\mZ(W)) \cap \Theta_{\ell; \br_1 + \bsig_1, \ldots, \br_K + \bsig_K}(\mZ(W)) \\
        = & \; \bigcap_{\gamma \in W} \Theta_{\ell; \br_1, \ldots, \br_K}(\mZ(\gamma)) \cap \bigcap_{\gamma \in W} \Theta_{\ell; \br_1 + \bsig_1, \ldots, \br_K + \bsig_K}(\mZ(\gamma)) \\
        = & \; \bigcap_{\gamma \in W} \Big(\Theta_{\ell; \br_1, \ldots, \br_K}(\mZ(\gamma)) \cap \Theta_{\ell; \br_1 + \bsig_1, \ldots, \br_K + \bsig_K}(\mZ(\gamma))\Big) \\
        = & \; \bigcap_{\gamma \in W} \big(\mZ(\gamma_S) \cap \mZ(\gamma_{S'}) \big).
    \end{align*}
    Consider two cases:

    \begin{enumerate}[nosep, wide, label=\textbf{Case~\arabic*:}]
        \item 
    If $S = \{1, \ldots, K\} \setminus S'$.
    Then both $\mZ(\gamma_S)$ and $\mZ(\gamma_{S'})$ are stable under translation by $\boldsymbol{e}_{S, b}$.
    This is because $\gamma_{S'} = \sum_{i \in \{1, \ldots, K\} \setminus S'} \oA^{p^{\ell}\bz_{i}} x'_i = \sum_{i \in S} \oA^{p^{\ell}\bz_{i}} x'_i$ contains only terms for $i \in S$, while $\gamma_S$ does not contain terms for $i \in S$.
    Consequently, $\mZ(W)$ is stable under translation by $\boldsymbol{e}_{S, b}$.

    \item 
    If $S \subsetneq \{1, \ldots, K\} \setminus S'$.
    Then $\mZ(\gamma_S)$ is stable under translation by $\boldsymbol{e}_{S, b}$, but $\mZ(\gamma_{S'})$ is not necessarily stable under translation by $\boldsymbol{e}_{S, b}$.
    Pick another set $S'' \notin \{S, S'\}$ in the partition $\Pi$ and repeat the above process for $(\gamma_S, S'')$ and $(\gamma_{S'}, S'')$ in place of $(\gamma, S')$.
    That is, we can write
    \[
    \Theta_{\ell; \br_1, \ldots, \br_K}(\mZ(\gamma_{S})) \cap \Theta_{\ell; \br_1 + \bsig_1, \ldots, \br_K + \bsig_K}(\mZ(\gamma_{S})) = \mZ(\gamma_{S, S}) \cap \mZ(\gamma_{S, S''}),
    \]
    \[
    \Theta_{\ell; \br_1, \ldots, \br_K}(\mZ(\gamma_{S'})) \cap \Theta_{\ell; \br_1 + \bsig_1, \ldots, \br_K + \bsig_K}(\mZ(\gamma_{S'})) = \mZ(\gamma_{S', S}) \cap \mZ(\gamma_{S', S''}),
    \]
    where each sequence $\gamma_{S_1, S_2}, \; S_1, S_2 \in \{S, S', S''\}$, has the form 
    \[
    \gamma_{S_1, S_2} = \sum_{i \in (\{1, \ldots, K\} \setminus S_1)\setminus S_2} \oA^{p^{2\ell} \bz_{i}} x_i
    \]
    with some $x_i \in \mV$.
    Then
    \begin{align*}
        \mZ(W) = & \; \Theta_{\ell; \br_1, \ldots, \br_K}(\mZ(W)) \cap \Theta_{\ell; \br_1 + \bsig_1, \ldots, \br_K + \bsig_K}(\mZ(W)) \\
        = & \; \Theta_{\ell; \br_1, \ldots, \br_K}\left(\bigcap_{\gamma \in W} \big(\mZ(\gamma_S) \cap \mZ(\gamma_{S'}) \big)\right) \cap \Theta_{\ell; \br_1 + \bsig_1, \ldots, \br_K + \bsig_K}\left(\bigcap_{\gamma \in W} \big(\mZ(\gamma_S) \cap \mZ(\gamma_{S'}) \big)\right) \\
        = & \; \bigcap_{\gamma \in W} \big( \Theta_{\ell; \br_1, \ldots, \br_K}(\mZ(\gamma_{S})) \cap \Theta_{\ell; \br_1 + \bsig_1, \ldots, \br_K + \bsig_K}(\mZ(\gamma_{S})) \big) \\
        & \quad \quad \quad \cap \bigcap_{\gamma \in W} \Theta_{\ell; \br_1, \ldots, \br_K}(\mZ(\gamma_{S'})) \cap \Theta_{\ell; \br_1 + \bsig_1, \ldots, \br_K + \bsig_K}(\mZ(\gamma_{S'})) \\
        = & \; \bigcap_{\gamma \in W} \big(\mZ(\gamma_{S, S}) \cap \mZ(\gamma_{S, S''}) \cap \mZ(\gamma_{S', S}) \cap \mZ(\gamma_{S', S''}) \big)
    \end{align*}
    
    Repeating this process until we have found $S, S', S'', S''', \ldots$, such that the disjoint union $S \cup S' \cup S'' \cup S''' \cup \cdots$ is equal to $\{1, \ldots, K\}$.
    By doing so, we will have written
    \[
    \mZ(W) = \bigcap_{\gamma \in W} \big(\mZ(\gamma_{S, S, S, \ldots}) \cap \mZ(\gamma_{S, S'', S, \ldots}) \cap \cdots \cap \mZ(\gamma_{S', S'', S''', \ldots}) \big),
    \]
    where each set $\mZ(\gamma_{S_1, S_2, S_3, \ldots})$ except the last one contains $S$ in the subscript (and is hence stable under translation by $\be_{S, b}$ because the $\gamma_{S_1, S_2, S_3, \ldots}$ does not contain any term $x_i$ with $i \in S$).
    The last set, $\mZ(\gamma_{S', S'', S''', \ldots})$, is also stable under translation by $\be_{S, b}$ because $S' \cup S'' \cup S''' \cup \cdots = \{1, \ldots, K\} \setminus S$, so $\gamma_{S', S'', S''', \ldots}$ contains only terms $x_i$ with $i \in S$.
    Therefore, the total intersection $\mZ(W)$ is also stable under translation by $\boldsymbol{e}_{S, b}$.
    \end{enumerate}
    
    We have now shown that $\mZ(W) = \mZ(W) + \be_{S, b}$ for each generator $\be_{S, b}$ of $(\Z^N)^{\Pi}$.
    We conclude that $\mZ(W) = \mZ(W) + (\Z^N)^{\Pi}$.
\end{proof}

For two cycles $C_1, C_2 \in L(W, W)$ with $\len(C_1) = \len(C_2)$, we call the partition $\Pi$ defined in Lemma~\ref{lem:stablePi} the partition \emph{induced} by the pair $(C_1, C_2)$, and denote it by $\Pi_{(C_1, C_2)}$.

We say that a partition $\Pi$ is \emph{finer} than a partition $\Pi'$, if each block of $\Pi$ is a subset of some block of $\Pi'$.
For example, the partition $\big\{\{1,2\},\{3\}, \{4, 5\}\big\}$ is finer than the partition $\big\{\{1,2,3\}, \{4, 5\}\big\}$.
If $\Pi$ is finer than $\Pi'$, we write $\Pi \preceq \Pi'$.
For two different partitions $\Pi, \Pi'$, we can define their \emph{meet}, denoted by $\Pi \wedge \Pi'$, as the partition whose blocks are the intersections of a block of $\Pi$ and a block of $\Pi'$.
For example, if $\Pi$ is the family $\big\{\{1,2\},\{3, 4, 5\}\big\}$ and $\Pi'$ is the family $\big\{\{1,3\},\{2, 4, 5\}\big\}$, then $\Pi \wedge \Pi'$ is the family $\big\{\{1\},\{2\},\{3\}, \{4, 5\}\big\}$.
Obviously $\Pi \wedge \Pi' \preceq \Pi$ and $\Pi \wedge \Pi' \preceq \Pi'$.

Let $C_1, C_2, C'_1, C'_2 \in L(W, W)$ be such that $\len(C_1) = \len(C_2),\; \len(C'_1) = \len(C'_2)$, then we have $\len(C_1C_1C_1C'_1) = \len(C_2C_1C_1C'_2)$ for the concatenated cycles $C_1C_1C_1C'_1, C_2C_1C_1C'_2 \in L(W, W)$.

\begin{lem}\label{lem:meet}
    Let $C_1, C_2, C'_1, C'_2 \in L(W, W)$ be such that $\len(C_1) = \len(C_2),\; \len(C'_1) = \len(C'_2)$, then
    \begin{equation*}
        \Pi_{(C_1C_1C_1C'_1, C_2C_1C_1C'_2)} = \Pi_{(C_1, C_2)} \wedge \Pi_{(C'_1, C'_2)}.
    \end{equation*}
\end{lem}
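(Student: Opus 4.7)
The proof is a direct computation that tracks how evaluations combine under concatenation. Set $\ell = \len(C_1) = \len(C_2)$ and $\ell' = \len(C'_1) = \len(C'_2)$, and write $\eval(C_1) = (\br_1, \ldots, \br_K)$, $\eval(C_2) = (\br_1 + \bsig_1, \ldots, \br_K + \bsig_K)$, $\eval(C'_1) = (\br'_1, \ldots, \br'_K)$, $\eval(C'_2) = (\br'_1 + \bsig'_1, \ldots, \br'_K + \bsig'_K)$. By definition of $\Pi_{(C_1, C_2)}$, indices $i, j$ lie in the same block of $\Pi_{(C_1, C_2)}$ iff $\bsig_i = \bsig_j$, and similarly for $\Pi_{(C'_1, C'_2)}$ using the $\bsig'$'s. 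So the task is to identify the analogous $\widetilde{\bsig}_i$'s attached to the concatenated cycles and show they determine the meet partition.

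First I would use the formula $\eval(AB) = \eval(A) + p^{\len(A)} \eval(B)$ for concatenation of words, applied repeatedly, to compute
\[
\eval(C_1 C_1 C_1 C'_1) = (1 + p^\ell + p^{2\ell})\eval(C_1) + p^{3\ell}\eval(C'_1),
\]
\[
\eval(C_2 C_1 C_1 C'_2) = \eval(C_2) + (p^\ell + p^{2\ell})\eval(C_1) + p^{3\ell}\eval(C'_2).
\]
Both cycles have length $3\ell + \ell'$, so their evaluations lie in $\{-(p^{3\ell + \ell'} - 1), \ldots, p^{3\ell + \ell'} - 1\}^{KN}$ and the partition $\Pi_{(C_1C_1C_1C'_1, C_2C_1C_1C'_2)}$ is well-defined. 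Subtracting, the $i$-th coordinate of the difference is $\widetilde{\bsig}_i = \bsig_i + p^{3\ell}\, \bsig'_i$, so indices $i, j$ lie in the same block of $\Pi_{(C_1C_1C_1C'_1, C_2C_1C_1C'_2)}$ iff $(\bsig_i - \bsig_j) + p^{3\ell}(\bsig'_i - \bsig'_j) = 0$.

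The core step is to show this last equation forces both $\bsig_i = \bsig_j$ and $\bsig'_i = \bsig'_j$ separately. Since $\br_i$ and $\br_i + \bsig_i$ both lie in $\{-(p^\ell - 1), \ldots, p^\ell - 1\}^N$, the coordinate-wise $\ell_\infty$-norm of $\bsig_i - \bsig_j$ is bounded by $4(p^\ell - 1)$. A direct check gives $p^{3\ell} > 4(p^\ell - 1)$ for every prime $p \geq 2$ and every $\ell \geq 1$ (this is exactly the reason for inserting three copies of $C_1$: a single or double copy would be too short when $p = 2, \ell = 1$). Thus the $p^{3\ell}$-multiple cannot cancel a nonzero $\bsig_i - \bsig_j$, and we conclude $\bsig_i = \bsig_j$; the equation then forces $\bsig'_i = \bsig'_j$ as well. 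The reverse direction is trivial.

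Putting this together, $i$ and $j$ belong to the same block of $\Pi_{(C_1C_1C_1C'_1, C_2C_1C_1C'_2)}$ precisely when they belong simultaneously to the same block of $\Pi_{(C_1, C_2)}$ and to the same block of $\Pi_{(C'_1, C'_2)}$, which is by definition the meet $\Pi_{(C_1, C_2)} \wedge \Pi_{(C'_1, C'_2)}$. The only subtle point is verifying the size bound $p^{3\ell} > 4(p^\ell - 1)$; everything else is bookkeeping on base-$p$ evaluations of concatenated words.
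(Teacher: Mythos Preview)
Your proof is correct and follows essentially the same approach as the paper's: both compute the difference $\eval(C_2C_1C_1C'_2)-\eval(C_1C_1C_1C'_1)$ coordinatewise as $\bsig_i+p^{3\ell}\bsig'_i$ and then use the size bound $\|\bsig_i-\bsig_j\|_\infty\le 4(p^\ell-1)<p^{3\ell}$ to separate the two contributions. Your remark explaining why three copies of $C_1$ are needed (the bound would fail for $p=2,\ell=1$ with fewer) is a nice addition not made explicit in the paper.
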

\begin{proof}
    Let $\ell$ denote the length of $C_1$ and $C_2$.
    Write $\eval(C_1) = (\br_1, \ldots, \br_K)$ with $\br_i \in \{-(p^{\ell}-1), \ldots, 0, \ldots, p^{\ell}-1\}^N$, and $\eval(C_2) = (\br_1 + \bsig_1, \ldots, \br_K + \bsig_K)$ with $\br_i + \bsig_i \in \{-(p^{\ell}-1), \ldots, 0, \ldots, p^{\ell}-1\}^N$.
    Then $\|\bsig_i\| \leq 2 p^{\ell} - 2$, where $\|\bz\|$ denote the maximal absolute value among the entries of $\bz$.
    Similarly, let $\ell'$ denote the length of $C'_1$ and $C'_2$, and write $\eval(C'_1) = (\br'_1, \ldots, \br'_K)$, $\eval(C'_2) = (\br'_1 + \bsig'_1, \ldots, \br'_K + \bsig'_K)$.

    By direct computation, $\eval(C_1C_1C_1C'_1)$ amounts to
    \[
    \left(\br_1 + p^{\ell} \br_1 + p^{2\ell} \br_1 + p^{3\ell} \br'_1, \ldots, \br_K + p^{\ell} \br_K + p^{2\ell} \br_K + p^{3\ell} \br'_K\right),
    \]
    whereas $\eval(C_2C_1C_1C'_2)$ amounts to
    \[
    \left((\br_1+\bsig_1) + p^{\ell} \br_1 + p^{2\ell} \br_1 + p^{3\ell} (\br'_1 + \bsig'_1), \ldots, (\br_K+\bsig_K) + p^{\ell} \br_K + p^{2\ell} \br_K + p^{3\ell} (\br'_K + \bsig'_K)\right).
    \]
    Hence, the difference $\eval(C_2C_1C_1C'_2) - \eval(C_1C_1C_1C'_1)$ amounts to
    \begin{equation}
    \left(\bsig_1 + p^{3\ell} \bsig'_1, \ldots, \bsig_K + p^{3\ell} \bsig'_K\right).
    \end{equation}
    
    Recall that $i, j \in \{1, \ldots, K\}$ fall in the same block of $\Pi_{(C_1, C_2)}$ if and only if $\bsig_i = \bsig_j$.
    Similarly, $i, j$ fall in the same block of $\Pi_{(C'_1, C'_2)}$ if and only if $\bsig'_i = \bsig'_j$.
    Therefore $i, j$ fall in the same block of $\Pi_{(C_1, C_2)} \wedge \Pi_{(C'_1, C'_2)}$ if and only if both $\bsig_i = \bsig_j$ and $\bsig'_i = \bsig'_j$.
    Therefore, if $i, j$ fall in the same block of $\Pi_{(C_1, C_2)} \wedge \Pi_{(C'_1, C'_2)}$, then we have $\bsig_i + p^{3\ell} \bsig'_i = \bsig_j + p^{3\ell} \bsig'_j$, so $i, j$ are in the same block of $\Pi_{(C_1C_1C_1C'_1, C_2C_1C_1C'_2)}$.
    
    On the other hand, if $i, j$ fall in the same block of $\Pi_{(C_1C_1C_1C'_1, C_2C_1C_1C'_2)}$, then $\bsig_i + p^{3\ell} \bsig'_i = \bsig_j + p^{3\ell} \bsig'_j$, which can be rewritten as $\bsig_i - \bsig_j = p^{3\ell} (\bsig'_j - \bsig'_i)$.
    But $\|\bsig_i - \bsig_j\| \leq (2 p^{\ell} - 2) + (2 p^{\ell} - 2) < 4 p^{\ell} \leq p^{3 \ell}$, so we must have $\bsig'_i - \bsig'_j = 0$, and consequently $\bsig_i - \bsig_j = 0$.
    This shows that $i, j$ fall in the same block of $\Pi_{(C_1, C_2)} \wedge \Pi_{(C'_1, C'_2)}$.

    We conclude that $\Pi_{(C_1C_1C_1C'_1, C_2C_1C_1C'_2)} = \Pi_{(C_1, C_2)} \wedge \Pi_{(C'_1, C'_2)}$.
\end{proof}

Lemma~\ref{lem:meet} shows that, if $C_1, C_2, C'_1, C'_2 \in L(W, W)$ are such that $\len(C_1) = \len(C_2), \len(C'_1) = \len(C'_2)$, then there exist $C''_1, C''_2 \in L(W, W),\; \len(C''_1) = \len(C''_2)$, such that $\Pi_{(C''_1, C''_2)} = \Pi_{(C_1, C_2)} \wedge \Pi_{(C'_1, C'_2)}$.
This means that the set of partitions
\[
\mmP(W) \coloneqq \left\{\Pi_{(C_1, C_2)} \;\middle|\; C_1, C_2 \in L(W, W),\; \len(C_1) = \len(C_2) \right\}
\]
is closed under the meet operation (that is, $\Pi, \Pi' \in \mmP(W) \implies \Pi \wedge \Pi' \in \mmP(W)$).
Therefore, $\mmP(W)$ contains a finest element, which we denote by $\Pi(W)$.
Namely,
\[
\Pi(W) = \bigwedge_{C_1, C_2 \in L(W, W),\; \len(C_1) = \len(C_2)} \Pi_{(C_1, C_2)}.
\]

\begin{lem}
For any state $W$, the partition $\Pi(W)$ can be effectively computed.
\end{lem}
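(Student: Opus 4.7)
The plan is to reduce the computation of $\Pi(W)$ to a decidable emptiness question for a single effectively $p$-automatic set. Consider the synchronous product automaton $\mmU \times \mmU$, whose states are pairs $(V_1, V_2)$ of states of $\mmU$ and whose transitions, labelled by pairs $(\epsilon, \epsilon') \in \Sigma_p^{KN} \times \Sigma_p^{KN} \cong \Sigma_p^{2KN}$, synchronize transitions of $\mmU$ on each coordinate. A pair $(C_1, C_2)$ of same-length cycles at $W$ in $\mmU$ is precisely a single cycle at $(W, W)$ in this product, so the set $L$ of label words of such cycles is a regular language over $\Sigma_p^{2KN}$, and its image $\eval(L) \subseteq \Z^{2KN}$ is effectively $p$-automatic by definition.

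Applying the linear map $(u, v) \mapsto v - u$ and invoking Lemma~\ref{lem:pauto}(2), I obtain the effectively $p$-automatic set
\[
D(W) \coloneqq \{\eval(C_2) - \eval(C_1) : (C_1, C_2) \text{ same-length cycle pair at } W\} \subseteq \Z^{KN}.
\]
By the definition of $\Pi(W)$, two indices $i, j \in \{1, \ldots, K\}$ lie in the same block of $\Pi(W)$ exactly when every $\bsig = (\bsig_1, \ldots, \bsig_K) \in D(W)$ satisfies $\bsig_i = \bsig_j$; equivalently, when $D(W)$ is contained in the subgroup $H_{i,j} \coloneqq \{(\bsig_1, \ldots, \bsig_K) \in \Z^{KN} : \bsig_i = \bsig_j\}$, which is itself effectively $p$-automatic.

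By Lemma~\ref{lem:pauto}(1), $D(W) \setminus H_{i,j}$ is effectively $p$-automatic, and its emptiness reduces to the emptiness problem for a finite automaton, which is decidable. I can therefore decide ``$i \sim_{\Pi(W)} j$'' for every pair $(i, j)$, and assemble the partition $\Pi(W)$ from these decisions. The main conceptual step is the realization that the a priori infinite family of same-length cycle pairs at $W$ collapses, once one passes to the product automaton, into a single effectively $p$-automatic set $D(W) \subseteq \Z^{KN}$; after that, the proof is a compact application of the closure properties in Lemma~\ref{lem:pauto} together with the standard decidability of automaton emptiness, and no further obstacle remains.
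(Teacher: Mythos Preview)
Your proof is correct and follows essentially the same strategy as the paper: reduce the question ``do $i$ and $j$ lie in the same block of $\Pi(W)$?'' to an emptiness test for an effectively $p$-automatic set, then invoke the closure properties of Lemma~\ref{lem:pauto}. The only technical difference is in how the same-length constraint on $(C_1,C_2)$ is enforced: you build the synchronous product $\mmU\times\mmU$ so that cycles at $(W,W)$ are exactly same-length cycle pairs, whereas the paper works with a single copy of $\mmU$, adjoins an extra coordinate $p^{\len(C)}$ to record length, forms the difference set $L_{ji}-L_{ji}$, and then intersects with $\{0\}\times\Z^N$ to pick out equal-length pairs. Your product-automaton encoding is arguably the more direct of the two, but both arguments are short applications of the same closure toolkit.
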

\begin{proof}
    It suffices to show that for any $i, j \in \{1, \ldots, K\}$, we can check whether $i, j$ belong to the same block of $\Pi(W)$.
    This can be done the following way.
    For any cycle $C$, write $\eval(C) = \left(\eval(C)_1, \ldots, \eval(C)_K\right)$, where each $\eval(C)_i$ is a vector in $\Z^N$.
    Then $i, j$ belong to the same block of $\Pi(W)$ if and only if
    \begin{equation}\label{eq:condPi}
    \forall C_1, C_2 \in L(W, W), \; \len(C_1) = \len(C_2) \implies \eval(C_2)_i - \eval(C_1)_i = \eval(C_2)_j - \eval(C_1)_j.
    \end{equation}
    Modifying the automaton that accepts $L(W, W)$, it is easy to see that the set
    \begin{equation*}
    \Big\{\big(p^{\len(C)}, \eval(C) \big) \;\Big|\; C \in L(W, W) \Big\} \subseteq \Z \times \Z^{KN}
    \end{equation*}
    is effectively $p$-automatic.
    By Lemma~\ref{lem:pauto}(2), the set
    \begin{equation*}
    L_{ji} \coloneqq \Big\{\big(p^{\len(C)}, \eval(C)_j - \eval(C)_i \big) \;\Big|\; C \in L(W, W) \Big\} \subseteq \Z \times \Z^N
    \end{equation*}
    is effectively $p$-automatic.
    Therefore the set
    \begin{multline*}
    L_{ji} - L_{ji} \coloneqq \big\{(a_1-a_2,b_1-b_2) \;\big|\; (a_1,b_1), (a_2, b_2) \in L_{ji} \big\} \\
    = \bigg\{\big(p^{\len(C_1)} - p^{\len(C_2)}, \big(\eval(C_1)_j - \eval(C_1)_i\big) - \big(\eval(C_2)_j - \eval(C_2)_i\big) \;\bigg|\; C_1,C_2 \in L(W, W) \bigg\}
    \end{multline*}
    is effectively $p$-automatic.
    Therefore Condition~\eqref{eq:condPi} is equivalent to ``$(L_{ji} - L_{ji}) \cap (\{0\} \times \Z^N) = \{(0, \bzer)\}$'', which can be effectively verified~\cite{wolper2000construction}.
\end{proof}

The lemmas above gave an intuition of the subgroup $H$ in the definition~\eqref{eq:psuccinct} of $p$-succinct sets.
In fact, $H$ will be a suitable modification of the subgroup $(\Z^{N})^{\Pi(W)}$, where $W$ ranges over the states of $\mmU$.
Next, we start working towards the term $p^{\ell k_i} \ba_i$ in the Equation~\eqref{eq:psuccinct}.

\paragraph{Prototype of the term $p^{\ell k_i} \ba_i$.}

The following lemma characterizes the evaluation of cycles in $L(W, W)$, up to quotient by the subgroup $(\Z^{N})^{\Pi(W)}$ identified in the previous lemmas.

\begin{lem}\label{lem:WW}
    Let $W$ be a state. 
    Then there exists $\bb \in \Q^{KN}$, whose denominators are not divisible by $p$, such that for every cycle $C \in L(W, W)$, we have
    \begin{equation}\label{eq:evalCin}
    \eval(C) \in \left(p^{\len(C)} - 1\right) \bb + (\Z^N)^{\Pi(W)}.
    \end{equation}
\end{lem}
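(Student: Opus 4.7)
The plan is to exploit the fact that $(\Z^N)^{\Pi(W)}$ is a primitive sublattice of $\Z^{KN}$: its generators $\be_{S,b}$ (for $S$ a block of $\Pi(W)$ and $b \in \{1,\ldots,N\}$) have pairwise disjoint supports in the standard basis, so they can be completed to a $\Z$-basis of $\Z^{KN}$ and the quotient is free abelian. This lets us fix a direct complement $L \subseteq \Z^{KN}$ with $\Z^{KN} = (\Z^N)^{\Pi(W)} \oplus L$ together with the associated projection $\pi_L \colon \Z^{KN} \to L$; crucially, $L$ is torsion-free. If $L(W,W)$ contains only the empty cycle, take $\bb = \bzer$; otherwise pick any $C_0 \in L(W,W)$ of length $\ell_0 \geq 1$ and set
\[
\bb \;\coloneqq\; \frac{\pi_L(\eval(C_0))}{p^{\ell_0} - 1} \;\in\; L \otimes_{\Z} \Q \;\subseteq\; \Q^{KN},
\]
whose denominator $p^{\ell_0}-1$ is coprime to $p$, as required.

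To verify that this $\bb$ works, fix any cycle $C$ of length $\ell$ and apply Lemma~\ref{lem:stablePi} to the two cycles $C_0 \cdot C$ and $C \cdot C_0$, both of length $\ell_0 + \ell$. By the definition of the induced partition, their evaluations differ by an element of $(\Z^N)^{\Pi_{(C_0 \cdot C,\, C \cdot C_0)}} \subseteq (\Z^N)^{\Pi(W)}$, where the inclusion holds because $\Pi(W)$ is the \emph{finest} element of $\mmP(W)$. A direct computation using the concatenation rule $\eval(AB) = \eval(A) + p^{\len(A)}\eval(B)$ gives
\[
\eval(C_0 \cdot C) - \eval(C \cdot C_0) \;=\; (p^{\ell_0} - 1)\,\eval(C) - (p^\ell - 1)\,\eval(C_0),
\]
so this quantity lies in $(\Z^N)^{\Pi(W)}$. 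Projecting onto $L$ (which kills $(\Z^N)^{\Pi(W)}$) yields the identity $(p^{\ell_0} - 1)\pi_L(\eval(C)) = (p^\ell - 1)\pi_L(\eval(C_0))$ inside $L$; since $L$ is torsion-free and $p^{\ell_0} - 1 \neq 0$, we may divide to conclude $\pi_L(\eval(C)) = (p^\ell - 1)\bb$ as an identity in $L$. Therefore $\eval(C) - (p^\ell - 1)\bb = \eval(C) - \pi_L(\eval(C))$ is the $(\Z^N)^{\Pi(W)}$-component of $\eval(C)$, and in particular lies in $(\Z^N)^{\Pi(W)}$, as desired.

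The main obstacle is ensuring that the cancellation of the factor $p^{\ell_0} - 1$ is legitimate; without primitivity of $(\Z^N)^{\Pi(W)}$, the quotient $\Z^{KN}/(\Z^N)^{\Pi(W)}$ could carry torsion (potentially of order coprime to $p$) that blocks the division. The disjoint-support structure of the generators $\be_{S,b}$ is precisely what makes the quotient free abelian and lets the argument go through cleanly.
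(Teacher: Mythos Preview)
Your proof is correct and follows essentially the same approach as the paper: both arguments first split $\Z^{KN} = (\Z^N)^{\Pi(W)} \oplus L$ using the disjoint-support structure of the generators $\be_{S,b}$, then use the key observation that any two equal-length cycles in $L(W,W)$ have the same $L$-projection (since their difference lies in $(\Z^N)^{\Pi_{(C_1,C_2)}} \subseteq (\Z^N)^{\Pi(W)}$). The only difference is the device used to produce equal-length cycles from $C$ and $C_0$: the paper compares the iterates $C^{\len(C_0)}$ and $C_0^{\len(C)}$ (invoking the geometric-series formula for $\eval(C^m)$), whereas you compare the concatenations $C_0\cdot C$ and $C\cdot C_0$, which gives the identity $(p^{\ell_0}-1)\pi_L(\eval(C)) = (p^{\ell}-1)\pi_L(\eval(C_0))$ directly and is arguably a shade cleaner.
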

\begin{proof}
    Let $S_1 = \{s_{11}, \ldots, s_{1 n_1}\}, \ldots, S_r = \{s_{r1}, \ldots, s_{r n_r}\}$ be the blocks of $\Pi(W)$.
    For $i \in \{1, \ldots, K\}$, $j \in \{1, \ldots, N\}$, let $\be_{ij}$ denote the vector $(z_{11}, \ldots, z_{KN})$ where $z_{ij} = 1$ and all the other entries are $0$.
    Recall the definition~\eqref{eq:defeSb} of the generators $\be_{S_1, b}, \ldots, \be_{S_r, b}, b = 1, \ldots, N$ of $(\Z^N)^{\Pi(W)}$.
    It is easy to see that they can be extended to a $\Z$-basis
    \[
    \be_{S_1, b}, \be_{s_{11}b}, \be_{s_{12}b}, \ldots,\be_{s_{1(n_1 - 1)}b}, \ldots, \be_{S_r, b}, \be_{s_{r1}b}, \be_{s_{r2}b}, \ldots,\be_{s_{r(n_r - 1)}b}, \; b = 1, \ldots, N,
    \]
    of $\Z^{KN}$.
    Therefore $\Z^{KN}$ splits as a direct sum 
    \begin{equation*}
    \Z^{KN} = (\Z^N)^{\Pi(W)} \oplus \Z^M
    \end{equation*}
    with $M = (K - r)N$.
    We will write each element $\bz$ of $\Z^{KN}$ as a pair $(\bz_{\Pi}, \bz_{\perp}) \in (\Z^N)^{\Pi(W)} \oplus \Z^M$ according to this direct sum.
    Note that this naturally extends to a direct sum $\Q^{KN} = (\Q^N)^{\Pi(W)} \oplus \Q^M$.


    Let $C_1, C_2 \in L(W, W)$ be two cycles of the same length. 
    By definition, $\eval(C_1) - \eval(C_2)$ belongs to $(\Z^N)^{\Pi(W)}$.
    This means that $\eval(C_1)_{\perp} = \eval(C_2)_{\perp}$.

    Let $C, C' \in L(W, W)$ be two cycles, not necessarily of the same length.
    Then the two concatenations $C^{\len(C')} \coloneqq \underbrace{CC \cdots C}_{\text{$\len(C')$ iterations}} \in L(W, W)$ and $(C')^{\len(C)} \coloneqq \underbrace{C'C' \cdots C'}_{\text{$\len(C)$ iterations}} \in L(W, W)$ have the same length.
    So $\eval\left((C')^{\len(C)}\right)_{\perp} = \eval\left(C^{\len(C')}\right)_{\perp}$.
    Since $\eval\left((C')^{\len(C)}\right) = \frac{p^{\len(C')\len(C)} - 1}{p^{\len(C')} - 1} \cdot \eval(C')$, and $\eval\left(C^{\len(C')}\right) = \frac{p^{\len(C)\len(C')} - 1}{p^{\len(C)} - 1} \cdot \eval(C)$, this yields
    \[
    \frac{p^{\len(C')\len(C)} - 1}{p^{\len(C')} - 1} \cdot \eval(C')_\perp = \frac{p^{\len(C)\len(C')} - 1}{p^{\len(C)} - 1} \cdot \eval(C)_\perp.
    \]
    Consequently,
    \[
    \frac{\eval(C')_\perp}{p^{\len(C')} - 1} = \frac{\eval(C)_\perp}{p^{\len(C)} - 1}.
    \]
    This means that, there exists a constant $\ba \in \Q^{M}$, such that $\frac{\eval(C)_\perp}{p^{\len(C)} - 1} = \ba$ for all $C \in L(W, W)$.

    Let $\bb \coloneqq (\bzer_{\Pi}, \ba) \in (\Q^N)^{\Pi(W)} \oplus \Q^M$.
    Since $p \nmid p^{\len(C)} - 1$ and $(p^{\len(C)} - 1) \bb = (\bzer_{\Pi}, \eval(C)_\perp) \in\Z^{KN}$, the denominators of $\bb$ are not divisible by $p$.
    Then,
    \[
    \eval(C) - \left(p^{\len(C)} - 1\right) \bb = \eval(C) - (\bzer_{\Pi}, \eval(C)_\perp) = \left(\eval(C)_{\Pi}, \bzer_{\perp}\right) \in (\Z^N)^{\Pi(W)}.
    \]    
    Therefore $\eval(C) \in \left(p^{\len(C)} - 1\right) \bb + (\Z^N)^{\Pi(W)}$.
\end{proof}

We can easily extend Lemma~\ref{lem:WW} from cycles in $L(W, W)$ to paths in $L(W, V)$, provided that $W, V$ are states in the same strongly connected component of $\mmU$:

\begin{lem}\label{lem:singlecomp}
    Let $W, V$ be two states in the same strongly connected component of $\mmU$.
    Then there exist $\bb, \bc \in \Q^{KN}$, whose denominators are not divisible by $p$, such that for every path $\pi \in L(W, V)$, we have
    \begin{equation}\label{eq:evalpiin}
    \eval(\pi) \in p^{\len(\pi)} \bb + \bc +  (\Z^N)^{\Pi(W)}.
    \end{equation}
\end{lem}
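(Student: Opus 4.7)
The plan is to reduce Lemma~\ref{lem:singlecomp} to Lemma~\ref{lem:WW} by closing any path $\pi \in L(W,V)$ into a cycle at $W$. Since $W$ and $V$ lie in the same strongly connected component of $\mmU$, we can fix once and for all some auxiliary path $\pi_1 \in L(V,W)$. Then for any path $\pi \in L(W,V)$, the concatenation $\pi\pi_1$ is a cycle in $L(W,W)$, and its length is $\len(\pi) + \len(\pi_1)$. By the way evaluation behaves under concatenation (the first factor occupies the low-order digits), we have the identity
\[
\eval(\pi\pi_1) = \eval(\pi) + p^{\len(\pi)} \eval(\pi_1).
\]

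Applying Lemma~\ref{lem:WW} to the cycle $\pi\pi_1$, there exists $\bb_W \in \Q^{KN}$ (independent of $\pi$) whose denominators are not divisible by $p$, such that
\[
\eval(\pi\pi_1) \in \bigl(p^{\len(\pi) + \len(\pi_1)} - 1\bigr)\bb_W + (\Z^N)^{\Pi(W)}.
\]
Substituting the identity above and solving for $\eval(\pi)$ gives
\[
\eval(\pi) \in p^{\len(\pi)}\bigl(p^{\len(\pi_1)} \bb_W - \eval(\pi_1)\bigr) - \bb_W + (\Z^N)^{\Pi(W)}.
\]
Since $\pi_1$ is fixed, the coset representative $\bb \coloneqq p^{\len(\pi_1)}\bb_W - \eval(\pi_1)$ and constant $\bc \coloneqq -\bb_W$ depend only on $W, V$, not on the particular $\pi$. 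Both $\bb$ and $\bc$ have denominators not divisible by $p$, since the denominators of $\bb_W$ are not divisible by $p$ and $\eval(\pi_1) \in \Z^{KN}$. This is exactly the inclusion~\eqref{eq:evalpiin} required by the lemma.

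There is essentially no obstacle here beyond bookkeeping: the key mechanism is the concatenation identity for $\eval$ and the fact that Lemma~\ref{lem:WW} already controls all cycles at $W$ modulo the same subgroup $(\Z^N)^{\Pi(W)}$. The only conceptual thing worth verifying is that the partition used in the target statement is $\Pi(W)$ and not $\Pi(V)$; this is automatic because we closed the path into a $W$-cycle rather than a $V$-cycle, so it is Lemma~\ref{lem:WW} for $W$ that gets applied.
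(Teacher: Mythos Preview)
Your proof is correct and essentially identical to the paper's: both fix a return path $\pi_1 \in L(V,W)$, concatenate to form a cycle $\pi\pi_1 \in L(W,W)$, apply Lemma~\ref{lem:WW}, and solve for $\eval(\pi)$ using $\eval(\pi\pi_1) = \eval(\pi) + p^{\len(\pi)}\eval(\pi_1)$. The resulting $\bb$ and $\bc$ match the paper's choices exactly (up to notation).
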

\begin{proof}
    Let $\pi_{VW}$ be any path in $L(V, W)$.
    Then for any path $\pi \in L(W, V)$, the concatenation $\pi \pi_{VW}$ is a cycle in $L(W, W)$.

    By Lemma~\ref{lem:WW}, there exists $\widetilde\bb \in \Q^{KN}$ such that for all $\pi \in L(W, V)$, we have
    \[
    \eval(\pi \pi_{VW}) \in \left(p^{\len(\pi \pi_{VW})} - 1\right) \widetilde\bb + (\Z^N)^{\Pi(W)}.
    \]
    Since $\eval(\pi \pi_{VW}) = \eval(\pi) + p^{\len(\pi)} \cdot \eval(\pi_{VW})$ and $\len(\pi \pi_{VW}) = \len(\pi) + \len(\pi_{VW})$, we have
    \begin{align*}
    \eval(\pi) & \in - p^{\len(\pi)} \cdot \eval(\pi_{VW}) + \left(p^{\len(\pi) + \len(\pi_{VW})} - 1\right) \widetilde\bb + (\Z^N)^{\Pi(W)} \\
    & = p^{\len(\pi)} \left(- \eval(\pi_{VW}) + p^{\len(\pi_{VW})} \widetilde\bb\right)- \widetilde\bb + (\Z^N)^{\Pi(W)}.
    \end{align*}
    Thus we obtain the statement~\eqref{eq:evalpiin} by taking $\bb \coloneqq - \eval(\pi_{VW}) + p^{\len(\pi_{VW})} \widetilde\bb$, and $\bc \coloneqq - \widetilde\bb$.
    The denominators of $\bb, \bc$ are not divisible by $p$ since the denominators of $\widetilde\bb$ are not divisible by $p$.
\end{proof}
For each pair of states $W, V$ in the same strongly connected component, we can find such vectors $\bb, \bc \in \Q^{KN}$ as in Lemma~\ref{lem:singlecomp}.
In what follows we will denote them by $\bb_{W,V}, \bc_{W,V},$ when we want to stress their dependence on $W, V$.

For two states $W, V$ of the automaton $\mmU$, define
\[
\Lambda(W, V) \coloneqq \{\len(\pi) \mid \pi \in L(W, V)\} \subseteq \N,
\]
that is, the set of lengths of paths from $W$ to $V$.
The following folklore result characterizes $\Lambda(W, V)$.

\begin{lem}[{See~\cite{kozen2012automata} or~\cite{haase2018survival}}]\label{lem:semilinear}
Let $W, V$ be two states in the automaton $\mmU$.
Then $\Lambda(W, V)$ can be effectively written as the union of a finite set and finitely many arithmetic progressions.
\end{lem}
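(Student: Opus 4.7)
The plan is to reduce Lemma~\ref{lem:semilinear} to the classical fact that the length set of any regular language over a \emph{unary} alphabet is semilinear. Indeed, view $\mmU$ as a directed graph, ignore its edge labels, designate $W$ as initial and $V$ as the unique accepting state, and replace every edge label by a single symbol. The result is a unary NFA whose accepted language $L$ satisfies $\Lambda(W,V) = \{|w| : w \in L\}$, so it suffices to show that such a length set is a finite union of arithmetic progressions plus a finite set, and that this description is computable from $\mmU$.

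For the concrete effective argument, I would first compute the decomposition of the underlying graph of $\mmU$ into strongly connected components $\mC_1, \ldots, \mC_m$ (e.g.\ using Tarjan's algorithm). Every path from $W$ to $V$ can be uniquely decomposed as a simple path in the quotient DAG (visiting each SCC at most once) together with insertions of cycles based at the visited vertices within each SCC. The set of simple paths from $W$ to $V$ in the DAG of SCCs is finite and enumerable, and for each such path $\pi$ I would collect the list of SCCs $\mC_{i_1}, \ldots, \mC_{i_s}$ it traverses. For each $\mC$ I would compute the quantity $d_\mC \coloneqq \gcd\{\len(C) : C \text{ cycle in } \mC\}$, which can be obtained effectively from a spanning tree of $\mC$ by taking the GCD of the length-contributions of the non-tree edges.

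The key step is then the following: for each simple path $\pi$ from $W$ to $V$ traversing SCCs $\mC_{i_1}, \ldots, \mC_{i_s}$, the set of achievable path lengths obtained by inserting cycles into $\pi$ coincides, beyond a computable threshold, with the arithmetic progression $\len(\pi) + \gcd(d_{i_1}, \ldots, d_{i_s}) \cdot \N$. This follows from the Sylvester--Frobenius theorem applied to the numerical semigroup generated by $d_{i_1}, \ldots, d_{i_s}$, once one observes that an $n$-fold loop at a chosen vertex of $\mC_{i_j}$ contributes length $n \cdot d_{i_j}$ modulo the lengths of the finitely many simple cycles in $\mC_{i_j}$. Taking the union over the finitely many simple paths $\pi$ yields $\Lambda(W,V)$ as a finite union of arithmetic progressions plus a finite set of exceptional lengths, as required.

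I do not anticipate a genuine obstacle: all the ingredients (SCC decomposition, enumeration of simple paths, GCD computation, Sylvester--Frobenius threshold) are elementary and effective. The only bookkeeping subtlety will be in recording, for each simple path $\pi$, which intermediate vertex of each SCC $\mC_{i_j}$ is used as the insertion point for cycles, and verifying that the GCD $d_{\mC_{i_j}}$ is independent of this choice (which it is, since any two based cycles in the same SCC differ in length by a cycle length of the same SCC).
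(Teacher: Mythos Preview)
The paper does not give its own proof of this lemma; it simply records it as folklore and cites Kozen's textbook and Haase's tutorial. Your outline is a correct and standard way to establish the result, and your opening reduction to the length set of a unary regular language is exactly the spirit of those references.

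The one place your exposition wobbles is the passage from ``simple path in the quotient DAG of SCCs'' (which carries no intrinsic length in $\mmU$) to ``simple path $\pi$ from $W$ to $V$'' with a definite $\len(\pi)$. For the Sylvester--Frobenius step you need the latter: enumerate the finitely many simple paths in $\mmU$ itself, and for each one the lengths achievable by splicing in cycles at its vertices form $\len(\pi)$ plus a numerical semigroup with gcd $g = \gcd_j d_{\mC_{i_j}}$, hence eventually $\len(\pi) + g\N$. You already flag this bookkeeping in your final paragraph, so there is no genuine gap.
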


We now give an intuition of the the term $p^{\ell k_i} \ba_i$ in the definition~\eqref{eq:psuccinct} of $p$-succinct sets.
One can see from Lemma~\ref{lem:singlecomp} that $p^{\ell k_i} \ba_i$ will be a suitable modification of the term $p^{\len(\pi)} \cdot \bb_{W, V}$, where $\pi$ is the part of an accepting run within a strongly connected component.
Note that Lemma~\ref{lem:semilinear} shows that the value of $\len(\pi)$ must fall in a union of a finite set and finitely many arithmetic progressions.
The common difference $\lambda_j$ of these arithmetic progressions will constitute the value $\ell$ in the term $p^{\ell k_i} \ba_i$.

Next, we start working towards characterizing the zero set $\mZ(\alpha)$ as a finite union of ``prototype'' $p$-succinct sets.

\paragraph{Finite union of ``prototype'' $p$-succinct sets.} 
For a set of paths $L$ in the automaton $\mmU$, denote
\[
\eval(L) \coloneqq \{\eval(\pi) \mid \pi \in L\} \subseteq \Z^{KN}.
\]
Recall that $\mF$ denotes the set of all accepting states of $\mmU$. For any state $W$ in $\mmU$, we have
\[
\mZ(W) = \eval\left(\bigcup_{F \in \mF} L(W, F)\right).
\]

The automaton $\mmU$ can be decomposed into strongly connected components.
Accordingly, each path $\pi \in \bigcup_{F \in \mF} L(\{\alpha\}, F)$ can be decomposed as a concatenation
\begin{equation}\label{eq:decomppi}
\pi = \pi_{W_1, V_1} \cdot \delta_{V_1, W_2} \cdot \pi_{W_2, V_2} \cdot \delta_{V_2, W_3} \cdot \cdots \cdot \delta_{V_{r-1}, W_r} \cdot \pi_{W_r, V_r},
\end{equation}
where
\begin{enumerate}[nosep, label=(\arabic*)]
    \item For $i = 1, \ldots, r$, each $\pi_{W_i, V_i}$ is a path in $L(W_i, V_i)$, where $W_i$ and $V_i$ are in the same strongly connected component of $\mmU$.
    \item For $i = 1, \ldots, r-1$, each $\delta_{V_i, W_{i+1}}$ is a \emph{transition} from $V_i$ to $W_{i+1}$, where $V_i$ and $W_{i+1}$ are in \emph{different} strongly connected components of $\mmU$.
    \item $W_1 = \{\alpha\}$ is the initial state.
    \item $V_r = F$ is an accepting state.
\end{enumerate}
Let $W_1, V_1, \ldots, W_r, V_r$, be states of the automaton $\mmU$.
We will write the diagram
\[
    \{\alpha\} = W_1 \rightsquigarrow V_1 \rightarrow W_2 \rightsquigarrow V_2 \rightarrow \cdots \rightarrow W_r \rightsquigarrow V_r \in \mF
\] 
if
\begin{enumerate}[nosep, label=(\arabic*)]
    \item For $i = 1, \ldots, r$, the states $W_i, V_i$ are in the same strongly connected component of $\mmU$.
    \item For $i = 1, \ldots, r-1$, there exists a transition from $V_i$ to $W_{i+1}$. Furthermore, $V_i$ and $W_{i+1}$ are in different strongly connected components of $\mmU$.
    \item $W_1 = \{\alpha\}$ is the initial state.
    \item $V_r \in \mF$ is an accepting state.
\end{enumerate}
See Figure~\ref{fig:SC} for an illustration. Note that $r$ is bounded by the number of strongly connected components of $\mmU$.

\begin{figure}[h!]
    \centering
    \includegraphics[width=0.9\textwidth,height=1.0\textheight,keepaspectratio, trim={1cm 0cm 1cm 0cm},clip]{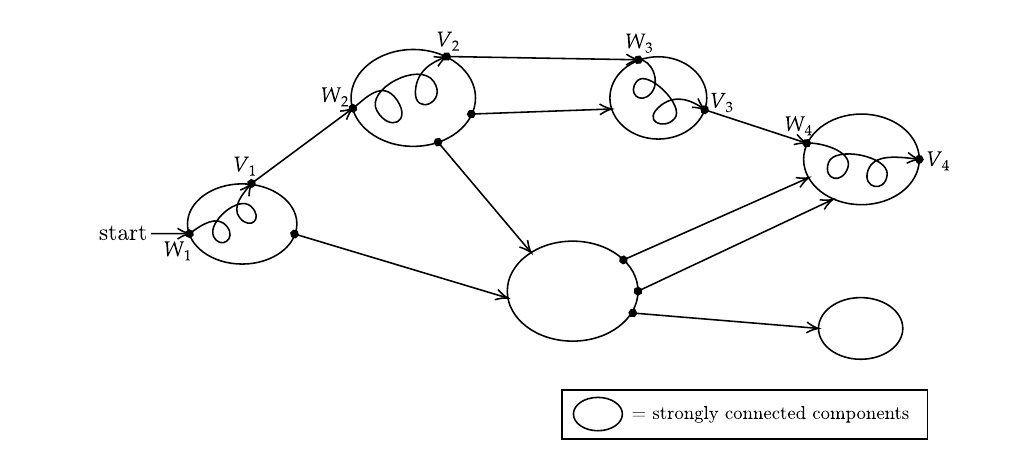}
        \caption{A path validating $\{\alpha\} = W_1 \rightsquigarrow V_1 \rightarrow W_2 \rightsquigarrow V_2 \rightarrow W_3 \rightsquigarrow V_3 \rightarrow W_4 \rightsquigarrow V_4 \in \mF$.}
    \label{fig:SC}
\end{figure}

The above discussion shows that the zero set $\mZ(\alpha) = \eval\left(\bigcup_{F \in \mF} L(\{\alpha\}, F)\right)$ can be written as a finite union
\begin{equation}\label{eq:decomppath}
\bigcup_{\{\alpha\} = W_1 \rightsquigarrow V_1 \rightarrow \cdots \rightarrow W_r \rightsquigarrow V_r \in \mF} \eval\Big(L(W_1, V_1) \cdot \delta_{V_1, W_2} \cdot L(W_2, V_2) \cdot \cdots \cdot \delta_{V_{r-1}, W_r} \cdot L(W_r, V_r)\Big),
\end{equation}
where the concatenation $L \cdot \delta \cdot L' \cdots$ denotes the set of paths $\{\pi \delta \pi' \cdots \mid \pi \in L, \pi' \in L', \ldots\}$.
Note that this union is not necessarily disjoint because different paths may have the same evaluation.

For $i = 1, \ldots, r$, define the integer vector
\begin{align*}
\bd_{V_i, W_{i+1}} \coloneqq 
\begin{cases}
    \eval(\delta_{V_i, W_{i+1}}) & \quad \text{ for } 1 \leq i \leq r-1, \\
    0 & \quad \text{ for } i = r.
\end{cases}
\end{align*}
By Lemma~\ref{lem:singlecomp}, there exist vectors $\bb_{W_i, V_i}, \bc_{W_i, V_i} \in \Q^{KN}, \; i = 1, \ldots, r$, whose denominators are not divisible by $p$, such that
\begin{equation}\label{eq:defcWV}
    \eval(\pi) \in p^{\len(\pi)} \cdot \bb_{W_i, V_i} + \bc_{W_i, V_i} +  (\Z^N)^{\Pi(W_i)}
\end{equation}
for all $\pi \in L(W_i, V_i)$.
We now characterize the zero set $\mZ(\alpha)$ using the vectors $\bd_{V_i, W_{i+1}}, \bb_{W_i, V_i}, \bc_{W_i, V_i}$, $i = 1, \ldots, r$.

\begin{prop}\label{prop:protosuccinct}
    The zero set $\mZ(\alpha)$ is equal to the finite union
    \begin{align}\label{eq:decompeval}
        \bigcup_{\{\alpha\} = W_1 \rightsquigarrow V_1 \rightarrow \cdots \rightarrow W_r \rightsquigarrow V_r \in \mF} & \Bigg\{\sum_{i = 1}^r p^{(\ell_1 + 1) + \cdots + (\ell_{i-1} + 1)} \left(p^{\ell_i} \bb_{W_i, V_i} + \bc_{W_i, V_i} + p^{\ell_i} \bd_{V_i, W_{i+1}} + \bh_i \right) \nonumber \\
        & \qquad \qquad \;\Bigg|\; \forall i, \ell_i \in \Lambda(W_i, V_i), \bh_i \in (\Z^N)^{\Pi(W_i)} \Bigg\}.
    \end{align}
\end{prop}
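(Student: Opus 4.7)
The plan is to establish both inclusions in the claimed equality, working from the decomposition~\eqref{eq:decomppath} of accepting runs into segments through strongly connected components.

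\textbf{Forward inclusion.} Let $\bz \in \mZ(\alpha)$ be the evaluation of some accepting path $\pi$. Apply~\eqref{eq:decomppath} to decompose $\pi = \pi_{W_1, V_1}\delta_{V_1, W_2}\pi_{W_2,V_2}\cdots\delta_{V_{r-1},W_r}\pi_{W_r,V_r}$ with $\{\alpha\} = W_1 \rightsquigarrow V_1 \rightarrow \cdots \rightsquigarrow V_r \in \mF$. Setting $\ell_i \coloneqq \len(\pi_{W_i,V_i})$ (so $\ell_i \in \Lambda(W_i, V_i)$) and $L_i \coloneqq (\ell_1+1) + \cdots + (\ell_{i-1}+1)$, and using the concatenation formula $\eval(\sigma\tau) = \eval(\sigma) + p^{\len(\sigma)}\eval(\tau)$ together with $\eval(\delta_{V_i, W_{i+1}}) = \bd_{V_i, W_{i+1}}$, we get
\[
\bz \;=\; \sum_{i=1}^r p^{L_i}\bigl(\eval(\pi_{W_i, V_i}) + p^{\ell_i}\bd_{V_i, W_{i+1}}\bigr).
\]
Lemma~\ref{lem:singlecomp} then writes $\eval(\pi_{W_i, V_i}) = p^{\ell_i}\bb_{W_i, V_i} + \bc_{W_i, V_i} + \bh_i$ for some $\bh_i \in (\Z^N)^{\Pi(W_i)}$, yielding exactly the expression appearing in~\eqref{eq:decompeval}.

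\textbf{Backward inclusion.} Fix a diagram $\{\alpha\} = W_1 \rightsquigarrow V_1 \rightarrow \cdots \rightsquigarrow V_r \in \mF$, lengths $\ell_i \in \Lambda(W_i, V_i)$, and translates $\bh_i \in (\Z^N)^{\Pi(W_i)}$, and let $\bz$ be the resulting sum. For each $i$, pick a concrete path $\pi_i \in L(W_i, V_i)$ of length $\ell_i$ (possible since $\ell_i \in \Lambda(W_i, V_i)$), and set $\bh'_i \coloneqq \eval(\pi_i) - p^{\ell_i}\bb_{W_i, V_i} - \bc_{W_i, V_i} \in (\Z^N)^{\Pi(W_i)}$ by Lemma~\ref{lem:singlecomp}. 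Define the tails
\[
T_i \;\coloneqq\; \sum_{j=i}^{r} p^{L_j - L_i}\bigl(p^{\ell_j}\bb_{W_j, V_j} + \bc_{W_j, V_j} + p^{\ell_j}\bd_{V_j, W_{j+1}} + \bh_j\bigr),
\]
so that $T_1 = \bz$ and the recursion $T_i = (p^{\ell_i}\bb_{W_i, V_i} + \bc_{W_i, V_i} + \bh_i) + p^{\ell_i}\bd_{V_i, W_{i+1}} + p^{\ell_i+1}T_{i+1}$ holds (with $T_{r+1} \coloneqq 0$). I will show by reverse induction on $i$ that $T_i \in \mZ(W_i)$; taking $i=1$ then yields $\bz \in \mZ(W_1) = \mZ(\alpha)$. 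For the base case $i=r$, observe $T_r = \eval(\pi_r) + (\bh_r - \bh'_r)$; since $V_r \in \mF$, the path $\pi_r$ is an accepting run from $W_r$, so $\eval(\pi_r) \in \mZ(W_r)$, and Lemma~\ref{lem:stablePi} (applied to cycle pairs witnessing $\Pi(W_r)$) gives stability of $\mZ(W_r)$ under translation by $(\Z^N)^{\Pi(W_r)}$, whence $T_r \in \mZ(W_r)$. For the inductive step, assuming $T_{i+1} \in \mZ(W_{i+1})$, there is an accepting run $\rho$ from $W_{i+1}$ with $\eval(\rho) = T_{i+1}$, so that $\pi_i \delta_i \rho$ is an accepting run from $W_i$ whose evaluation $\eval(\pi_i) + p^{\ell_i}\bd_{V_i, W_{i+1}} + p^{\ell_i+1}T_{i+1}$ lies in $\mZ(W_i)$; adding the correction $\bh_i - \bh'_i \in (\Z^N)^{\Pi(W_i)}$ and invoking stability once more gives $T_i \in \mZ(W_i)$.

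The main obstacle is the backward direction: Lemma~\ref{lem:singlecomp} only asserts the inclusion $\eval(L(W_i, V_i)) \subseteq p^{\ell_i}\bb_{W_i,V_i} + \bc_{W_i,V_i} + (\Z^N)^{\Pi(W_i)}$ and not equality, so one cannot directly realize every prescribed $\bh_i$ as the evaluation of some length-$\ell_i$ path. The telescoping identity for the tails $T_i$ together with the stability property of Lemma~\ref{lem:stablePi} is what makes it possible to absorb each discrepancy $\bh_i - \bh'_i$ into the zero set $\mZ(W_i)$ of the corresponding component, one strongly connected component at a time, and obtain the desired equality.
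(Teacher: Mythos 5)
Your proof is correct and follows essentially the same route as the paper: the forward inclusion via the decomposition~\eqref{eq:decomppi} and Lemma~\ref{lem:singlecomp}, and the backward inclusion by reverse induction on the tails $T_i$ (the paper's $\bz_j$), absorbing the discrepancies $\bh_i - \bh'_i$ via the stability $\mZ(W_i) = \mZ(W_i) + (\Z^N)^{\Pi(W_i)}$ from Lemma~\ref{lem:stablePi}. No further comments are needed.
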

Before giving the proof of Proposition~\ref{prop:protosuccinct}, we would like to clarify a potentially misleading point.
The similarity between the unions~\eqref{eq:decomppath} and~\eqref{eq:decompeval} might lead one to falsely conjecture a term-wise equality.
However, as we will show later, we only have an inclusion
\begin{multline*}
    \eval\Big(L(W_1, V_1) \cdot \delta_{V_1, W_2} \cdot L(W_2, V_2) \cdot \cdots \cdot \delta_{V_{r-1}, W_r} \cdot L(W_r, V_r)\Big) \subseteq \\
    \Bigg\{\sum_{i = 1}^r p^{(\ell_1 + 1) + \cdots + (\ell_{i-1} + 1)} \left(p^{\ell_i} \bb_{W_i, V_i} + \bc_{W_i, V_i} + p^{\ell_i} \bd_{V_i, W_{i+1}} + \bh_i \right) \\
    \;\Bigg|\; \forall i, \ell_i \in \Lambda(W_i, V_i), \bh_i \in (\Z^N)^{\Pi(W_i)} \Bigg\},
\end{multline*}
where the inclusion ``$\subseteq$'' might be a strict one.
The proof of Proposition~\ref{prop:protosuccinct} is more subtle than simply proving a term-wise equality.
We will need to combine the expression~\eqref{eq:decomppath} with the stability condition $\mZ(W_i) = \mZ(W_i) + (\Z^N)^{\Pi(W_i)}$ from Lemma~\ref{lem:stablePi} for each $i = 1, 2, \ldots, r$.

\begin{proof}[Proof of Proposition~\ref{prop:protosuccinct}]
    First we show the inclusion
    \begin{multline}\label{eq:evalsubset}
        \mZ(\alpha) \subseteq \bigcup_{\{\alpha\} = W_1 \rightsquigarrow V_1 \rightarrow \cdots \rightarrow W_r \rightsquigarrow V_r \in \mF} \Bigg\{\sum_{i = 1}^r p^{(\ell_1 + 1) + \cdots + (\ell_{i-1} + 1)} \left(p^{\ell_i} \bb_{W_i, V_i} + \bc_{W_i, V_i} + p^{\ell_i} \bd_{V_i, W_{i+1}} + \bh_i \right) \\
        \;\Bigg|\; \forall i, \ell_i \in \Lambda(W_i, V_i), \bh_i \in (\Z^N)^{\Pi(W_i)} \Bigg\}.
    \end{multline}
    This is the easier direction.
    Take any accepting path $\pi \in L(\{\alpha\}, F), F \in \mF$, then as in the Decomposition~\eqref{eq:decomppi} we can write $\pi$ as a concatenation
    \[
    \pi = \pi_{W_1, V_1} \cdot \delta_{V_1, W_2} \cdot \pi_{W_2, V_2} \cdot \delta_{V_2, W_3} \cdot \cdots \cdot \delta_{V_{r-1}, W_r} \cdot \pi_{W_r, V_r},
    \]
    where $W_1, V_1, \ldots, W_r, V_r$ satisfy the diagram $\{\alpha\} = W_1 \rightsquigarrow V_1 \rightarrow W_2 \rightsquigarrow V_2 \rightarrow \cdots \rightarrow W_r \rightsquigarrow V_r \in \mF$.
    Denote $\ell_1 \coloneqq \len(\pi_{W_1, V_1}), \ell_2 \coloneqq \len(\pi_{W_2, V_2}), \ldots, \ell_r \coloneqq \len(\pi_{W_r, V_r})$, then
    \begin{align*}
        &\;\eval(\pi) \\
        = &\; \eval\left(\pi_{W_1, V_1} \cdot \delta_{V_1, W_2} \cdot \pi_{W_2, V_2} \cdot \delta_{V_2, W_3} \cdot \cdots \cdot \delta_{V_{r-1}, W_r} \cdot \pi_{W_r, V_r}\right) \\
        = &\; \eval\left(\pi_{W_1, V_1}\right) + p^{\ell_1} \cdot \eval\left(\delta_{V_1, W_2}\right) + p^{\ell_1 + 1} \cdot \eval\left(\pi_{W_2, V_2}\right) + \cdots + p^{\ell_1+1+ \cdots + \ell_{r-1}+1} \cdot \eval\left(\pi_{W_r, V_r}\right) \\ 
        = &\; \eval\left(\pi_{W_1, V_1}\right) + p^{\ell_1} \cdot \bd_{V_1, W_2} + p^{\ell_1 + 1} \cdot \eval\left(\pi_{W_2, V_2}\right) + \cdots + p^{\ell_1+1+ \cdots + \ell_{r-1}+1} \cdot \eval\left(\pi_{W_r, V_r}\right) \\
        = &\; \sum_{i=1}^{r} p^{(\ell_1+1)+ \cdots + (\ell_{i-1}+1)} \cdot \Big(\eval(\pi_{W_i, V_i}) + p^{\ell_i} \cdot \bd_{V_i, W_{i+1}}\Big).
    \end{align*}
    By Equation~\eqref{eq:defcWV}, we have $\eval(\pi_{W_i, V_i}) \in p^{\ell_i} \cdot \bb_{W_i, V_i} + \bc_{W_i, V_i} + (\Z^N)^{\Pi(W_i)}$.
    Therefore
    \[
    \eval(\pi) \in \sum_{i=1}^{r} p^{(\ell_1+1)+ \cdots + (\ell_{i-1}+1)} \cdot \Big(p^{\ell_i} \cdot \bb_{W_i, V_i} + \bc_{W_i, V_i} + (\Z^N)^{\Pi(W_i)} + p^{\ell_i} \cdot \bd_{V_i, W_{i+1}}\Big).
    \]
    Since $\ell_i = \len(\pi_{W_i, V_i}) \in \Lambda(W_i, V_i)$ for all $i = 1, \ldots, r$, this proves the inclusion~\eqref{eq:evalsubset}.

    Next we show the other inclusion
    \begin{multline}\label{eq:evalsupset}
        \bigcup_{\{\alpha\} = W_1 \rightsquigarrow V_1 \rightarrow \cdots \rightarrow W_r \rightsquigarrow V_r \in \mF} \Bigg\{\sum_{i = 1}^r p^{(\ell_1 + 1) + \cdots + (\ell_{i-1} + 1)} \left(p^{\ell_i} \bb_{W_i, V_i} + \bc_{W_i, V_i} + p^{\ell_i} \bd_{V_i, W_{i+1}} + \bh_i \right) \\
        \;\Bigg|\; \forall i, \ell_i \in \Lambda(W_i, V_i), \bh_i \in (\Z^N)^{\Pi(W_i)} \Bigg\} \subseteq \mZ(\alpha).
    \end{multline}
    This is the more difficult direction, see Figure~\ref{fig:supset} for an illustration of the proof.
    Take any states $W_1, V_1, \ldots, W_r, V_r$ satisfying $\{\alpha\} = W_1 \rightsquigarrow V_1 \rightarrow \cdots \rightarrow W_r \rightsquigarrow V_r \in \mF$, and take $\ell_i \in \Lambda(W_i, V_i), \bh_i \in (\Z^N)^{\Pi(W_i)}$ for $i = 1, \ldots, r$.
    
    For $j = r, r-1, \ldots, 2, 1$, define
    \[
    \bz_j \coloneqq \sum_{i = j}^r p^{(\ell_j + 1) + \cdots + (\ell_{i-1} + 1)} \left(p^{\ell_i} \bb_{W_i, V_i} + \bc_{W_i, V_i} + p^{\ell_i} \bd_{V_i, W_{i+1}} + \bh_i \right).
    \]
    In particular, $\bz_r = p^{\ell_r} \bb_{W_r, V_r} + \bc_{W_r, V_r} + \bh_r$, and 
    \begin{equation}\label{eq:reczj}
        \bz_{j} = \left(p^{\ell_i} \bb_{W_i, V_i} + \bc_{W_i, V_i} + p^{\ell_i} \bd_{V_i, W_{i+1}} + \bh_i \right) + p^{\ell_j + 1} \bz_{j+1}
    \end{equation}
    for $j \leq r-1$.    
    We will now show $\bz_j \in \mZ(W_j)$ inductively for $j = r, r-1, \ldots, 2, 1$.
    Note that showing $\bz_1 \in \mZ(W_1)$ will prove the inclusion~\eqref{eq:evalsupset} since $W_1 = \{\alpha\}$.
    
    For $j = r$, take some path $\pi_{W_r, V_r} \in L(W_r, V_r)$ such that $\len(\pi_{W_r, V_r}) = \ell_r$. 
    In particular, we have $\eval(\pi_{W_r, V_r}) \in \mZ(W_r)$.
    By Equation~\eqref{eq:defcWV}, we have
    \[
    \eval(\pi_{W_r, V_r}) \in p^{\ell_r} \cdot \bb_{W_r, V_r} + \bc_{W_r, V_r} + (\Z^N)^{\Pi(W_r)}.
    \]
    Since $\bh_r \in (\Z^N)^{\Pi(W_r)}$, this yields
    \[
    \bz_r - \eval(\pi_{W_r, V_r}) = p^{\ell_r} \bb_{W_r, V_r} + \bc_{W_r, V_r} + \bh_r - \eval(\pi_{W_r, V_r}) \in \bh_r - (\Z^N)^{\Pi(W_r)} = (\Z^N)^{\Pi(W_r)}.
    \]
    Therefore
    \[
    \bz_r \in \eval(\pi_{W_r, V_r}) + (\Z^N)^{\Pi(W_r)} \subseteq \mZ(W_r) + (\Z^N)^{\Pi(W_r)}.
    \]
    But by Lemma~\ref{lem:stablePi}, we have $\mZ(W_r) = \mZ(W_r) + (\Z^N)^{\Pi(W_r)}$, so we obtain $\bz_r \in \mZ(W_r)$.

    Suppose we have proven $\bz_{j+1} \in \mZ(W_{j+1})$, we now prove $\bz_{j} \in \mZ(W_{j})$.
    Take some path $\pi_{W_j, V_j} \in L(W_j, V_j)$ such that $\len(v) = \ell_j$.
    By Equation~\eqref{eq:defcWV}, we have 
    \begin{equation}\label{eq:evalpiin2}
    \eval(\pi_{W_j, V_j}) \in p^{\ell_j} \cdot \bb_{W_j, V_j} + \bc_{W_j, V_j} + (\Z^N)^{\Pi(W_j)}.
    \end{equation}
    By the induction hypothesis $\bz_{j+1} \in \mZ(W_{j+1})$, there exists a path $\pi'_{W_{j+1}} \in L(W_{j+1}, F')$ for some accepting state $F'$ (not necessarily the same as $V_r$), such that $\eval(\pi'_{W_{j+1}}) = \bz_{j+1}$.
    Consider the concatenation 
    \[
    \pi_{W_j, V_j} \cdot \delta_{V_{j}, W_{j+1}} \cdot \pi'_{W_{j+1}} \in L(W_j, F'),
    \]
    we have
    \begin{alignat*}{2}
    & \eval\left(\pi_{W_j, V_j} \cdot \delta_{V_{j}, W_{j+1}} \cdot \pi'_{W_{j+1}}\right) \\
    = &\; \eval(\pi_{W_j, V_j}) + p^{\ell_j} \cdot \eval(\delta_{V_{j}, W_{j+1}}) + p^{\ell_j + 1} \cdot \eval(\pi'_{W_{j+1}}) \\
    = &\; \eval(\pi_{W_j, V_j}) + p^{\ell_j} \cdot \bd_{V_{j}, W_{j+1}} + p^{\ell_j + 1} \cdot \bz_{j+1} \\
    \in &\; p^{\ell_j} \cdot \bb_{W_j, V_j} + \bc_{W_j, V_j} + p^{\ell_j} \cdot \bd_{V_{j}, W_{j+1}} + p^{\ell_j + 1} \cdot \bz_{j+1} + (\Z^N)^{\Pi(W_j)} \qquad \qquad && \text{(by~\eqref{eq:evalpiin2})} \\
    = &\; \bz_j - \bh_j + (\Z^N)^{\Pi(W_j)} && \text{(by~\eqref{eq:reczj})}\\
    = &\; \bz_j + (\Z^N)^{\Pi(W_j)}.
    \end{alignat*}
    Since $\pi_{W_j, V_j} \cdot \delta_{V_{j}, W_{j+1}} \cdot \pi'_{W_{j+1}} \in L(W_j, F')$, we have $\eval(\pi_{W_j, V_j} \cdot \delta_{V_{j}, W_{j+1}} \cdot \pi'_{W_{j+1}}) \in \mZ(W_j)$.
    Consequently,
    \[
    \bz_j \in \eval(v \cdot \delta_{V_{j}, W_{j+1}} \cdot w) + (\Z^N)^{\Pi(W_j)} \subseteq \mZ(W_j) + (\Z^N)^{\Pi(W_j)} = \mZ(W_j)
    \]
    by Lemma~\ref{lem:stablePi}.
    This proves the inclusion~\eqref{eq:evalsupset}.
\end{proof}

\begin{figure}[h!]
    \centering
    \includegraphics[width=1\textwidth,height=1.0\textheight,keepaspectratio, trim={0cm 0cm 0cm 0cm},clip]{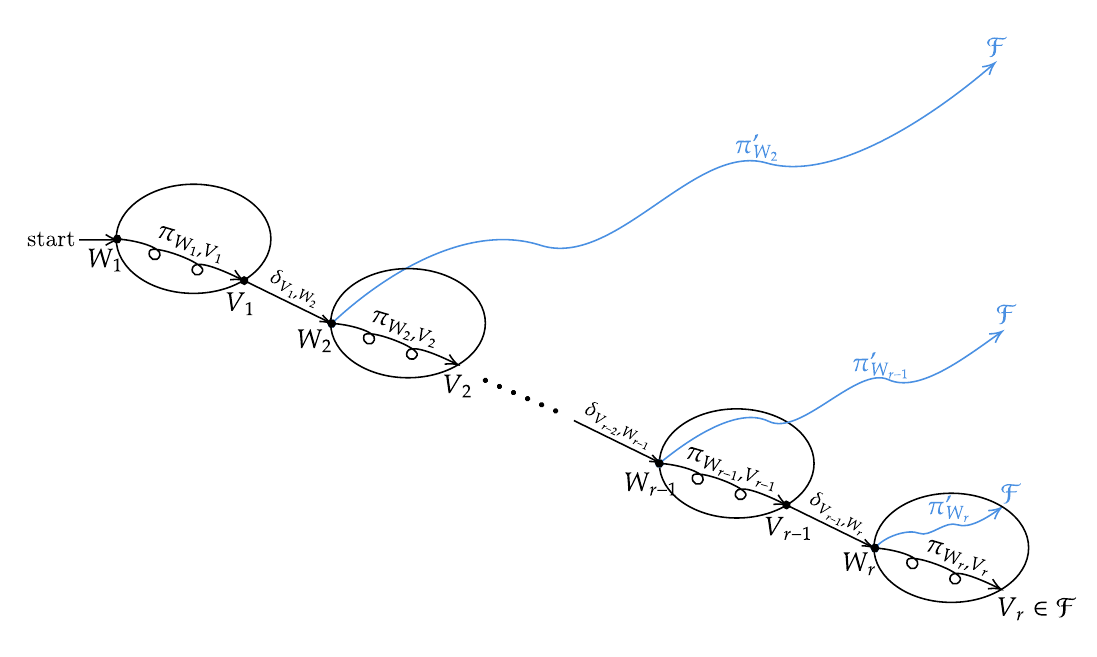}
        \caption{Illustration of proving the inclusion~\eqref{eq:evalsupset}.}
    \label{fig:supset}
\end{figure}

We have thus shown that the zero set $\mZ(\alpha)$ can be written as a finite union of the sets
\begin{align*}
        & \Bigg\{\sum_{i = 1}^r p^{(\ell_1 + 1) + \cdots + (\ell_{i-1} + 1)} \left(p^{\ell_i} \bb_{W_i, V_i} + \bc_{W_i, V_i} + p^{\ell_i} \bd_{V_i, W_{i+1}} + \bh_i \right) \\
        & \hspace{8cm}\;\Bigg|\; \forall i, \ell_i \in \Lambda(W_i, V_i), \bh_i \in (\Z^N)^{\Pi(W_i)} \Bigg\} \\
        = \; & \Bigg\{\bc_{W_1, V_1} + \sum_{i = 1}^r p^{\ell_1 + \cdots + \ell_{i} + (i-1)} \left(\bb_{W_i, V_i} + p \bc_{W_{i+1}, V_{i+1}} + \bd_{V_i, W_{i+1}} \right) + \sum_{i = 1}^r  p^{\ell_1 + \cdots + \ell_{i-1} + (i-1)} \bh_{i}\\
        & \hspace{8cm} \;\Bigg|\; \forall i, \ell_i \in \Lambda(W_i, V_i), \bh_i \in (\Z^N)^{\Pi(W_i)} \Bigg\}, 
\end{align*}
where $\bc_{W_{r+1}, V_{r+1}}$ is defined as zero.
If we denote
\begin{align}\label{eq:defa}
\ba_{i} \coloneqq 
\begin{cases}
    \bc_{W_1, V_1}, & \quad i = 0, \\
    p^{i-1}\left( \bb_{W_i, V_i} + p \bc_{W_{i+1}, V_{i+1}} + \bd_{V_i, W_{i+1}} \right), & \quad i = 1, \ldots, r,\\
\end{cases}
\end{align}
then $\mZ(\alpha)$ can be written as the finite union
\begin{multline}\label{eq:proto}
    \bigcup_{\{\alpha\} = W_1 \rightsquigarrow V_1 \rightarrow \cdots \rightarrow W_r \rightsquigarrow V_r \in \mF} \Bigg\{\sum_{i = 0}^r p^{\ell_1 + \cdots + \ell_{i}} \ba_i + \sum_{i = 1}^r p^{\ell_1 + \cdots + \ell_{i-1} + (i-1)} \bh_i \\
        \;\Bigg|\; \forall i, \ell_i \in \Lambda(W_i, V_i), \bh_i \in (\Z^N)^{\Pi(W_i)} \Bigg\}.
\end{multline}
Furthermore, the denominators of each $\ba_i$ are not divisible by $p$, since this is the case for $\bb_{W_i, V_i}$, $\bc_{W_{i+1}, V_{i+1}}$ and $\bd_{V_i, W_{i+1}}$.

The form of each component in the union~\eqref{eq:proto} is very similar to a $p$-succinct set as defined in Equation~\eqref{eq:psuccinct}.
However, there are two important differences:
\begin{enumerate}[noitemsep, label=(\roman*)]
    \item The set $\left\{\sum_{i = 1}^r p^{\ell_1 + \cdots + \ell_{i-1} + (i-1)} \bh_i \;\middle|\; \forall i, \ell_i \in \Lambda(W_i, V_i), \bh_i \in (\Z^N)^{\Pi(W_i)}\right\}$ doesn't really form a subgroup of $\Z^{KN}$. While for any \emph{fixed} $\ell_1, \ldots, \ell_{r-1}$, the set $\big\{p^{\ell_1 + \cdots + \ell_{i-1} + (i-1)} \bh_i \;\big|\; \bh_i \in (\Z^N)^{\Pi(W_i)} \big\}$ forms a subgroup, it is generally not true when $\ell_1, \ldots, \ell_{r-1}$ are allowed to vary.
    \item The expression $\sum_{i = 0}^r p^{\ell_1 + \cdots + \ell_{i}} \ba_i, \forall i, \ell_i \in \Lambda(W_i, V_i)$ is not really of the form $\ba_0 + p^{\ell k_1} \ba_1 + \cdots + p^{\ell k_r} \ba_r, \forall i, k_i \in \N$.
    The main problem is that we have the extra constraint $\ell_1 \leq \ell_1 + \ell_2 \leq \cdots \leq \ell_1 + \cdots + \ell_{r}$.
\end{enumerate}

The following subsection focuses on eliminating these two differences.
Difference (i) is easy to resolve using a variable elimination process that ``saturates'' the subgroup $p^{\ell_1 + \cdots + \ell_{i-1} + (i-1)} \cdot (\Z^N)^{\Pi(W_i)}$.
Difference (ii) is more difficult to resolve.
To achieve this, we use a so-called ``symmetrization'' process similar to that of~\cite[Lemma~8.1]{derksen2007skolem} and~\cite[Lemma~12.1]{derksen2012linear}, in order to bring down the exponents $\ell_1 + \cdots + \ell_{i}$.

\subsection{Saturation and symmetrization}\label{subsec:symm}
In this subsection we will refine Expression~\eqref{eq:proto} and finally prove $p$-normality of the zero set $\mZ(\alpha)$.

\paragraph{Saturation.}
For a subgroup $H \leq \Z^{KN}$, denote $q \cdot H \coloneqq \{q \bh \mid \bh \in H\}$ for any $q \in \N \setminus \{0\}$, and denote $\ba + H \coloneqq \{\ba + \bh \mid \bh \in H\}$ for any $\ba \in \Z^{KN}$.

\begin{lem}[Subgroup saturation]\label{lem:saturation}
    Let $\Pi$ be a partition of $\{1, \ldots, K\}$, and let $\ba \in \Z^{KN}$.
    Suppose $\ba + q \cdot (\Z^N)^{\Pi} \subseteq \mZ(\alpha)$ for some $q \in \N \setminus \{0\}$. Then $\ba +  (\Z^N)^{\Pi} \subseteq \mZ(\alpha)$.
\end{lem}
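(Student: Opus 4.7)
The plan is to expand the hypothesis $\ba + q \cdot (\Z^N)^{\Pi} \subseteq \mZ(\alpha)$ in terms of the coefficients $v_i$ and block sums, and then use multiplicative independence of $A_1, \ldots, A_N$ together with the locality of $\mA$ to conclude that each block sum must vanish, from which the stronger containment $\ba + (\Z^N)^{\Pi} \subseteq \mZ(\alpha)$ is immediate.

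Concretely, write $\ba = (\ba_1, \ldots, \ba_K)$ with $\ba_i \in \Z^N$, and for each block $S \in \Pi$ set
\[
u_S \;\coloneqq\; \sum_{i \in S} \oA^{\ba_i} v_i \in \mV.
\]
An element $\bh \in (\Z^N)^{\Pi}$ is specified by a tuple $(\bk_S)_{S \in \Pi} \in (\Z^N)^{|\Pi|}$, and a direct computation gives $\alpha(\ba + \bh) = \sum_{S \in \Pi} \oA^{\bk_S} u_S$. The hypothesis then says $\sum_{S \in \Pi} \oA^{q\bk_S} u_S = 0$ for every choice of $(\bk_S)_S$. Setting all but one $\bk_S$ to zero, then subtracting the case where all $\bk_T$ are zero, I obtain the key relations
\[
(\oA^{q\bk} - 1)\, u_S \;=\; 0 \qquad \text{for all } S \in \Pi \text{ and all } \bk \in \Z^N,
\]
together with $\sum_S u_S = 0$ (which is just $\alpha(\ba) = 0$).

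The crux is now to upgrade these relations to $u_S = 0$ for each $S$. Suppose for contradiction that some $u_S \neq 0$. Since $\mA$ is local with maximal ideal $\frm$, the annihilator $\Ann_{\mA}(u_S)$ is a proper ideal, hence contained in $\frm$. Consequently $\oA^{q\bk} - 1 \in \frm$ for every $\bk \in \Z^N$; in particular $A_b^{q} \equiv 1 \pmod{\frm}$ for each $b = 1, \ldots, N$. But by Lemma~\ref{lem:independentmodq}, the assumed multiplicative independence of $A_1, \ldots, A_N$ in $\mA$ passes to multiplicative independence in the field $\mA/\frm$, so no nontrivial relation $A_b^q = 1$ with $q \neq 0$ can hold modulo $\frm$. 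This contradiction forces $u_S = 0$ for every block $S$.

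With all $u_S = 0$, the identity $\alpha(\ba + \bh) = \sum_{S} \oA^{\bk_S} u_S = 0$ holds automatically for every $\bh \in (\Z^N)^{\Pi}$, which is exactly $\ba + (\Z^N)^{\Pi} \subseteq \mZ(\alpha)$. The only thing that could go wrong is the passage from ``annihilator of a nonzero element is contained in $\frm$'' to multiplicative nontriviality, but locality of $\mA$ (property~(i) of Proposition~\ref{prop:wrapperfreeness}) and Lemma~\ref{lem:independentmodq} together close this gap cleanly, so I do not anticipate a serious technical obstacle.
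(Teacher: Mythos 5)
Your proof is correct and uses the same essential ingredients as the paper's own argument: homogeneity of $\alpha$, Lemma~\ref{lem:independentmodq}, and locality of $\mA$, which together make $\oA^{q\bk}-1$ (in the paper, $A_b^q-1$) a unit so that the block sums can be cancelled. The only difference is organizational — you show directly that every block sum $u_S$ vanishes at the base point $\ba$, which yields the conclusion in one step, whereas the paper performs the same cancellation generator-by-generator via a $2\times 2$ matrix inversion and then iteratively enlarges the translated set; this is a mild streamlining rather than a genuinely different route.
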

\begin{proof}
    Let $\mT$ denote the set $\ba + q \cdot(\Z^N)^{\Pi}$.
    Let $S$ be any block of $\Pi$ and let $b \in \{1, \ldots, N\}$, recall the definition of $\be_{S, b} \in (\Z^N)^{\Pi}$ in~\eqref{eq:defeSb}.
    Take any $\bz = (z_{11}, \ldots, z_{KN}) \in \mT$, we have $\bz, \bz + q\be_{S, b} \in \mZ(\alpha)$.
    The inclusion $\bz \in \mZ(\alpha)$ means
    \begin{equation*}
    \sum_{i = 1}^K A_1^{z_{i1}} \cdots A_N^{z_{iN}} v_i = 0,
    \end{equation*}
    and the inclusion $\bz+ q\be_{S, b} \in \mZ(\alpha)$ means
    \[
    \sum_{i \notin S} A_1^{z_{i1}} \cdots A_b^{z_{ib}} \cdots A_N^{z_{iN}} v_i + \sum_{i \in S} A_1^{z_{i1}} \cdots A_b^{z_{ib} + q} \cdots A_N^{z_{iN}} v_i= 0.
    \]
    Therefore we have the system of equations
    \begin{align*}
    & \sum_{i \notin S} A_1^{z_{i1}} \cdots A_N^{z_{iN}} v_i + \sum_{i \in S} A_1^{z_{i1}} \cdots A_N^{z_{iN}} v_i= 0, \\
    & \sum_{i \notin S} A_1^{z_{i1}} \cdots A_N^{z_{iN}} v_i + A_b^q \cdot \sum_{i \in S} A_1^{z_{i1}} \cdots A_N^{z_{iN}} v_i= 0.
    \end{align*}
    By Lemma~\ref{lem:independentmodq}, we have $A_b^q - 1 \not\in \frm$, so $A_b^q - 1 \in \mA$ is invertible.
    Therefore the transformation matrix
    $
    \begin{pmatrix}
        1 & 1 \\
        1 & A_b^q \\
    \end{pmatrix}
    $
    is invertible.
    This yields
    \[
    \sum_{i \notin S} A_1^{z_{i1}} \cdots A_N^{z_{iN}} v_i = \sum_{i \in S} A_1^{z_{i1}} \cdots A_n^{z_{in}} v_i = 0.
    \]
    Consequently,
    \[
    \sum_{i \notin S} A_1^{z_{i1}} \cdots A_N^{z_{iN}} v_i + A_b \cdot \sum_{i \in S} A_1^{z_{i1}} \cdots A_N^{z_{iN}} v_i= 0.
    \]
    This yields $\bz + \be_{S, b} \in \mZ(\alpha)$.
    Since this holds for all $\bz \in \mT$, we have $\mT + \Z \be_{S, b} \subseteq \mZ(\alpha)$.

    We now set $\mT$ as the new set $\mT + \Z \be_{S, b}$.
    Repeat the above process by taking any other block $S'$ of $\Pi$ and any $b' \in \{1, \ldots, N\}$.
    The process yields $\mT + \Z \be_{S, b} + \Z \be_{S', b'} \subseteq \mZ(\alpha)$.
    Iterate this for all blocks of $\Pi$ and all elements of $\{1, \ldots, N\}$, we obtain 
    \[
    \mT + (\Z^N)^{\Pi} = \mT + \sum_{S \in \Pi, b \in \{1, \ldots, N\}} \Z \be_{S, b} \subseteq \mZ(\alpha).
    \]
    Since $\mT = \ba + q \cdot(\Z^N)^{\Pi}$, this yields $\ba + (\Z^N)^{\Pi} \subseteq \mZ(\alpha)$.
\end{proof}

Recall from Expression~\eqref{eq:proto} that $\mZ(\alpha)$ can be written as the finite union
\begin{multline*}
\bigcup_{\{\alpha\} = W_1 \rightsquigarrow V_1 \rightarrow \cdots \rightarrow W_r \rightsquigarrow V_r \in \mF} \Bigg\{\sum_{i = 0}^r p^{\ell_1 + \cdots + \ell_{i}} \ba_i + \sum_{i = 1}^r p^{\ell_1 + \cdots + \ell_{i-1} + (i-1)} \bh_i \\
        \;\Bigg|\; \forall i, \ell_i \in \Lambda(W_i, V_i), \bh_i \in (\Z^N)^{\Pi(W_i)} \Bigg\}.
\end{multline*}
We can apply Lemma~\ref{lem:saturation} with
$
\ba = \sum_{i = 0}^r p^{\ell_1 + \cdots + \ell_{i}} \ba_i + \sum_{i = 1}^r p^{\ell_1 + \cdots + \ell_{i-1} + (i-1)} \bh_i,
$
and with
$
q = p^{\ell_1 + \cdots + \ell_{i-1} + (i-1)},
$
for each $i = 1, \ldots, r$.
This yields
\[
\mZ(\alpha) = \bigcup_{\{\alpha\} = W_1 \rightsquigarrow V_1 \rightarrow \cdots \rightarrow W_r \rightsquigarrow V_r \in \mF} \Bigg\{\sum_{i = 0}^r p^{\ell_1 + \cdots + \ell_{i}} \ba_i + \sum_{i = 1}^r \bh_i \\
        \;\Bigg|\; \forall i, \ell_i \in \Lambda(W_i, V_i), \bh_i \in (\Z^N)^{\Pi(W_i)} \Bigg\}.
\]
Since
\[
    \sum_{i = 1}^r (\Z^N)^{\Pi(W_i)} \coloneqq \left\{\sum_{i = 1}^r \bh_i \;\middle|\; \forall i, \bh_i \in (\Z^N)^{\Pi(W_i)} \right\}
\]
is a subgroup of $\Z^{KN}$, the above discussion can be summarized as the following corollary:

\begin{cor}\label{cor:saturated}
    The zero set $\mZ(\alpha)$ can be written as the finite union
    \begin{equation}\label{eq:saturated}
        \bigcup_{\{\alpha\} = W_1 \rightsquigarrow V_1 \rightarrow \cdots \rightarrow W_r \rightsquigarrow V_r \in \mF} \Bigg\{\sum_{i = 0}^r p^{\ell_1 + \cdots + \ell_{i}} \ba_i + \bh \\
        \;\Bigg|\; \forall i, \ell_i \in \Lambda(W_i, V_i),\; \bh \in \sum_{i = 1}^r (\Z^N)^{\Pi(W_i)} \Bigg\},
    \end{equation}
    where $\ba_0, \ba_1, \ldots, \ba_r \in \Q^{KN}$ are defined in~\eqref{eq:defa}. Their denominators are not divisible by $p$.
\end{cor}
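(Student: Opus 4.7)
My plan is to derive the corollary directly from Expression~\eqref{eq:proto} established in Proposition~\ref{prop:protosuccinct}, combined with the saturation Lemma~\ref{lem:saturation}. The key observation is that, in~\eqref{eq:proto}, each summand $p^{\ell_1 + \cdots + \ell_{i-1} + (i-1)} \bh_i$ already lies inside $(\Z^N)^{\Pi(W_i)}$, since this subgroup is closed under integer scalar multiplication; conversely, the spurious factor $p^{\ell_1 + \cdots + \ell_{i-1} + (i-1)}$ in front of $\bh_i$ can be stripped off thanks to Lemma~\ref{lem:saturation}. Once both observations are combined, the sum $\sum_{i=1}^r p^{\ell_1 + \cdots + \ell_{i-1} + (i-1)} \bh_i$ collapses to a single free parameter $\bh \in \sum_{i=1}^r (\Z^N)^{\Pi(W_i)}$.

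Concretely, I will prove both inclusions between $\mZ(\alpha)$ and the right-hand side of~\eqref{eq:saturated}. For the inclusion $\mZ(\alpha) \subseteq \cdots$, I take any $\bz \in \mZ(\alpha)$ and, using~\eqref{eq:proto}, write it as $\sum_{i = 0}^r p^{\ell_1 + \cdots + \ell_{i}} \ba_i + \sum_{i = 1}^r p^{\ell_1 + \cdots + \ell_{i-1} + (i-1)} \bh_i$ with $\bh_i \in (\Z^N)^{\Pi(W_i)}$. Since $(\Z^N)^{\Pi(W_i)}$ is a subgroup of $\Z^{KN}$, each $p^{\ell_1 + \cdots + \ell_{i-1} + (i-1)} \bh_i$ is again in $(\Z^N)^{\Pi(W_i)}$; summing over $i$ yields an element $\bh \in \sum_{i=1}^r (\Z^N)^{\Pi(W_i)}$ as required.

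For the reverse inclusion, I fix a diagram $\{\alpha\} = W_1 \rightsquigarrow V_1 \rightarrow \cdots \rightarrow W_r \rightsquigarrow V_r \in \mF$ and lengths $\ell_i \in \Lambda(W_i, V_i)$. Expression~\eqref{eq:proto} guarantees that for every choice of $\bh'_j \in (\Z^N)^{\Pi(W_j)}$, the element $\sum_{j = 0}^r p^{\ell_1 + \cdots + \ell_{j}} \ba_j + \sum_{j = 1}^r p^{\ell_1 + \cdots + \ell_{j-1} + (j-1)} \bh'_j$ lies in $\mZ(\alpha)$. Proceeding by induction on $i = 1, 2, \ldots, r$, at step $i$ I freeze all summands with index $j \neq i$ and apply Lemma~\ref{lem:saturation} with this frozen expression playing the role of $\ba$ and with $q = p^{\ell_1 + \cdots + \ell_{i-1} + (i-1)}$; this allows me to replace the contribution $p^{\ell_1 + \cdots + \ell_{i-1} + (i-1)} \bh'_i$ by an arbitrary element of $(\Z^N)^{\Pi(W_i)}$ without leaving $\mZ(\alpha)$. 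After $r$ iterations, I conclude that $\sum_{j = 0}^r p^{\ell_1 + \cdots + \ell_{j}} \ba_j + \sum_{i = 1}^r \bh_i$ belongs to $\mZ(\alpha)$ for arbitrary $\bh_i \in (\Z^N)^{\Pi(W_i)}$, which is exactly the right-hand side of~\eqref{eq:saturated}.

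I do not anticipate a substantive obstacle: all the conceptual work has already been done, in Proposition~\ref{prop:protosuccinct} on one hand and Lemma~\ref{lem:saturation} on the other. The only point requiring mild attention is making sure the saturation argument is applied one index $i$ at a time, with the remaining summands held fixed, so that at each step the hypothesis of Lemma~\ref{lem:saturation} truly takes the form $\ba + q \cdot (\Z^N)^{\Pi(W_i)} \subseteq \mZ(\alpha)$ for a legitimate $\ba \in \Z^{KN}$ (clearing denominators if necessary, noting that the $\ba_i$'s have denominators coprime to $p$ so that $\ba \in \Z^{KN}$ is automatic whenever the expression in~\eqref{eq:proto} is).
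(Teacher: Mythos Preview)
Your proposal is correct and follows essentially the same approach as the paper: both derive the corollary from Expression~\eqref{eq:proto} by applying Lemma~\ref{lem:saturation} once for each index $i = 1, \ldots, r$ with $q = p^{\ell_1 + \cdots + \ell_{i-1} + (i-1)}$. Your write-up is in fact more careful than the paper's terse one-line justification, since you make explicit that the applications must be iterated (so that after saturating index $i$ the resulting enlarged set still satisfies the hypothesis needed to saturate index $i+1$), and you correctly note that the base point $\ba$ is an integer vector because it lies in $\mZ(\alpha)\subseteq\Z^{KN}$.
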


This resolves the difference (i) in the discussion at the end of the last subsection.
We now start to resolve the difference (ii).

\paragraph{Symmetrization I: arithmetic progressions.}

Recall from Lemma~\ref{lem:semilinear} that for each $i = 1, \ldots, r$, the set $\Lambda(W_i, V_i)$ can be written as a union of a finite set $Q_i$ and finitely many arithmetic progressions $\{s_{ij} + n \cdot \lambda_{ij} \mid n \in \N\}, j = 1, 2, \ldots$.
Let $\ell$ denote the least common multiplier of all $\lambda_{ij}$.
Then each set $\Lambda(W_i, V_i)$ can be written as a union
\[
\Lambda(W_i, V_i) = Q_i \cup \bigcup_{s = 1}^{u_i} \{\sigma_{is} + \ell k \mid k \in \N\}
\]
for some $\sigma_{is} \in \N, i = 1, \ldots, r;\, s = 1, \ldots, u_i$.
In other words, we can suppose without loss of generality that the common difference in all the arithmetic progressions is equal to $\ell$.
Thus, let $\ell_i$ be any element in $\Lambda(W_i, V_i)$, then $\ell_i$ can either be written as $\sigma_{is} + \ell k_i$ for some $k_i \in \N, s \in \{1, \ldots, u_i\}$, or it is equal to some $q_i \in Q_i$.

Hence, the set $\Big\{\sum_{i = 1}^r p^{\ell_1 + \cdots + \ell_{i}} \left(\ba_i + p^i \bh_i\right) \;\Big|\; \forall i, \ell_i \in \Lambda(W_i, V_i), \bh_i \in (\Z^N)^{\Pi(W_i)} \Big\}$
can be written as a finite union
\begin{multline}\label{eq:replacelwithq}
\bigcup_{\underset{q_1 \in Q_1, \ldots, q_r \in Q_r}{\underset{1 \leq s_1 \leq u_{i_1}, \ldots, 1 \leq s_{r'} \leq u_{i_{r'}}}{1 \leq i_1 < i_2 < \cdots < i_{r'} \leq r}}}
\Bigg\{\sum_{i = 0}^{r} p^{q_1 + \cdots + q_{i_1 - 1} + (\sigma_{i_1s_1} + \ell k_{i_1}) + q_{i_1 + 1} + \cdots + q_{i_2 - 1} + (\sigma_{i_2s_2} + \ell k_{i_2}) + q_{i_2 + 1} + \cdots + q_i} \ba_i + \bh \\
\;\Bigg|\; k_{i_1}, k_{i_2}, \ldots, k_{i_{r'}} \in \N, \; \bh \in \sum_{i = 1}^r (\Z^N)^{\Pi(W_i)} \Bigg\}.
\end{multline}
That is, let $\{i_1, i_2, \ldots, i_{r'}\}$ be the set of all indices $i$ such that the value of $\ell_{i}$ falls in an arithmetic progression $\{\sigma_{is} + \ell k \mid k \in \N\}$; for each of these indices we choose $s \in \{1, \ldots, u_i\}$ to determine the arithmetic progression, and write $\ell_i = \sigma_{is} + \ell k_i,\; k_i \in \N$.
For all the other indices $i \notin \{i_1, i_2, \ldots, i_{r'}\}$, the value of $\ell_i$ falls in the finite set $Q_i$, and we choose $q_i \in Q$ so that $\ell_i = q_i$.
These choices give the decomposition~\eqref{eq:replacelwithq}.

By regrouping the terms, each component of the union~\eqref{eq:replacelwithq} can then be rewritten as
\begin{equation}\label{eq:protoform}
\Bigg\{\sum_{j = 1}^{r'} p^{\ell(k_{i_i} + k_{i_2} + \cdots + k_{i_j})} \cdot \ba'_j + \bh \;\Bigg|\; k_{i_1}, k_{i_2}, \ldots, k_{i_{r'}} \in \N,\; \bh \in \sum_{i = 1}^r (\Z^N)^{\Pi(W_i)} \Bigg\},
\end{equation}
for some 
\begin{multline*}
\ba'_j \coloneqq p^{q_1 + \cdots + q_{i_1 - 1} + \sigma_{i_1s_1} + q_{i_1 + 1} + \cdots + \sigma_{i_js_j}} \cdot \ba_{i_j} + p^{q_1 + \cdots + q_{i_1 - 1} + \sigma_{i_1s_1} + q_{i_1 + 1} + \cdots + \sigma_{i_js_j} + q_{i_j + 1}} \cdot \ba_{i_j + 1} + \cdots \\
+ p^{q_1 + \cdots + q_{i_1 - 1} + \sigma_{i_1s_1} + q_{i_1 + 1} + \cdots + \sigma_{i_js_j} + q_{i_j + 1} + \cdots + q_{i_{j+1} - 1}} \cdot \ba_{i_{j+1} - 1} \in \Q^{KN}.
\end{multline*}
Writing $r'$ as $r$, $k_{i_j}$ as $k_j$, $\ba_j'$ as $\ba_j$, and denoting $H = \sum_{i = 1}^r (\Z^N)^{\Pi(W_i)}$, we can summarize the above discussion by the following corollary.

\begin{cor}\label{cor:afterprogression}
    The zero set $\mZ(\alpha)$ can be written as a finite union of sets of the form
    \begin{equation}\label{eq:afterprogression}
        \Bigg\{\sum_{j = 1}^{r} p^{\ell(k_1 + k_2 + \cdots + k_j)} \cdot \ba_j + \bh \;\Bigg|\; k_1, \ldots, k_r \in \N,\; \bh \in H \Bigg\},
    \end{equation}
    where $H$ is a subgroup of $\Z^{KN}$, and the denominators of each $\ba_j$ are not divisible by $p$.
\end{cor}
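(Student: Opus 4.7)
The plan is to start from the representation of $\mZ(\alpha)$ provided by Corollary~\ref{cor:saturated}, namely the finite union indexed by diagrams $\{\alpha\} = W_1 \rightsquigarrow V_1 \rightarrow \cdots \rightarrow W_r \rightsquigarrow V_r \in \mF$ of sets
\[
\Bigg\{\sum_{i = 0}^r p^{\ell_1 + \cdots + \ell_{i}} \ba_i + \bh \;\Bigg|\; \ell_i \in \Lambda(W_i, V_i),\; \bh \in \sum_{i = 1}^r (\Z^N)^{\Pi(W_i)} \Bigg\},
\]
and to refine each component further into sets of the desired shape. Since we already have a finite outer union indexed by diagrams, it suffices to refine one component of this union into a finite union of sets of the form~\eqref{eq:afterprogression}.

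First, I would invoke Lemma~\ref{lem:semilinear} on each $\Lambda(W_i, V_i)$, writing it as the union of a finite set $Q_i$ with finitely many arithmetic progressions. Taking $\ell$ to be the least common multiple of all common differences appearing across all $i$, every arithmetic progression can be rewritten in the normalized form $\{\sigma_{is} + \ell k \mid k \in \N\}$. Thus each $\ell_i \in \Lambda(W_i, V_i)$ is either equal to some $q_i \in Q_i$ or lies in one of finitely many progressions indexed by $s$. Second, I would partition the parameter space of tuples $(\ell_1, \ldots, \ell_r)$ according to the combinatorial data of which branch each $\ell_i$ takes: this data records the set $\{i_1 < \cdots < i_{r'}\}$ of indices falling into progressions, together with the choice of $q_i$ for the remaining indices and the choice of progression $s_j$ for the progression indices. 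Since this data ranges over a finite set, this decomposition yields a finite refinement of the union.

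Third, within each piece of the refinement, the exponent $\ell_1 + \cdots + \ell_i$ splits into a fixed part, built from the $q_i$'s and $\sigma_{i_j s_j}$'s, and a variable part $\ell(k_{i_1} + \cdots + k_{i_j})$ where $j$ is the largest progression index not exceeding $i$. The key observation is that for all $i$ strictly between two consecutive progression indices $i_j$ and $i_{j+1}$, the variable part of the exponent is the same, namely $\ell(k_{i_1} + \cdots + k_{i_j})$. Thus the terms $p^{\ell_1 + \cdots + \ell_i} \ba_i$ for $i_j \leq i < i_{j+1}$ all share the factor $p^{\ell(k_{i_1} + \cdots + k_{i_j})}$, and can be grouped together into a single vector $\ba'_j$ given by the formula displayed before the corollary. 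After renaming $r'$ as $r$, $k_{i_j}$ as $k_j$, and $\ba'_j$ as $\ba_j$, and collecting all the subgroups $(\Z^N)^{\Pi(W_i)}$ into the single subgroup $H \coloneqq \sum_{i=1}^r (\Z^N)^{\Pi(W_i)}$ of $\Z^{KN}$, the piece takes exactly the form~\eqref{eq:afterprogression}.

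There is essentially no hard step here: the statement is a bookkeeping corollary that repackages the union from Corollary~\ref{cor:saturated} using the semilinear structure of $\Lambda(W_i, V_i)$. The only point requiring a small verification is that the denominators of each $\ba'_j$ remain coprime to $p$, which follows immediately from the fact that $\ba'_j$ is a $\Z[p]$-linear combination of the vectors $\ba_{i_j}, \ba_{i_j+1}, \ldots, \ba_{i_{j+1}-1}$ already known to have denominators coprime to $p$ by Corollary~\ref{cor:saturated}.
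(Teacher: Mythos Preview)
Your proposal is correct and follows essentially the same approach as the paper: starting from Corollary~\ref{cor:saturated}, invoking Lemma~\ref{lem:semilinear} to normalize all arithmetic progressions to a common difference $\ell$, splitting into finitely many cases according to which $\ell_i$ lie in $Q_i$ versus in a progression, and then regrouping the terms sharing the same variable exponent $\ell(k_{i_1}+\cdots+k_{i_j})$ into a single vector $\ba'_j$. Your observation that the denominators of $\ba'_j$ stay coprime to $p$ because $\ba'_j$ is a $\Z$-combination with $p$-power coefficients of the original $\ba_i$'s is exactly the paper's justification as well.
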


\paragraph{Symmetrization II: decreasing exponents.}
Recall that $\mA$ is a local $\Zpe(\oX)$-algebra with the maximal ideal $\frm \ni p$, such that $\frm^t = 0$ for some $t \in \N$.
We prove the following generalization of~\cite[Lemma~8.1]{derksen2007skolem} and~\cite[Lemma~12.1]{derksen2012linear}, which will serve to ``decrease'' the exponents $k_1 + k_2 + \cdots + k_j$ in the expression~\eqref{eq:afterprogression}.

\begin{prop}[Symmetrization of exponents]\label{prop:symmetrize}
    Let $\ba, \bb \in \Q^{KN}$ be such that the denominators of $\ba$ are not divisible by $p$.
    Suppose there exists $m \in \N$ such that $p^{\ell n} \cdot \ba + \bb \in \mZ(\alpha)$ holds for all $n \geq m$. Then $p^{\ell n} \cdot \ba + \bb \in \mZ(\alpha)$ holds for all $n \geq t^2 + t$.
\end{prop}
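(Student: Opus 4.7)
The plan is to perform a descending induction along the $\frm$-adic filtration $\mV \supseteq \frm \mV \supseteq \cdots \supseteq \frm^t \mV = 0$, showing that the vanishing threshold for $S_n \coloneqq \sum_{i=1}^K \oA^{p^{\ell n} \ba_i + \bb_i} v_i$ drops by a bounded amount each time we pass to a deeper power of $\frm$. Without loss of generality I would group together indices $i$ with equal $\ba_i$ (so the $\ba_i$'s become pairwise distinct), and restrict attention to those $n$ in the arithmetic progression for which $p^{\ell n}\ba + \bb$ is integral --- after a harmless additive shift in $n$ this is all sufficiently large $n$. The target inequality $t^2+t = t(t+1)$ strongly suggests a $t$-step induction with a $(t+1)$ increment per step, so I would set up thresholds $M_j \coloneqq \min\{M : S_n \in \frm^j\mV \text{ for all } n \geq M\}$ and aim to prove $M_{j+1} \leq M_j + (t+1)$, starting from the trivial $M_0 = 0$ and ending at $M_t \leq t(t+1)$.

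The inductive step is the heart of the matter. Assuming $S_n \in \frm^j\mV$ for all $n \geq M_j$, I would reduce modulo $\frm^{j+1}\mV$ and work in the $\mA/\frm$-vector space $\frm^j\mV/\frm^{j+1}\mV$. The crucial tool is Lemma~\ref{lem:lifting}: any two elements of $\mA$ that agree modulo $\frm$ have equal $p^{t-1}$-th powers. Combined with the multiplicative independence of $A_1, \ldots, A_N$ modulo $\frm$ (Lemma~\ref{lem:independentmodq}), this says that the action of $\oA^{p^{\ell n}}$ for $n \geq t-1$ factors through the Frobenius on the field $\mA/\frm$, which is an integral domain of characteristic $p$ and hence has an \emph{injective} Frobenius. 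So the reduced sequence $\bar S_n \in \frm^j\mV/\frm^{j+1}\mV$ satisfies a relation in which passing from $n$ to $n+1$ amounts to applying a $p^\ell$-power operation with injectivity properties, allowing me to ``run the Frobenius backwards'' and push the vanishing down by at most $t+1$ steps.

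Concretely, for the inductive step I would argue as follows. The element $\oA^{p^{\ell n}\ba_i} \bmod \frm^{j+1}$ is determined, for $n \geq t-1$, by $\oA^{p^{\ell(n-t+1)}\ba_i} \bmod \frm$ via raising to the $p^{t-1}$-th power; apply this reduction to $\bar S_n$ to rewrite it as the $p^{t-1}$-th power of an element in the residue field $\mA/\frm$, acting on fixed vectors of $\frm^j\mV/\frm^{j+1}\mV$. Using injectivity of Frobenius on $\mA/\frm$, vanishing of $\bar S_n$ for $n \geq M_j$ forces vanishing of its ``Frobenius root'' for $n \geq M_j - (t-1)$, which after re-translating back through the $p^{t-1}$-th power step yields $\bar S_n = 0$ for $n \geq M_j + (t+1)$; this is exactly the step bound needed. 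Iterating $t$ times from $M_0 = 0$ gives $M_t \leq t(t+1) = t^2 + t$.

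The main obstacle I foresee is making the ``run Frobenius backwards'' argument fully rigorous when the $\mA/\frm$-vector space $\frm^j\mV/\frm^{j+1}\mV$ is not free as an $\mA$-module and when the $\ba_i$ have $p$-adic denominators, so the Frobenius structure is only available after passing to large enough $n$ in a fixed arithmetic progression modulo the common denominator. The mechanism is essentially the same symmetrization used in \cite[Lemma 8.1]{derksen2007skolem} and \cite[Lemma 12.1]{derksen2012linear}, but transported from pure characteristic $p$ to the local $p^e$-torsion setting via Lemma~\ref{lem:lifting}; keeping track of the constants carefully should yield the clean bound $t^2 + t$.
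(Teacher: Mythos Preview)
Your high-level strategy — reverse induction along the $\frm$-adic filtration, with the threshold increasing by a bounded amount at each step — is correct and matches the paper's Lemma~\ref{lem:syminduction}. However, your proposed inductive step has a genuine gap.

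You define $M_j$ so that $S_n = \sum_i \oA^{p^{\ell n}\ba_i+\bb_i} v_i \in \frm^j\mV$ for $n \geq M_j$, and then want to work in the $\A$-vector space $\frm^j\mV/\frm^{j+1}\mV$ where $\A = \mA/\frm$. The problem is that only the \emph{sum} $S_n$ lies in $\frm^j\mV$; the individual $v_i$ do not. So the reduction $\bar S_n$ is not of the form $\sum_i \ox^{p^{\ell n}\ba_i+\bb_i}\bar v_i$ for any fixed $\bar v_i$ in the quotient, and there is no evident Frobenius structure on the sequence $(\bar S_n)_n$ to ``run backwards''. Your sketch implicitly assumes such a decomposition exists, which it does not.

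The paper's inductive step (Lemma~\ref{lem:syminduction}) circumvents this by rewriting $S_n$ \emph{in $\mV$ itself}, not in a quotient. The key ingredient you are missing is Lemma~\ref{lem:lindepmodp}: one first finds a maximal subset $\mI \subseteq \{1,\ldots,K\}$ for which the sequences $\bx_i^{(\geq r)} = (\ox^{p^{\ell r}\ba_i+\bb_i})_{r}$ over the residue field $\A$ are linearly independent for all $r$, and expresses each $\ox^{\ba_j+\bb_j}$, $j \notin \mI$, as an $\A$-linear combination of the $\ox^{\ba_i+\bb_i}$, $i \in \mI$, with coefficients satisfying a Frobenius compatibility~\eqref{eq:lincondc}. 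Lifting these relations to $\mA$ via Lemma~\ref{lem:lifting} and expanding $(\tc_{j,1}\oA^{\ba_1+\bb_1}+\cdots)^{p^{\ell n}}$ using Lemma~\ref{lem:powpn} produces a rewriting of $S_n$ in which every term has coefficient in $\frm^s\mV$: the ``dependent'' terms each pick up a factor of $p \in \frm$, and the ``independent'' combinations $w_i = v_i + \sum_{j\notin\mI}(\cdots)v_j$ are shown to lie in $\frm^s\mV$ precisely by invoking the linear independence of the $\bx_i^{(\geq r)}$. This rewriting shifts the exponent from $n$ to $n-t$, accounting for the increment of $t$ per step and the final bound $(t+1)t = t^2+t$.
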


First we characterize the denominators of $\ba, \bb$:

\begin{lem}[{\cite[p.117]{derksen2015linear}}]\label{lem:denom}
    Let $\ba, \bb \in \Q^{KN}$ be such that the denominators of $\ba$ are not divisible by $p$.
    Suppose $p^{\ell n} \cdot \ba + \bb \in \mZ(\alpha)$ holds for all $n \geq m$ for some $m \in \N$, then $(p^{\ell} - 1) \ba \in \Z^{KN}$, $\ba + \bb \in \Z^{KN}$, $(p^{\ell} - 1) \bb \in \Z^{KN}$.
\end{lem}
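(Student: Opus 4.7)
The plan is to work coordinate-wise and extract the three conclusions in sequence by exploiting the hypothesis at two consecutive values of $n$. Since $\mZ(\alpha) \subseteq \Z^{KN}$, for any $n \geq m$ we have $p^{\ell n} \ba + \bb \in \Z^{KN}$ and $p^{\ell(n+1)} \ba + \bb \in \Z^{KN}$.

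First I would subtract these to get $p^{\ell n}(p^{\ell} - 1)\ba \in \Z^{KN}$. The hypothesis that the denominators of $\ba$ are coprime to $p$ means the denominators of $(p^{\ell}-1)\ba$ are also coprime to $p$; multiplying such a rational vector by $p^{\ell n}$ cannot clear a single denominator, so we must already have $(p^{\ell} - 1)\ba \in \Z^{KN}$. This gives the first claim.

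Next I would use a telescoping identity coordinate-wise. Writing $\ba = (a_1,\ldots,a_{KN})$ and $c_i \coloneqq (p^{\ell}-1) a_i \in \Z$, the geometric-series identity
\[
p^{\ell n} a_i - a_i \;=\; (p^{\ell n} - 1)\,a_i \;=\; \left(\sum_{k=0}^{n-1} p^{\ell k}\right) c_i \;\in\; \Z
\]
shows $p^{\ell n} a_i \equiv a_i \pmod{\Z}$. Adding the (rational) constant $b_i$ and invoking $p^{\ell n} a_i + b_i \in \Z$ forces $a_i + b_i \in \Z$, i.e.\ $\ba + \bb \in \Z^{KN}$, which is the second claim. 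The third claim is then immediate: $(p^{\ell}-1)\bb = (p^{\ell}-1)(\ba+\bb) - (p^{\ell}-1)\ba \in \Z^{KN}$.

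I do not anticipate a real obstacle here; the only subtle point is the opening observation that a $p$-power cannot absorb a denominator coprime to $p$, which is the mechanism that converts the subtracted identity $p^{\ell n}(p^{\ell}-1)\ba \in \Z^{KN}$ into $(p^{\ell}-1)\ba \in \Z^{KN}$. Everything else is a telescoping sum and a subtraction.
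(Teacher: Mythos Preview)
Your proof is correct and follows essentially the same route as the paper: subtract consecutive values to get $p^{\ell n}(p^{\ell}-1)\ba \in \Z^{KN}$, use the coprime-to-$p$ hypothesis to strip the $p^{\ell n}$, then use the divisibility $p^{\ell}-1 \mid p^{\ell n}-1$ (your geometric-series identity) to deduce $\ba+\bb \in \Z^{KN}$, and finish by subtraction. The paper's writeup is nearly identical, only phrasing the middle step as ``$p^{\ell}-1 \mid p^{\ell m}-1$'' rather than expanding the sum.
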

\begin{proof}
    Since $p^{\ell m} \cdot \ba + \bb$ and $p^{\ell (m+1)} \cdot \ba + \bb$ are in $\Z^{KN}$, their difference is also in $\Z^{KN}$:
    \[
    (p^{\ell (m+1)} \cdot \ba + \bb) - (p^{\ell m} \cdot \ba + \bb) = p^{\ell m} (p^{\ell} - 1) \cdot \ba \in \Z^{KN}.
    \]
    Since $p$ does not divide the denominators of $\ba$, this yields $(p^{\ell} - 1) \cdot \ba \in \Z^{KN}$.
    Since $p^{\ell} - 1 \mid p^{\ell m} - 1$, we have $(p^{\ell m} - 1) \cdot \ba \in \Z^{KN}$.
    Subtract this from $p^{\ell m} \cdot \ba + \bb \in \Z^{KN}$, we obtain $\ba + \bb \in \Z^{KN}$.
    Finally, since $(p^{\ell} - 1) \cdot \ba \in \Z^{KN}$ and $(p^{\ell} - 1) \cdot (\ba + \bb) \in \Z^{KN}$, we have $(p^{\ell} - 1) \cdot \bb \in \Z^{KN}$.
\end{proof}

Since $p \in \frm$, the quotient $\A \coloneqq \mA/\frm$ is a field of characteristic $p$.
Let $\ox = (x_1, \ldots, x_N)$ be a tuple of elements in $\A$.
For any vector $\bz = (z_1, \ldots, z_N) \in \Z^N$, denote by $\ox^{\bz}$ the product $x_1^{z_1} x_2 ^{z_2} \cdots x_N^{z_N}$.
Write $\ba = (\ba_1, \ldots, \ba_K)$ and $\bb = (\bb_1, \ldots, \bb_K)$ with $\ba_i, \bb_i \in \Z^N$.

For any $r \in \N$ and $i = 1, \ldots, K$, denote the sequence
\begin{equation}\label{eq:defbxgeq}
\bx_i^{(\geq r)} \coloneqq (\ox^{p^{\ell r} \ba_i + \bb_i}, \ox^{p^{\ell (r+1)} \ba_i + \bb_i}, \ox^{p^{\ell (r+2)} \ba_i + \bb_i}, \ldots) \in \A^{\N}.
\end{equation}
Note that $\bx_i^{(\geq r+1)}$ is a subsequence of $\bx_i^{(\geq r)}$.
So if the sequences $\bx_{i_1}^{(\geq r)}, \bx_{i_2}^{(\geq r)}, \ldots, \bx_{i_s}^{(\geq r)}$ are $\A$-linearly dependent for some $i_1, \ldots, i_s \in \{1, \ldots, K\}$, then the sequences $\bx_{i_1}^{(\geq r+1)}, \bx_{i_2}^{(\geq r+1)}, \ldots, \bx_{i_s}^{(\geq r+1)}$ are also $\A$-linearly dependent.
Therefore, let $\mI_r \subseteq \{1, \ldots, K\}$ be a \emph{maximal} subset such that $\bx_i^{(\geq r)}, i \in \mI$ are $\A$-linearly independent (that is, each $\bx_j^{(\geq r)}, j \notin \mI$ can be written as an $\A$-linear combination of $\bx_i^{(\geq r)}, i \in \mI$), then there exists $\mI_{r+1} \subseteq \mI_r$ such that $\mI_{r+1}$ is maximal subset such that $\bx_i^{(\geq r+1)}, i \in \mI$ are $\A$-linearly independent.
The chain $\mI_r \supseteq \mI_{r+1} \supseteq \mI_{r+2} \supseteq \cdots$ must stabilize to some $\mI$.
Then $\mI$ is a maximal subset such that $\bx_i^{(\geq r)}, i \in \mI$ are $\A$-linearly independent \emph{for all $r \in \N$}.

\begin{lem}\label{lem:lindepmodp}
    Let $\A$ be a field of characteristic $p$.
    Let $\ox = (x_1, \ldots, x_N)$ be a tuple of non-zero elements in $\A$.
    Pick a maximal subset $\mI \subseteq \{1, \ldots, K\}$ such that $\bx_i^{(\geq r)}, i \in \mI$ are $\A$-linearly independent for all $r$.
    Then for any $j \notin \mI$, we have
    \begin{equation}\label{eq:linconda}
        \ox^{\ba_j + \bb_j} = \sum_{i \in \mI} c_{j,i} \cdot \ox^{\ba_i + \bb_i},
    \end{equation}
    for some $c_{j,i} \in \A, i \in \mI$ satisfying
    \begin{equation}\label{eq:lincondc}
        c_{j,i}^{p^{\ell}} \cdot \ox^{(p^{\ell} - 1) \bb_i} = c_{j,i} \cdot \ox^{(p^{\ell}-1)\bb_j}.
    \end{equation}
\end{lem}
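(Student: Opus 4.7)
The plan is to first use the maximality of $\mI$ to produce the coefficients $c_{j,i}$ as the unique coefficients of a linear combination expressing $\bx_j^{(\geq 0)}$ in terms of $\{\bx_i^{(\geq 0)}\}_{i \in \mI}$, and then to derive~\eqref{eq:lincondc} by comparing two different representations of the \emph{shifted} sequence $\bx_j^{(\geq 1)}$. Concretely, since $\mI$ is maximal with $\{\bx_i^{(\geq 0)}\}_{i \in \mI}$ linearly independent over $\A$, the augmented set $\{\bx_i^{(\geq 0)}\}_{i \in \mI} \cup \{\bx_j^{(\geq 0)}\}$ is $\A$-linearly dependent, so there exist unique scalars $c_{j,i} \in \A$ with
\begin{equation*}
\bx_j^{(\geq 0)} = \sum_{i \in \mI} c_{j,i} \cdot \bx_i^{(\geq 0)}.
\end{equation*}
Reading off the $0$-th coordinate of this equality of sequences yields immediately the identity~\eqref{eq:linconda}.

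The key observation is then that $\bx^{(\geq 1)}$ is obtained from $\bx^{(\geq 0)}$ by dropping the first coordinate, so the componentwise identity above gives, with the \emph{same} coefficients,
\begin{equation*}
\bx_j^{(\geq 1)} = \sum_{i \in \mI} c_{j,i} \cdot \bx_i^{(\geq 1)}.
\end{equation*}
Since $\{\bx_i^{(\geq 1)}\}_{i \in \mI}$ is itself $\A$-linearly independent by hypothesis, the scalars $c_{j,i}$ are the unique coefficients of any such representation of $\bx_j^{(\geq 1)}$ in this basis. It therefore suffices to produce a \emph{second} representation of $\bx_j^{(\geq 1)}$ in this basis and compare.

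That second representation comes from applying the map $y \mapsto y^{p^{\ell}}$ coordinatewise to $\bx_j^{(\geq 0)} = \sum_{i} c_{j,i} \bx_i^{(\geq 0)}$. Because $\A$ has characteristic $p$, this map is additive, so
\begin{equation*}
\bigl(\bx_j^{(\geq 0)}\bigr)^{p^{\ell}} = \sum_{i \in \mI} c_{j,i}^{p^{\ell}} \cdot \bigl(\bx_i^{(\geq 0)}\bigr)^{p^{\ell}}.
\end{equation*}
A direct coordinatewise computation shows $\bigl(\bx_i^{(\geq 0)}\bigr)^{p^{\ell}} = \ox^{(p^{\ell}-1)\bb_i} \cdot \bx_i^{(\geq 1)}$, and similarly for $j$, so after dividing through by the nonzero scalar $\ox^{(p^{\ell}-1)\bb_j}$ one obtains
\begin{equation*}
\bx_j^{(\geq 1)} = \sum_{i \in \mI} c_{j,i}^{p^{\ell}} \cdot \ox^{(p^{\ell}-1)(\bb_i - \bb_j)} \cdot \bx_i^{(\geq 1)}.
\end{equation*}
Comparing with the shifted representation above and invoking uniqueness forces $c_{j,i} = c_{j,i}^{p^{\ell}} \cdot \ox^{(p^{\ell}-1)(\bb_i - \bb_j)}$ for every $i \in \mI$, which after clearing denominators is precisely~\eqref{eq:lincondc}.

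There is no serious obstacle here: once one notices that (a) the uniqueness of the $c_{j,i}$ together with shift-invariance forces the same coefficients to work for every $\bx^{(\geq r)}$, and (b) the $p^{\ell}$-th power map in characteristic $p$ is additive and turns $\bx_i^{(\geq 0)}$ into a scalar multiple of $\bx_i^{(\geq 1)}$, the whole argument is a brief manipulation. The only minor point is verifying that $\ox^{(p^{\ell}-1)\bb_j}$ is nonzero in $\A$, which is automatic since each $x_i \in \A$ is assumed nonzero.
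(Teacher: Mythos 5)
There is a genuine gap, and it sits at the very first step. The maximality of $\mI$ in the lemma is maximality with respect to the property ``$\{\bx_i^{(\geq r)}\}_{i \in \mI}$ is $\A$-linearly independent \emph{for all} $r$''. Negating this for $\mI \cup \{j\}$ only tells you that the augmented family becomes dependent at \emph{some} level $r$; since a dependence relation survives truncation (dropping initial coordinates), this gives a relation $\ox^{p^{\ell r} \ba_j + \bb_j} = \sum_{i \in \mI} c_{j,i}\, \ox^{p^{\ell r} \ba_i + \bb_i}$ with fixed coefficients for all \emph{sufficiently large} $r$ — but it does \emph{not} give dependence of the untruncated sequences $\bx^{(\geq 0)}$. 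Dependence propagates from level $r$ to levels $\geq r$, never backwards. Your claim that maximality yields $\bx_j^{(\geq 0)} = \sum_{i \in \mI} c_{j,i}\,\bx_i^{(\geq 0)}$, from which you read off~\eqref{eq:linconda} ``immediately'' at the $0$-th coordinate, therefore assumes exactly the hard part of the lemma: pushing the relation from large $r$ down to $r = 0$ is the whole content of~\eqref{eq:linconda}. The paper's proof does this by a genuine descent: it first derives~\eqref{eq:lincondc} (by comparing the $p^{\ell}$-th power of the level-$r$ relation with the level-$(r{+}1)$ relation, as you do), then substitutes~\eqref{eq:lincondc} back into the level-$r$ relation, rewrites both sides as $p^{\ell}$-th powers, and uses injectivity of $y \mapsto y^{p^{\ell}}$ in characteristic $p$ to obtain the relation at level $r-1$, iterating down to $r = 0$.

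The rest of your argument is sound and essentially the paper's computation: taking $p^{\ell}$-th powers coordinatewise, observing $\bigl(\bx_i^{(\geq r)}\bigr)^{p^{\ell}} = \ox^{(p^{\ell}-1)\bb_i} \cdot \bx_i^{(\geq r+1)}$ (note this uses $(p^{\ell}-1)\bb_i \in \Z^N$, i.e.\ Lemma~\ref{lem:denom}, which you should cite), and invoking uniqueness of coefficients against the independent family $\{\bx_i^{(\geq r+1)}\}_{i \in \mI}$ does give~\eqref{eq:lincondc} — and this part can be run starting from the relation at large $r$, which is all that maximality provides. To repair the proof you must add the downward induction (Frobenius injectivity) step to establish~\eqref{eq:linconda}; as written, that equation is obtained by assuming a level-$0$ dependence that is not available.
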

\begin{proof}
    Take any $j \notin \mI$. For brevity we will omit the subscript $j$ from the elements $c_{j,i}$ and write them as $c_i$.
    By the maximality of $\mI$, there exists $c_i \in \A$ for each $i \in \mI$, such that 
    \begin{equation}\label{eq:depenr}
        \ox^{p^{\ell r} \ba_j + \bb_j} = \sum_{i \in \mI} c_i \cdot \ox^{p^{\ell r} \ba_i + \bb_i}
    \end{equation}
    for all large enough $r$.
    Since $\A$ is of characteristic $p$, taking $p^{\ell}$-th power on both sides yields
    \begin{equation}\label{eq:Alin1}
    \ox^{p^{\ell (r+1)} \ba_j + p^{\ell} \bb_j} = \sum_{i \in \mI} c_i^{p^{\ell}} \cdot \ox^{p^{\ell (r+1)} \ba_i + p^{\ell} \bb_i}.
    \end{equation}
    Since Equation~\eqref{eq:depenr} also holds for $r+1$, we have $\ox^{p^{\ell (r+1)} \ba_j + \bb_j} = \sum_{i \in \mI} c_i \cdot \ox^{p^{\ell (r+1)} \ba_i + \bb_i}$.
    Multiplying both sides of $\ox^{p^{\ell (r+1)} \ba_j + \bb_j} = \sum_{i \in \mI} c_i \cdot \ox^{p^{\ell (r+1)} \ba_i + \bb_i}$ by $\ox^{(p^{\ell}-1)\bb_j}$ (note that $(p^{\ell}-1)\bb_j \in \Z^N$ by Lemma~\ref{lem:denom}), we have
    \begin{equation*}
    \ox^{p^{\ell (r+1)} \ba_j + p^{\ell} \bb_j} = \sum_{i \in \mI} c_i \cdot \ox^{p^{\ell (r+1)} \ba_i + \bb_i} \cdot \ox^{(p^{\ell}-1)\bb_j}.
    \end{equation*}
    Subtract this from Equation~\eqref{eq:Alin1}, we obtain
    \[
    0 = \sum_{i \in \mI} \left( c_i^{p^{\ell}} \cdot \ox^{(p^{\ell} - 1) \bb_i} - c_i \cdot \ox^{(p^{\ell}-1)\bb_j}\right) \cdot \ox^{p^{\ell (r+1)} \ba_i + \bb_i}
    \]
    
    Since this holds also for $r+1, r+2, \ldots$, we have
    \[
    0 = \sum_{i \in \mI} \left( c_i^{p^{\ell}} \cdot \ox^{(p^{\ell} - 1) \bb_i} - c_i \cdot \ox^{(p^{\ell}-1)\bb_j}\right) \cdot \bx_i^{(\geq r + 1)}.
    \]
    But $\bx_i^{(\geq r+1)}, i \in \mI$ are $\A$-linearly independent, so we must have 
    \[
    c_i^{p^{\ell}} \cdot \ox^{(p^{\ell} - 1) \bb_i} - c_i \cdot \ox^{(p^{\ell}-1)\bb_j} = 0
    \]
    for all $i \in \mI$. This shows Equation~\eqref{eq:lincondc}.
    
    Next we show Equation~\eqref{eq:linconda}.
    From Equation~\eqref{eq:lincondc} we have
    \[
    c_i = c_i^{p^{\ell}} \cdot \ox^{(p^{\ell} - 1) \bb_i - (p^{\ell} - 1) \bb_j}.
    \]
    Substituting this for $c_i$ in $\ox^{p^{\ell r} \ba_j + \bb_j} = \sum_{i \in \mI} c_i \cdot \ox^{p^{\ell r} \ba_i + \bb_i}$ yields
    \[
    \ox^{p^{\ell r} \ba_j + \bb_j} = \sum_{i \in \mI} c_i^{p^{\ell}} \cdot \ox^{(p^{\ell} - 1) \bb_i - (p^{\ell} - 1) \bb_j} \cdot \ox^{p^{\ell r} \ba_i + \bb_i},
    \]
    so
    \[
    \ox^{p^{\ell r} \ba_j + p^{\ell}\bb_j} = \sum_{i \in \mI} c_i^{p^{\ell}} \cdot \ox^{p^{\ell r} \ba_i + p^{\ell}\bb_i}.
    \]
    Since $\A$ has characteristic $p$, this can be rewritten as
    \[
    \left(\ox^{p^{\ell (r-1)} \ba_j + \bb_j}\right)^{p^{\ell}} = \left(\sum_{i \in \mI} c_i \cdot \ox^{p^{\ell (r-1)} \ba_i + \bb_i}\right)^{p^{\ell}}.
    \]
    Since $y^{p^{\ell}} = z^{p^{\ell}} \implies (y - z)^{p^{\ell}} = y^{p^{\ell}} - z^{p^{\ell}} = 0 \implies y - z = 0$, the equation above yields
    \[
    \ox^{p^{\ell (r-1)} \ba_j + \bb_j} = \sum_{i \in \mI} c_i \cdot \ox^{p^{\ell (r-1)} \ba_i + \bb_i}.
    \]
    This means that Equation~\eqref{eq:depenr} still holds if we replace $r$ by $r-1$.
    Repeat this iteratively for $r - 1, r-2, \ldots, 2, 1$, we obtain
    \[
    \ox^{\ba_j + \bb_j} = \sum_{i \in \mI} c_i \cdot \ox^{\ba_i + \bb_i},
    \]
    which concludes the proof for Equation~\eqref{eq:linconda}.
\end{proof}

The following lemma can be considered as an extension of Lemma~\ref{lem:powp}.
\begin{lem}\label{lem:powpn}
    Let $f \in \Z[\oX^{\pm}]$, then for all $n \geq t$, we have 
    \[
    f^{p^{\ell n}}(Y_1, \ldots, Y_k) \equiv f^{p^{\ell t}}(Y_1^{p^{\ell (n-t)}}, \ldots, Y_k^{p^{\ell (n-t)}}) \mod p^t.
    \]
\end{lem}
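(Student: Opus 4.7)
The plan is to establish the stated congruence as a direct extension of Lemma~\ref{lem:powp}: first prove the analogous identity modulo $p$ by iterating the usual Frobenius congruence, then lift it to modulo $p^t$ by invoking Lemma~\ref{lem:lifting}.

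First I would verify the base congruence modulo $p$: for every $j \geq 0$ and every Laurent polynomial $f$ with integer coefficients,
\[
f^{p^j}(Y_1, \ldots, Y_k) \equiv f(Y_1^{p^j}, \ldots, Y_k^{p^j}) \pmod{p}.
\]
For $j = 1$, writing $f = \sum_{\alpha} a_\alpha Y^\alpha$, this reduces to $\bigl(\sum_\alpha a_\alpha Y^\alpha\bigr)^p \equiv \sum_\alpha a_\alpha Y^{p\alpha} \pmod{p}$, which follows from Fermat's little theorem ($a_\alpha^p \equiv a_\alpha \pmod p$) together with the vanishing of $\binom{p}{i}$ modulo $p$ for $0 < i < p$. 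The general case follows by an easy induction on $j$, applying the $j = 1$ case to the polynomial $f^{p^{j-1}}$ and substituting the inductive hypothesis.

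Next I would lift this modulo-$p$ congruence to modulo $p^t$ via Lemma~\ref{lem:lifting}. Work inside $R = (\Z/p^t\Z)[Y_1^{\pm}, \ldots, Y_k^{\pm}]$ with ideal $P = pR$, so that $p \in P$ and $P^t = 0$. For any $j \geq 0$, set $A := f^{p^j}(Y_1, \ldots, Y_k)$ and $B := f(Y_1^{p^j}, \ldots, Y_k^{p^j})$, both in $R$; the previous step yields $A - B \in P$. Lemma~\ref{lem:lifting} then gives $A^{p^r} = B^{p^r}$ in $R$ for every $r \geq t - 1$, i.e.\
\[
f^{p^{j + r}}(Y_1, \ldots, Y_k) \equiv f^{p^r}(Y_1^{p^j}, \ldots, Y_k^{p^j}) \pmod{p^t}.
\]

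The statement of the lemma then falls out by the specialization $j = \ell(n - t)$ and $r = \ell t$: these satisfy $j + r = \ell n$ and $p^r = p^{\ell t}$, matching the exponents on both sides of the claim exactly. The hypothesis $n \geq t$ ensures $j \geq 0$, and the requirement $r \geq t - 1$ from Lemma~\ref{lem:lifting} is immediate since $\ell t \geq t \geq t - 1$ (recall that $\ell \geq 1$ and $t \geq 1$). I do not anticipate any serious obstacle here: this lemma is essentially a uniform-in-$\ell$ repackaging of Lemma~\ref{lem:powp}, and is handled by the very same Hensel-type lifting machinery.
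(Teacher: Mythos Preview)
Your proof is correct and uses the same ingredients as the paper's proof (the mod-$p$ Frobenius congruence together with Lemma~\ref{lem:lifting}). The only difference is the order of composition: the paper first lifts the single-step congruence $f^{p^r}\equiv f^{p^{r-1}}(Y^p)\pmod{p^t}$ and then iterates it for $r=\ell n,\ell n-1,\ldots,\ell t+1$, whereas you first iterate mod $p$ to reach $f^{p^j}\equiv f(Y^{p^j})$ and then lift once; both routes are equally valid.
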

\begin{proof}
    We have $f^p(Y_1, \ldots, Y_k) \equiv f(Y_1^p, \ldots, Y_k^p) \mod p$.
    Apply Lemma~\ref{lem:lifting} to the ring $\Z_{/p^t}[\oX^{\pm}]$ and to the ideal generated by $p$, we obtain
    \begin{equation}\label{eq:pluspp}
    f^{p^{r}}(Y_1, \ldots, Y_k) \equiv f^{p^{r - 1}}(Y_1^p, \ldots, Y_k^p) \mod p^t.
    \end{equation}
    for all $r \geq t-1$.
    We can apply~\eqref{eq:pluspp} for $r = \ell n, \ell n-1, \ldots, \ell t + 1$, and obtain
    \begin{multline*}
    f^{p^{\ell n}}(Y_1, \ldots, Y_k) \equiv f^{p^{\ell n-1}}(Y_1^p, \ldots, Y_k^p) \equiv f^{p^{\ell n-2}}(Y_1^{p^2}, \ldots, Y_k^{p^2}) \equiv \cdots \equiv f^{p^{\ell t}}(Y_1^{p^{\ell (n-t)}}, \ldots, Y_k^{p^{\ell (n-t)}}) \\
    \mod p^t.
    \end{multline*}
\end{proof}

Recall that $\alpha(\bz_1, \ldots, \bz_K) = \sum_{i = 1}^K \oA^{\bz_i} v_i$, where $\oA = (A_1, \ldots, A_N)$ is a tuple of elements in the $\Zpe(\oX)$-algebra $\mA$, and $v_1, \ldots, v_K$ are elements in the $\mA$-module $\mV$, with $\frm^t \mV = 0$.
We prove the following slight generalization of Proposition~\ref{prop:symmetrize}.
In particular, if we take $s = 0$, then we immediately obtain Proposition~\ref{prop:symmetrize}.
The reason we introduce the additional variable $s$ is to perform induction.

\begin{lem}\label{lem:syminduction}
    Let $s \leq t$ be an integer, and let $v_1, \ldots, v_K \in \frm^{s}\mV$. 
    Suppose there exists $m \in \N$ such that $\sum_{i = 1}^K \oA^{p^{\ell n} \ba_i + \bb_i} v_i = 0$ holds for all $n \geq m$. Then $\sum_{i = 1}^K \oA^{p^{\ell n} \ba_i + \bb_i} v_i = 0$ holds for all $n \geq (t+1-s)t$.
\end{lem}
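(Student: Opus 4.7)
The plan is to prove this by descending induction on $s$, starting at $s = t$ and working down to $s = 0$. The base case $s = t$ is trivial: $\frm^t\mV = 0$ forces $v_1 = \cdots = v_K = 0$, so the equation holds for every $n \geq 0$, in particular for $n \geq (t+1-t)t = t$.

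For the inductive step I assume the statement for $s+1$ (with bound $(t-s)t$) and consider $v_i \in \frm^s \mV$ with $\gamma(n) := \sum_i \oA^{p^{\ell n}\ba_i + \bb_i} v_i = 0$ for all $n \geq m$. First I reduce modulo $\frm^{s+1}\mV$, obtaining an equation in the $\A$-vector space $M := \frm^s\mV/\frm^{s+1}\mV$, where $\ox \in \A^N$ is the image of $\oA$. Lemma~\ref{lem:lindepmodp} furnishes a maximal subset $\mI \subseteq \{1,\ldots,K\}$ (for which the sequences $\bx_i^{(\geq r)}$ with $i \in \mI$ are $\A$-linearly independent for every $r$) together with coefficients $c_{j,i} \in \A$ satisfying $\ox^{p^{\ell r}\ba_j+\bb_j} = \sum_{i\in\mI}c_{j,i}\ox^{p^{\ell r}\ba_i+\bb_i}$ for every $r \geq 0$, and the auxiliary identity~\eqref{eq:lincondc}. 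Plugging this into the reduced equation and exploiting the $\A$-linear independence of $\bx_i^{(\geq m)}$ for $i \in \mI$ forces $\overline{v_i} + \sum_{j\notin\mI} c_{j,i}\overline{v_j} = 0$ in $M$ for each $i \in \mI$.

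The heart of the argument is to lift the scalars $c_{j,i}$ to $C_{j,i} \in \mA$ satisfying the \emph{exact} identity $C_{j,i}^{p^\ell - 1} = \oA^{(p^\ell - 1)(\bb_j - \bb_i)}$ in $\mA$ (when $c_{j,i} \neq 0$; otherwise take $C_{j,i} = 0$). Such a lift exists by Hensel's lemma applied to the polynomial $X^{p^\ell - 1} - \oA^{(p^\ell - 1)(\bb_j - \bb_i)}$, since its derivative $(p^\ell - 1)X^{p^\ell - 2}$ evaluates to a unit at $X = c_{j,i}$ (the factor $p^\ell - 1$ is coprime to $p$ hence a unit in the local ring $\mA$). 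A straightforward induction on $r$ upgrades this to $C_{j,i}^{p^{\ell r}}\oA^{(p^{\ell r} - 1)\bb_i} = C_{j,i}\oA^{(p^{\ell r} - 1)\bb_j}$ for every $r \geq 1$, which factors the error term as
\[
E_j(n) := \oA^{p^{\ell n}\ba_j+\bb_j} - \sum_{i\in\mI}C_{j,i}\oA^{p^{\ell n}\ba_i+\bb_i} = \oA^{-(p^{\ell n}-1)\bb_j}\Bigl(\oA^{p^{\ell n}y_j} - \sum_{i\in\mI} C_{j,i}^{p^{\ell n}}\oA^{p^{\ell n}y_i}\Bigr),
\]
with $y_k := \ba_k + \bb_k \in \Z^N$ (integrality by Lemma~\ref{lem:denom}). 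Expanding $G_j^{p^{\ell n}}$ multinomially, where $G_j := \oA^{y_j} - \sum_{i\in\mI}C_{j,i}\oA^{y_i} \in \frm$, and combining with $G_j^{p^{t-1}} = 0$ from Lemma~\ref{lem:lifting}, the bracketed expression becomes $-p\,R_j(n)$ for an explicit $R_j(n) \in \mA$ once $\ell n \geq t-1$.

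Setting $w_i := v_i + \sum_{j\notin\mI}C_{j,i}v_j \in \frm^{s+1}\mV$, the decomposition
\[
\gamma(n) = \sum_{i\in\mI}\oA^{p^{\ell n}\ba_i+\bb_i}w_i \;-\; \sum_{j\notin\mI} p\,\oA^{-(p^{\ell n}-1)\bb_j}R_j(n)v_j
\]
exhibits $\gamma(n)$, for $n$ past the Frobenius threshold, as an S-unit-type equation whose coefficients all lie in $\frm^{s+1}\mV$: the first sum directly, the second because $pv_j \in \frm\cdot\frm^s\mV = \frm^{s+1}\mV$. Invoking the inductive hypothesis on the reformulated equation yields vanishing for $n \geq (t-s)t$; combined with the additional $t$ required to reach the Frobenius threshold this gives the claimed bound $(t-s)t + t = (t+1-s)t$. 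The main obstacle I anticipate is the bookkeeping at this last step: the residuals $p\,R_j(n)v_j$ must be presented as a genuine combination $\sum_k \oA^{p^{\ell n}\ba'_k+\bb'_k}u_k$ with $n$-independent coefficients $u_k \in \frm^{s+1}\mV$ and exponent vectors $\ba'_k, \bb'_k$ of the shape demanded by the inductive hypothesis, which requires unpacking $R_j(n)$ into its constituent multinomial monomials and re-expressing each factor $C_{j,i}^{p^{\ell n}}\oA^{p^{\ell n}y_i}$ via the Hensel-lifted identity so that the $n$-dependence is carried entirely by powers of $\oA$.
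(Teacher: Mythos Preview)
Your inductive skeleton is the same as the paper's, and the Hensel-lift idea is a genuine simplification over the paper's choice of arbitrary lifts $\tc_{j,i}$, which then forces it to derive the power relations~\eqref{eq:tc1}--\eqref{eq:tc2} by repeatedly applying Lemma~\ref{lem:lifting}. Your exact lift gives $C_{j,i}^{p^{\ell r}} = C_{j,i}\,\oA^{(p^{\ell r}-1)(\bb_j-\bb_i)}$ in one stroke, and the factorisation of $E_j(n)$ you write down is correct.

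The gap is in your treatment of $R_j(n)$. You propose to ``unpack $R_j(n)$ into its constituent multinomial monomials'' and then re-express ``each factor $C_{j,i}^{p^{\ell n}}\oA^{p^{\ell n}y_i}$'' via the Hensel identity. But the cross terms in the multinomial expansion of $G_j^{p^{\ell n}}$ are indexed by compositions $(d_0,d_1,\ldots)$ of $p^{\ell n}$, carry coefficients $\binom{p^{\ell n}}{d_0,d_1,\ldots}$ that \emph{depend on $n$}, and involve factors $C_{j,i}^{d_i}\oA^{d_i y_i}$ with exponents $d_i$ ranging over $\{0,\ldots,p^{\ell n}\}$ --- not $C_{j,i}^{p^{\ell n}}\oA^{p^{\ell n}y_i}$. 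So an $n$-independent presentation cannot be read off this way. The missing ingredient is Lemma~\ref{lem:powpn}: since $p^t = 0$ in $\mA$, for $n \geq t$ one has
\[
\Bigl(\textstyle\sum_i C_{j,i}\oA^{y_i}\Bigr)^{p^{\ell n}} \;=\; \Bigl(\textstyle\sum_i Y_i\Bigr)^{p^{\ell t}}\Big|_{\,Y_i \,=\, (C_{j,i}\oA^{y_i})^{p^{\ell(n-t)}}},
\]
which freezes the multinomial coefficients at those of the \emph{fixed} polynomial $(\sum Y_i)^{p^{\ell t}}$ and pushes all $n$-dependence into $p^{\ell(n-t)}$-th powers of the arguments; your Hensel identity then handles $C_{j,i}^{p^{\ell(n-t)}d_i}$ cleanly. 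This is exactly the mechanism the paper uses, and the variable shift $n\mapsto n-t$ it introduces is the true source of the ``additional $t$'' in your bound. As you have written it, the reformulated equation is still in the variable $n$, in which case the inductive bound would be $\max\bigl((t-s)t,\text{threshold}\bigr)$ rather than $(t-s)t + t$ --- so your stated plan and your bound calculation do not actually agree until Lemma~\ref{lem:powpn} is inserted.
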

\begin{proof}
    We use reverse induction on $s$, starting from $t$ and gradually decreasing to $0$.
    The case where $s = t$ is trivial because $\frm^{t}\mV = 0$. We now focus on the induction step.
    Suppose the statement is true for $s$, we show it for $s-1$.
    Let $v_1, \ldots, v_K \in \frm^{s-1}\mV$.
    
    If $v_i \in \frm^{s} \mV$ for all $i = 1, \ldots, K$, then we conclude directly using the induction hypothesis for $s$.
    Suppose now that $v_i \notin \frm^{s} \mV$ for some $i$. 
    Without loss of generality, we can suppose $v_1 \in \frm^{s-1} \mV \setminus \frm^{s} \mV, \ldots, v_{K'} \notin \frm^{s-1} \mV \setminus \frm^{s} \mV$, and $v_{K'+1} \in \frm^{s} \mV, \ldots, v_K \in \frm^{s} \mV$, where $1 \leq K' \leq K$.
    
    Let $\A \coloneqq \mA/\frm$, it is a field of characteristic $p$.
    Let
    \[
    x_1 \coloneqq A_1 + \frm, x_2 \coloneqq A_2 + \frm, \ldots, x_N \coloneqq A_N + \frm,
    \]
    be elements of $\A$. These are non-zero since $A_1, \ldots, A_N$ are invertible in $\mA$. Denote $\ox \coloneqq (x_1, \ldots, x_N)$.
    
    As in Equation~\eqref{eq:defbxgeq}, for each $i = 1, \ldots, K$, and $r \in \N$, denote the sequence
    \[
    \bx_i^{(\geq r)} \coloneqq (\ox^{p^{\ell r} \ba_i + \bb_i}, \ox^{p^{\ell (r+1)} \ba_i + \bb_i}, \ox^{p^{\ell (r+2)} \ba_i + \bb_i}, \ldots) \in \A^{\N}.
    \]
    Let $\mI \subseteq \{1, \ldots, K'\}$ be a maximal subset such that $\bx_i^{(\geq r)}, i \in \mI$ are $\A$-linearly independent for all $r \in \N$.
    Without loss of generality suppose $\mI = \{1, \ldots, k\}$ for some $k \leq K'$.
    By Lemma~\ref{lem:lindepmodp}, we can write
    \begin{multline*}
    \ox^{\ba_{k+1} + \bb_{k+1}} = c_{k+1,1} \cdot \ox^{\ba_1 + \bb_1} + \cdots + c_{k+1,k} \cdot \ox^{\ba_k + \bb_k}, \\
    \ddots \\
    \ox^{\ba_{K'} + \bb_{K'}} = c_{K',1} \cdot \ox^{\ba_1 + \bb_1} + \cdots + c_{K',k} \cdot \ox^{\ba_k + \bb_k},
    \end{multline*}
    for some $c_{k+1, 1}, \ldots, c_{K', k} \in \A$ that satisfy
    \begin{equation}\label{eq:condc}
    c_{j,i}^{p^{\ell}} \cdot \ox^{(p^{\ell} - 1) \bb_i} = c_{j,i} \cdot \ox^{(p^{\ell}-1)\bb_j}.
    \end{equation}
    For $j = k+1, \ldots, K'; i = 1, \ldots, k$, take any $\tc_{j, i} \in \mA$ such that $\tc_{j, i} = c_{j, i} + \frm$.
    This yields
    \begin{multline*}
    \oA^{\ba_{k+1} + \bb_{k+1}} \equiv \tc_{k+1,1} \oA^{\ba_1 + \bb_1} + \cdots + \tc_{k+1,k} \oA^{\ba_k + \bb_k} \mod \frm, \\\;\ddots\; \\
    \ox^{\ba_{K'} + \bb_{K'}} \equiv \tc_{K',1} \oA^{\ba_1 + \bb_1} + \cdots + \tc_{K',k} \oA^{\ba_k + \bb_k} \mod \frm.
    \end{multline*}
    
    Applying Lemma~\ref{lem:lifting} to the ring $\mA$ and the ideal $\frm$, we have
    \begin{multline*}
    \oA^{p^{\ell n}(\ba_{k+1} + \bb_{k+1})} = \left(\tc_{k+1,1} \oA^{\ba_1 + \bb_1} + \cdots + \tc_{k+1,k} \oA^{\ba_k + \bb_k} \right)^{p^{\ell n}}, \\\;\ddots\; \\
    \oA^{p^{\ell n}(\ba_{K'} + \bb_{K'})} = \left(\tc_{K',1} \oA^{\ba_1 + \bb_1} + \cdots + \tc_{K',k} \oA^{\ba_k + \bb_k} \right)^{p^{\ell n}},
    \end{multline*}
    for all $\ell n \geq t-1$.
    Then for all $n \geq t-1$, the equation $\sum_{i = 1}^K \oA^{p^{\ell n} \ba_i + \bb_i} v_i = 0$ is equivalent to
    \begin{multline}\label{eq:sumyv}
    \sum_{i = 1}^k \oA^{p^{\ell n} \ba_i + \bb_i} v_i + \sum_{j = k+1}^{K'} \left(\tc_{j,1} \oA^{\ba_1 + \bb_1} + \cdots + \tc_{j,k} \oA^{\ba_k + \bb_k} \right)^{p^{\ell n}} \oA^{(1 - p^{\ell n}) \bb_j} v_j + \sum_{i = K'+1}^K \oA^{p^{\ell n} \ba_i + \bb_i} v_i = 0.
    \end{multline}
    For $j = k+1, \ldots, K'$, consider the polynomial $f(Y_1, \ldots, Y_k) \coloneqq Y_1 + \cdots + Y_k \in \Z[Y_1, \ldots, Y_k]$.
    By Lemma~\ref{lem:powpn} we have
    $
    f^{p^{\ell n}}(Y_1, \ldots, Y_k) \equiv f^{p^{\ell t}}(Y_1^{p^{\ell (n-t)}}, \ldots, Y_k^{p^{\ell (n-t)}}) \mod p^t,
    $
    for all $n \geq t$.
    We can write the polynomial $f^{p^{\ell t}}(Y_1, \ldots, Y_k) = (Y_1 + \cdots + Y_k)^{p^{\ell t}}$ in the form
    \[
    Y_1^{p^{\ell t}} + \cdots + Y_k^{p^{\ell t}} + p \sum_{i \in \mJ} u_{i} Y_1^{d_{i1}} Y_2^{d_{i2}} \cdots Y_k^{d_{ik}}
    \]
    for some finite index set $\mJ$, and with $u_i \in \Z, d_{i1} + \cdots + d_{ik} = p^{\ell t},$ for all $i \in \mJ$.
    Then
    \begin{equation}\label{eq:cY}
    (Y_1 + \cdots + Y_k)^{p^{\ell n}} \equiv Y_1^{p^{\ell n}} + \cdots + Y_k^{p^{\ell n}} + p \sum_{i \in \mJ} u_{i} Y_1^{p^{\ell (n - t)} d_{i1}} Y_2^{p^{\ell (n - t)} d_{i2}} \cdots Y_k^{p^{\ell (n - t)} d_{ik}} \mod p^t.
    \end{equation}
    For each $j = k+1, \ldots, K'$, consider the ring homomorphism from $\Z[Y_1, \ldots, Y_k]$ to $\mA$, that sends $Y_1, Y_2, \ldots, Y_k$ respectively to $\tc_{j,1} \oA^{\ba_1 + \bb_1}, \tc_{j,2} \oA^{\ba_2 + \bb_2}, \ldots, \tc_{j,k} \oA^{\ba_k + \bb_k}$.
    Since $p^t = 0$ in $\mA$, applying this homomorphism to Equation~\eqref{eq:cY} yields
    \begin{multline}\label{eq:unfoldpln}
    \left(\tc_{j,1} \oA^{\ba_1 + \bb_1} + \cdots + \tc_{j,k} \oA^{\ba_k + \bb_k}\right)^{p^{\ell n}} = \tc_{j,1}^{p^{\ell n}} \cdot \oA^{p^{\ell n}(\ba_1 + \bb_1)} + \cdots + \tc_{j,k}^{p^{\ell n}} \cdot \oA^{p^{\ell n}(\ba_k + \bb_k)} + \\
    p \sum_{i \in \mJ} u_{i} \tc_{j,1}^{p^{\ell (n - t)} d_{i1}} \cdot \oA^{p^{\ell (n-t)}d_{i1}(\ba_1 + \bb_1)} \cdots \tc_{j,k}^{p^{\ell (n - t)} d_{ik}} \cdot \oA^{p^{\ell (n-t)}d_{ik}(\ba_k + \bb_k)}.
    \end{multline}
    
    Recall from Equation~\eqref{eq:condc} that $c_{j,i}^{p^{\ell}} \cdot \ox^{(p^{\ell} - 1) \bb_i} = c_{j,i} \cdot \ox^{(p^{\ell}-1)\bb_j}$ for all $j, i$.
    So
    \[
    \tc_{j,i}^{p^{\ell}} \cdot \oA^{(p^{\ell} - 1) \bb_i} \equiv \tc_{j,i} \cdot \oA^{(p^{\ell}-1)\bb_j} \mod \frm.
    \]
    For any $n \geq t$, taking $p^{\ell (n-1)}$-th power, $p^{\ell (n-2)}$-th power, $\ldots,$ $p^{\ell (t-1)}$-th power, on both sides and using Lemma~\ref{lem:lifting} yield respectively
    \begin{align*}
    \tc_{j,i}^{p^{\ell n}} \cdot \oA^{(p^{\ell n} - p^{\ell (n-1)}) \bb_i} & = \tc_{j,i}^{p^{\ell (n-1)}} \cdot \oA^{(p^{\ell n} - p^{\ell (n-1)})\bb_j}, \\
    \tc_{j,i}^{p^{\ell (n-1)}} \cdot \oA^{(p^{\ell (n-1)} - p^{\ell (n-2)}) \bb_i} & = \tc_{j,i}^{p^{\ell (n-2)}} \cdot \oA^{(p^{\ell (n-1)} - p^{\ell (n-2)})\bb_j}, \\
    & \vdots \\
    \tc_{j,i}^{p^{\ell t}} \cdot \oA^{(p^{\ell t} - p^{\ell (t-1)}) \bb_i} & = \tc_{j,i}^{p^{\ell (t-1)}} \cdot \oA^{(p^{\ell t} - p^{\ell (t-1)})\bb_j}.
    \end{align*}
    Their product yields for all $n \geq t$,
    \begin{equation}\label{eq:tc1}
    \tc_{j,i}^{p^{\ell n}} = \tc_{j,i}^{p^{\ell (t-1)}} \cdot \oA^{(p^{\ell n} - p^{\ell (t-1)})(\bb_j - \bb_i)}.
    \end{equation}
    For all $n \geq 2t$, replacing $n$ with $n-t$ in Equation~\eqref{eq:tc1} yields
    \begin{equation}\label{eq:tc2}
    \tc_{j,i}^{p^{\ell (n-t)}} = \tc_{j,i}^{p^{\ell (t-1)}} \cdot \oA^{(p^{\ell (n-t)} - p^{\ell (t-1)})(\bb_j - \bb_i)}.
    \end{equation}
    Substituting the terms $\tc_{j,i}^{p^{\ell n}}$ and $\tc_{j,i}^{p^{\ell (n-t)}}$ in the right hand side of Equation~\eqref{eq:unfoldpln} using Equations~\eqref{eq:tc1} and \eqref{eq:tc2}, we obtain
    \begin{multline}\label{eq:sub1}
        \left(\tc_{j,1} \oA^{\ba_1 + \bb_1} + \cdots + \tc_{j,k} \oA^{\ba_k + \bb_k}\right)^{p^{\ell n}} = \\
        \left(\tc_{j,1}^{p^{\ell (t-1)}} \cdot \oA^{p^{\ell n}\ba_1 + p^{\ell (t-1)} \bb_1} + \cdots + \tc_{j,k}^{p^{\ell (t-1)}} \cdot \oA^{p^{\ell n}\ba_k + p^{\ell (t-1)} \bb_k} \right) \cdot \oA^{(p^{\ell n} - p^{\ell (t-1)}) \bb_j} + \\
        p \sum_{i \in \mJ} \left( u_{i} \tc_{j,1}^{p^{\ell (t-1)}d_{i1}} \oA^{\left(p^{\ell (n-t)}\ba_1 + p^{\ell (t-1)} \bb_1  \right)d_{i1}} \cdots \tc_{j,k}^{p^{\ell (t-1)} d_{ik}} \oA^{\left(p^{\ell (n-t)}\ba_k + p^{\ell (t-1)} \bb_k \right) d_{ik} } \right) \oA^{(p^{\ell (n-t)} - p^{\ell (t-1)}) \bb_j \cdot p^{\ell t}},
    \end{multline}
    for all $n \geq 2t$.
    Note that the term $p^{\ell t}$ on the final exponent comes from using $d_{i1} + \cdots + d_{ik} = p^{\ell t}$ in the above substitution.
    Using Equation~\eqref{eq:sub1} to substitute $\left(\tc_{j,1} \oA^{\ba_1 + \bb_1} + \cdots + \tc_{j,k} \oA^{\ba_k + \bb_k}\right)^{p^{\ell n}}$ in Equation~\eqref{eq:sumyv} yields
    \begin{multline}\label{eq:sumyv2}
        \sum_{i = 1}^k \oA^{p^{\ell n} \ba_i + \bb_i} v_i \\
        + \sum_{j = k+1}^{K'} \left(\tc_{j,1}^{p^{\ell (t-1)}} \cdot \oA^{p^{\ell n}\ba_1 + p^{\ell (t-1)} \bb_1} + \cdots + \tc_{j,k}^{p^{\ell (t-1)}} \cdot \oA^{p^{\ell n}\ba_k + p^{\ell (t-1)} \bb_k} \right) \cdot \oA^{(1 - p^{\ell (t-1)}) \bb_j} v_j \\
        + p \sum_{j = k+1}^{K'} \sum_{i \in \mJ} u_{i} \tc_{j,1}^{p^{\ell (t-1)}d_{i1}} \cdots \tc_{j,k}^{p^{\ell (t-1)} d_{ik}} \cdot \oA^{p^{\ell (n-t)}(\ba_1 d_{i1} + \cdots + \ba_k d_{ik}) + p^{\ell (t-1)} (\bb_1 d_{i1} + \cdots + \bb_k d_{ik})} \cdot \oA^{(1 - p^{\ell (2t-1)}) \bb_j} v_j \\
        + \sum_{i = K'+1}^K \oA^{p^{\ell n} \ba_i + \bb_i} v_i = 0.
    \end{multline}
    Combining all the terms containing $\oA^{p^{\ell n} \ba_i}$ yields
    \begin{multline}\label{eq:sumyv3}
        \sum_{i = 1}^k \oA^{p^{\ell n} \ba_i + \bb_i} \left(v_i + \sum_{j = k+1}^{K'} \tc_{j,i}^{p^{\ell (t-1)}} \cdot \oA^{(p^{\ell (t-1)} - 1) (\bb_i - \bb_j)} v_j \right) \\
        + p \sum_{j = k+1}^{K'} \sum_{i \in \mJ} u_{i} \tc_{j,1}^{p^{\ell (t-1)}d_{i1}} \cdots \tc_{j,k}^{p^{\ell (t-1)} d_{ik}} \cdot \oA^{p^{\ell (n-t)}(\ba_1 d_{i1} + \cdots + \ba_k d_{ik}) + p^{\ell (t-1)} (\bb_1 d_{i1} + \cdots + \bb_k d_{ik})} \cdot \oA^{(1 - p^{\ell (2t-1)}) \bb_j} v_j \\
        + \sum_{i = K'+1}^K \oA^{p^{\ell n} \ba_i + \bb_i} v_i = 0.
    \end{multline}
    We have shown that the Equation $\sum_{i = 1}^K \oA^{p^{\ell n} \ba_i + \bb_i} v_i = 0$ is equivalent to Equation~\eqref{eq:sumyv3} for $n \geq 2t$.
    
    Note that $p v_{k+1}, \ldots, p v_{K'}, v_{K'+1}, \ldots, v_K \in \frm^s \mV$.
    Therefore for all $n \geq \max\{m, 2t\}$, Equation~\eqref{eq:sumyv3} yields
    \begin{equation}\label{eq:sumyvmod}
    \sum_{i = 1}^k \oA^{p^{\ell n} \ba_i + \bb_i} \left(v_i + \sum_{j = k+1}^{K'} \tc_{j,i}^{p^{\ell (t-1)}} \cdot \oA^{(p^{\ell (t-1)} - 1) (\bb_i - \bb_j)} v_j \right) \in \frm^s \mV.
    \end{equation}
    Note that $v_1, \ldots, v_k \in \frm^{s-1} \mV$, so Equation~\eqref{eq:sumyvmod} is equivalent the following equation in the $\A$-module $\frm^{s-1} \mV/\frm^s\mV$:
    \[
    \sum_{i = 1}^k \ox^{p^{\ell n} \ba_i + \bb_i} \left(v_i + \sum_{j = k+1}^{K'} c_{j,i}^{p^{\ell (t-1)}} \cdot \ox^{(p^{\ell (t-1)} - 1) (\bb_i - \bb_j)} v_j + \frm^s\mV \right) = 0.
    \]

    Note that $\frm^{s-1} \mV/\frm^s\mV$ is a finitely generated module over the field $\A$, and is therefore a finite dimensional $\A$-vector space.
    Let $\pi$ be any $\A$-linear map from $\frm^{s-1} \mV/\frm^s\mV$ to $\A$, then
    \[
    \sum_{i = 1}^k \ox^{p^{\ell n} \ba_i + \bb_i} \cdot \pi \left(v_i + \sum_{j = k+1}^{K'} c_{j,i}^{p^{\ell (t-1)}} \cdot \ox^{(p^{\ell (t-1)} - 1) (\bb_i - \bb_j)} v_j + \frm^s\mV \right) = 0
    \]
    for all $n \geq \max\{m, 2t\}$.
    Recall that for all $n$, the sequences
    \[
    \bx_i^{(\geq n)} = \left(\ox^{p^{\ell n} \ba_i + \bb_i}, \ox^{p^{\ell (n+1)} \ba_i + \bb_i}, \ldots\right) \in \A^{\N}
    \]
    for $i = 1, 2, \ldots, k$, are $\A$-linearly independent.
    Therefore we must have
    \[
    \pi \left(v_i + \sum_{j = k+1}^{K'} c_{j,i}^{p^{\ell (t-1)}} \cdot \ox^{(p^{\ell (t-1)} - 1) (\bb_i - \bb_j)} v_j + \frm^s\mV \right) = 0.
    \]
    for $i = 1, 2, \ldots, k$. 
    Since this is true for all linear maps $\pi \colon \frm^{s-1} \mV/\frm^s\mV \rightarrow \A$, we have
    \[
    v_i + \sum_{j = k+1}^{K'} c_{j,i}^{p^{\ell (t-1)}} \cdot \ox^{(p^{\ell (t-1)} - 1) (\bb_i - \bb_j)} v_j + \frm^s\mV = 0
    \]
    for $i = 1, 2, \ldots, k$.
    Denote
    \[
    w_i \coloneqq v_i + \sum_{j = k+1}^{K'} c_{j,i}^{p^{\ell (t-1)}} \cdot \ox^{(p^{\ell (t-1)} - 1) (\bb_i - \bb_j)} v_j
    \]
    for $i = 1, \ldots, k$, then $w_i \in \frm^s\mV$ for all $i$.
    Equation~\eqref{eq:sumyv3} can be rewritten as
    \begin{multline}\label{eq:sumyv4}
    \sum_{i = 1}^k \oA^{p^{\ell n} \ba_i + \bb_i} w_i \\
        + p \sum_{j = k+1}^{K'} \sum_{i \in \mJ} u_{i} \tc_{j,1}^{p^{\ell (t-1)}d_{i1}} \cdots \tc_{j,k}^{p^{\ell (t-1)} d_{ik}} \cdot \oA^{p^{\ell (n-t)}(\ba_1 d_{i1} + \cdots + \ba_k d_{ik}) + p^{\ell (t-1)} (\bb_1 d_{i1} + \cdots + \bb_k d_{ik})} \cdot \oA^{(1 - p^{\ell (2t-1)}) \bb_j} \cdot v_j \\
        + \sum_{i = K'+1}^K \oA^{p^{\ell n} \ba_i + \bb_i} v_i = 0.
    \end{multline}
    Note that $w_1, \ldots, w_k, pv_{k+1}, \ldots, pv_{K'}, v_{K'+1}, \ldots, v_{K}$ are now all in $\frm^s \mV$.
    The left hand side of Equation~\eqref{eq:sumyv4} is a sum of terms of the form
    $
        \oA^{p^{\ell (n-t)} \ba + \bb} v
    $,
    for $v \in \frm^s \mV$ and $\ba, \bb \in \Q^{N}$ whose denominators are not divisible by $p$.
    Indeed,
    \begin{enumerate}[nosep, label=(\roman*)]
    \item 
    the terms $\oA^{p^{\ell n} \ba_i + \bb_i} w_i, i = 1, \ldots, k$, can be written in the form $\oA^{p^{\ell (n-t)} \ba + \bb} v$, with
    \[
    \ba \coloneqq (p^{\ell n} - p^{\ell (n-t)}) \ba_i \;, \quad \bb \coloneqq \bb_i \;, \quad v \coloneqq w_i.
    \]
    Similarly, the terms $\oA^{p^{\ell n} \ba_i + \bb_i} v_i, i = K'+1, \ldots, K$, can be written in the form $\oA^{p^{\ell (n-t)} \ba + \bb} v$.
    \item
    The terms
    \[
    p \left( u_{i} \tc_{j,1}^{p^{\ell (t-1)}d_{i1}} \cdots \tc_{j,k}^{p^{\ell (t-1)} d_{ik}} \cdot \oA^{p^{\ell (n-t)}(\ba_1 d_{i1} + \cdots + \ba_k d_{ik}) + p^{\ell (t-1)} (\bb_1 d_{i1} + \cdots + \bb_k d_{ik})} \cdot \oA^{(1 - p^{\ell (2t-1)}) \bb_j} \right) \cdot v_j,
    \]
    for $j = k+1, \ldots, K'; i \in \mJ$,
    can be written in the form $\oA^{p^{\ell (n-t)} \ba + \bb} v$ with
    \[
    \ba \coloneqq \ba_1 d_{i1} + \cdots + \ba_k d_{ik} \;, \quad \bb \coloneqq p^{\ell (t-1)} (\bb_1 d_{i1} + \cdots + \bb_k d_{ik}) + (1 - p^{\ell (2t-1)}) \bb_j,
    \]
    and
    \[
    v \coloneqq p \cdot u_{i} \tc_{j,1}^{p^{\ell (t-1)}d_{i1}} \cdots \tc_{j,k}^{p^{\ell (t-1)} d_{ik}} \cdot v_j.
    \]
    \end{enumerate}
    
    Since Equation~\eqref{eq:sumyv4} can be written as a sum of terms $\oA^{p^{\ell (n-t)} \ba + \bb} v$, $v \in \frm^s \mV$, and it holds for all $n - t \geq \max\{m, 2t\} - t$, we can apply the induction hypothesis on $s$, and conclude that Equation~\eqref{eq:sumyv4} holds for all $n - t \geq (t+1-s)t$.
    Since Equation~\eqref{eq:sumyv4} is equivalent to $\sum_{i = 1}^K \oA^{p^{\ell n} \ba_i + \bb_i} v_i = 0$ for $n \geq 2t$ (which is true whenever $n - t \geq (t+1-s)t$), we conclude that 
    \[
    \sum_{i = 1}^K \oA^{p^{\ell n} \ba_i + \bb_i} v_i = 0
    \]
    holds for all $n \geq t+ (t+1-s)t = \big(t+1 - (s-1)\big) t$.
    This finishes the induction step on $s$.
\end{proof}

\begin{proof}[Proof of Proposition~\ref{prop:symmetrize}]
    Proposition~\ref{prop:symmetrize} follows directly from Lemma~\ref{lem:syminduction} by taking $s = 0$.
\end{proof}

\paragraph{Symmetrization III: conclusion.}
In this subsection we finish the proof of Theorem~\ref{thm:primepower}.
Recall from Corollary~\ref{cor:afterprogression} that $\mZ(\alpha)$ is a finite union of sets of the form
\begin{multline}\label{eq:unionm}
        \Bigg\{\sum_{j = 1}^{r} p^{\ell(k_1 + k_2 + \cdots + k_j)} \cdot \ba_j + \bh \;\Bigg|\; k_1, \ldots, k_r \in \N,\; \bh \in H \Bigg\} \\
        = \Bigg\{\sum_{j = 1}^{r} p^{\ell n_j} \cdot \ba_j + \bh \;\Bigg|\; 0 \leq n_1 \leq n_2 \leq \cdots \leq n_r \in \N,\; \bh \in H \Bigg\}.
\end{multline}
We will use Proposition~\ref{prop:symmetrize} to decrease the exponents $\ell n_i$, and show $\mZ(\alpha)$ is a finite union of $p$-succinct sets.

\begin{proof}[Proof of Theorem~\ref{thm:primepower}]
    See Figures~\ref{fig:triangular}-\ref{fig:succinct} for an illustration of the proof.
    Denote
    \[
    \mT \coloneqq \Bigg\{\sum_{j = 1}^{r} p^{\ell n_j} \cdot \ba_j + \bh \;\Bigg|\; 0 \leq n_1 \leq n_2 \leq \cdots \leq n_r \in \N,\; \bh \in H \Bigg\}.
    \]

    \begin{figure}[h!]
    \centering
    \begin{minipage}[t]{.30\textwidth}
        \centering
        \includegraphics[width=\textwidth,height=1.0\textheight,keepaspectratio, trim={6cm 0cm 5.7cm 0cm},clip]{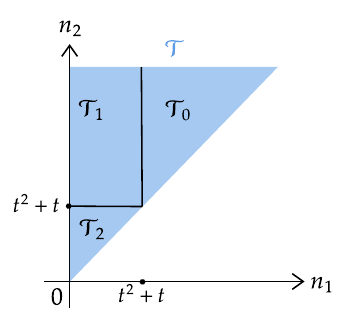}
        \caption{Decomposition of the set $\mT$.}
        \label{fig:triangular}
    \end{minipage}
    \hfill
    \begin{minipage}[t]{.30\textwidth}
        \centering
        \includegraphics[width=1\textwidth,height=1.0\textheight,keepaspectratio, trim={6cm 0cm 5.7cm 0cm},clip]{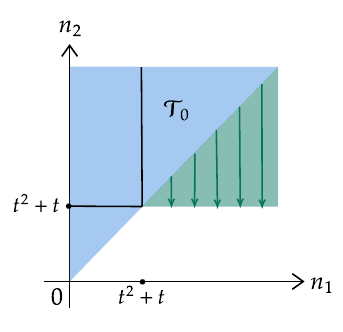}
        \caption{Applying Proposition~\ref{prop:symmetrize}.}
        \label{fig:symmetrize}
    \end{minipage}
    \hfill
    \begin{minipage}[t]{0.30\textwidth}
        \centering
        \includegraphics[width=1\textwidth,height=1.0\textheight,keepaspectratio, trim={6cm 0cm 5.7cm 0cm},clip]{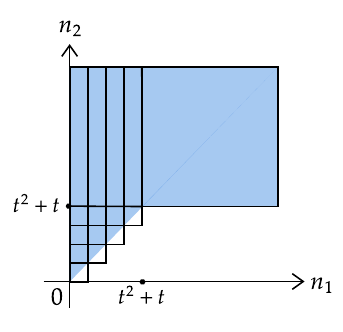}
        \caption{A finite union of $p$-succinct sets.}
        \label{fig:succinct}
    \end{minipage}
\end{figure}

    We can write $\mT$ as a union $\mT_0 \cup \mT_1 \cup \cdots \cup \mT_r$, where
    \begin{multline*}
        \mT_0 \coloneqq \Bigg\{\sum_{j = 1}^{r} p^{\ell n_j} \cdot \ba_j + \bh \;\Bigg|\; t^2+t \leq n_1 \leq n_2 \leq \cdots \leq n_r,\; \bh \in H \Bigg\}, \\
        \mT_1 \coloneqq \Bigg\{\sum_{j = 1}^{r} p^{\ell n_j} \cdot \ba_j + \bh \;\Bigg|\; 0 \leq n_1 \leq t^2+t \leq n_2 \leq \cdots \leq n_r,\; \bh \in H \Bigg\}, \\
        \ddots \\
        \mT_r \coloneqq \Bigg\{\sum_{j = 1}^{r} p^{\ell n_j} \cdot \ba_j + \bh \;\Bigg|\; 0 \leq n_1 \leq n_2 \leq \cdots \leq n_r \leq t^2 + t,\; \bh \in H \Bigg\}.
    \end{multline*}

    For each $i = 0, 1, \ldots, r$, consider the expression $\sum_{j = 1}^{r} p^{\ell n_j} \cdot \ba_j + \bh, 0 \leq n_1 \leq \cdots \leq n_i \leq t^2 + t \leq n_{i+1} \leq \cdots \leq n_r,$ in the set $\mT_i$.
    Apply Proposition~\ref{prop:symmetrize} successively for $n = n_r, n_{r-1}, \ldots, n_{i+2}$, we obtain a new set $\widetilde{\mT}_i \supseteq \mT_i$ such that $\widetilde{\mT}_i \subseteq \mZ(\alpha)$, where
    \[
    \widetilde{\mT}_i \coloneqq \Bigg\{\sum_{j = 1}^{r} p^{\ell n_j} \cdot \ba_j + \bh \;\Bigg|\; 0 \leq n_1 \leq \cdots \leq n_i \leq t^2 + t, \; t^2+t \leq n_{i+1}, \ldots, t^2+t \leq n_r,\; \bh \in H \Bigg\}.
    \]
    Each set $\widetilde{\mT}_i$ can be written as a finite union of $p$-succinct sets
    \begin{align*}
    \widetilde{\mT}_i
    & = \bigcup_{0 \leq n_1 \leq \cdots \leq n_i \leq t^2 + t} \Bigg\{\sum_{j = 1}^{r} p^{\ell n_j} \cdot \ba_j + \bh \;\Bigg|\; t^2+t \leq n_{i+1}, \ldots, t^2+t \leq n_r,\; \bh \in H \Bigg\} \\
    & = \bigcup_{0 \leq n_1 \leq \cdots \leq n_i \leq t^2 + t} \Bigg\{\left(p^{\ell n_1} \ba_1 + \cdots + p^{\ell n_i} \ba_i\right) + \sum_{j = i+1}^{r} p^{\ell n'_j} \cdot \left( p^{\ell(t^2 + t)} \ba_j \right) + \bh \;\Bigg|\; \\
    & \hspace{10cm} n'_{i+1}, \ldots, n'_r \in \N,\; \bh \in H \Bigg\} \\
    & = \bigcup_{0 \leq n_1 \leq \cdots \leq n_i \leq t^2 + t} S\left(\ell; p^{\ell n_1} \ba_1 + \cdots + p^{\ell n_i} \ba_i, p^{\ell(t^2 + t)} \ba_{i+1}, \ldots, p^{\ell(t^2 + t)} \ba_r; H\right).
    \end{align*}
    Since $\mZ(\alpha) \supseteq \left(\widetilde{\mT}_0 \cup \cdots \cup \widetilde{\mT}_r \right) \supseteq \left(\mT_0 \cup \cdots \cup \mT_r \right) = \mT$, and $\mZ(\alpha)$ is a finite union of different $\mT$'s, we obtain that $\mZ(\alpha)$ is a finite union of different $\widetilde{\mT} \coloneqq \widetilde{\mT}_0 \cup \cdots \cup \widetilde{\mT}_r$.
    Since each $\widetilde{\mT}_i$ is a finite union of $p$-succinct sets, we conclude that $\mZ(\alpha)$ is also a finite union of $p$-succinct sets, hence $p$-normal.
\end{proof}

\section{Linear-exponential Diophantine equations to S-unit equation}\label{sec:ltos}
In this section we reduce linear-exponential Diophantine equations to S-unit equations:
\begin{prop}\label{prop:lintoSunit}
    Let $T = p_1^{e_1} p_2^{e_2} \cdots p_k^{e_k}$ be as in Theorem~\ref{thm:mainequiv}.
    Deciding whether a system of linear-exponential Diophantine equations (Equations~\eqref{eq:linearpower}) admits a solution reduces to deciding whether an S-unit equation in a $\ZT[X_1^{\pm}, \ldots, X_N^{\pm}]$-module (Equation~\eqref{eq:Sunit}) admits a solution.
\end{prop}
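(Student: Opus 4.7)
The plan is to encode a given linear-exponential Diophantine system into a finite disjunction of S-unit equations over $\ZT[X_1^\pm,\ldots,X_N^\pm]$-modules, exploiting that only a Turing reduction is required, so that several oracle queries may be combined.

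First, I would construct each target module $\mM$ as a direct sum of two kinds of components. For every prime $p_j$ dividing $T$, I would include a torsion component isomorphic to $\Z_{/p_j^{e_j}}$, on which an S-unit variable $X_i$ (whenever its intended base $q_i$ is distinct from $p_j$) acts as multiplication by $q_i$; since $q_i$ is then a unit modulo $p_j^{e_j}$, this action is invertible, and $X_i^{n_i}$ reads off $q_i^{n_i} \pmod{p_j^{e_j}}$. In addition, I would include a free rank-one component $\ZT[U^\pm]$, on which further S-unit variables act as multiplication by prescribed monomials $U^{\alpha}$. This exploits the elementary fact that in a free module, $U^a\cdot 1 = U^b\cdot 1$ if and only if $a = b$ in $\Z$, so that a single S-unit equation can encode any $\Z$-linear relation on the exponents. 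Collectively, via the Chinese Remainder Theorem, the torsion components capture the underlying congruence of the system modulo $T$, while the free component captures the $\Z$-linear content, and a system of $L$ equations is assembled into a single S-unit equation over $\mM^L$.

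For terms $c_{l,i}\,q_i^{n_i}$ with $q_i = p_{j(i)}$, the prescription above breaks since $q_i$ is not a unit in $\Z_{/p_{j(i)}^{e_{j(i)}}}$. To handle this, I would case-split: either $n_i < e_{j(i)}$, enumerating the finitely many possible exact values and substituting them into the module elements $m_0,\ldots,m_K$, or $n_i \geq e_{j(i)}$, in which case $q_i^{n_i}$ vanishes modulo $p_{j(i)}^{e_{j(i)}}$ and the offending term is simply deleted from that component. The positivity constraint $n_i \in \N$ (as opposed to the a priori $\Z$-valued S-unit exponents) would be enforced by a similar case analysis over sign patterns, combined with the fact that in the small case, exact enumeration pins down the sign, while in the large case the free-module portion and the torsion-module portion are forced to be mutually consistent only for non-negative $n_i$.

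The main obstacle, I expect, will be the discrepancy between equations over $\Z$, the intended semantics, and the intrinsically modular nature of $\ZT$-module equations: an S-unit equation in a $\ZT$-module cannot directly capture an integer identity like $c_1 q_1^{n_1} + c_2 z_2 = b$ once the terms $q_i^{n_i}$ grow beyond $T$. The resolution is to use the linear variables $z_{d+j}$ to absorb the integer-valued slack: after a standard row reduction over $\Z$ that eliminates the linear unknowns, the integer system is equivalent to a family of congruence conditions of the shape $\sum_i c'_{l,i}\,q_i^{n_i} \equiv b'_l \pmod{M_l}$, where each modulus $M_l$ divides a suitable power of $T$ (possibly after an initial enumeration over orders of the $q_i$ modulo the part of the linear content that is coprime to $T$). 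Each such congruence is then faithfully encoded in the module $\mM$ described above. Verifying the equivalence between the original system and the resulting finite disjunction of S-unit queries, and controlling the combinatorial case analysis, will be the technical heart of the argument.
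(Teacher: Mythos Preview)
Your approach has a genuine gap. You correctly observe that your torsion components only see $q_i^{n_i}$ modulo $T$, and you propose to recover the missing integer content by row-reducing out the linear unknowns $z_{d+1},\ldots,z_D$ so that only congruences remain. This does not work: after eliminating the linear variables from $Cx + C'z = b$ (with $x_i = q_i^{n_i}$), the condition $b - Cx \in C'\Z^{D-d}$ decomposes, via Smith normal form, into congruences \emph{together with} $\Q$-linear equations on $x$, and the latter are genuine integer identities $\sum_i c'_{l,i}\,q_i^{n_i} = b'_l$, not congruences. In the extreme case $D = d$ there are no linear variables at all and you are left with a pure exponential equation such as $2^{n_1} + 3^{n_2} = 5$, which no finite set of congruences can capture. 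Your free component $\ZT[U^\pm]$ does not help here either: it encodes $\Z$-linear relations among the \emph{S-unit exponents} $n_i$, not among the values $q_i^{n_i}$.

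The paper resolves this by a change of viewpoint that you are missing. Rather than letting the S-unit exponent play the role of $n_i$, it lets the S-unit exponent $z_{1i}$ play the role of the \emph{value} $q_i^{n_i}$ itself. The linear equation $\sum_i c_i z_{1i} = b$ is then encoded \emph{exactly over $\Z$} by the monomial equation $X_1^{z_{11}}\cdots X_D^{z_{1D}} = Y^b$ in $\ZT[X_1^\pm,\ldots,X_D^\pm,Y^\pm]/\langle X_i - Y^{c_i}\rangle \cong \ZT[Y^\pm]$ (this is precisely your free-module observation, but applied to $q_i^{n_i}$ rather than $n_i$). The remaining constraint $z_{1i} \in q_i^{\N}$ is encoded by a separate S-unit equation over an $\F_{q_i}$-module, using the Derksen identity $(X+1)^z - X^z = 1$ in $\F_{q_i}[X]$, which holds iff $z \in q_i^{\N}$. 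Taking the direct product of all these modules yields a single S-unit equation over a $\ZT$-module.
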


The main idea is as follows.
Suppose we are given a system of the form~\eqref{eq:linearpower}:
\begin{align*}
        c_{1,1} \cdot q_1^{n_1} + \cdots + c_{1,d} \cdot q_d^{n_d} + c_{1,d+1} \cdot z_{d+1} + \cdots + c_{1, D} \cdot z_{D} & = b_1, \nonumber \\
        & \vdots \nonumber \\
        c_{L,1} \cdot q_1^{n_1} + \cdots + c_{L,d} \cdot q_d^{n_d} + c_{L,d+1} \cdot z_{d+1} + \cdots + c_{L, D} \cdot z_{D} & = b_L,
\end{align*}
with $1 \leq d \leq D$, $q_1, \ldots, q_d \in \{p_1, \ldots, p_k\}$.
We will construct a finitely presented $\ZT[X_1^{\pm}, \ldots, X_D^{\pm}]$-module $\mM$ and its elements $m_0, m_1, \ldots, m_K$, satisfying the following desired property.

\textbf{Desired property:}
Let $(z_{11}, \ldots, z_{1D}) \in \Z^{D}$, the equation
\begin{equation}\label{eq:Sunitidea}
X_1^{z_{11}} X_2^{z_{12}} \cdots X_D^{z_{1D}} \cdot m_1 + \cdots + X_1^{z_{K1}} X_2^{z_{K2}} \cdots X_D^{z_{KD}} \cdot m_K = m_0
\end{equation}
can be satisfied for some $(z_{21}, \ldots, z_{2D}, \ldots, z_{K1}, \ldots,  z_{KD}) \in \Z^{(K-1)D}$, if and only if
\begin{align}\label{eq:mixed}
        & c_{1,1} \cdot z_{11} + \cdots + c_{1, D} \cdot z_{1D} = b_1, \nonumber\\
        & \hspace{3.5cm} \vdots \\
        & c_{L,1} \cdot z_{11} + \cdots + c_{L, D} \cdot z_{1D} = b_L, \nonumber\\
        & z_{11} \in q_1^{\N}, z_{12} \in q_2^{\N}, \ldots, z_{1d} \in q_d^{\N}. \nonumber
\end{align}

Here, $p^{\N}$ denotes the set $\{p^n \mid n \in \N\}$.
This desired property will allow us to reduce solving the system of linear-exponential Diophantine equations~\eqref{eq:mixed} to solving the S-unit equation~\eqref{eq:Sunitidea}.
Our strategy is to, for each equation in~\eqref{eq:mixed}, construct an S-unit equation in some module $\mM$.
We then combine these S-unit equations (in different modules $\mM$) into a single one by taking the direct product of these modules.

First, let us take care of the linear equations in~\eqref{eq:mixed}.

\begin{lem}\label{lem:linear}
    Let $c_1, \ldots, c_D, b \in \Z$. 
    Define the $\ZT[X_1^{\pm}, \ldots, X_D^{\pm}]$-module
    \begin{equation}\label{eq:defMlin}
        \mM \coloneqq \ZT[X_1^{\pm}, \ldots, X_D^{\pm}, Y^{\pm}]/\gen{X_1 - Y^{c_1}, X_2 - Y^{c_2}, \ldots, X_D - Y^{c_D}}.
    \end{equation}
    Then $(z_{11}, \ldots, z_{1D}) \in \Z^D$ satisfies 
    \begin{equation}\label{eq:lintomono}
        X_1^{z_{11}} X_2^{z_{12}} \cdots X_D^{z_{1D}} = Y^b \qquad \text{ in } \mM,
    \end{equation}
    if and only if $c_1 \cdot z_{11} + \cdots + c_D \cdot z_{1D} = b$.
\end{lem}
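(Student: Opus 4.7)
The plan is to prove both implications via a single ring homomorphism that ``evaluates'' each $X_i$ at $Y^{c_i}$. Specifically, I would define
\[
\phi \colon \ZT[X_1^{\pm}, \ldots, X_D^{\pm}, Y^{\pm}] \longrightarrow \ZT[Y^{\pm}], \qquad X_i \mapsto Y^{c_i}, \quad Y \mapsto Y.
\]
Since $\phi(X_i - Y^{c_i}) = 0$ for each $i$, the ideal $\gen{X_1 - Y^{c_1}, \ldots, X_D - Y^{c_D}}$ lies in $\ker \phi$, so $\phi$ descends to a well-defined ring homomorphism $\overline{\phi} \colon \mM \to \ZT[Y^{\pm}]$.

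For the ``if'' direction, suppose $c_1 z_{11} + \cdots + c_D z_{1D} = b$. Inside $\mM$, the defining relations give $X_i = Y^{c_i}$ for each $i$, so
\[
X_1^{z_{11}} \cdots X_D^{z_{1D}} = Y^{c_1 z_{11}} \cdots Y^{c_D z_{1D}} = Y^{c_1 z_{11} + \cdots + c_D z_{1D}} = Y^b.
\]
For the ``only if'' direction, assume Equation~\eqref{eq:lintomono} holds in $\mM$. Applying $\overline{\phi}$ yields
\[
Y^{c_1 z_{11} + \cdots + c_D z_{1D}} = \overline{\phi}\bigl(X_1^{z_{11}} \cdots X_D^{z_{1D}}\bigr) = \overline{\phi}(Y^b) = Y^b
\]
in $\ZT[Y^{\pm}]$. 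Since the monomials $\{Y^n\}_{n \in \Z}$ are $\ZT$-linearly independent in the Laurent polynomial ring $\ZT[Y^{\pm}]$, the equality $Y^{c_1 z_{11} + \cdots + c_D z_{1D}} = Y^b$ forces $c_1 z_{11} + \cdots + c_D z_{1D} = b$.

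There is no real obstacle here: the lemma is essentially the observation that specialization along $X_i = Y^{c_i}$ reduces a multiplicative monomial identity in $\mM$ to an identity of $Y$-monomials in a Laurent polynomial ring, where distinct exponents remain distinct. The only point that deserves a brief verification is that $\phi$ is well-defined on Laurent polynomials (i.e.\ respects the invertibility of each $X_i$), which is immediate because each $Y^{c_i}$ is a unit in $\ZT[Y^{\pm}]$.
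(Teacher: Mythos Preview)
Your proof is correct and follows essentially the same approach as the paper: the paper observes that $\mM \cong \ZT[Y^{\pm}]$ via $X_i \mapsto Y^{c_i}$ and reads off the equivalence from the monomial identity $Y^{c_1 z_{11} + \cdots + c_D z_{1D}} = Y^b$, which is exactly what your homomorphism $\overline{\phi}$ accomplishes. Your version is slightly more explicit in separating the two directions (using the relations directly for ``if'' and the induced map for ``only if''), but the underlying idea is identical.
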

\begin{proof}
    Note that $\mM$ is isomorphic to $\ZT[Y^{\pm}]$ by the map $X_i \mapsto Y^{c_i}, i = 1, \ldots, D$.
    Under this map, Equation~\eqref{eq:lintomono} becomes $Y^{c_1z_{11}} Y^{c_2z_{12}} \cdots Y^{c_D z_{1D}} = Y^b$, which is equivalent to $c_1 \cdot z_{11} + \cdots + c_D \cdot z_{1D} = b$.
\end{proof}

The $\ZT[X_1^{\pm}, \ldots, X_D^{\pm}]$-module $\mM$ defined in Equation~\eqref{eq:defMlin} might not be finitely generated.
However, both sides of Equation~\eqref{eq:lintomono} fall in the submodule generated by $1$ and $Y^b$, so we can restrict $\mM$ to this submodule.
Then $\mM$ becomes finitely generated and hence finitely presented.\footnote{Computing the finite presentation of a finitely generated submodule of $\mM$ is effective, see~\cite[Theorem~2.14]{baumslag1981computable} or \cite[Theorem~2.6]{baumslag1994algorithmic}.} 

Next, let us take care of the ``exponential'' parts of~\eqref{eq:mixed}, using a similar construction to Example~\ref{exmpl:Derksen} (or~\cite[Example~1.3]{derksen2007skolem}).

\begin{lem}\label{lem:pn}
    Let $z \in \Z$ and $p$ be a prime number. Then $z \in p^{\N}$ if and only if
    \[
    X_2^z - X_1^z = 1
    \]
    in the $\F_p[X_1^{\pm}, X_2^{\pm}]$-module $\F_p[X_1^{\pm}, X_2^{\pm}]/\gen{X_2 - X_1 - 1}$.
\end{lem}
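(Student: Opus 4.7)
The plan is to reduce the statement to a transparent equation in the fraction field $\F_p(X)$ and then apply the Frobenius together with a standard binomial argument.

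First, I would identify the module structure. The relation $X_2 = X_1 + 1$ presents $\mM \cong \F_p[X^{\pm}, (X+1)^{\pm}]$, which embeds as a subring of $\F_p(X)$ (note $X$ and $X+1$ are coprime nonzero elements of the UFD $\F_p[X]$, so the natural map $\F_p[X^{\pm}, (X+1)^{\pm}] \hookrightarrow \F_p(X)$ is injective). Under this identification, the equation $X_2^z - X_1^z = 1$ becomes the identity
\[
(X+1)^z - X^z = 1
\]
to be tested in $\F_p(X)$.

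For the forward direction ($z = p^n$ implies the equation holds): in characteristic $p$ the Frobenius gives $(a+b)^p = a^p + b^p$, so by induction $(X+1)^{p^n} = X^{p^n} + 1$, and subtracting $X^{p^n}$ yields $1$.

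For the converse I would split on the sign of $z$. If $z > 0$, expand with the binomial theorem:
\[
(X+1)^z - X^z = \sum_{k=0}^{z-1} \binom{z}{k} X^k,
\]
which equals $1 \in \F_p(X)$ iff $\binom{z}{k} \equiv 0 \pmod{p}$ for every $1 \leq k \leq z-1$. By Lucas' theorem (already cited in Lemma~\ref{lem:iterphi}), writing $z = \sum z_i p^i$ in base $p$, this vanishing condition forces the base-$p$ expansion of $z$ to have a single nonzero digit equal to $1$, i.e.\ $z = p^n$ for some $n \geq 0$. If $z = 0$ then the left side is $0 \neq 1$. If $z < 0$, write $w = -z > 0$ and clear denominators: the identity becomes $X^w - (X+1)^w = X^w(X+1)^w$ in $\F_p[X]$, which is impossible by comparing degrees since the left side has degree at most $w-1$ while the right has degree $2w$.

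No serious obstacle is expected: the only subtlety is taking care of negative and zero $z$ (since the variable ranges over all of $\Z$), but the degree argument handles this cleanly. The main conceptual ingredient is the combination of Frobenius and Lucas, both standard tools already invoked in the paper.
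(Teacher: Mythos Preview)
Your proof is correct and follows the same overall structure as the paper: identify the quotient with a subring of $\F_p(X)$, handle the forward direction via Frobenius, and split the converse by the sign of $z$. The negative case is treated by the same clear-denominators-and-compare-degrees idea (the paper phrases it as a divisibility observation $X_1^y \mid (X_1+1)^y$ forcing $y=0$, which is equivalent).

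The one genuine difference is the positive case. You expand $(X+1)^z - X^z$ binomially and invoke Lucas' theorem to conclude directly that all intermediate coefficients vanish iff $z$ has a single base-$p$ digit equal to $1$. The paper instead writes $z = p^n + a$ with $0 \le a < p^n$, splits $(X_1+1)^z = X_1^{p^n}(X_1+1)^a + (X_1+1)^a$ into two degree-disjoint pieces, and argues each must be a monomial, forcing $a=0$. Your Lucas argument is more self-contained and sidesteps the question of how to choose $n$ so that such a decomposition exists (which is not automatic for $p>2$); the paper's argument is closer in spirit to the automaton construction of Example~\ref{exmpl:Derksen}. Both are short and valid.
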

\begin{proof}
    If $z = p^n$ for some $n \in \N$, then $(X_1+1)^{p^n} = X_1^{p^n} + 1$.
    We then have $X_2^z - X_1^z = (X_1+1)^{p^n} - X_1^{p^n} = 1$ in the module $\F_p[X_1^{\pm}, X_2^{\pm}]/\gen{X_2 - X_1 - 1}$.

    For the other implication, suppose $X_2^z - X_1^z = 1$ in $\F_p[X_1^{\pm}, X_2^{\pm}]/\gen{X_2 - X_1 - 1}$.
    If $z \leq 0$, write $z = -y$ with $y \geq 0$.
    Then $X_1^y - X_2^y = X_1^y X_2^y$ in $\F_p[X_1^{\pm}, X_2^{\pm}]/\gen{X_2 - X_1 - 1}$.
    This is equivalent to 
    \begin{equation}\label{eq:X1}
        X_1^y - (X_1+1)^y = X_1^y (X_1+1)^y
    \end{equation}
    in $\F_p[X_1]$.
    The form of Equation~\eqref{eq:X1} suggest we must have $X_1^y \mid (X_1+1)^y$, so $y = 0$. But $y = 0$ is not a solution of~\eqref{eq:X1}.

    If $z > 0$, then the situation is similar to Example~\ref{exmpl:Derksen}.
    For a rigorous proof, the equation $X_2^z - X_1^z = 1$ in $\F_p[X_1^{\pm}, X_2^{\pm}]/\gen{X_2 - X_1 - 1}$ is equivalent to 
    \[
        (X_1+1)^z = X_1^z + 1
    \]
    in $\F_p[X_1]$.
    Write $z = p^n + a$ with $0 \leq a < p^n,\; n \in \N$.
    Then
    \[
    (X_1 + 1)^{z} = (X_1 + 1)^{p^n} (X_1 + 1)^a = (X_1^{p^n} + 1) (X_1 + 1)^a = X_1^{p^n} (X_1 + 1)^a + (X_1 + 1)^a.
    \]
    Every monomial appearing in $X_1^{p^n} (X_1 + 1)^a$ has degree larger than every monomial appearing $(X_1 + 1)^a$. But $(X_1 + 1)^{z} = X_1^{z} + 1$ has only two monomials.
    So both $X_1^{p^n} (X_1 + 1)^a$ and $(X_1 + 1)^a$ must be monomials, meaning $a = 0$.
    Therefore $z = p^n$, and we conclude that $z \in p^{\N}$.   
\end{proof}

We then need to express the equation in Lemma~\ref{lem:pn} as a system of ``full'' S-unit equations:

\begin{lem}\label{lem:pnsys}
    Let $p$ be a prime number and let $z_{1i} \in \Z$ for some $i \in \{1, \ldots, D\}$.
    One can construct a system of S-unit equations~\eqref{eq:Sunitidea} with $K=2$, in finitely presented $\F_p[X_1^{\pm}, \ldots, X_D^{\pm}]$-modules, such that $z_{1i} \in p^{\N}$ if and only if it extends to a solution $(z_{11}, \ldots, z_{1D}, z_{21}, \ldots, z_{2D}) \in \Z^{2D}$ for this system.
\end{lem}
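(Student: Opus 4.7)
The plan is to upgrade Lemma~\ref{lem:pn} to a system of S-unit equations over $\F_p[X_1^{\pm}, \ldots, X_D^{\pm}]$ with $K=2$. I will assume $D \geq 2$ and pick an auxiliary index $k \neq i$; the corner case $D=1$ is handled by first adjoining a dummy variable and working in $\F_p[X_1^{\pm}, X_2^{\pm}]$. Let $\mM_0 \coloneqq \F_p[X_1^{\pm}, \ldots, X_D^{\pm}]/\gen{X_k - X_i - 1,\; X_j - 1 : j \neq i, k}$, which is cyclic and hence finitely presented. As a ring, $\mM_0$ is isomorphic to $\F_p[Y^{\pm}, (Y+1)^{\pm}]$ via $X_i \mapsto Y$, $X_k \mapsto Y+1$, and $X_j \mapsto 1$ otherwise, so Lemma~\ref{lem:pn} reads: $z_{1i} \in p^{\N}$ iff $(Y+1)^{z_{1i}} - Y^{z_{1i}} = 1$ in $\mM_0$.

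The main equation of the system, with coefficients $(m_1, m_2, m_0) = (-1, 1, 1)$ in $\mM_0$, reads $Y^{z_{2i}}(Y+1)^{z_{2k}} - Y^{z_{1i}}(Y+1)^{z_{1k}} = 1$. To collapse this onto Lemma~\ref{lem:pn}'s equation, I adjoin three further $K=2$ auxiliary equations that force $z_{1k}=0$, $z_{2i}=0$, and $z_{2k}=z_{1i}$. The first two sit in the one-variable module $\F_p[Y^{\pm}]$ with $X_k \mapsto Y$ (respectively $X_i \mapsto Y$) and all other $X_j \mapsto 1$: taking coefficients $(m_1, m_2, m_0) = (1, 0, 1)$ (respectively $(0, 1, 1)$), they reduce to $Y^{z_{1k}}=1$ (respectively $Y^{z_{2i}}=1$). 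The third, in $\F_p[Y^{\pm}]$ with both $X_i, X_k \mapsto Y$, takes $(m_1, m_2, m_0) = (1, -1, 0)$ and yields $Y^{z_{1i}+z_{1k}} = Y^{z_{2i}+z_{2k}}$; combined with the first two, this forces $z_{2k} = z_{1i}$.

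Under these three constraints the main equation in $\mM_0$ reduces exactly to $(Y+1)^{z_{1i}} - Y^{z_{1i}} = 1$, so by Lemma~\ref{lem:pn} the system is solvable iff $z_{1i} \in p^{\N}$. The converse direction is immediate: given $z_{1i} \in p^{\N}$, the explicit extension $z_{1k} = z_{2i} = 0$, $z_{2k} = z_{1i}$, and all other $z_{1j}, z_{2j}$ equal to $0$ satisfies every equation. The key obstacle is arranging the auxiliary equations so that the ``free'' exponents $z_{1j}, z_{2j}$ for $j \neq i, k$ remain unconstrained; this is handled by making $X_j$ act trivially in every module used, so those exponents have no effect on any equation. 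Each auxiliary equation has formally $K=2$ terms, permitting one coefficient $m_i$ to vanish, which is allowed by the definition of S-unit equation.
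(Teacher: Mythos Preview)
Your proof is correct and takes essentially the same approach as the paper's: both construct the same four $K=2$ equations (one ``main'' equation in the module where $X_k = X_i + 1$, plus three auxiliary equations in one-variable modules forcing $z_{1k}=0$, $z_{2i}=0$, and $z_{2k}=z_{1i}$), reducing the system to Lemma~\ref{lem:pn}. Your explicit mention of the $D=1$ corner case is a small addition the paper leaves implicit.
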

\begin{proof}
    Without loss of generality suppose $i = 1$.
    We claim that $z_{11} \in p^{\N}$ if and only if it extends to a solution for the following system
    \begin{equation}\label{eq:syspn}
    \begin{cases}
        & X_1^{z_{11}} X_2^{z_{12}} \cdots X_D^{z_{1D}} \cdot (-1) + X_1^{z_{21}} X_2^{z_{22}} \cdots X_D^{z_{2D}} \cdot 1 = 1 \\ & \hspace{5cm} \text{ in }\; \F_p[X_1^{\pm}, \ldots, X_D^{\pm}]/\gen{X_2 - X_1 - 1, X_3 - 1, \ldots, X_D - 1} \\
        & X_1^{z_{11}} X_2^{z_{12}} \cdots X_D^{z_{1D}} \cdot 0 + X_1^{z_{21}} X_2^{z_{22}} \cdots X_D^{z_{2D}} \cdot 1 = 1 \\ & \hspace{5cm} \text{ in }\; \F_p[X_1^{\pm}, \ldots, X_D^{\pm}]/\gen{X_2 - 1, X_3 - 1, \ldots, X_D - 1} \\
        & X_1^{z_{11}} X_2^{z_{12}} \cdots X_D^{z_{1D}} \cdot 1 + X_1^{z_{21}} X_2^{z_{22}} \cdots X_D^{z_{2D}} \cdot 0 = 1 \\ & \hspace{5cm} \text{ in }\; \F_p[X_1^{\pm}, \ldots, X_D^{\pm}]/\gen{X_1 - 1, X_3 - 1, \ldots, X_D - 1} \\
        & X_1^{z_{11}} X_2^{z_{12}} \cdots X_D^{z_{1D}} \cdot (-1) + X_1^{z_{21}} X_2^{z_{22}} \cdots X_D^{z_{2D}} \cdot 1 = 0 \\ & \hspace{5cm} \text{ in }\; \F_p[X_1^{\pm}, \ldots, X_D^{\pm}]/\gen{X_2 - X_1, X_3 - 1, \ldots, X_D - 1}. \\
    \end{cases}
    \end{equation}
    
    Note that the elements $X_3 - 1, \ldots, X_D - 1$ in the quotient means that $X_3 = 1, \ldots, X_D = 1$ in the respective modules.
    Thus, the second equation of~\eqref{eq:syspn} is equivalent to $X_1^{z_{21}} = 1$, which means $z_{21} = 0$.
    The third equation of~\eqref{eq:syspn} is equivalent to $X_2^{z_{12}} = 1$, which means $z_{12} = 0$.
    Then, the fourth equation of~\eqref{eq:syspn} becomes $- X_1^{z_{11}} + X_2^{z_{22}} = 0$, in $\F_p[X_1^{\pm}, \ldots, X_D^{\pm}]/\gen{X_2 - X_1, X_3 - 1, \ldots, X_D - 1}$ this yields $z_{11} = z_{22}$.
    Finally, putting $z_{12} = z_{21} = 0,\; z_{11} = z_{22}$, in the first equation of~\eqref{eq:syspn} yields $- X_1^{z_{11}} + X_2^{z_{11}} = 1$ in $\F_p[X_1^{\pm}, X_2^{\pm}]/\gen{X_2 - X_1 - 1}$.
    Using Lemma~\ref{lem:pn}, this is equivalent to $z_{11} = z_{22} \in p^{\N}$.
    Thus, the system~\eqref{eq:syspn} has a solution if and only if $z_{11} \in p^{\N}$.
\end{proof}

We are now ready to complete the proof of Proposition~\ref{prop:lintoSunit}, and consequently, of Theorem~\ref{thm:mainequiv}.
\begin{proof}[Proof of Proposition~\ref{prop:lintoSunit}]
    Suppose we are given a system of the form~\eqref{eq:linearpower}, which we rewrite as
    \begin{align}\label{eq:mixed2}
        & c_{1,1} \cdot z_{11} + \cdots  + c_{1, D} \cdot z_{1D} = b_1, \nonumber\\
        & \hspace{3.5cm} \vdots \\
        & c_{L,1} \cdot z_{11} + \cdots + c_{L, D} \cdot z_{1D} = b_L, \nonumber\\
        & z_{11} \in q_1^{\N}, z_{12} \in q_2^{\N}, \ldots, z_{1d} \in q_d^{\N}. \nonumber
    \end{align}
    Note that Equation~\eqref{eq:lintomono} in Lemma~\ref{lem:linear} can also be written as
    \[
    X_1^{z_{11}} \cdots X_D^{z_{1D}} \cdot 1 + X_1^{z_{21}} \cdots X_D^{z_{2D}} \cdot 0 = Y^b \qquad \text{ in } \mM
    \]
    over the variables $(z_{11}, \ldots, z_{1D}, z_{21}, \ldots, z_{2D}) \in \Z^{2D}$. And we can restrict the module $\mM$ to its finitely presented submodule $\gen{1, Y^b}$.
    
    Therefore by Lemma~\ref{lem:linear} and Lemma~\ref{lem:pnsys}, we can construct a system of S-unit equations
    \begin{alignat}{2}\label{eq:constructSunit}
        & X_1^{z_{11}} \cdots X_D^{z_{1D}} \cdot m_{11} + X_1^{z_{21}} \cdots X_D^{z_{2D}} \cdot m_{12} = m_{10} && \qquad \text{ in } \mM_1, \nonumber\\
        & \hspace{6cm} \vdots && \\
         & X_1^{z_{11}} \cdots X_D^{z_{1D}} \cdot m_{s 1} + X_1^{z_{21}} \cdots X_D^{z_{2D}} \cdot m_{s2} = m_{s0} && \qquad \text{ in } \mM_s, \nonumber
    \end{alignat}
     where each $\mM_i$ is a finitely presented $\F_{q_i}[X_1^{\pm}, \ldots, X_D^{\pm}]$-module for some $q_i \in \{p_1, \ldots, p_k\}$, with the following property.
    For a tuple $(z_{11}, \ldots, z_{1D}) \in \Z^D$, it satisfies the system~\eqref{eq:mixed2} if and only if it extends to a solution $(z_{11}, \ldots, z_{1D}, z_{21}, \ldots, z_{2D}) \in \Z^{2D}$ for the system~\eqref{eq:constructSunit}.

    Since $T = p_1^{e_1} p_2^{e_2} \cdots p_k^{e_k}$ and $q_i \in \{p_1, \ldots, p_k\}$, each $\mM_i$ is also a finitely presented module over the ring $\ZT[X_1^{\pm}, \ldots, X_D^{\pm}]$.
    Let $\mM$ denote the direct product $\mM_1 \times \mM_2 \times \cdots \times \mM_s$, it is also a finitely presented $\ZT[X_1^{\pm}, \ldots, X_D^{\pm}]$-module.
    Thus, the system of equations~\eqref{eq:constructSunit} can be written as a single equation
    \[
    X_1^{z_{11}} \cdots X_D^{z_{1D}} \cdot (m_{11}, \ldots, m_{s1}) + X_1^{z_{21}} \cdots X_D^{z_{2D}} \cdot (m_{12}, \ldots, m_{s2}) = (m_{10}, \ldots, m_{s0})
    \]
    in the $\ZT[X_1^{\pm}, \ldots, X_D^{\pm}]$-module $\mM$.
    The concludes the proof.
\end{proof}

\begin{proof}[Proof of Theorem~\ref{thm:mainequiv}]
    Theorem~\ref{thm:mainequiv} follows directly from Corollary~\ref{cor:intersection} and Proposition~\ref{prop:lintoSunit}.
\end{proof}

\bibliography{torsion}

\newcommand{\etalchar}[1]{$^{#1}$}
\begin{thebibliography}{AKM{\etalchar{+}}21}

\bibitem[AB12]{adamczewski2012vanishing}
Boris Adamczewski and Jason~P. Bell.
\newblock On vanishing coefficients of algebraic power series over fields of positive characteristic.
\newblock {\em Inventiones mathematicae}, 187(2):343--393, 2012.

\bibitem[AKM{\etalchar{+}}21]{10.1007/978-3-030-80914-0_1}
Alejandra Alvarado, Angelos Koutsianas, Beth Malmskog, Christopher Rasmussen, Christelle Vincent, and Mckenzie West.
\newblock A robust implementation for solving the {S}-unit equation and several applications.
\newblock In {\em Arithmetic Geometry, Number Theory, and Computation}, pages 1--41. Springer International Publishing, 2021.

\bibitem[AM69]{atiyah1969introduction}
M.~F. Atiyah and I.~G. MacDonald.
\newblock {\em Introduction to Commutative Algebra}.
\newblock Addison-Wesley, 1969.

\bibitem[BB23]{bajpai2023effective}
Prajeet Bajpai and Michael~A. Bennett.
\newblock Effective {$S$}-unit equations beyond 3 terms: Newman's conjecture.
\newblock {\em arXiv preprint arXiv:2308.05162}, 2023.

\bibitem[BBC{\etalchar{+}}96]{babai1996multiplicative}
L{\'a}szl{\'o} Babai, Robert Beals, Jin-yi Cai, G{\'a}bor Ivanyos, and Eugene~M Luks.
\newblock Multiplicative equations over commuting matrices.
\newblock In {\em Proceedings of the Seventh Annual ACM-SIAM Symposium on Discrete Algorithms}, pages 498--507, 1996.

\bibitem[BCM23]{benedikt2023complexity}
Michael Benedikt, Dmitry Chistikov, and Alessio Mansutti.
\newblock The complexity of {P}resburger arithmetic with power or powers.
\newblock In {\em 50th International Colloquium on Automata, Languages, and Programming, {ICALP} 2023, July 10-14, 2023, Paderborn, Germany}, volume 261 of {\em LIPIcs}, pages 112:1--112:18. Schloss Dagstuhl - Leibniz-Zentrum f{\"{u}}r Informatik, 2023.

\bibitem[BCMI81]{baumslag1981computable}
Gilbert Baumslag, Frank~B. Cannonito, and Charles~F. Miller~III.
\newblock Computable algebra and group embeddings.
\newblock {\em Journal of Algebra}, 69(1):186--212, 1981.

\bibitem[BCR94]{baumslag1994algorithmic}
Gilbert Baumslag, Frank~B. Cannonito, and Derek~J.S. Robinson.
\newblock The algorithmic theory of finitely generated metabelian groups.
\newblock {\em Transactions of the American Mathematical Society}, 344(2):629--648, 1994.

\bibitem[BF82]{brenner1982exponential}
J.L. Brenner and Lorraine Foster.
\newblock Exponential {D}iophantine equations.
\newblock {\em Pacific Journal of Mathematics}, 101(2):263--301, 1982.

\bibitem[BKN{\etalchar{+}}24]{berthe2024decidability}
Val{\'e}rie Berth{\'e}, Toghrul Karimov, Joris Nieuwveld, Jo{\"e}l Ouaknine, Mihir Vahanwala, and James Worrell.
\newblock On the decidability of monadic second-order logic with arithmetic predicates.
\newblock In {\em Proceedings of the 39th Annual ACM/IEEE Symposium on Logic in Computer Science}, pages 1--14, 2024.

\bibitem[Bro93]{brown1993matrices}
William~C. Brown.
\newblock {\em Matrices over Commutative Rings}, volume 169 of {\em Monographs and Textbooks in Pure and Applied Mathematics}.
\newblock M. Dekker, 1993.

\bibitem[BS08]{10.5555/1481045}
J.~P. Buhler and P.~Stevenhagen.
\newblock {\em Algorithmic Number Theory. Lattices, Number Fields, Curves and Cryptography}.
\newblock Cambridge University Press, USA, 1st edition, 2008.

\bibitem[Bü60]{buchi1960weak}
J.~Richard Büchi.
\newblock Weak second-order arithmetic and finite automata.
\newblock {\em Mathematical Logic Quarterly}, 6(1-6):66--92, 1960.

\bibitem[Cao99]{cao1999note}
Zhenfu Cao.
\newblock A note on the {D}iophantine equation $a^x + b^y= c^z$.
\newblock {\em Acta Arithmetica}, 91(1):85--93, 1999.

\bibitem[COW13]{chonev2013orbit}
Ventsislav Chonev, Jo{\"e}l Ouaknine, and James Worrell.
\newblock The orbit problem in higher dimensions.
\newblock In {\em Proceedings of the forty-fifth annual ACM Symposium on Theory of Computing}, pages 941--950, 2013.

\bibitem[Der07]{derksen2007skolem}
Harm Derksen.
\newblock A {S}kolem--{M}ahler--{L}ech theorem in positive characteristic and finite automata.
\newblock {\em Inventiones mathematicae}, 168(1):175--224, 2007.

\bibitem[DKW23]{draisma2023no}
Jan Draisma, Thomas Kahle, and Finn Wiersig.
\newblock No short polynomials vanish on bounded rank matrices.
\newblock {\em Bulletin of the London Mathematical Society}, 55(4):1791--1807, 2023.

\bibitem[DM12]{derksen2012linear}
Harm Derksen and David Masser.
\newblock Linear equations over multiplicative groups, recurrences, and mixing {I}.
\newblock {\em Proceedings of the London Mathematical Society}, 104(5):1045--1083, 2012.

\bibitem[DM15]{derksen2015linear}
Harm Derksen and David Masser.
\newblock Linear equations over multiplicative groups, recurrences, and mixing {II}.
\newblock {\em Indagationes Mathematicae}, 26(1):113--136, 2015.

\bibitem[Don24]{dong2024submonoid}
Ruiwen Dong.
\newblock Submonoid {M}embership in n-dimensional lamplighter groups and {S}-unit equations.
\newblock {\em arXiv preprint arXiv:2409.07077}, 2024.
\newblock To appear in ICALP 2025.

\bibitem[Don25]{Dong2025LinearEW}
Ruiwen Dong.
\newblock Linear equations with monomial constraints and decision problems in abelian-by-cyclic groups.
\newblock In {\em Proceedings of the 2025 Annual ACM-SIAM Symposium on Discrete Algorithms (SODA)}, pages 1892--1908, 2025.

\bibitem[EGST88]{Evertse1988}
Jan-Hendrik Evertse, Kálmán Győry, C.~L. Stewart, and R.~Tijdeman.
\newblock S-unit equations and their applications.
\newblock In {\em New Advances in Transcendence Theory}, pages 110--174. Cambridge University Press, 1988.

\bibitem[Eis13]{eisenbud2013commutative}
David Eisenbud.
\newblock {\em Commutative algebra: with a view toward algebraic geometry}, volume 150.
\newblock Springer Science \& Business Media, 2013.

\bibitem[Eve84]{Evertse1984}
Jan-Hendrik Evertse.
\newblock On sums of {S}-units and linear recurrences.
\newblock {\em Compositio Mathematica}, 53(2):225--244, 1984.

\bibitem[FGLZ20]{figelius2020complexity}
Michael Figelius, Moses Ganardi, Markus Lohrey, and Georg Zetzsche.
\newblock The complexity of knapsack problems in wreath products.
\newblock In {\em 47th International Colloquium on Automata, Languages, and Programming, {ICALP} 2020}, volume 168 of {\em LIPIcs}, pages 126:1--126:18. Schloss Dagstuhl - Leibniz-Zentrum f{\"{u}}r Informatik, 2020.

\bibitem[Gra97]{granville1997arithmetic}
Andrew Granville.
\newblock Arithmetic properties of binomial coefficients {I}: {B}inomial coefficients modulo prime powers.
\newblock {\em Canadian Mathematical Society Conference Proceedings}, 20:253--276, 1997.

\bibitem[Haa18]{haase2018survival}
Christoph Haase.
\newblock A survival guide to {P}resburger arithmetic.
\newblock {\em ACM SIGLOG News}, 5(3):67--82, 2018.

\bibitem[HS22]{Hieronymi2022}
Philipp Hieronymi and Christian Schulz.
\newblock A strong version of {C}obham's theorem.
\newblock {\em SIAM Journal on Computing}, 51(6):1400--1421, 2022.

\bibitem[IS24]{ibrahim2024positivity}
Alaa Ibrahim and Bruno Salvy.
\newblock Positivity certificates for linear recurrences.
\newblock In {\em Proceedings of the 2024 Annual ACM-SIAM Symposium on Discrete Algorithms (SODA)}, pages 982--994. SIAM, 2024.

\bibitem[JKK17]{jensen2017finding}
Anders Jensen, Thomas Kahle, and Lukas Katth{\"a}n.
\newblock Finding binomials in polynomial ideals.
\newblock {\em Research in the Mathematical Sciences}, 4:1--10, 2017.

\bibitem[KLN{\etalchar{+}}25]{karimov2025decidability}
Toghrul Karimov, Florian Luca, Joris Nieuwveld, Jo{\"e}l Ouaknine, and James Worrell.
\newblock On the decidability of {P}resburger arithmetic expanded with powers.
\newblock In {\em Proceedings of the 2025 Annual ACM-SIAM Symposium on Discrete Algorithms (SODA)}, pages 2755--2778. SIAM, 2025.

\bibitem[Koz12]{kozen2012automata}
Dexter~C. Kozen.
\newblock {\em Automata and computability}.
\newblock Springer Science \& Business Media, 2012.

\bibitem[Lan60]{Lang1960}
Serge Lang.
\newblock Integral points on curves.
\newblock {\em Publications Mathématiques de l'IHÉS}, 6:27--43, 1960.

\bibitem[LLN{\etalchar{+}}22]{lipton2022skolem}
Richard Lipton, Florian Luca, Joris Nieuwveld, Jo{\"e}l Ouaknine, David Purser, and James Worrell.
\newblock On the {S}kolem problem and the {S}kolem conjecture.
\newblock In {\em Proceedings of the 37th Annual ACM/IEEE Symposium on Logic in Computer Science}, pages 1--9, 2022.

\bibitem[Mah33]{Mahler1932}
K.~Mahler.
\newblock Zur {A}pproximation algebraischer {Z}ahlen, {I}.
\newblock {\em Math. Ann.}, 107:691–730, 1933.

\bibitem[Mas04]{masser2004mixing}
David Masser.
\newblock Mixing and linear equations over groups in positive characteristic.
\newblock {\em Israel Journal of Mathematics}, 142(1):189--204, 2004.

\bibitem[Nos82]{noskov1982conjugacy}
Gennady~Andreevich Noskov.
\newblock Conjugacy problem in metabelian groups.
\newblock {\em Mathematical notes of the Academy of Sciences of the USSR}, 31:252--258, 1982.

\bibitem[OW15]{ouaknine2015linear}
Jo{\"e}l Ouaknine and James Worrell.
\newblock On linear recurrence sequences and loop termination.
\newblock {\em ACM Siglog News}, 2(2):4--13, 2015.

\bibitem[Pre29]{presburger1929uber}
Moj\.{z}esz Presburger.
\newblock {\"U}ber die {V}ollstandigkeiteines gewissen {S}ystems der {A}rithmetik ganzer {Z}ahlen, in welchen die {A}ddition als einzige {O}peration hervortritt.
\newblock In {\em Comptes Rendus du I Congrès des Mathématiciens des Pays Slaves}, 1929.

\bibitem[Rut92]{rutman1992grobner}
Elizabeth~W. Rutman.
\newblock Gr{\"o}bner bases and primary decomposition of modules.
\newblock {\em Journal of symbolic computation}, 14(5):483--503, 1992.

\bibitem[Sem80]{semenov1980certain}
Aleksei~L. Sem\"{e}nov.
\newblock On certain extensions of the arithmetic of addition of natural numbers.
\newblock {\em Mathematics of the USSR-Izvestiya}, 15(2):401, 1980.

\bibitem[ST86]{alma991009821129706011}
T.~N. Shorey and R.~Tijdeman.
\newblock {\em Exponential {D}iophantine equations}.
\newblock Cambridge tracts in mathematics. Cambridge University Press, 1986.

\bibitem[vdPS91]{PoortenSchlickewei1991}
A.~J. van~der Poorten and H.~P. Schlickewei.
\newblock Additive relations in fields.
\newblock {\em Journal of the Australian Mathematical Society}, 51(1):154--170, 1991.

\bibitem[WB00]{wolper2000construction}
Pierre Wolper and Bernard Boigelot.
\newblock On the construction of automata from linear arithmetic constraints.
\newblock In {\em International Conference on Tools and Algorithms for the Construction and Analysis of Systems}, pages 1--19. Springer, 2000.

\end{thebibliography}

\appendix
\section{Intersection of $p$-normal sets is $p$-normal}\label{app:internormal}
In this appendix we provide a self-contained proof of Proposition~\ref{prop:internormal}:

\propinternormal*

\begin{defn}\label{def:rectangular}
    A subset $N \subseteq \N^R$ is called a \emph{rectangular coset} if it is of the form $\bepsilon_0 + \N \bepsilon_1 + \cdots + \N \bepsilon_r$ for some $r \in \N$, where each $\bepsilon_i, i = 1, \ldots, r$ is of the form $\bepsilon_i = (\epsilon_{i1}, \ldots, \epsilon_{iR})$,
    \[
    \epsilon_{ij} =
    \begin{cases}
        c_i \quad j \in S_i, \\
        0 \quad j \notin S_i,
    \end{cases}
    \]
    with $c_1, \ldots, c_r \in \N$ and $S_1, \ldots, S_r$ are pairwise disjoint subsets of $\{1, \ldots, R\}$.
\end{defn}

For example, $\{(2a, 2a, 3b, 3b+1, 7) \mid a, b \in \N\}$ is a rectangular coset in $\N^5$, with $\bepsilon_0 = (0, 0, 0, 1, 7)$, $\bepsilon_1 = (2, 2, 0, 0, 0)$ and $\bepsilon_2 = (0, 0, 3, 3, 0)$.

\begin{lem}\label{lem:rectangular}
    Let $q \geq 2$ be an integer, $G$ be a subgroup of $\Z^{KN}$, and $\ba_0, \ba_1, \ldots, \ba_R \in \Z^{KN}$.
    Then the set of $(k_1, \ldots, k_R) \in \N^R$ satisfying
    \begin{equation}\label{eq:recinG}
        \ba_0 + q^{k_1} \ba_1 + \cdots + q^{k_R} \ba_R \in G
    \end{equation}
    is (effectively) a finite union of rectangular cosets. 
\end{lem}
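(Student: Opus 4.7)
The plan is to induct on $R$, after reducing the problem by quotient: write the condition inside the finitely generated abelian group $A = \Z^{KN}/G \cong \Z^s \oplus T$ with $T$ finite (structure theorem). Writing $\bar\ba_i$ for the image of $\ba_i$ in $A$, the condition $\ba_0 + \sum q^{k_i}\ba_i \in G$ becomes $\bar\ba_0 + \sum q^{k_i}\bar\ba_i = 0$ in $A$, which splits into a free-part equation in $\Z^s$ and a torsion-part condition in $T$.

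To handle the torsion, I exploit the eventual periodicity of $q^k$ modulo $\mathrm{exp}(T)$: there exist effective $M, P \geq 1$ with $q^{k+P} \equiv q^k \pmod{\mathrm{exp}(T)}$ for $k \geq M$. This lets me partition $\N^R$ into finitely many cells, each itself a rectangular coset, where on each cell each coordinate $k_i$ is constrained either to a fixed value below $M$ or to a single arithmetic progression $\{M_0 + Pn : n \in \N\}$ (each coordinate sitting in its own singleton block). On each cell the torsion condition either holds uniformly or fails uniformly; discarding the failing cells, the remaining condition is a free-part equation of the form $\bb_0 + \sum_{i \in I}(q^P)^{m_i}\bb_i = 0$ in $\Z^s$, in reparameterized variables $(m_i)_{i \in I}$. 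This reduces the lemma to the following sub-lemma: for $q' \geq 2$ and $\bb_0, \ldots, \bb_r \in \Z^s$, the solution set of $\bb_0 + \sum q'^{m_i}\bb_i = 0$ in $\N^r$ is a finite union of rectangular cosets.

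For the sub-lemma I argue by induction on $r$ with an auxiliary reduction in $s$. Zero $\bb_i$'s give free coordinates and reduce $r$. Otherwise I split the $\Z^s$-equation into its $s$ coordinate projections; each projection is a one-dimensional equation of the same form, and solutions to the $\Z^s$-equation are the intersection of the solutions of the $s$ projected equations. If I can prove the $s = 1$ case and show that finite unions of rectangular cosets are closed under intersection, the sub-lemma follows. The one-dimensional case $c_0 + \sum q'^{m_i}c_i = 0$ in $\Z$ is a multi-variable generalization of \cite[Lemma~9.5]{derksen2007skolem}: for each partition $\Pi$ specifying which $m_i$'s coincide, the equation collapses to one in $|\Pi|$ block-values $K_j$ with reduced coefficients $C_j = \sum_{i \in S_j} c_i$, and a distinctness constraint on the $K_j$'s that I resolve by inclusion--exclusion over coarser partitions; when $|\Pi| < r$ the induction on $r$ applies. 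The powering elementary fact is that when all $C_j$ are nonzero and the $K_j$'s are pairwise distinct, $q'$-adic size estimates force the pairwise differences $K_j - K_{j'}$ to lie in a bounded set, so the solutions form a finite union of ``diagonals'' of the form $\bd + \N \cdot (1,1,\ldots,1)$, each a rectangular coset.

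The main obstacle will be the combinatorial closure lemma asserting that finite unions of rectangular cosets are closed under intersection. When two rectangular cosets impose block constraints on overlapping coordinates, the intersection must be exhibited as a rectangular coset whose block partition is the join of the two input partitions and whose step sizes are obtained from the input steps by $\mathrm{lcm}$, with the offsets $\bepsilon_0$ determined by an auxiliary affine Diophantine system — I will verify this directly and effectively, checking that no non-rectangular behavior emerges because within each joined block the input constraints force all coordinates to move in lockstep at a common rate. Effectivity then follows from the effectivity of each step (structure-theorem decomposition of $A$, computation of $M$ and $P$, inclusion--exclusion over partitions, and intersection computation).
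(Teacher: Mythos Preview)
Your overall reduction (quotient by $G$, split into torsion and free parts, handle torsion by periodicity) is sound and parallels the paper's use of the $\Q$-span $\oG$ of $G$. The genuine gap is in the one-dimensional base case. You assert that when all $C_j \neq 0$ and the $K_j$ are pairwise distinct, size estimates force \emph{all} pairwise differences $K_j - K_{j'}$ to be bounded, so that the solution set is a finite union of diagonals $\bd + \N(1,\ldots,1)$. This is false. With $q' = 2$ and coefficients $(c_0,c_1,c_2,c_3,c_4) = (0,1,2,-1,-2)$, the equation $2^{m_1} + 2\cdot 2^{m_2} - 2^{m_3} - 2\cdot 2^{m_4} = 0$ has, for every $m_2, m_4 \in \N$, the solution $(m_1,m_2,m_3,m_4) = (m_4+1,\,m_2,\,m_2+1,\,m_4)$, with all four entries distinct once $|m_2 - m_4| \geq 2$; here $|m_2 - m_4|$ is unbounded. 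The solution set is the rectangular coset $(1,0,1,0) + \N(0,1,1,0) + \N(1,0,0,1)$ with \emph{two} blocks, not a union of diagonals. The reason your size estimate fails is that vanishing proper sub-sums (here $2^{m_1} - 2\cdot 2^{m_4}$ and $2\cdot 2^{m_2} - 2^{m_3}$) decouple the exponents.

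The correct statement --- and this is exactly what the paper proves --- is that \emph{some} pair of exponents (including the dummy exponent $k_0 = 0$ attached to the constant term) must lie within a bounded distance $D$ of each other. One then merges that pair into a single term, which reduces $R$ and closes the induction. The paper runs this norm argument directly in the quotient vector space $\Q^{KN}/\oG$, so it never needs to project to coordinates, never needs the intersection-closure lemma for rectangular cosets, and never needs the partition/inclusion--exclusion bookkeeping. (Incidentally, that bookkeeping has its own hazard: imposing the distinctness constraint on the $K_j$'s amounts to a set-theoretic difference, and finite unions of rectangular cosets are \emph{not} closed under complement or difference --- for instance $\N^2 \setminus \{(a,a):a\in\N\}$ is not such a union.) Your route can be repaired by replacing the ``all pairwise differences bounded'' step with ``some pair is close, merge, recurse'', but at that point you are reproducing the paper's argument inside each coordinate, and the projection/intersection detour becomes unnecessary overhead.
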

\begin{proof}
    Let $\bg_1, \ldots, \bg_s$ be a $\Z$-basis of $G$.
    Define the following $\Q$-linear subspace of $\Q^{KN}$:
    \[
    \oG \coloneqq \Q \bg_1 + \cdots \Q \bg_s.
    \]
    Then the quotient $\Q^{KN}/\oG \cong \Q^{KN - s}$ is again a $\Q$-linear space.
    Let $\|\cdot\|$ be any norm on $\Q^{KN}/\oG \cong \Q^{KN - s}$.
    
    We use induction on $R$.
    Consider the case $R = 1$.
    If $\ba_1 \in \oG$, then there exists $t \in \N$ such that $t \ba_1 \in G$.
    Let $c > b \geq 1$ be two different integers such that $q^c \equiv q^b \mod t$, then for all $k \geq c$ we have $\ba_0 + q^{k} \ba_1 \in G \iff \ba_0 + q^{k - (c - b)} \ba_1 \in G$.
    Therefore, the set of $k_1 \in \N$ satisfying $\ba_0 + q^{k_1} \ba_1 \in G$ is a finite subset of $\{0, 1, \ldots, b\}$ plus a finite union of arithmetic progressions $\{i + (c- b) \N\}$ with $b \leq i < c$.
    This is a finite union of rectangular cosets.

    If $\ba_1 \notin \oG$, then $\|\ba_1\| > 0$. Therefore if $\ba_0 + q^{k_1} \ba_1 \in G$ we must have $q^{k_1} \leq \frac{\|\ba_0\|}{\|\ba_1\|}$.
    Therefore $k_1$ is bounded, so the solution set is finite.

    For the induction step, consider two cases.
    Let $\Lambda$ denote the solution set of Equation~\eqref{eq:recinG}.
    \begin{enumerate}[wide, label = \textbf{Case~\arabic*:}]
        \item 
    If there is some $i \in \{1, \ldots, R\}$ such that $\ba_i \in \oG$, then there exists $t \in \N$ such that $t \ba_i \in G$. Let $c > b \geq 1$ be two integers such that $q^c \equiv q^b \mod t$, then for all $k \geq c$ we have $\ba_0 + q^{k_1} \ba_1 + \cdots + q^{k_i} \ba_i + \cdots + q^{k_R} \ba_R \in G \iff \ba_0 + q^{k_1} \ba_1 + \cdots + q^{k_i - (c - b)} \ba_i + \cdots + q^{k_R} \ba_R \in G$.
    Therefore, the solution set $\Lambda$ can be decomposed into a finite union
    \[
    \Lambda = \bigcup_{r = 0}^{b-1} \Lambda_r \cup \bigcup_{r = b}^{c-1} \Lambda_r,
    \]
    where for $r = 0, \ldots, b-1$,
    \[
    \Lambda_r = \left\{(k_1, \ldots, k_{i-1}, r, k_{i+1}, \ldots, k_R) \;\middle|\; \ba_0 + q^{k_1} \ba_1 + \cdots + q^{r} \ba_i + \cdots + q^{k_R} \ba_R \in G \right\},
    \]
    and for $r = b, \ldots, c-1$,
    \[
    \Lambda_r = \left\{(k_1, \ldots, k_{i-1}, r + n (c-b), k_{i+1}, \ldots, k_R) \;\middle|\; n \in \N, \ba_0 + q^{k_1} \ba_1 + \cdots + q^{r} \ba_i + \cdots + q^{k_R} \in G \right\}.
    \]
    By the induction hypothesis, for each $r = 0, \ldots, b-1, b, \ldots, c-1$, the set
    \[
    \left\{(k_1, \ldots, k_{i-1}, k_{i+1}, \ldots, k_R) \;\middle|\; \ba_0 + q^{k_1} \ba_1 + \cdots + q^{r} \ba_i + \cdots + q^{k_R} \in G\right\}
    \]
    is a finite union of rectangular cosets of $\N^{R-1}$. 
    Therefore each $\Lambda_r$ is also a finite union of rectangular cosets of $\N^{R}$.

    \item 
    If $\ba_i \notin \oG$ for all $i \in \{1, \ldots, R\}$, that is, $\|\ba_i\| > 0$ for all $i$.
    Choose $D \in \N$ such that
    \begin{equation}\label{eq:Dbig}
    q^D \|\ba_i\| > \sum_{j \neq i} \|\ba_i\|
    \end{equation}
    for all $i = 1, \ldots, R$.
    
    Define $k_0 \coloneqq 0$.
    We claim that for every $(k_1, \ldots, k_R) \in \Lambda$ there exist distinct indices $i, j \in \{0, 1, \ldots, R\}$ such that $|k_j - k_i| \leq D$.
    Take $i$ such that $k_i$ is maximal. Suppose on the contrary that $k_i > D + k_j$ for all $j \neq i$. Then by $\sum_{j = 0}^R q^{k_j} \ba_j = 0$ we have
    \[
    q^{k_i}\|\ba_i\| =  \left\| - \sum_{j \neq i}^R q^{k_j} \ba_j \right\| \leq \sum_{j \neq i}^R q^{k_j} \|\ba_j\| \leq \sum_{j \neq i}^R q^{k_i - D} \|\ba_j\|.
    \]
    This contradicts \eqref{eq:Dbig}.
    Therefore $|k_j - k_i| \leq D$ for some $j \neq i$.

    We can thus write
    \[
    \Lambda = \bigcup_{0 \leq i < j \leq R} \bigcup_{r = -d}^d \Lambda_{i, j, r},
    \]
    where
    \begin{multline*}
    \Lambda_{i, j, r} 
    = \{(k_1, \ldots, k_R) \in \Lambda \mid k_j = k_i + r \} \\
    = \left\{ (k_1, \ldots, k_i, \ldots, k_j = k_i + r, \ldots,  k_R) \;\middle|\; \ba_0 + \cdots + q^{k_i} \ba_i + \cdots + q^{k_i+r} \ba_j + \cdots + q^{k_R} \ba_R \in G \right\}.
    \end{multline*}
    Apply the induction hypothesis with $\ba_i$ as $\ba_i + q^r \ba_j$, we conclude that $\Lambda_{i, j, r}$ is a finite union of rectangular cosets.
    Therefore, $\Lambda$ is a finite union of rectangular cosets.
    \end{enumerate}
\end{proof}

\begin{proof}[Proof of Proposition~\ref{prop:internormal}]
    Since a $p$-normal set is a finite union of $p$-succinct sets, it suffices to show that the intersection of two $p$-succinct sets is effectively $p$-normal.
    Let
    \[
    S = \left\{\ba_0 + p^{\ell k_1} \ba_1 + \cdots + p^{\ell k_r} \ba_r + \bh \;\middle|\; k_1, k_2, \ldots, k_r \in \N, \bh \in H \right\}
    \]
    and
    \[
    S' = \left\{\ba'_0 + p^{\ell' k_1} \ba'_1 + \cdots + p^{\ell' k_{r'}} \ba_{r'} + \bh \;\middle|\; k_1, k_2, \ldots, k_{r'} \in \N, \bh \in H' \right\}
    \]
    be two $p$-succinct sets.
    Let $L$ be a common multiplier of $\ell$ and $\ell'$.
    Then $S$ can be written as a finite union of $p$-succinct sets
    \begin{align*}
    S & = \bigcup_{i_1 = 0}^{\frac{L}{\ell} - 1} \cdots \bigcup_{i_r = 0}^{\frac{L}{\ell} - 1}
    \left\{ \ba_0 + p^{L k_1} (p^{\ell i_1}\ba_1) + \cdots + p^{L k_r} (p^{\ell i_r}\ba_r) + \bh \;\middle|\; k_1, k_2, \ldots, k_r \in \N, \bh \in H \right\}. \\
    & = \bigcup_{i_1 = 0}^{\frac{L}{\ell} - 1} \cdots \bigcup_{i_r = 0}^{\frac{L}{\ell} - 1} S\left(L; \ba_0, p^{\ell i_1}\ba_1, \ldots, p^{\ell i_r}\ba_r ; H \right).
    \end{align*}
    Therefore we can without loss of generality replace $\ell$ with $L$.
    Similarly we can without loss of generality replace $\ell'$ with $L$.
    Therefore from now on we suppose $\ell = \ell'$.
    
    Let $U = \{u_1, \ldots, u_k\}$ be a $\Z$-basis for $H \cap H'$. Extend $U$ to a maximal $\Z$-independent subset $\{u_1, \ldots, u_k, v_1, \ldots, v_m\}$ of $H$. That is, $\sum_{i = 1}^k \Z u_i + \sum_{i = 1}^m \Z v_i$ is a finite index subgroup of $H$.
    Similarly, extend $U$ to a maximal $\Z$-independent subset $\{u_1, \ldots, u_k, v'_1, \ldots, v'_{m'}\}$ of $H'$.
    Note that $u_1, \ldots, u_k, v_1, \ldots, v_m, v'_1, \ldots, v'_{m'}$ are $\Z$-independent. Indeed, suppose $\sum_{i = 1}^k y_i u_i + \sum_{i = 1}^m z_i v_i + \sum_{i = 1}^{m'} z'_i v'_i = 0$ for some $y_i, z_i, z'_i \in \Z$, then $H' \ni \sum_{i = 1}^{m'} z'_i v'_i = - (\sum_{i = 1}^k y_i e_i + \sum_{i = 1}^m z_i v_i) \in H$.
    Therefore $\sum_{i = 1}^m z_i v_i \in H \cap H'$, so $z_i = 0$ for all $i$. Similarly $z'_i = 0$ for all $i$. Consequently $\sum_{i = 1}^k y_i u_i = 0$, so $y_i = 0$ for all $i$.

    Let $h_1, \ldots, h_s \in H$ be the representatives of $H/(\sum_{i = 1}^k \Z u_i + \sum_{i = 1}^m \Z v_i)$, and let $h'_1, \ldots, h'_{s'} \in H'$ be the representatives of $H'/(\sum_{i = 1}^k \Z u_i + \sum_{i = 1}^{m'} \Z v'_i)$. 
    Let
    \[
    \tS \coloneqq \ba_0 + p^{\ell \N} \ba_1 + \cdots + p^{\ell \N} \ba_r + \sum_{i = 1}^k \Z u_i + \sum_{i = 1}^m \Z v_i
    \]
    and
    \[
    \tS' \coloneqq \ba'_0 + p^{\ell \N} \ba'_1 + \cdots + p^{\ell \N} \ba'_{r'} + \sum_{i = 1}^k \Z u_i + \sum_{i = 1}^{m'} \Z v'_i.
    \]
    Then $S \cap S' = \bigcup_{i = 1}^s \bigcup_{j = 1}^{s'} \big( (h_i + \tS) \cap (h'_j + \tS') \big)$.
    Therefore it suffices to show that each $(h_i + \tS) \cap (h'_j + \tS')$ is $p$-normal. By replacing $\ba_0$ with $\ba_0 + h_i$ and $\ba'_0$ with $\ba'_0 + h'_j$ we can without loss of generality suppose $h_i = h'_j = 0$ and show $\tS \cap \tS'$ is $p$-normal.
    
    We then extend the set $\{u_1, \ldots, u_k, v_1, \ldots, v_m, v'_1, \ldots, v'_{m'}\}$ to a maximal $\Z$-independent subset 
    \[
    \{u_1, \ldots, u_k, v_1, \ldots, v_m, v'_1, \ldots, v'_{m'}, w_1, \ldots, w_n\}
    \]
    of $\Z^{KN}$.
    Denote by $U, V, V', W \subseteq \Q^{KN}$ respectively the $\Q$-linear spaces generated by $\{u_1, \ldots, u_k\}$, $\{v_1, \ldots, v_m\}$, $\{v'_1, \ldots, v'_{m'}\}$, $\{w_1, \ldots, w_n\}$.
    Then $U + V + V' + W = \Q^{KN}$.
    For any $x \in \Q^{KN}$, we can uniquely write $x = u + v + v' + w$ with $u \in U, v \in V, v' \in V', w \in W$; in this case we define $\pi_{U+V}(x) \coloneqq u+v$ and $\pi_{V'+W}(x) \coloneqq v' + w$.
    
    Let $\Lambda$ denote the set of solutions $(k_1, \ldots, k_r, k'_1, \ldots, k'_{r'}) \in \N^{r + r'}$ to
    \begin{equation}\label{eq:defLambda}
    \Big(\ba_0 + p^{\ell k_1} \ba_1 + \cdots + p^{\ell k_r} \ba_r\Big) - \Big(\ba'_0 + p^{\ell k'_1} \ba'_1 + \cdots + p^{\ell k'_{r'}} \ba_{r'}\Big) \in \sum_{i = 1}^k \Z u_i + \sum_{i = 1}^m \Z v_i + \sum_{i = 1}^m \Z v'_i.
    \end{equation}
    By Lemma~\ref{lem:rectangular}, we know that $\Lambda$ is a finite union of rectangular cosets.
    
    Consider the set
    \begin{multline*}
    T \coloneqq \\
    \Big\{\pi_{V' + W}(\ba_0) + p^{\ell k_1} \pi_{V' + W}(\ba_1) + \cdots + p^{\ell k_r} \pi_{V' + W}(\ba_r) + \pi_{U + V}(\ba'_0) + p^{\ell k'_1} \pi_{U + V}(\ba'_1) + \cdots + p^{\ell k'_r} \pi_{U + V}(\ba'_r) \\
     \;\Big|\; (k_1, \ldots, k'_{r'}) \in \Lambda \Big\} + \sum_{i = 1}^k \Z u_i.
    \end{multline*}
    We claim that $T$ is $p$-normal and $\tS \cap \tS' = T$, this would show that $\tS \cap \tS'$ is $p$-normal and conclude the proof.

    \begin{enumerate}[wide, label = \arabic*.]
    \item \textbf{First we show $T$ is $p$-normal.}
    Recall that a finite union of $p$-normal sets is still $p$-normal, and $\Lambda$ is a finite union of rectangular cosets. Therefore it suffices to show that $T$ is $p$-normal when $\Lambda$ is a single rectangular coset $\{\bepsilon_0 + n_1 \bepsilon_1 + \cdots + n_s \bepsilon_s \mid n_1, \ldots, n_s \in \N \}$.
    Replacing $(k_1, \ldots, k_r, k'_1, \ldots, k'_{r'})$ with $\bepsilon_0 + n_1 \bepsilon_1 + \cdots + n_s \bepsilon_s$, we can rewrite
    \[
    \pi_{V' + W}(\ba_0) + p^{\ell k_1} \pi_{V' + W}(\ba_1) + \cdots + p^{\ell k_r} \pi_{V' + W}(\ba_r) + \pi_{U + V}(\ba'_0) + p^{\ell k'_1} \pi_{U + V}(\ba'_1) + \cdots + p^{\ell k'_r} \pi_{U + V}(\ba'_r)
    \]
    as
    \[
    \widetilde\ba_0 + p^{\ell c_1 n_1} \widetilde\ba_1 + \cdots + p^{\ell c_s n_s} \widetilde\ba_s
    \]
    for some $\widetilde\ba_0, \ldots, \widetilde\ba_s \in \Q^{KN}$.
    Here $c_1, \ldots, c_s \in \N$ are as in Definition~\ref{def:rectangular}.
    Let $C$ be a common multiplier of $c_1, \ldots, c_s$, then $\{\widetilde\ba_0 + p^{\ell c_1 n_1} \widetilde\ba_1 + \cdots + p^{\ell c_s n_s} \widetilde\ba_s \mid n_1, \ldots, n_s \in \N \}$ can be written as a finite union of sets of the form
    \[
    \{\widetilde\ba'_0 + p^{\ell C n_1} \widetilde\ba'_1 + \cdots + p^{\ell C n_s} \widetilde\ba'_s \mid n_1, \ldots, n_s \in \N \},
    \]
    which are $p$-normal.

    \item
    \textbf{Then we show $\tS \cap \tS' \subseteq T$.}
    Let 
    \[
    s \coloneqq \ba_0 + p^{\ell k_1} \ba_1 + \cdots + p^{\ell k_r} \ba_r + \sum_{i = 1}^k x_i u_i + \sum_{i = 1}^m y_i v_i = \ba'_0 + p^{\ell k'_1} \ba'_1 + \cdots + p^{\ell k'_{r'}} \ba_{r'} + \sum_{i = 1}^k x'_i u_i + \sum_{i = 1}^m y'_i v'_i
    \]
    be any element of $\tS \cap \tS'$.
    Then $(k_1, \ldots, k'_{r'}) \in \Lambda$.
    We have $\pi_{V' + W}(s) = \pi_{V' + W}(\ba_0) + p^{\ell k_1} \pi_{V' + W}(\ba_1) + \cdots + p^{\ell k_r} \pi_{V' + W}(\ba_r)$ and $\pi_{U + V}(s) \in \pi_{U + V}(\ba'_0) + p^{\ell k'_1} \pi_{U + V}(\ba'_1) + \cdots + p^{\ell k'_r} \pi_{U + V}(\ba'_r) + \sum_{i = 1}^k x'_i u_i$.
    Therefore
    \begin{align*}
    s & = \pi_{V' + W}(s) + \pi_{U + V}(s) \\
    & = \pi_{V' + W}(\ba_0) + \cdots + p^{\ell k_r} \pi_{V' + W}(\ba_r) + \pi_{U + V}(\ba'_0) + \cdots + p^{\ell k'_r} \pi_{U + V}(\ba'_r) + \sum_{i = 1}^k x'_i u_i \\
    & \in T.
    \end{align*}
    We conclude that $\tS \cap \tS' \subseteq T$.

    \item \textbf{Finally we show $T \subseteq \tS \cap \tS'$.}
    Let
    \[
    t \coloneqq \pi_{V' + W}(\ba_0) + \cdots + p^{\ell k_r} \pi_{V' + W}(\ba_r) + \pi_{U + V}(\ba'_0) + \cdots + p^{\ell k'_r} \pi_{U + V}(\ba'_r) + \sum_{i = 1}^k x'_i u_i
    \]
    be any element in $T$, with $x'_1, \ldots, x'_k \in \Z$ and $(k_1, \ldots, k'_{r'}) \in \Lambda$.
    Since $(k_1, \ldots, k'_{r'}) \in \Lambda$, by the definition of $\Lambda$ we have
    \[
    \Big(\ba_0 + p^{\ell k_1} \ba_1 + \cdots + p^{\ell k_r} \ba_r\Big) - \Big(\ba'_0 + p^{\ell k'_1} \ba'_1 + \cdots + p^{\ell k'_{r'}} \ba_{r'}\Big) \in \sum_{i = 1}^k \Z u_i + \sum_{i = 1}^m \Z v_i + \sum_{i = 1}^m \Z v'_i.
    \]
    So
    \begin{align*}
        & t - \Big(\ba_0 + p^{\ell k_1} \ba_1 + \cdots + p^{\ell k_r} \ba_r\Big) \\
        = \; & \big(\pi_{V' + W}(\ba_0) - \ba_0 \big) + \cdots + p^{\ell k_r} \big(\pi_{V' + W}(\ba_r) - \ba_r \big) + \pi_{U + V}(\ba'_0) + \cdots + p^{\ell k'_r} \pi_{U + V}(\ba'_r) + \sum_{i = 1}^k x'_i u_i \\
        = \;& - \pi_{U + V}(\ba_0) - \cdots - p^{\ell k_r} \pi_{U + V}(\ba_r) + \pi_{U + V}(\ba'_0) + \cdots + p^{\ell k'_r} \pi_{U + V}(\ba'_r) + \sum_{i = 1}^k x'_i u_i \\
        = \; & \pi_{U+V} \Big( \big(\ba'_0 + p^{\ell k'_1} \ba'_1 + \cdots + p^{\ell k'_{r'}} \ba_{r'}\big) - \big(\ba_0 + p^{\ell k_1} \ba_1 + \cdots + p^{\ell k_r} \ba_r\big) \Big) + \sum_{i = 1}^k x'_i u_i \\
        \in \; & \pi_{U+V} \left( \sum_{i = 1}^k \Z u_i + \sum_{i = 1}^m \Z v_i + \sum_{i = 1}^m \Z v'_i \right) + \sum_{i = 1}^k x'_i u_i \\
        \subseteq \; & \sum_{i = 1}^k \Z u_i + \sum_{i = 1}^m \Z v_i.
    \end{align*}
    Therefore $t \in \ba_0 + p^{\ell \N} \ba_1 + \cdots + p^{\ell \N} \ba_r + \sum_{i = 1}^k \Z u_i + \sum_{i = 1}^m \Z v_i = \tS$.
    
    Similarly,
    \begin{align*}
        & t - \Big(\ba'_0 + p^{\ell k'_1} \ba'_1 + \cdots + p^{\ell k'_r} \ba'_r\Big) \\
        = \; & \pi_{V' + W}(\ba_0) + \cdots + p^{\ell k_r} \pi_{V' + W}(\ba_r) + \big(\pi_{U+V}(\ba'_0) - \ba'_0 \big) + \cdots + p^{\ell k_r} \big(\pi_{U+V}(\ba'_r) - \ba'_r \big) + \sum_{i = 1}^k x'_i u_i \\
        = \;& \pi_{V' + W}(\ba_0) + \cdots + p^{\ell k_r} \pi_{V' + W}(\ba_r) - \pi_{V' + W}(\ba'_0) - \cdots - p^{\ell k'_r} \pi_{V' + W}(\ba'_r) + \sum_{i = 1}^k x'_i u_i \\
        = \; & \pi_{V' + W} \Big( \big(\ba_0 + p^{\ell k_1} \ba_1 + \cdots + p^{\ell k_r} \ba_r\big) - \big(\ba'_0 + p^{\ell k'_1} \ba'_1 + \cdots + p^{\ell k'_{r'}} \ba_{r'}\big) \Big) + \sum_{i = 1}^k x'_i u_i \\
        \in \; & \pi_{V' + W} \left( \sum_{i = 1}^k \Z u_i + \sum_{i = 1}^m \Z v_i + \sum_{i = 1}^m \Z v'_i \right) + \sum_{i = 1}^k x'_i u_i \\
        \subseteq \; & \sum_{i = 1}^k \Z u_i + \sum_{i = 1}^m \Z v'_i.
    \end{align*}
    Therefore $t \in \ba'_0 + p^{\ell \N} \ba'_1 + \cdots + p^{\ell \N} \ba'_r + \sum_{i = 1}^k \Z u_i + \sum_{i = 1}^m \Z v'_i = \tS'$.
    This yields $T \subseteq \tS \cap \tS'$.
    \end{enumerate}
    We conclude that $\tS \cap \tS' = T$, which is $p$-normal.
\end{proof}
\end{document}